\newtheorem{theorem}{Theorem}[section]
\newtheorem{lemma}[theorem]{Lemma}
\newtheorem{question}[theorem]{Question}
\newtheorem{proposition}[theorem]{Proposition}
\newtheorem{corollary}[theorem]{Corollary}
\theoremstyle{definition}
\newtheorem{definition}[theorem]{Definition}
\newtheorem*{definition-nono}{Definition}
\newtheorem{remark}[theorem]{Remark}
\newtheorem*{acknowledgement}{Acknowledgements}
\newcommand{\N}{\mathbb{N}}
\newcommand{\Z}{\mathbb{Z}}
\newcommand{\Q}{\mathbb{Q}}
\newcommand{\R}{\mathbb{R}}
\newcommand{\C}{\mathbb{C}}
\newcommand{\mc}{\mathcal}
\renewcommand{\a}{\alpha}
\renewcommand{\b}{\beta}
\newcommand{\g}{\gamma}
\renewcommand{\d}{\delta}
\newcommand{\e}{\varepsilon}
\renewcommand{\l}{\lambda}
\newcommand{\w}{\omega}
\newcommand{\s}{\sigma}
\newcommand{\vp}{\varphi}
\renewcommand{\th}{\theta}
\newcommand{\set}[1]{\left\{#1\right\}}
\renewcommand{\r}{\rightarrow}
\def\multiset#1#2{\ensuremath{\left(\kern-.3em\left(\genfrac{}{}{0pt}{}{#1}{#2}\right)\kern-.3em\right)}}
\newcommand{\norm}[1]{\left\lVert#1\right\rVert}
\newcommand{\mbf}{\mathbf}
\newcommand{\mrm}{\mathrm}
\subjclass[2010]{37A10, and 37A25}
\keywords{flat surfaces, Hausdorff dimension, ergodic theorems, divergent trajectories}
\numberwithin{equation}{section}
\title[Exceptional Trajectories]{Exceptional directions for the Teichm\"{u}ller geodesic flow and Hausdorff dimension}
\author[Al-Saqban]{Hamid Al-Saqban}
\address{Department of Mathematics, University of Maryland, College Park, MD}
\email{hqs@math.umd.edu}
\author[Apisa]{Paul Apisa}
\address{Department of Mathematics, University of Chicago, Chicago, IL}
\email{papisa@math.uchicago.edu}
\author[Erchenko]{Alena Erchenko}
\address{Department of Mathematics, Pennsylvania State University, University Park, PA}
\email{axe930@psu.edu}
\author[Khalil]{Osama Khalil}
\address{Department of Mathematics, Ohio State University, Columbus, OH}
\email{khalil.37@osu.edu}
\author[Mirzadeh]{Shahriar Mirzadeh}
\address{Department of Mathematics, Brandeis University, Waltham, MA}
\email{shahmir@brandeis.edu}
\author[Uyanik]{Caglar Uyanik}
\address{Department of Mathematics, Vanderbilt University, Nashville, TN}
\email{caglar.uyanik@vanderbilt.edu}
\date{}
\begin{document}

\begin{abstract}
We prove that for every flat surface $\omega$, the Hausdorff dimension of the set of directions in which Teichm\"{u}ller geodesics starting from $\omega$ exhibit a definite amount of deviation from the correct limit in Birkhoff's and Oseledets' Theorems 
is strictly less than $1$. 
This theorem extends a result by Chaika and Eskin where they proved that such sets have measure $0$.
We also prove that the Hausdorff dimension of the directions in which Teichm\"{u}ller geodesics diverge on average in a stratum is bounded above by $1/2$, strengthening a classical result due to Masur. Moreover, we show that the Hausdorff codimension of the set of non-weakly mixing IETs with permutation $(d, d-1, \dots, 1)$, where $d$ is an odd number, is exactly $1/2$ and strengthen a result by Avila and Leguil. 
\end{abstract}

\maketitle
{\hypersetup{linkcolor=black}

\tableofcontents
}

\section{Introduction}

The problem of determining the size of the set of points with non-dense orbits under a partially hyperbolic transformation has a long history. These include orbits which escape to infinity, remain confined inside a proper compact set or simply miss a given open set. In the most studied setting, the transformation preserves a natural ergodic measure and hence these non-dense orbits have measure zero. Thus, it is natural to ask whether different types of non-dense orbits are more abundant than others with respect to other notions of size among which \emph{Hausdorff dimension} is the most common.

Many instances of this problem have been studied for algebraic partially hyperbolic flows on homogeneous spaces. For such flows, Margulis conjectured in his 1990 ICM address that orbits with closure a compact subset of a (non-compact) homogeneous space that misses a countable set of points have full Hausdorff dimension \cite[Conjectures A,B]{Margulis-Non-dense}. A full resolution of these conjectures was provided in subsequent papers of Kleinbock and Margulis \cite{Kleinbock-MargulisCojA} and Kleinbock and Weiss \cite{Kleinbock-MargulisCojB}. 
This phenomenon of abundance of non-dense orbits also takes place in the setting of hyperbolic dynamical systems. For example, Urba\'nski showed in \cite{Urbanski-NonDense} that non-dense orbits of Anosov flows on compact manifolds have full Hausdorff dimension. Then, in \cite{Dolgopyat-BoundedAnosov}, Dolgopyat studied the Hausdorff dimension of orbits of Anosov flows and diffeomorphisms which do not accumulate on certain low entropy subsets. It was shown that these trajectories have full Hausdorff dimension in many cases.

On the other hand, non-dense orbits of divergence type tend to be less abundant. In the homogeneous setting, it was shown in \cite{KKLM-SingSystems} that the divergent on average trajectories for certain flows on $\mrm{SL}(n,\R)/\mrm{SL}(n,\Z)$ do not have full Hausdorff dimension. In fact, an explicit upper bound on the Hausdorff dimension is given, generalizing earlier papers by Cheung \cite{Cheung-singular} and Cheung and Chevallier \cite{CheungChevallier}. In the setting of strata of quadratic differentials, Masur showed that the Hausdorff dimension of the set of non-uniquely ergodic directions for the associated translation flow is bounded above by 1/2. These correspond to divergent orbits for the Teichm\"uller geodesic flow.

In this article, we quantify the abundance of non-dense orbits in the setting of Teichm\"uller dynamics. Theorem~\ref{thrm: divergent on average} is the analogue of the result of \cite{KKLM-SingSystems} on the dimension of directions in which orbits of the Teichm\"uller geodesic flow are divergent on average. It provides a strengthening of Masur's result mentioned above. As for non-dense orbits, we study the more general problem concerning the set of directions at a fixed basepoint in which trajectories exhibit a definite amount of deviation from the correct limit in Birkhoff's and Oseledets' Theorems. Theorems~\ref{thrm: Birkhoff deviations thrm} and \ref{thrm: Oseledets deviations} show that the Hausdorff dimension of these sets of directions is bounded away from $1$ uniformly as the basepoint varies in the complement of certain proper submanifolds of the stratum. In particular, this implies that the intersection of the set of non-dense orbits with any Teichm\"uller disk in the complement of these finitely many proper submanifolds has positive Hausdorff codimension (see Corollary~\ref{cor: Birkhoff corollary} and Section~\ref{section: remarks}).

These results generalize prior work of Chaika and Eskin \cite{ChaikaEskin} in which the aforementioned exceptional sets were shown to have measure $0$.
The work of Chaika and Eskin was used in \cite{Hubert-windtree} to study the diffusion rate of billiard orbits in periodic wind-tree models. It was shown that for any choice of side lengths of the periodic rectangular obstacles, diffusion of orbits has a constant polynomial rate in almost every direction. Theorems~\ref{thrm: Birkhoff deviations thrm} and \ref{thrm: Oseledets deviations} imply that the directions exhibiting different diffusion rates do not have full Hausdorff dimension.
Prior to the work of Chaika and Eskin, Athreya and Forni ~\cite{AthreyaForni} established a polynomial bound on the deviation of Birkhoff averages of sufficiently regular functions along orbits of translation flows on flat surfaces in almost every direction. This full measure set of directions was chosen so that the average of a certain continuous function along the Teichm\"uller flow orbits is close to its expected value. Theorem ~\ref{thrm: Birkhoff deviations thrm} can be used to show that the directions which do not satisfy this bound are of dimension strictly smaller than $1$. 

It is well known that Teichm\"uller dynamics is closely tied to interval exchange transformations. In particular, Theorem~\ref{thrm: divergent on average} allows us to derive a lower bound on the Hausdorff codimension of the set of non-weakly mixing IETs with permutation $(d, d-1, \dots, 1)$, where $d$ is an odd number. In combination with the result of \cite{ChaikaMasur-dIET} establishing the upper bound, this allows us to compute the precise Hausdorff codimension.

\subsection*{Formulation of Results}

Let $g\geqslant 1$ and let $\alpha=(\alpha_1, \dots, \alpha_n)$ be an integral partition of $2g-2$. An \textbf{abelian differential} is a pair 
$(M,\omega)$, where $M$ is a Riemann surface of genus $g$ and $\omega$ is a holomorphic $1$-form on $M$ whose zeroes have multiplicities $\alpha_1, \dots, \alpha_n$. Throughout this paper, $\mathcal H_1(\alpha)$ will denote a stratum of Abelian differentials with area $1$
with respect to the induced area form on $M$. We refer to points of $\mathcal H_1(\alpha)$ as translation surfaces. For the sake of brevity, we will often refer to $\omega$ itself as an element of $\mathcal H_1(\alpha)$.

We recall that there are well-defined local coordinates on a stratum, called \textbf{period coordinates} (e.g., see \cite[Section 2.3]{ForniMatheus} for details), such that all changes of coordinates are given by affine maps. In period coordinates, $\mathrm{SL}_2(\mathbb R)$ acts naturally on each copy of $\mathbb C$. Moreover, the closure of any $\mathrm{SL}_2(\mathbb R)$ orbit is an affine invariant manifold \cite{EMM}, i.e., a closed subset of $\mathcal H_1(\alpha)$ that is invariant under the $\mathrm{SL}_2(\mathbb R)$ action and looks like an affine subspace in period coordinates. Therefore, it is the support of an ergodic $\mathrm{SL}_2(\mathbb R)$ invariant probability measure.

The action of the following one parameter subgroups of $\mathrm{SL}_2(\R)$ will be referred to throughout the article.
	\begin{align*}
	g_t = \begin{pmatrix}
		e^t & 0 \\ 0 & e^{-t}
	\end{pmatrix}, \qquad
    r_\th = \begin{pmatrix}
    \cos\th & \sin\th \\ -\sin\th &\cos\th
    \end{pmatrix}, \qquad
    h_s = \begin{pmatrix}
    1 & s\\ 0 & 1
    \end{pmatrix}, \qquad
    \check{h}_s = \begin{pmatrix}
    1 & 0 \\ s & 1
    \end{pmatrix}
	\end{align*}

We recall that the actions of $g_t$, $r_{\theta}$, $h_s$ and $\check{h}_s$ correspond to the Teichm\"{u}ller geodesic flow, the rotation of the flat surface by the angle $\theta$, and the expanding and contracting horocycle flows, respectively.   

	\subsection*{Birkhoff's Ergodic Theorem} Chaika and Eskin \cite{ChaikaEskin} proved that for any translation surface $(M, \omega)\in \mathcal H_1(\alpha)$, and any continuous compactly supported function $f$ on $\mathcal H_1(\alpha)$, for almost all $\theta\in [0,2\pi]$,
\begin{equation}\label{Birkhoff}
\lim\limits_{T\rightarrow\infty}\frac{1}{T}\int_0^T f(g_tr_{\theta}w)dt = \int_{\mathcal M}fd\nu_{\mathcal M},
\end{equation}
where $\mathcal M = \overline{\mathrm{SL}_2(\mathbb R)\omega}$ is the smallest affine invariant manifold containing $\omega$ and $\nu_{\mathcal M}$ is the affine measure whose support is $\mathcal M$.

In this paper, we show that the Hausdorff dimension of the set of directions exhibiting a definite amount of deviation from the correct limit in \eqref{Birkhoff} is strictly less than $1$.

  \begin{theorem} \label{thrm: Birkhoff deviations thrm}
     Suppose $\mc{M}\subseteq \mc{H}_1(\a)$ is an affine invariant submanifold and $\nu_{\mc{M}}$ is the affine measure whose support is $\mc{M}$.
      Then, for any bounded continuous function $f$ on $\mc{M}$ and any $\e>0$, there exist affine invariant submanifolds $\mc{N}_1,\dots, \mc{N}_k$, properly contained in $\mc{M}$, and $0<\d< 1$, such that for all $\omega \in \mc{M}\backslash \left( \cup_{i=1}^k \mc{N}_i \right)$,      
      the Hausdorff dimension of the set
      \begin{equation*}
          \set{\th\in[0,2\pi]: \limsup_{T\r\infty} \left| \frac{1}{T} \int_0^T f(g_tr_\th\omega)\;dt -
                   \int_{\mathcal M} f \;d\nu_\mc{M}\right| \geqslant \e }
      \end{equation*}
      is at most $\d$.
  \end{theorem}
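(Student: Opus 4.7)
The strategy is a covering argument at exponentially small scales that reduces the theorem to an effective version of the Chaika--Eskin averaging theorem. Let $c = \int_{\mc{M}} f\, d\nu_\mc{M}$, and for $T > 0$ set
\[
B_T(\w) = \set{\th \in [0, 2\pi] \,:\, \left| \tfrac{1}{T} \int_0^T f(g_s r_\th \w) \, ds - c \right| \geqslant \e}.
\]
The exceptional set of the theorem equals $\bigcap_{N \geqslant 1} \bigcup_{T \geqslant N} B_T(\w)$. A direct computation shows that if, for some $\d' < 1$, each $B_T(\w)$ can be covered by at most $e^{\d' T}$ arcs of length $e^{-T}$, then the Hausdorff dimension of this $\limsup$ set is at most $\d'$: for any $\d > \d'$, the series $\sum_{T \geqslant N} e^{(\d' - \d)T}$ is geometric and vanishes as $N \to \infty$. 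It therefore suffices to produce such a covering bound, uniform in $\w \in \mc{M}\setminus \bigcup_i \mc{N}_i$.

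The cleanest route is a single-scale large-deviation estimate $\mathrm{meas}(B_T(\w)) \leqslant C e^{-\k T}$, which one would attempt via Chebyshev applied to the second moment
\[
\int_0^{2\pi} \left| \tfrac{1}{T}\int_0^T (f(g_s r_\th \w) - c) \, ds \right|^2 d\th.
\]
Expansion and Fubini reduce this to bounding the two-point correlation $\int_0^{2\pi} f(g_s r_\th \w) f(g_t r_\th \w) \, d\th$ as $|s - t| \to \infty$, whose exponential decay would follow from an effective version of the Eskin--Mirzakhani equidistribution theorem for circle orbits. Since such an effective statement is not available as a black box in arbitrary strata, I expect the proof to instead proceed by a multi-scale branching argument. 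Fix a window length $\Delta t$ and call an arc $I$ of length $e^{-n\Delta t}$ bad if it contains some $\th$ with $\bigl|\tfrac{1}{\Delta t}\int_{n\Delta t}^{(n+1)\Delta t} f(g_s r_\th \w)\, ds - c\bigr| \geqslant \e/2$. The key step is to show that, for $\w \not\in \bigcup_i \mc{N}_i$, every bad arc of length $e^{-n\Delta t}$ contains at most $\b \cdot e^{\Delta t}$ bad children of length $e^{-(n+1)\Delta t}$, for some $\b < 1$. Since any direction in the exceptional set must exhibit bad windows at a positive density of scales, iterating bounds the bad tree by branching factor $\b \cdot e^{\Delta t}$ per step, yielding $\d' = 1 + \log \b / \Delta t < 1$.

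The submanifolds $\mc{N}_i$ arise as those affine invariant submanifolds that obstruct uniform quantitative averaging at scale $\e$; their finiteness follows from the measure classification of Eskin--Mirzakhani together with the finiteness of affine invariant submanifolds of bounded complexity, after truncating to those that can affect averaging at the chosen scale. The main technical difficulties I anticipate are: (i) establishing the window-wise averaging estimate with a rate quantified by the distance of $g_{n\Delta t} r_\th \w$ to $\bigcup_i \mc{N}_i$, which demands an effective form of the Eskin--Mirzakhani--Mohammadi measure classification; (ii) handling the non-compactness of the stratum and the mere boundedness of $f$ via an Eskin--Masur non-divergence estimate and a Margulis-type Lyapunov function for $g_t$, so that cusp excursions can be absorbed into the $\e/2$ slack; and (iii) passing between the $r_\th$-parameterization of arcs and the unstable horocyclic $h_s$-action, with which the expanding geometry of $g_t$ interacts cleanly.
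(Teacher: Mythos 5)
Your skeleton matches the paper's strategy in broad strokes: reduce to exponential-scale coverings of a $\limsup$ set, decompose long averages into windows of length $\Delta t$, get a per-window failure probability $\b<1$ on each renormalized arc, and exploit a branching/independence structure to beat the trivial count $e^{\Delta t}$ per scale. You also correctly identify that the exceptional submanifolds come from the Eskin--Mirzakhani--Mohammadi finiteness statement and that one should work with the $h_s$-parameterization rather than $r_\th$. However, there are two genuine gaps. First, you assert that the window-wise estimate ``demands an effective form of the Eskin--Mirzakhani--Mohammadi measure classification,'' which is not available; if that were truly required, the plan would stall. The paper instead proves a \emph{uniform but non-effective} averaging statement (Theorem~\ref{thrm: quantitative Chaika-Eskin}): for $x$ ranging over a compact subset of $\mc{M}\setminus\cup_i\mc{N}_i$ and $T$ large, the measure of bad $s\in[-1,1]$ is small. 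This is obtained softly, by contradiction: extract a weak-$\ast$ limit of the arc measures, show it is $h_{\b_1},h_{\b_2}$-invariant via a quantitative second-moment/decorrelation estimate along the arc (the ``effective unipotent invariance'' of Chaika--Eskin), invoke the $P$-invariance classification, and analyze the ergodic decomposition. No rate in EMM is needed, and this uniformity is exactly enough to run the branching step.

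Second, your proposed treatment of non-compactness --- absorbing cusp excursions into the $\e/2$ slack using a non-divergence estimate --- does not suffice. The uniform averaging estimate is only valid when the renormalized basepoint $g_{n\Delta t}h_{s_0}\w$ lies in a fixed compact set away from $\cup_i\mc{N}_i$; for a given direction, the fraction of scales at which this fails need not be small, and a non-divergence (measure) estimate only tells you that the set of such directions has small \emph{measure}, not small \emph{Hausdorff dimension}. What is actually required, and what the paper proves as a separate main ingredient (Theorem~\ref{thrm: H.dim of non-recurrent directions,fixed compact set}, following the KKLM integral-inequality method with Margulis--Eskin--Masur height functions), is that the set of $s$ for which $\set{g_{n\Delta t}h_s\w}$ misses the good compact set at a $\d$-fraction of scales already has dimension $<1$. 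Only after splitting off that set can the branching argument be run on the remaining ``recurrent'' directions. Relatedly, your final exponent $\d'=1+\log\b/\Delta t$ overlooks that only a $\d$-fraction of scales are bad and that their positions vary with the direction: one must union over the $\binom{M}{\lceil\d M\rceil}\leq 2^M$ choices of bad scales, so the gain is $\b^{\d M}$ against a loss of $2^M$, and one needs $2<\b^{-\d}$; this bookkeeping is routine but changes the constants and is where the paper's independence lemma (intersections of the bad sets $F_i$ have measure at most $\b^{|A|}$) does real work.
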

  
  \begin{remark}
  We note that the upper bound in Theorem~\ref{thrm: Birkhoff deviations thrm} is uniform as the basepoint $\omega$ varies in the complement of finitely many proper affine invariant submanifolds in $\mc{M}$.
This, in particular, includes points $\omega$ whose $\mrm{SL}(2,\R)$ orbit is not dense in $\mc{M}$.
  \end{remark}
  
   In Theorem~\ref{thrm: discrete Birkhoff deviations thrm for horocycles},  we obtain a version of Theorem~\ref{thrm: Birkhoff deviations thrm} for discrete Birkhoff averages which is needed for later applications.  
  It is worth noting that the exceptional sets considered in Theorem~\ref{thrm: Birkhoff deviations thrm} are non-empty in most examples and can, in fact, have positive Hausdorff dimension.
  By using the results in~\cite{KleinbockWeiss}, one can find a compact set $K$ such that the Hausdorff dimension of trajectories which are contained completely in $K$ is at least $1- \delta'$ for some $0<\delta' < 1$.
  By taking $f$ to be supported in the complement of $K$ and to have $\nu_\mc{M}(f) \neq 0$, these bounded trajectories will belong to the exceptional set for all $\e$ sufficiently small. 
  A similar argument shows that directions in which geodesics diverge on average (Definition~\ref{defn: doa}) belong to the exceptional sets of compactly supported function with non-zero average.
    
    Using the uniform dimension estimate in Theorem~\ref{thrm: Birkhoff deviations thrm}, we obtain the following corollary.
    \begin{corollary} \label{cor: Birkhoff corollary}
    Suppose $\mc{M}\subseteq \mc{H}_1(\a)$ is an affine invariant submanifold and $\nu_{\mc{M}}$ is the affine measure whose support is $\mc{M}$.
      Then, for any bounded continuous function $f$ on $\mc{M}$ and any $\e>0$, there exist affine invariant submanifolds $\mc{N}_1,\dots, \mc{N}_k$, properly contained in $\mc{M}$, and $0<\d <1$, such that for all $\omega \in \mc{M}\backslash \left( \cup_{i=1}^k \mc{N}_i \right)$, 
      the Hausdorff dimension of the set
      \begin{equation*}
          \set{x\in \mathrm{SL}_2(\R)\cdot \omega: \limsup_{T\r\infty} \left|
          \frac{1}{T} \int_0^T f(g_tx)\;dt -
          \int_{\mathcal M} f \;d\nu_\mc{M} \right| \geqslant \e }
      \end{equation*}
      is at most $2+\d$.
    \end{corollary}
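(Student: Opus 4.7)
The plan is to reduce Corollary~\ref{cor: Birkhoff corollary} to Theorem~\ref{thrm: Birkhoff deviations thrm} by parameterizing the $\mrm{SL}_2(\R)$-orbit via the decomposition $\mrm{SL}_2(\R) = N^- A K$, where $N^- = \{\check h_u\}$, $A = \{g_\tau\}$, and $K = \{r_\theta\}$. Any $x$ in the dense open cell $N^- A K \cdot \omega$ may be written as $x = \check h_u g_\tau r_\theta \omega$; the complementary codimension-one piece will be covered by an analogous second parameterization, so it will not affect the final bound.

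The key computation is the commutation identity $g_t \check h_u = \check h_{u e^{-2t}} g_t$, which gives
\begin{equation*}
g_t x \;=\; \check h_{u e^{-2t}} g_{t+\tau} r_\theta \omega.
\end{equation*}
Since $\check h_{u e^{-2t}} \to I$ as $t \to \infty$, I expect
\begin{equation*}
\limsup_{T\to\infty} \left| \frac{1}{T}\int_0^T f(g_t x)\, dt - \int_{\mc{M}} f\, d\nu_{\mc{M}} \right| = \limsup_{T\to\infty} \left| \frac{1}{T}\int_0^T f(g_t r_\theta \omega)\, dt - \int_{\mc{M}} f\, d\nu_{\mc{M}} \right|,
\end{equation*}
since the shift by $\tau$ contributes only a boundary term of order $\|f\|_\infty |\tau|/T \to 0$ and the contracting $N^-$ perturbation dies in the Ces\`aro average. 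This would imply that $x$ can be a bad point for the corollary only when $\theta$ lies in the bad-direction set $\mrm{Bad}_\omega \subseteq [0, 2\pi]$ of Theorem~\ref{thrm: Birkhoff deviations thrm}, which has Hausdorff dimension at most $\delta$.

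The principal technical obstacle is justifying this comparison under the weaker hypothesis that $f$ is bounded continuous rather than uniformly continuous, since $\mc{M}$ is non-compact. I plan to split the time integral at some $T_0$: on $[0, T_0]$ the contribution is dominated by $2\|f\|_\infty T_0/T$, which vanishes as $T \to \infty$, while on $[T_0, T]$ the group element $\check h_{u e^{-2t}}$ is uniformly close to the identity. Controlling the tail will proceed either by approximating $f$ uniformly by compactly supported continuous functions (where uniform continuity holds) or by invoking the discrete horocycle version supplied by Theorem~\ref{thrm: discrete Birkhoff deviations thrm for horocycles}.

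Once the reduction is established, the set of bad points in the orbit is contained in the image of $\mrm{Bad}_\omega \times \R \times \R$ under the locally Lipschitz map $\Phi(\theta, \tau, u) = \check h_u g_\tau r_\theta \omega$. Covering the $(\tau, u)$-plane by countably many compact boxes on which $\Phi$ is Lipschitz in the period-coordinate metric, and applying the standard product bound $\dim_H(A \times B) \leq \dim_H(A) + \dim_B(B)$ (Hausdorff versus upper box dimension), gives a bound of $\delta + 2$ on each piece, and hence on the full bad set. The remaining codimension-one cell of $\mrm{SL}_2(\R)$ contributes an image of dimension at most $2$, which is already within the target bound.
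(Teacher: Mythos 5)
Your proposal is correct and follows essentially the same route as the paper: the paper also reduces to Lipschitz $f$, writes a neighborhood of the identity in coordinates $(z,r,s)\mapsto \check h_z g_r h_s$ (so that the bad set depends only on the expanding coordinate, by the same contraction-of-$\check h$ argument you use), and then applies the product formula for Hausdorff dimension to get $2+\d$. The only cosmetic difference is that you use the $N^-AK$ parameterization and reduce to the rotation version (Theorem~\ref{thrm: Birkhoff deviations thrm}), whereas the paper uses $N^-AN^+$ and reduces directly to the horocycle version (Theorem~\ref{thrm: Birkhoff deviations thrm for horocycles}); these are interchangeable via the identity $r_\th=\check h_{-\tan\th}g_{\log\cos\th}h_{\tan\th}$ already used in Section~\ref{section_general_approach}.
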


    In particular, by a standard approximation argument, we see that for any non-empty open subset $U$ of a connected component $\mc{C}$ of the stratum $\mc{H}_1(\a)$, the Hausdorff dimension of the set
    \begin{equation*}
          \set{x\in \mathrm{SL}_2(\R)\cdot \omega: g_t x \notin U \text{ for all } t>0 }
    \end{equation*}
    is strictly less than the dimension of $\mathrm{SL}_2(\R)$, which is $3$.
    This being true uniformly over all Teichm\"uller curves $\mathrm{SL}_2(\R)\cdot \omega$ in the complement of finitely many lower dimensional invariant submanifolds of $\mc{C}$.

    \subsection*{Oseledets' Theorem for the Kontsevich-Zorich Cocycle}
	The next object of our study is the Lyapunov exponents of the Kontsevich-Zorich cocycle. 
      Consider the Hodge bundle whose fiber over every point $(X,\omega)\in \mc{H}_1(\a)$ is the cohomology group $H^{1}(X,\mathbb{R})$. 
      Let $\mathrm{Mod}(X)$ be the mapping class group, i.e. the group of isotopy classes of orientation preserving homeomorphisms of $X$. Fix a fundamental domain in the Teichm\"uller space for the action of $\mathrm{Mod}(X)$. Consider the cocycle $\tilde{A}:\mathrm{SL}_2(\R)\times\mc{H}_1(\alpha)\to Mod(X)$, where for $x$ in the fundamental domain, $\tilde{A}(g,x)$ is the element of $\mathrm{Mod}(X)$ that is needed to return the point $gx$ to fundamental domain. Then, the \textbf{Kontsevich-Zorich cocycle} $A(g,x)$ is defined by 
        \[ A(g,x)=\rho(\tilde{A}(g,x))
        \]
      where $\rho:\mathrm{Mod}(X)\to Sp(2g,\mathbb{Z})$ is given by the induced action of $\mathrm{Mod}(X)$ on cohomology.
      We recall the notion of a strongly irreducible $\mathrm{SL}_2(\R)$ cocycle.
      \begin{definition-nono}[Strongly Irreducible Cocycle]
      Let $(X,\nu)$ be a probability space admitting an action of a locally compact group $G$ which leaves $\nu$ invariant.
      Let $\pi: V\r X$ be a vector bundle over $X$ on which $G$ acts fiberwise linearly.
      We say that $V$ admits a $\nu$-\textbf{measurable almost invariant splitting} if there exists $n>1$ and 
      for $\nu$-almost every $x$, the fiber $\pi^{-1}(x)$ splits
      into non-trivial subspaces $V_1(x), \dots, V_n(x)$ satisfying $V_i(x) \cap V_j(x) = \set{0}$ for all $i\neq j$
      and $gV_i(x) = V_i(gx)$ for all $i$, $\nu$-almost every $x\in X$ and for almost every $g\in G$ with respect to the (left) Haar measure on $G$. And, finally, the map $x \mapsto V_i(x)$ is required to be $\nu$-measurable for all $i$.
      
      The $G$ action on $V$ is said to be \textbf{strongly irreducible} with respect to $\nu$ if the $G$-action doesn't admit any $\nu$-measurable almost invariant splitting.
      \end{definition-nono}

      In this setting, we prove the following statement about deviations in the Lyapunov exponents of the Kontsevich-Zorich cocycle.
      \begin{theorem} \label{thrm: Oseledets deviations}
      Suppose $\mc{M}\subseteq \mc{H}_1(\a)$ is an affine invariant submanifold and $\nu_{\mc{M}}$ is the affine measure whose support is $\mc{M}$.
      Let $V$ be a continuous (on $\mc{M}$) $\mathrm{SL}_2(\R)$ invariant sub-bundle of (some exterior power of) the Hodge bundle. Assume that $A_V$ is strongly irreducible with respect to $\nu_{\mc{M}}$, where $A_V$ is the restriction of the Kontsevich-Zorich cocycle to $V$.
      Then, for any $\e>0$, there exist affine invariant submanifolds $\mc{N}_1,\dots, \mc{N}_k$, properly contained in $\mc{M}$, and $0<\d< 1$, such that for all $\omega \in \mc{M}\backslash \left( \cup_{i=1}^k \mc{N}_i \right)$, the Hausdorff dimension of the set
      \begin{equation*}
          \set{\th\in[0,2\pi]: \limsup_{t\r\infty} \frac{\log \norm{A_V(g_t,r_\th\omega)} }{t}
                  \geqslant \l_V + \e }
      \end{equation*}
      is at most $\d$, where $\l_V$ denotes the top Lyapunov exponent for $A_V$ with respect to $\nu_\mc{M}$.
    \end{theorem}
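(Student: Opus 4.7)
The plan is to reduce Theorem~\ref{thrm: Oseledets deviations} to Theorem~\ref{thrm: Birkhoff deviations thrm} (more precisely, its discrete-time avatar Theorem~\ref{thrm: discrete Birkhoff deviations thrm for horocycles}) applied on the projective bundle $\pi\colon \mathbb{P}(V)\to\mc{M}$, along the lines used by Chaika and Eskin in~\cite{ChaikaEskin} to derive their Lyapunov statement from their Birkhoff theorem. First, I would reduce the operator norm to the norm applied to a single vector: fixing a Hodge-orthonormal basis $e_1,\dots,e_d$ of $V_\omega$ (continuity of $V$ provides such a basis after possibly shrinking a neighborhood of $\omega$), one has $\norm{A_V(g_t,r_\th\omega)}\leqslant \sqrt{d}\cdot\max_i\norm{A_V(g_t,r_\th\omega)e_i}$, so any $\th$ in the exceptional set contributes a $\th$ in the analogous exceptional set for at least one $e_i$. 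Since finite unions preserve Hausdorff dimension, it suffices to bound, for a single fixed $v\in V_\omega\setminus\{0\}$, the dimension of
\[
\set{\th\in[0,2\pi]:\limsup_{t\r\infty}\tfrac{1}{t}\log\norm{A_V(g_t,r_\th\omega)v}\geqslant \l_V+\e}.
\]

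Next, I would convert this into a Birkhoff average on $\mathbb{P}(V)$. Writing $\tilde g_t,\tilde r_\th$ for the lifted $\mrm{SL}_2(\R)$-action and defining $\phi(x,[u])=\log(\norm{A_V(g_1,x)u}/\norm{u})$, the cocycle identity gives, for integers $n\geqslant 1$,
\[
\log\frac{\norm{A_V(g_n,r_\th\omega)v}}{\norm{v}} \;=\; \sum_{k=0}^{n-1}\phi\bigl(\tilde g_1^{\,k}\tilde r_\th(\omega,[v])\bigr),
\]
and continuity of $A_V(g_s,\cdot)$ on compact sets permits passage from continuous-time to integer-time limsups up to errors absorbable into $\e$. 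By the strong-irreducibility hypothesis and a Furstenberg-type argument, there is a unique $P$-invariant probability measure $\tilde\nu_{\mc{M}}$ on $\mathbb{P}(V)$ projecting to $\nu_{\mc{M}}$, and the Furstenberg–Kifer identity gives $\l_V=\int_{\mathbb{P}(V)}\phi\,d\tilde\nu_{\mc{M}}$, so the deviation condition becomes a deviation of a Birkhoff average from its integral.

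Now I would apply the discrete Birkhoff-deviation estimate to $\phi$ on the fibered system $\mathbb{P}(V)\to\mc{M}$. Since $\phi$ is log-integrable but not bounded in the Hodge norm, one truncates to $\phi_R=\min(\phi,R)$, applies the deviation bound to the bounded continuous function $\phi_R$, and controls the tail $\phi-\phi_R$ by quantitative non-divergence: the set of $\th$ whose $g_t r_\th\omega$-orbit spends an abnormal fraction of time in the region $\{\phi>R\}$ has small Hausdorff dimension, by estimates of the type underlying Theorem~\ref{thrm: divergent on average} and~\cite{KKLM-SingSystems}. This produces finitely many proper affine invariant subsets $\tilde{\mc{N}}_1,\dots,\tilde{\mc{N}}_k\subsetneq \mathbb{P}(V)$ and $\d<1$ such that, for every $(\omega,[v])\notin \cup_j\tilde{\mc{N}}_j$, the dimension of bad $\th$ is at most $\d$. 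Setting $\mc{N}_i=\pi(\tilde{\mc{N}}_i)$ and assembling over the finitely many choices $v=e_i$ yields the theorem.

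The hard part will be the third step: establishing the Birkhoff-deviation estimate in the bundle setting. The projective bundle $\mathbb{P}(V)$ is not itself a stratum of abelian differentials, so one must check that the quantitative machinery behind Theorem~\ref{thrm: Birkhoff deviations thrm}—in particular quantitative non-divergence of horocycle orbits, Margulis-type height functions, and the role played by intermediate affine invariant submanifolds—transports to the affine-invariant structure that $\mathbb{P}(V)$ inherits from $\mc{M}$ via the $\mrm{SL}_2(\R)$-equivariance of $V$ and its continuity. A secondary obstacle is quantifying the truncation error: one must match the decay of $\tilde\nu_{\mc{M}}(\{\phi>R\})$ and the orbital time spent in that set against the dimension defect $1-\d$ produced by the deviation theorem, which in turn dictates how large $R$ (hence how strong the non-divergence input) must be.
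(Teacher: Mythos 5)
Your reduction steps (operator norm to vector norms via a finite basis, cocycle identity turning $\log\norm{A_V(g_n,r_\th\omega)v}$ into a Birkhoff sum of $\phi$ over the skew product on $\mathbb{P}(V)$, discretization of time) are fine, and in fact the truncation you worry about is unnecessary: by the Lipschitz property of the cocycle, $\norm{A_V(g,x)}\leq\norm{g}^K$ and likewise for the inverse, so $\phi$ is already bounded. The genuine gap is exactly the step you defer to as ``the hard part'': there is no Birkhoff-deviation theorem on $\mathbb{P}(V)$, and producing one is not a matter of ``transporting the machinery.'' The proof of Theorem~\ref{thrm: Birkhoff deviations thrm for horocycles} rests on (i) the Eskin--Mirzakhani classification of ergodic $P$-invariant measures on $\mc{M}$ as the countably many affine measures, (ii) the finiteness statement (Lemma~\ref{lemma: finite collection}) that only finitely many affine submanifolds $\mc{N}$ have $|\nu_{\mc{N}}(f)-\nu_{\mc{M}}(f)|\geq\e$, and (iii) height functions and avoidance estimates near each such $\mc{N}$. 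On $\mathbb{P}(V)$ the analogue of (i) fails already in spirit: ergodic $g_1$- or $P$-invariant measures projecting to $\nu_{\mc{M}}$ include measures carried by measurable Oseledets sub-bundles, and an ergodic invariant measure projecting to $\nu_{\mc{N}}$ for a proper $\mc{N}\subset\mc{M}$ has $\int\phi$ equal to a Lyapunov exponent of $A_V$ \emph{over} $\mc{N}$, which may exceed $\l_V$. So the analogue of (ii) that your argument needs is a finiteness/continuity statement for Lyapunov exponents over affine sub-loci, which is not the integral of a fixed continuous function on $\mc{M}$ and does not follow from Lemma~\ref{lemma: finite collection}; it is of comparable depth to the theorem you are trying to prove. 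Strong irreducibility and Furstenberg--Kifer give you uniqueness of the \emph{stationary} measure for a random walk, not a classification of invariant measures for the deterministic flow on $\mathbb{P}(V)$.

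This is also why your framing of the Chaika--Eskin precedent is off: they do \emph{not} derive their Lyapunov statement by applying their Birkhoff theorem on the projective bundle. The paper follows their actual route. Strong irreducibility is used only for the random walk (via Goldsheid--Margulis/Furstenberg) to produce the open sets $E_{good}(\e,L)$ with $\nu_{\mc{M}}(E_{good}(\e,L))\r 1$ (Lemma~\ref{lemma: CE lemma measure of E good}), on which the cocycle norm along \emph{most} random walk paths of length $L$ grows at rate $\l_V^\mu\pm\e$ for \emph{every} vector. Sublinear tracking plus Egorov and Lusin (Lemma~\ref{lemma: E good with angles}, Corollary~\ref{cor: E good with horocycles}) convert this into the measure bound on the bad sets $A_i$ inside partition elements that return to $E_{good}$, replacing your appeal to a bundle-level deviation theorem; the independence and covering scheme is then identical to the Birkhoff case. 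The discrete Birkhoff deviation theorem on $\mc{M}$ itself is invoked only once, for a continuous minorant of $\chi_{E_{good}(\e',L)}$, to bound the dimension of directions that too often miss $E_{good}$. If you want to salvage your outline, you must either supply the measure classification and exponent-finiteness inputs on $\mathbb{P}(V)$, or replace step three by the random-walk/tracking argument.
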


    This complements a result in~\cite{ChaikaEskin} where they show that under the same hypotheses, for \emph{every} $\omega$ and for Lebesgue almost every 
    $\th \in [0,2\pi]$, the following limit exists
    \begin{equation*}
    	\lim_{t\r\infty} \frac{\log \norm{A_V(g_t,r_\th\omega)} }{t} =   \l_V
    \end{equation*}
    
    It is shown in~\cite[Theorem A.6]{EskinMirzakhani} that the Kontsevich-Zorich cocycle is in fact semisimple, which means that, after passing to a finite cover, the Hodge bundle splits into $\nu_\mc{M}$-measurable $\mathrm{SL}_2(\R)$ invariant, strongly irreducible subbundles.
    Moreover, it is shown in~\cite{Filip2016} that such subbundles can be taken to be continuous (and in fact real analytic) in period coordinates.
    Additionally, it is well-known that the top Lyapunov exponent of the $k^{th}$ exterior power of the cocycle is a sum of the top $k$ exponents of the cocycle itself.
    In this manner, we can deduce the deviation statement for \emph{all} Lyapunov exponents by examining the top Lyapunov exponents of exterior powers of the cocycle.
    The following Corollary is the precise statement.
    For more details on this deduction, see the proof of Theorem $1.4$ in~\cite{ChaikaEskin}.
    
    \begin{corollary} \label{cor: Oseledets deviations for the hodge bundle} 
      Suppose $(M,\omega) \in \mc{H}_1(\a)$ and $\nu_{\mc{M}}$ is the affine measure whose support is $\mc{M} = \overline{\mathrm{SL}_2(\R) \omega}$.
      Let $A$ be the Kontsevich-Zorich cocycle over $\mc{M}$. Denote by $\l_i$ the Lyapunov exponents of $A$ (with multiplicities) with respect to $\nu_{\mathcal M}$. For any $\theta\in[0,2\pi]$, suppose $\psi_1(t,\theta)\leq\dots\leq\psi_{2g}(t, \theta)$ are the eigenvalues of the matrix $A^*(g_t, r_{\theta}\omega)A(g_t, r_{\theta}\omega)$.
      Then, the Hausdorff dimension of the set
      \begin{equation*}
          \set{\th\in[0,2\pi]: \limsup_{t\r\infty} \frac{\log \norm{\psi_i(t,\theta)} }{t}
                  \geqslant 2\l_i + \e }
      \end{equation*}
      is strictly less than $1$.
    \end{corollary}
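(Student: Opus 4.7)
The plan is to apply Theorem~\ref{thrm: Oseledets deviations} to strongly irreducible sub-bundles of exterior powers of the Hodge bundle. Let $\sigma_1\geqslant\cdots\geqslant\sigma_{2g}$ denote the singular values of $A(g_t,r_\theta\omega)$ in decreasing order, so that $\psi_{2g+1-j}=\sigma_j^2$, and recall the identity $\log\sigma_i = \log\|\Lambda^iA\|-\log\|\Lambda^{i-1}A\|$, where $\|\Lambda^kA\|$ is the operator norm of the $k$-th exterior power of the cocycle. Thus the corollary will follow once, for each $k=1,\dots,2g-1$, one establishes two-sided asymptotic bounds
\begin{equation*}
\Lambda_k - \varepsilon' \;\leqslant\; \liminf_{t\to\infty}\frac{\log\|\Lambda^k A(g_t,r_\theta\omega)\|}{t} \;\leqslant\; \limsup_{t\to\infty}\frac{\log\|\Lambda^k A(g_t,r_\theta\omega)\|}{t} \;\leqslant\; \Lambda_k + \varepsilon'
\end{equation*}
outside a set of $\theta$ of Hausdorff dimension strictly less than $1$, where $\Lambda_k:=\lambda_{2g}+\lambda_{2g-1}+\cdots+\lambda_{2g-k+1}$ is the top Lyapunov exponent of $\Lambda^kA$ with respect to $\nu_{\mc{M}}$ in the increasing ordering of the corollary.

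For the upper bound, I would first invoke semisimplicity of the Kontsevich-Zorich cocycle~\cite[Theorem A.6]{EskinMirzakhani} together with Filip's continuity result~\cite{Filip2016} to decompose, for each $k$, the exterior power $\Lambda^k$ of the Hodge bundle (after passing to a finite cover of $\mc{M}$) as a direct sum of continuous $\mathrm{SL}_2(\R)$-invariant sub-bundles that are strongly irreducible with respect to $\nu_{\mc{M}}$. Applying Theorem~\ref{thrm: Oseledets deviations} to each summand, and using that off a further finite collection of proper affine invariant submanifolds of $\mc{M}$ the norm $\|\Lambda^kA\|$ is comparable to the maximum of the norms of the restricted cocycles, one obtains $\limsup \log\|\Lambda^kA\|/t\leqslant\Lambda_k+\varepsilon/(4g)$ for $\theta$ outside a set of Hausdorff dimension at most some $\delta_k^+<1$.

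The hard part will be the matching lower bound $\liminf \log\|\Lambda^kA\|/t\geqslant\Lambda_k-\varepsilon/(4g)$ outside a Hausdorff-dimension-$<1$ set. I expect this to follow by running the argument of Theorem~\ref{thrm: Oseledets deviations} for the lower tail of the large-deviation estimate, along the lines of the exterior-power deduction in the proof of Theorem~1.4 of~\cite{ChaikaEskin}: on any strongly irreducible summand $W\subseteq\Lambda^kV$ whose top Lyapunov exponent equals $\Lambda_k$, Chaika-Eskin's theorem gives $\lim\log\|A_W\|/t=\Lambda_k$ for Lebesgue-a.e.\ $\theta$, and the Hausdorff-dimension machinery behind Theorem~\ref{thrm: Oseledets deviations} should upgrade this a.s.\ lower bound to one valid outside a set of Hausdorff dimension $<1$. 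Granting this, combining the upper and lower bounds through $\log\sigma_i=\log\|\Lambda^iA\|-\log\|\Lambda^{i-1}A\|$ and translating back via $\psi_{2g+1-j}=\sigma_j^2$ and the ordering convention $\lambda_j=\lambda_{2g+1-j}^{\downarrow}$ yields the claim, with final exceptional set of Hausdorff dimension at most $\max_k\max(\delta_k^+,\delta_k^-)<1$.
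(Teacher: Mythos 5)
Your proposal is correct and follows essentially the same route as the paper, which likewise reduces the corollary to Theorem~\ref{thrm: Oseledets deviations} via semisimplicity of the cocycle \cite{EskinMirzakhani}, continuity of the strongly irreducible invariant subbundles \cite{Filip2016}, and the identity expressing singular values through norms of exterior powers, deferring the remaining details to the proof of Theorem 1.4 in \cite{ChaikaEskin}. Your instinct that the $\liminf$ bound is the delicate point is right: it is not a formal consequence of Theorem~\ref{thrm: Oseledets deviations} because submultiplicativity of norms only controls the upper tail, and the required lower-tail control comes from the vector-wise lower bound already built into the sets $E_{good}(\e,L)$ (via the sets $H(v)$) combined with the vector-tracking argument of Chaika--Eskin's Theorem 1.4, which is precisely what the paper's pointer to that proof is meant to supply.
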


    \subsection*{Divergent Trajectories}
    The study of exceptional trajectories in Birkhoff's and Oseledets' theorems lends itself naturally to studying trajectories which frequently miss large sets with good properties.
    This problem is closely connected to studying divergent geodesics, i.e. geodesics which leave every compact subset of $\mc{H}_1(\a)$.
    Masur showed in \cite{Masur_nonergodic} that, for every translation surface $\omega$, the set of directions $\th$ for which $g_t r_\th \omega$ is divergent has Hausdorff dimension at most $1/2$.
    Cheung \cite{Cheung_nonergodic} showed that this upper bound is optimal by constructing explicit examples for which this upper bound is realized.
    
    In this paper, we study \textbf{divergent on average geodesics}, i.e., geodesics that spend asymptotically zero percent of the time in any compact set.
    \begin{definition}\label{defn: doa}
    A direction $\theta\in[0, 2\pi]$ corresponds to a divergent on average geodesic $g_t r_\th \omega$ if for every compact set $K \subset \mc{H}_1(\a)$,
    \begin{equation*}
     \lim_{T\r\infty} \frac{1}{T} \int_0^T \chi_K(g_tr_\th \omega) \;dt = 0.
    \end{equation*}
    \end{definition}
    Note that the set of divergent on average geodesics contains the set of divergent geodesics. Therefore, Theorem \ref{thrm: divergent on average} below strengthens \cite{Masur_nonergodic}.
    \begin{theorem}\label{thrm: divergent on average}
    For any translation surface the Hausdorff dimension of the directions that correspond to divergent on average geodesics is at most $1/2$.
    \end{theorem}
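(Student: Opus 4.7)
The plan is to mirror the covering argument of \cite{KKLM-SingSystems} in the Teichm\"uller setting, with the homogeneous height function replaced by a Margulis-type proper function on the stratum $\mc{H}_1(\a)$. The main analytic input is a contraction property of the Teichm\"uller flow in the rotation direction for such a height function, which is by now a standard tool in the field.

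First I would fix a proper function $\a : \mc{H}_1(\a) \to [1,\infty)$, taken essentially to be the reciprocal of the length of the shortest saddle connection (after normalization). By the integral inequalities of Eskin--Masur, refined by Athreya and later by Eskin--Mirzakhani, there exist constants $C,B>0$ and $t_0>0$ such that for all $t\geq t_0$ and all $\omega\in \mc{H}_1(\a)$,
\[
\frac{1}{2\pi}\int_0^{2\pi} \a(g_t r_\th \omega)\, d\th \;\leq\; C e^{-t}\a(\omega) + B.
\]
The exponent $1$ in the contraction factor $e^{-t}$ is precisely what will produce the $1/2$ in the dimension bound (after being weighed against the $e^{2t}$ expansion of $g_t$ on $\theta$-arcs).

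Next I would discretize. Fix a large $\t>0$ and consider iterates $g_{n\t}$. Partition $[0,2\pi]$ into arcs of length comparable to $e^{-2n\t}$, the natural scale on which $\th\mapsto g_{n\t} r_\th \omega$ has bounded distortion. For $R$ large, set $K=\set{\a\leq R}$, and call an arc $I$ of level $n$ \emph{bad} if, for at least a definite fixed proportion of levels $j\leq n$, one has $\a(g_{j\t} r_{\th_I}\omega)>R$, where $\th_I$ is the center of $I$. Every $\th$ corresponding to a divergent-on-average geodesic lies in the nested intersection $\bigcap_n \bigcup_{I \text{ bad at level } n} I$, up to a set of directions that can be handled separately by choosing $R$ arbitrarily large.

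To count bad arcs I would iterate the contraction inequality along the tree: applying it to each level-$n$ arc and using Chebyshev's inequality, only $M$ out of the $\asymp e^{2\t}$ children can have $\a$-value at the center exceeding $R$ after one more application of $g_\t$, where $M \lesssim e^{\t}$ once $R$ and $\t$ are chosen appropriately. A KKLM-style combinatorial argument then yields that the total number of bad arcs at level $n$ is at most $M^n$. Since each such arc has length $\asymp e^{-2n\t}$, the $s$-dimensional Hausdorff content of the limit set is bounded by $\sum_n M^n e^{-2sn\t} \lesssim \sum_n e^{n\t(1-2s)}$, which is summable for every $s>1/2$, giving the desired dimension bound. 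Letting $R\to\infty$ handles the directions that visit $\set{\a>R}$ for nearly all times.

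The main obstacle will be in converting the single-step averaged contraction inequality into an iterated bound that is valid arc-by-arc rather than merely on average over $[0,2\pi]$. This requires carefully stratifying the children of each arc according to the size of $\a$ at their centers after one more $g_\t$-step, and invoking the contraction inequality inductively so that a uniform Chebyshev-style branching bound $M$ applies at every level. The technical work here mirrors the template in \cite{KKLM-SingSystems}; the Teichm\"uller-specific input is encapsulated in the Eskin--Masur/Athreya averaging estimate used above, together with the standard observation that integration against $d\th$ over an interval can be reparameterized, via the relation $g_t r_\th = h_{\tan\th} g_{t-\log\cos\th}$-type decomposition, as a horocyclic average at scale $e^{-2t}$.
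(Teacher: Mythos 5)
Your proposal follows essentially the same route as the paper: the paper first reduces circle arcs to horocycle arcs (where $g_t$ renormalizes the parameter exactly, which is the clean form of your ``reparameterization'' remark), invokes Athreya's Eskin--Masur-type averaging estimate $\frac{1}{2\pi}\int_0^{2\pi}\a_\eta(g_tr_\th x)\,d\th\le ce^{-(1-\eta)t}\a_\eta(x)+b$, and runs the KKLM covering scheme verbatim to get dimension at most $1-\d(1-\eta)/2$, finally letting $\d\to1$ and $\eta\to0$. Two small imprecisions worth flagging: the contraction exponent exactly $1$ in your $Ce^{-t}$ is not actually available (only $1-\eta$ for each $\eta>0$, which suffices after the limit), and your count $M^n$ of bad arcs corresponds to requiring \emph{every} level to be bad, whereas your definition of a bad arc only asks for a fixed proportion $\d$ of bad levels; the correct KKLM count carries an extra factor of roughly $\binom{n}{\d n}e^{2\t(1-\d)n}$ and yields $1-\d/2+o_\t(1)$, so the bound $1/2$ is recovered only in the limit $\d\to1$, exactly as in the paper.
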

     See also Theorem~\ref{thrm: H.dim of non-recurrent directions,fixed compact set} where we consider the set of directions with a prescribed divergence behavior in open strata with finitely many invariant submanifolds removed, which may be of independent interest.
     
     Combining Theorem~\ref{thrm: divergent on average} with the results in~\cite{BoshernitzanNogueira}, we derive the following bound on the dimension of non-weakly mixing interval exchange transformations (IETs) whose permutation is of type $W$.
     We refer the reader to Section~\ref{section: IETs} for detailed definitions.

     \begin{corollary} \label{cor: IETs}
     Suppose $\pi$ is a type $W$ permutation. Then, the Hausdorff codimension of the set of non-weakly mixing IETs (with respect to the Lebesgue measure) with permutation $\pi$ is at least $1/2$.
     \end{corollary}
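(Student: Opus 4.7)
The plan is to combine Theorem~\ref{thrm: divergent on average} with the criterion established by Boshernitzan and Nogueira in~\cite{BoshernitzanNogueira}, which links non-weak mixing of an IET with type $W$ permutation to divergent-on-average behavior of the associated Teichm\"uller trajectory. The strategy is then to use the standard Veech zippered rectangles correspondence to push the $1/2$ upper bound on the dimension of divergent-on-average directions (Theorem~\ref{thrm: divergent on average}) through to a $1/2$ lower bound on the Hausdorff codimension of the set of non-weakly mixing IETs with permutation $\pi$.

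First, I would recall the construction associating to each normalized IET $T_\lambda$ with permutation $\pi$ a translation surface $M(\pi,\lambda)$ in a stratum $\mathcal{H}_1(\alpha)$ determined by $\pi$, realizing $T_\lambda$ as the first-return map of the vertical flow on $M(\pi,\lambda)$ to a horizontal transversal. This identifies the IET parameter space with a ``horizontal'' transversal to the $SO(2)$-action on an open subset of the stratum. The Boshernitzan--Nogueira criterion then asserts that, for $\pi$ of type $W$, the IET $T_\lambda$ fails to be weakly mixing (with respect to Lebesgue measure) if and only if the forward Teichm\"uller orbit $\{g_t M(\pi,\lambda) : t \geqslant 0\}$ is divergent on average in $\mathcal{H}_1(\alpha)$ in the sense of Definition~\ref{defn: doa}.

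Next, I would consider the local foliation of the IET parameter space by \emph{rotational slices}. For each fixed $\omega \in \mathcal{H}_1(\alpha)$, the map $\theta \mapsto r_\theta \omega$ produces a one-parameter family of translation surfaces whose vertical first-return maps on a fixed horizontal transversal give a one-parameter family of IETs with permutation $\pi$. Theorem~\ref{thrm: divergent on average} applied to $\omega$ says that the set of $\theta$ for which $g_t r_\theta \omega$ is divergent on average has Hausdorff dimension at most $1/2$, so within each rotational slice the set of non-weakly mixing IETs has Hausdorff dimension at most $1/2$.

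Finally, a Marstrand-type slicing inequality for Hausdorff dimension converts this uniform fiberwise estimate into the global codimension bound: since the IET parameter space is locally foliated (via bi-Lipschitz charts) by one-dimensional rotational slices, and inside each such slice the exceptional set has Hausdorff dimension at most $1/2$, one concludes that the Hausdorff codimension of the non-weakly mixing set in the full parameter space is at least $1/2$. The principal technical obstacle is making this slicing step rigorous: one must verify that the map from angles $\theta$ to IET parameters $\lambda$ is sufficiently regular (e.g., locally bi-Lipschitz) for Hausdorff dimension to transfer cleanly through the slicing, and one must confirm that the precise output of the Boshernitzan--Nogueira criterion coincides with ``divergence on average'' in the strong sense demanded by Theorem~\ref{thrm: divergent on average}, rather than a weaker escape-of-mass property that would not be controlled by the theorem.
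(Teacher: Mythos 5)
Your overall strategy (Boshernitzan--Nogueira plus Theorem~\ref{thrm: divergent on average} plus a slicing argument) is the right one in spirit, but there are two genuine gaps. First, the Boshernitzan--Nogueira criterion is not an ``if and only if'' characterization of non-weak mixing by divergence on average. Theorem 5.3 of~\cite{BoshernitzanNogueira} says only that an \emph{ergodic, IDOC} IET with type $W$ permutation and $\limsup_n n\epsilon_n(T_{\l,\pi})>0$ is weakly mixing. Its contrapositive therefore forces you to decompose the non-weakly mixing set into three pieces: non-IDOC IETs, non-uniquely ergodic IETs, and IETs that are IDOC and uniquely ergodic but have ``short intervals'' ($\lim_n n\epsilon_n = 0$). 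The first piece must be handled by Keane's rational-independence criterion (giving codimension $\geq 1$), the second by the Athreya--Chaika bound on non-uniquely ergodic IETs; neither is produced by your argument. Only the third piece connects to divergence, and even there the implication is one-directional: short intervals $\Rightarrow$ the geodesic over the suspension diverges (this is the content of Proposition~\ref{propn: short intervals and recurrence}, which requires an explicit construction of short saddle connections); it is not an equivalence with divergence on average.

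Second, your slicing step uses the wrong one-parameter families. The correspondence that makes straight line segments in the IET parameter cone $\R_+^d$ match orbit arcs in the stratum is the Minsky--Weiss result (Proposition~\ref{propn: Minsky Weiss}), and it matches lines with \emph{horocycle} arcs $s\mapsto h_s\mbf{q}(\l,\mbf{b})$, not with rotation arcs $\th\mapsto r_\th\omega$. A rotation arc does not push forward to a straight line in $\R_+^d$, and there is no bi-Lipschitz foliation of the IET parameter space by rotational slices available to you, so the Marstrand-type transfer you invoke does not apply as stated. The correct route is the reverse direction of Marstrand slicing: assume for contradiction that the short-intervals set has codimension $<1/2$; then by Proposition~\ref{propn: dim of slices} a positive measure set of \emph{lines} meets it in dimension $>1/2$; by Lemma~\ref{lemma: Q +ve a.e.} and Frostman's lemma one can choose such a line through a point with $Q(\l,\mbf{b})>0$ where Minsky--Weiss applies; the corresponding horocycle arc then carries a set of dimension $>1/2$ of divergent directions, contradicting Theorem~\ref{thrm: divergent on average for horocycles} (the horocycle form of Theorem~\ref{thrm: divergent on average}). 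Your instinct to worry about the regularity of the parameter map and about the precise divergence notion is well placed, but the resolution requires replacing rotational slices by horocycle/line slices and adding the two extra exceptional sets.
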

     
     For $d\in \N$, we say a permutation $\pi$ on $\set{1,\dots,d}$ is a rotation if $\pi(i+1) = \pi(i)+1 \textrm{ mod } d $ for $1\leq i\leq d$.
     Avila and Forni~\cite{AvilaForni} showed that for any irreducible permutation, which is not a rotation, Lebesgue almost every IET is weakly mixing.
     In~\cite{AvilaLeguil}, this result was extended to show that for all such permutations, non-weakly mixing IETs have positive Hausdorff codimension.
     Thus, Corollary~\ref{cor: IETs} is an improvement of~\cite{AvilaLeguil} in the case of type $W$ permutations.
     Moreover, it is shown in~\cite{ChaikaMasur-dIET} that if $\pi$ is the permutation $(d,d-1,\dots,1)$ for $d\geq 5$, then the Hausdorff codimension of the set of non-weakly mixing IETs with permutation $\pi$ is at most $1/2$ (the case $d=4$ was done in~\cite{AthreyaChaika}).
     When $d$ is odd, the permutation $(d,d-1,\dots,1)$ is type $W$. Thus, we identify the exact Hausdorff dimension in this case.

    \subsection*{Outline of Proofs and Paper Organization}
    Our general approach is to deduce the desired results (Theorems~\ref{thrm: Birkhoff deviations thrm}, ~\ref{thrm: Oseledets deviations} and ~\ref{thrm: divergent on average}) from the analogous results for horocycle arcs (Theorems~\ref{thrm: Birkhoff deviations thrm for horocycles}, ~\ref{thrm: Oseledets deviations thrm for horocycles} and ~\ref{thrm: divergent on average for horocycles}, respectively). The reason is that horocycles are more convenient to work with as the geodesic flow normalizes the horocycle flow in $\mathrm{SL}_2(\R)$.
    This is carried out along with the proof of Corollary~\ref{cor: Birkhoff corollary} in Section~\ref{section_general_approach}.
    
	The strategy for proving Theorem~\ref{thrm: Birkhoff deviations thrm for horocycles} on deviations of Birkhoff averages consists of three main steps.
    First, we show that the convergence in~\eqref{Birkhoff} holds uniformly as the basepoint $\omega$ varies over compact sets in the complement of finitely many proper affine submanifolds. Theorem~\ref{thrm: quantitative Chaika-Eskin} is the precise statement.
    This result strengthens a result in~\cite{ChaikaEskin} and may be of independent interest.
    
    Next, we show that the Hausdorff dimension of directions whose geodesics frequently miss large compact sets, chosen with the help of a height function, is bounded away from $1$.
    This statement is made precise in Theorem~\ref{thrm: H.dim of non-recurrent directions,fixed compact set} whose proof is the main content of Section~\ref{section: doa}.    
   Using similar techniques, Theorem~\ref{thrm: divergent on average for horocycles} is proved in Section~\ref{HD_divergent_on_average}.
    
    Theorem~\ref{thrm: Birkhoff deviations thrm for horocycles} is proved in Section~\ref{section: birkhoff}.
    The idea is to treat a long orbital average as a sum of orbital averages over shorter orbit segments.
    With the help of Theorem~\ref{thrm: quantitative Chaika-Eskin},
    we show that most orbit segments which start from a suitably chosen large compact set with good properties, will have an orbital average close to the correct limit.
    Using Theorem~\ref{thrm: H.dim of non-recurrent directions,fixed compact set}, we control the dimension of those orbit segments which miss our good compact set.
    
    A key step is to show that the sum of such averages over orbit segments behaves like a sum of weakly dependent random variables, which is achieved by Lemma~\ref{propn: independence lemma}.
    This allows us to show that the measure of badly behaved long orbit averages decays exponentially.
    
	The proof of Theorem~\ref{thrm: Oseledets deviations thrm for horocycles} treating deviations in Oseledets' theorem spans Section~\ref{section: random walks} and Section ~\ref{section: oseledets}.
    It follows the same strategy as the one outlined above.
    It is shown in~\cite{ChaikaEskin} that Oseledets' theorem holds uniformly in the basepoint over large \emph{open} sets for random walk trajectories.
    Using Egorov's and Lusin's theorems, we translate these results into results about the Teichm\"{u}ller geodesic flow.
    This relies on the classical fact that a random walk trajectory is tracked by a geodesic, up to sublinear error.
    
    Finally, we show that trajectories which frequently miss such a large set with good properties exhibit deviation in the discrete Birkhoff averages of its indicator function.
    The dimension of those trajectories is in turn controlled by Theorem~\ref{thrm: discrete Birkhoff deviations thrm for horocycles}.
    
    In Section~\ref{section: IETs} we prove Corollary~\ref{cor: IETs}. In Proposition~\ref{propn: short intervals and recurrence} we relate the criterion for weak mixing of IETs with a type W permutation in \cite{BoshernitzanNogueira} and recurrence of Teichm\"uller geodesics in a stratum. The combination of this relation and our Theorem~\ref{thrm: divergent on average} finishes the proof.
    
    In Section~\ref{section: remarks}, we describe how to modify the proof of Theorem~\ref{thrm: Birkhoff deviations thrm} to show that the Hausdorff dimension of abelian differentials $\omega$ for which ergodic integrals along their Teichm\"uller flow orbit exhibit a definite amount of deviation from the correct limit in Birkhoff's theorem is strictly less than the dimension of $\overline{\mrm{SL}(2,\R)\omega}$.

    \begin{acknowledgement}
		The authors would like to thank Jon Chaika for suggesting the problems addressed in this article and for generously sharing his ideas on the project.
        This work grew out of the AMS Mathematics Research Communities workshop ``Dynamical Systems: Smooth, Symbolic, and Measurable" in June 2017, and we are grateful to the organizers. This material is based upon work supported by the National Science Foundation under Grant Number DMS 1641020. All authors are thankful for their support.
        This material is also based upon work supported by the National Science Foundation Graduate Research Fellowship Program under Grant Number DGE-1144082. P.A. gratefully acknowledges their support. C.U. gratefully acknowledges support from the NSF grants DMS-1405146 and DMS-1510034. 
	\end{acknowledgement}

\section{Preliminaries}
\label{section_general_approach}

   \subsection{Reduction to horocycles}
   We explain how to deduce Theorems~\ref{thrm: Birkhoff deviations thrm}, ~\ref{thrm: Oseledets deviations} and ~\ref{thrm: divergent on average} from the analogous results for horocycle arcs Theorems~\ref{thrm: Birkhoff deviations thrm for horocycles}, ~\ref{thrm: Oseledets deviations thrm for horocycles} and ~\ref{thrm: divergent on average for horocycles}, respectively.
   
   For any $\th \in [-\pi/4,\pi/4]$, the following equality holds.
  \[ r_\th = \check{h}_{-\tan\th} g_{\log\cos\th} h_{\tan\th} \]
  Recall that $g_t$ contracts $\check{h}_{-\tan\th}$, i.e., $g_t\check{h}_{-\tan\th}g_{-t}=\check{h}_{-e^{-2t}\tan\th}$, and $g_t g_{\log\cos\th} = g_{t+\log\cos\th}$. Therefore, we have that in each theorem formulated in the introduction $\th$ belongs to the exceptional set 
  if and only if $\tan\th$ belongs to the exceptional set in the corresponding theorem formulated below.
  
  Finally, the bounds for the Hausdorff dimensions of the corresponding sets are preserved as the map $\th \mapsto \tan \th$ is bi-Lipschitz on $[-\pi/4,\pi/4]$. 
   
\begin{theorem}[Analogue of  Theorem~\ref{thrm: Birkhoff deviations thrm}] \label{thrm: Birkhoff deviations thrm for horocycles}

      Suppose $\mc{M}\subseteq \mc{H}_1(\a)$ is an affine invariant submanifold and $\nu_{\mc{M}}$ is the affine measure whose support is $\mc{M}$.
      Then, for any bounded continuous function $f$ on $\mc{M}$ and any $\e>0$, there exist affine invariant submanifolds $\mc{N}_1,\dots, \mc{N}_k$, properly contained in $\mc{M}$, and $\d\in (0,1)$, such that for all $\omega \in \mc{M}\backslash \left( \cup_{i=1}^k \mc{N}_i \right)$,      
      the Hausdorff dimension of the set
      \begin{equation*}
          \set{s\in[-1,1]: \limsup_{T\r\infty} \frac{1}{T} \int_0^T f(g_th_s\omega)\;dt
                  \geq \int\limits_{\mathcal M} f \;d\nu_\mc{M} + \e }
      \end{equation*}
      is at most $\d$.
  \end{theorem}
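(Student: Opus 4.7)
The plan is to deduce the theorem from the scheme outlined in the introduction, resting on two main inputs: the uniform version of Chaika--Eskin's ergodic theorem (Theorem~\ref{thrm: quantitative Chaika-Eskin}) and the Hausdorff dimension bound for directions along which the geodesic frequently escapes a fixed large compact set (Theorem~\ref{thrm: H.dim of non-recurrent directions,fixed compact set}). These will be combined via a block decomposition of the long orbit average into shorter segments, which can then be treated as weakly dependent random variables thanks to Lemma~\ref{propn: independence lemma}.

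First, I would fix a large parameter $L>0$ and decompose a long orbit average into blocks,
\begin{equation*}
\frac{1}{NL}\int_0^{NL} f(g_t h_s \omega)\,dt = \frac{1}{N}\sum_{k=0}^{N-1} F_L\bigl(g_{kL} h_s \omega\bigr), \qquad F_L(x) := \frac{1}{L}\int_0^L f(g_t x)\,dt.
\end{equation*}
Using Theorem~\ref{thrm: quantitative Chaika-Eskin}, I would select a large compact set $K\subset \mc{M}$ in the complement of the finitely many exceptional submanifolds $\mc{N}_i$, and take $L$ large so that, uniformly for basepoints $x\in K$, the Lebesgue measure of horocycle parameters $s$ with $F_L(h_s x)\geqslant \int f\,d\nu_{\mc{M}}+\e/2$ is as small as I wish. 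Exploiting the identity $g_{kL} h_s = h_{s e^{2kL}} g_{kL}$, so that varying $s\in[-1,1]$ traces out a horocycle arc of length $\approx e^{2kL}$ through $g_{kL}\omega$, this yields a Lebesgue bound on the set $B_k\subseteq[-1,1]$ of parameters for which the $k$-th block produces $F_L(g_{kL}h_s\omega)\geqslant \int f\,d\nu_{\mc{M}}+\e/2$, conditional on $g_{kL} h_s \omega$ lying in $K$.

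Next, I would split the parameter space into two cases. The directions $s$ for which $g_{kL} h_s \omega\notin K$ for a positive density of indices $k$ form, by Theorem~\ref{thrm: H.dim of non-recurrent directions,fixed compact set}, a set of Hausdorff dimension at most some $\d_0<1$. For the remaining directions, Lemma~\ref{propn: independence lemma} supplies enough near-independence of the events $\{s\in B_k\}$ along the block decomposition to run a Bernstein-type large deviations estimate, yielding exponential decay in $N$ of the Lebesgue measure of
\begin{equation*}
\bigl\{ s \in [-1,1] : \#\{0\leqslant k<N : s\in B_k\} \geqslant \rho N\bigr\}.
\end{equation*}
Choosing $\rho$ small relative to $\e/(1+\|f\|_\infty)$ then guarantees that outside these two exceptional sets the full average lies below $\int f\,d\nu_{\mc{M}}+\e$.

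The main obstacle will be converting the exponential decay of Lebesgue measure into a uniform Hausdorff dimension bound strictly less than $1$. I would handle this by a scale-by-scale covering argument: at stage $N$, cover the bad parameter set by intervals of length $\sim e^{-2NL}$, the resolution at which $s\mapsto g_{NL} h_s \omega$ is essentially bi-Lipschitz onto a unit horocycle arc, so that roughly $(\text{bad Lebesgue measure})\cdot e^{2NL}$ intervals suffice. A measure bound of the form $e^{-cN}$ then yields a Hausdorff dimension at most $1-c/(2L)$. The delicate point is to choose $L$, the decay rate $c$, and the compact set $K$ so that the resulting $\d<1$ can be taken uniformly as $\omega$ ranges over $\mc{M}\setminus\bigcup_i \mc{N}_i$; this uniformity hinges on the fact that the constants in Theorem~\ref{thrm: quantitative Chaika-Eskin} depend only on the compact set $K$, not on the individual basepoint $\omega$.
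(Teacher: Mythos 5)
Your proposal is correct and follows essentially the same route as the paper's proof in Section~\ref{section: birkhoff}: block decomposition into the averages $f_i$, measure bounds per block via Theorem~\ref{thrm: quantitative Chaika-Eskin} restricted to partition intervals meeting the recurrent set, Theorem~\ref{thrm: H.dim of non-recurrent directions,fixed compact set} for the non-recurrent directions, the independence lemma plus a Chernoff-type count over subsets of bad blocks, and the conversion of measure decay into covering numbers at scale $e^{-2NL}$ using the Lipschitz regularity of $s\mapsto f_i(s)$ (the paper's Lemma~\ref{lemma: 0-1 law for partitions and F_i}). The only implicit point is that the small parameter $a$ from Theorem~\ref{thrm: quantitative Chaika-Eskin} must be chosen so small that $a^{\rho}<1/2$ to beat the binomial entropy $2^N$, which is exactly condition~\eqref{Birkhoff: condition on a} in the paper.
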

  We remark that minor modifications of the proof of Theorem~\ref{thrm: Birkhoff deviations thrm for horocycles} also yield an upper bound on the Hausdorff dimension of the set of directions for which the $\liminf$ is less than the correct limit by a definite amount.
  Moreover, the exceptional set in Theorem~\ref{thrm: Birkhoff deviations thrm} can be written as 
  \begin{multline*}
  \set{\theta\in[0,2\pi]: \limsup_{T\r\infty} \left|\frac{1}{T} \int_0^T f(g_tr_{\theta}\omega)\;dt
                  - \nu_\mc{M}(f)\right|\geq \e } \\=
	\set{\theta\in[0,2\pi]: \limsup_{T\r\infty} \frac{1}{T} \int_0^T f(g_tr_{\theta}\omega)\;dt
                  \geq \nu_\mc{M}(f) + \e }\\
                  \cup
	\set{\theta\in[0,2\pi]: \liminf_{T\r\infty} \frac{1}{T} \int_0^T f(g_tr_{\theta}\omega)\;dt
                  \leq \nu_\mc{M}(f) - \e }
  \end{multline*}
  where $\nu_\mc{M}(f) = \int_{\mc{M}} f\;d_\mc{M}$.
  Thus, Theorem~\ref{thrm: Birkhoff deviations thrm} follows from the reduction to horocycles, Theorem~\ref{thrm: Birkhoff deviations thrm for horocycles}, and its variant for the $\liminf$.

 \begin{theorem}[Analogue of Theorem~\ref{thrm: Oseledets deviations}] \label{thrm: Oseledets deviations thrm for horocycles} 
 
 Suppose $(M,\omega) \in \mc{H}_1(\a)$ and $\nu_{\mc{M}}$ is the affine measure whose support is $\mc{M} = \overline{SL_2(\R) \omega}$. Let $V$ be a continuous (on $\mc{M}$) $SL_2(\R)$ invariant sub-bundle of (some exterior power of) the Hodge bundle. Assume that $A_V$ is strongly irreducible with respect to $\nu_{\mc{M}}$, where $A_V$ is the restriction of the Kontsevich-Zorich cocycle to $V$.
      Then, for any $\e>0$, there exist affine invariant submanifolds $\mc{N}_1,\dots, \mc{N}_k$, properly contained in $\mc{M}$, and $\d\in (0,1)$, such that for all $\omega \in \mc{M}\backslash \left( \cup_{i=1}^k \mc{N}_i \right)$, the Hausdorff dimension of the set
      \begin{equation*}
            \set{s\in[-1,1]: \limsup_{t\r\infty} \frac{\log \norm{A_V(g_t,h_s\omega)}}{t}
                    \geq \l_V + \e }
        \end{equation*}
      is at most $\d$, where $\l_V$ denotes the top Lyapunov exponent for $A_V$ with respect to $\nu_\mc{M}$.
 
\end{theorem}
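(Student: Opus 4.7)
The plan is to reduce the theorem to the discrete Birkhoff deviation result (Theorem~\ref{thrm: discrete Birkhoff deviations thrm for horocycles}) applied to the indicator of a large compact set $K\subseteq \mc{M}$ on which the top Lyapunov exponent of $A_V$ is nearly realized. First, I would invoke the random-walk Oseledets theorem of \cite{ChaikaEskin}, which asserts $\nu_\mc{M}$-almost everywhere convergence of $\frac{1}{n}\log\|A_V\|$ along a suitable $\mathrm{SL}_2(\R)$-valued random walk on $\mc{M}$ to $\l_V$. Egorov's and Lusin's theorems applied to this a.e.\ convergence produce, for every $\eta>0$, a compact $K\subseteq \mc{M}$ with $\nu_\mc{M}(K)>1-\eta$ and an integer $N$ such that for $n\geq N$ and $x\in K$,
\begin{equation*}
\frac{1}{n}\log\|A_V(\mathrm{walk}_n, x)\| \leq \l_V + \e/8.
\end{equation*}
The classical sublinear-tracking principle (a random walk trajectory stays within $o(n)$ of a genuine geodesic segment of comparable length) then converts this into a uniform geodesic-flow bound $\frac{1}{t}\log\|A_V(g_t,x)\|\leq \l_V+\e/4$ for all $x\in K$ and all $t$ exceeding a uniform $T_0$.

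Next, fix $\tau\geq T_0$. By submultiplicativity,
\begin{equation*}
\log \|A_V(g_{n\tau}, h_s\omega)\| \leq \sum_{j=0}^{n-1} \log \|A_V(g_\tau, g_{j\tau} h_s\omega)\|.
\end{equation*}
Each summand with $g_{j\tau}h_s\omega\in K$ is at most $\tau(\l_V+\e/4)$, while on $\mc{M}\setminus K$ it is bounded by a constant $C_\tau$ coming from the uniform sub-exponential growth of the Hodge norm over $\tau$-time. Choose $\eta'>0$ so that $(1-\eta')(\l_V+\e/4)+\eta' C_\tau/\tau < \l_V+\e$. Any $s\in[-1,1]$ in the deviation set must then satisfy
\begin{equation*}
\limsup_{n\to\infty} \frac{1}{n}\sum_{j=0}^{n-1} (1-\chi_K)(g_{j\tau}h_s\omega) \geq \eta'.
\end{equation*}
Approximating $\chi_K$ from above by a continuous $\vp\colon \mc{M}\to[0,1]$ with $\nu_\mc{M}(\vp)<1-\eta'/2$, this set is contained in the discrete horocycle Birkhoff exceptional set for $1-\vp$ with deviation $\eta'/4$. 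Theorem~\ref{thrm: discrete Birkhoff deviations thrm for horocycles} then yields the desired $\mc{N}_1,\dots,\mc{N}_k$ and $\d\in(0,1)$.

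I expect the main obstacle to be that $\log\|A_V(g_\tau,\cdot)\|$ is \emph{unbounded} on $\mc{M}$: a single deep excursion into the thin part can make the one-step log-norm enormous, so the crude constant $C_\tau$ on $\mc{M}\setminus K$ is only meaningful after cutting off excessive thin-part excursions. To remedy this, one would combine the scheme above with the divergent-on-average estimate (Theorem~\ref{thrm: divergent on average for horocycles}) and the recurrence dimension bound (Theorem~\ref{thrm: H.dim of non-recurrent directions,fixed compact set}), absorbing those directions whose trajectories enter the thin part too frequently or too deeply into a $\d$-dimensional exceptional set on which the conclusion is automatic; on the complement, the cocycle admits a bounded log-norm truncation for which the argument above applies verbatim. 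A secondary technical point is to arrange that the continuous $\vp$ interacts well with the uniform random-walk convergence in the first step, which is ultimately guaranteed by the continuity (in fact real-analyticity) of $V$ and $A_V$ in period coordinates established in \cite{Filip2016}.
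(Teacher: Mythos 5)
There is a genuine gap at the heart of your reduction. The step ``each summand with $g_{j\tau}h_s\omega\in K$ is at most $\tau(\l_V+\e/4)$'' is not something Egorov, Lusin and sublinear tracking can deliver. What the random-walk Oseledets theorem plus sublinear tracking actually gives (this is Lemma~\ref{lemma: E good with angles} and Corollary~\ref{cor: E good with horocycles} in the paper) is that for a point $y$ in the good set $E_{good}(\e,L)$, the one-step bound $\frac{1}{\l L}\log\|A_V(g_{\l L},h_{s'}y)\|<\l_V+C\e$ holds for a set of continuations $s'$ of \emph{large measure} --- not for all continuations, and in particular not necessarily for the specific continuation determined by the geodesic $g_t h_s\omega$ you are following. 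A direction $s$ can return to the good set at every step $j\tau$ and still pick up excess cocycle growth at a positive proportion of those steps. Consequently your containment of the deviation set in ``directions that miss $K$ with frequency $\geq\eta'$'' is false, and the appeal to Theorem~\ref{thrm: discrete Birkhoff deviations thrm for horocycles} alone cannot close the argument. This is exactly why the paper, after handling the non-recurrent steps via the discrete Birkhoff theorem applied to an approximation of $\chi_{E_{good}}$ (which is the part of your proposal that does match the paper), must still run the entire Birkhoff-style machinery on the ``recurrent but bad'' steps: the measure bound of Lemma~\ref{lemma: bound on measure of recurrent part of A_i} (the bad continuations inside a recurrent partition element have relative measure $\leq 120\e$), the independence Lemma~\ref{propn: independence lemma oseledets}, and the covering Lemma~\ref{lemma: bound on covers for A_i}, which together show that directions accumulating bad one-step growth at a $\d$-fraction of recurrent times have dimension strictly below $1$.

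A secondary point: the obstacle you flag as the main one --- unboundedness of $\log\|A_V(g_\tau,\cdot)\|$ over $\mc{M}$ --- is not actually present. By Forni's estimate~\eqref{eqn: Lipschitz property of the cocycle}, $\|A_V(g,x)\|\leq\|g\|^K$ uniformly in $x$, so the one-step log-norm is bounded by $K\tau$ everywhere, including deep in the thin part; no truncation or extra input from Theorem~\ref{thrm: divergent on average for horocycles} is needed for that purpose. The real difficulty is the one described above.
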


\begin{theorem}[Analogue of Theorem~\ref{thrm: divergent on average}]\label{thrm: divergent on average for horocycles}

 Suppose $(M,\omega) \in \mc{H}_1(\a)$. Then, the Hausdorff dimension of the set
      \begin{equation*}
            \set{s\in[-1,1]: \text{the geodesic } g_t h_s \omega \text{ is divergent on average} }
        \end{equation*}
      is less than or equal to $\frac{1}{2}.$
\end{theorem}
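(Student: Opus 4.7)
The plan is to adapt Masur's multi-scale covering argument for the dimension bound on divergent directions, enhanced in the spirit of Kadyrov--Kleinbock--Lindenstrauss--Margulis to accommodate the weaker \emph{divergent on average} hypothesis. The starting point is the Eskin--Masur--Athreya height function $u \colon \mc{H}_1(\a) \to [1,\infty)$, which is proper and satisfies a horocycle contraction of the form
$$\frac{1}{2}\int_{-1}^{1} u(g_t h_s \omega)\, ds \;\leq\; C_1 e^{-t} u(\omega) + C_2$$
for all $t \geq t_0$, with $C_1, C_2, t_0$ depending only on the stratum. The exponent $e^{-t}$ (rather than a strictly weaker contraction) is precisely what produces the bound $1/2$.

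Fixing large $\tau > 0$ and $R \gg C_2$, I would recursively build a tree of nested intervals $\mc{F}_0 \supset \mc{F}_1 \supset \cdots$ in $[-1,1]$: starting with $\mc{F}_0 = \{[-1,1]\}$, each $I \in \mc{F}_n$ of length $\asymp e^{-2n\tau}$ is partitioned into $\asymp e^{2\tau}$ subintervals, and a subinterval $J$ is declared \emph{bad at level $n+1$} if $u(\omega_J) > R$, where $\omega_J := g_{(n+1)\tau} h_{s_J}\omega$ at the center $s_J$ of $J$. Using the commutation $g_\tau h_s = h_{e^{2\tau}s}g_\tau$, applying the contraction inequality to $\omega_I := g_{n\tau}h_{s_I}\omega$ at time $\tau$, combined with Markov's inequality and a Riemann-sum approximation over the $e^{2\tau}$ subdivisions, yields the core per-level estimate: for each $I$ with $u(\omega_I) \geq R$, the number of bad subintervals of $I$ is at most $\kappa e^\tau$ for an absolute constant $\kappa$. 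This is the step where the horocycle expansion $e^{2\tau}$ is cut down to $e^\tau$ by the contraction of $u$.

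Next, I would show that every $s$ whose geodesic is divergent on average is captured by the tree: for every $R, \epsilon > 0$ and $n$ sufficiently large, the unique chain $I_0 \supset \cdots \supset I_n$ containing $s$ is bad at all but at most $\epsilon n$ of its levels. The number of such chains is bounded by
$$\binom{n}{\lfloor\epsilon n\rfloor}\cdot (e^{2\tau})^{\lfloor\epsilon n\rfloor}\cdot (\kappa e^{\tau})^{n-\lfloor\epsilon n\rfloor} \;\leq\; \bigl(2\kappa^{1-\epsilon} e^{(1+\epsilon)\tau}\bigr)^n,$$
where the binomial accounts for the positions of good levels, the factor $e^{2\tau}$ allows unrestricted subdivision at good levels, and $\kappa e^\tau$ comes from the per-level estimate at bad levels. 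Covering the divergent-on-average set by these terminal intervals of length $\asymp e^{-2n\tau}$ yields a Hausdorff dimension bound
$$\frac{\log\bigl(2\kappa^{1-\epsilon} e^{(1+\epsilon)\tau}\bigr)}{2\tau} \longrightarrow \frac{1+\epsilon}{2} \quad \text{as } \tau \to \infty;$$
letting $\epsilon \to 0$ then yields the sharp bound $1/2$.

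The main technical obstacle is the clean translation of the continuous-time divergent-on-average condition into the discretized chain condition, uniformly across scales $n$, together with control of the Riemann-sum error in the per-level counting. This requires Lipschitz-type regularity of $u$ composed with the horocycle at the relevant scales, absorption of the additive $C_2$ by taking $R$ sufficiently large, and passage from continuous-time averages of $\chi_{\{u \leq R\}}$ to their discrete-time samples $u(g_{k\tau}h_s\omega)$. Each of these is standard but requires careful book-keeping, closely paralleling the arguments developed for Theorem~\ref{thrm: H.dim of non-recurrent directions,fixed compact set}.
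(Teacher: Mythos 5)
Your overall strategy is the right one and matches the paper's in spirit: the paper also runs the Kadyrov--Kleinbock--Lindenstrauss--Margulis machinery with the Eskin--Masur--Athreya height function, after first converting Athreya's circle-average contraction $\frac{1}{2\pi}\int_0^{2\pi}\a_\eta(g_tr_\theta x)\,d\theta\le ce^{-(1-\eta)t}\a_\eta(x)+b$ into a horocycle estimate (against a Gaussian on all of $\R$, to make the KKLM induction close up), and then quotes the proof of KKLM Theorem~1.1 verbatim to get $\dim_H(Z)\le 1-\delta(1-\eta)/2$ before sending $\delta\to1$, $\eta\to0$. (A minor point: the contraction rate available is $e^{-(1-\eta)t}$ for every $\eta>0$, not $e^{-t}$ exactly; this costs nothing in the limit but your statement of the inequality is slightly too strong.)

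However, your ``core per-level estimate'' has a genuine gap as stated. You claim that for each interval $I$ with $u(\omega_I)\ge R$, at most $\kappa e^{\tau}$ of its $e^{2\tau}$ children are bad. Markov's inequality applied to $\frac12\int_{-1}^1 u(g_\tau h_s\omega_I)\,ds\le C_1e^{-\tau}u(\omega_I)+C_2$ bounds the number of children $J$ with $u(\omega_J)>R$ by roughly $\bigl(C_1e^{\tau}u(\omega_I)+C_2e^{2\tau}\bigr)/R$. This is $\le\kappa e^{\tau}$ only when $u(\omega_I)=O(R)$; when $u(\omega_I)\gg R$ (which is exactly the situation along a long run of consecutive bad levels, where the height can grow like $e^{\sigma\tau}$ per step) the bound degenerates to the trivial $e^{2\tau}$, and your chain count $\binom{n}{\lfloor\epsilon n\rfloor}(e^{2\tau})^{\lfloor\epsilon n\rfloor}(\kappa e^{\tau})^{n-\lfloor\epsilon n\rfloor}$ collapses. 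This is precisely the difficulty that the KKLM argument is built to circumvent: instead of counting bad children of each bad parent uniformly, one propagates the height function multiplicatively along chains, showing that $\int_{Z}u(g_{N\tau}h_sx)\,ds\le(2a)^Nu(x)+(2a)^{N-1}b$ over the set $Z$ of chains that stay high at all intermediate times (Proposition~\ref{propn: integral estimate over Z_x} here, Proposition~5.1 in KKLM), and only then converts this integral bound into a covering number via the bounded distortion of $u$ along unit horocycle arcs. With $a\asymp e^{-(1-\eta)\tau}$ this recovers your intended count $(2a)^Ne^{2\tau N}\asymp(\kappa e^{(1+\eta)\tau})^N$ for the fully bad chains, and the interpolation over the $\epsilon n$ good levels is then handled by the combinatorial induction of Proposition~\ref{propn: main covering statement}. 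So the proof can be completed, but the per-level counting step must be replaced by the integrated (telescoping) height estimate rather than a uniform bound on bad children.
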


\subsection{Properties of Hausdorff dimension}
    The exceptional sets we study in this paper are of the form $A=\limsup\limits_{n\r\infty} A_n$, that is 
    \[A=\bigcap\limits_{n\geq 1}\bigcup\limits_{l\geq n}A_l\]
    for a sequence of subsets $A_n$ of the real line.
    
    In this section, we reduce the problem of finding an upper bound on the Hausdorff dimension of such sets to the problem of finding efficient covers of the $A_n$ (see Lemma~\ref{lemma: H.dim of limsup set via covering lemma}).
    
    First, we recall the definition of the Hausdorff dimension.    
    Let $A$ be a subset of a metric space $X$. For any  $\rho ,\beta >0$, we define
        \[ H_\rho^\b (A) = \inf\set{ \sum_{I\in \mc{U}} diam(I)^\beta: \mc{U} \text{ is a cover of A by balls of diameter } <\rho  }. \]
        
        Then, the $\beta$-dimensional Hausdorff measure of $A$ is defined to be
        \[ H^\b (A) = \lim_{\rho \r 0} H_\rho^\b(A). \]
 
 \begin{definition}
        The Hausdorff dimension of a subset $A$ of a metric space $X$ is equal to
        \[ dim_{H}(A) = \inf\set{\b\geq 0: H^\b(A) = 0}= \sup \set{\b\geq 0: H^\b(A) = \infty} \]
 \end{definition}
 
        The following lemma provides an upper bound on the Hausdorff dimension of a set for which we have efficient covers.         
      \begin{lemma} \label{lemma: H.dim of limsup set via covering lemma}
        Let $\set{A_n}_{n\geq 1}$ be a collection of subsets of $\R$.
        Suppose there exist constants $C,C',t>0$ and $\l\in (0,1)$ such that for each $n$, $A_n$ can be covered with $C e^{2(1-\l)tn}$ intervals of radius $C' e^{-2tn}$.
        Then, the Hausdorff dimension of the set $A = \limsup\limits_{n\r\infty} A_n$ is at most $1-\l$.
      \end{lemma}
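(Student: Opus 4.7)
The plan is to show directly that $H^\beta(A) = 0$ for every $\beta > 1-\lambda$, which by the definition of Hausdorff dimension immediately yields $\dim_H(A) \leq 1-\lambda$. The key observation is that $A = \bigcap_{N\geq 1}\bigcup_{n\geq N} A_n$, so for every $N$, the union $\bigcup_{n\geq N} A_n$ is itself a cover of $A$, and combining the hypothesized covers of each $A_n$ gives a cover of $A$ whose $\beta$-mass we can control.

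Concretely, I fix $\beta>1-\lambda$, and for each $n$ let $\mathcal{U}_n$ be the cover of $A_n$ provided by the hypothesis, consisting of at most $Ce^{2(1-\lambda)tn}$ intervals of diameter at most $2C'e^{-2tn}$. Then
\begin{equation*}
\sum_{I\in \mathcal{U}_n} \mathrm{diam}(I)^\beta \;\leq\; C(2C')^\beta \, e^{2tn(1-\lambda-\beta)}.
\end{equation*}
Since $1-\lambda-\beta<0$, this is the $n$-th term of a convergent geometric series. For any $\rho>0$, choose $N=N(\rho)$ such that $2C'e^{-2tN}<\rho$; then $\bigcup_{n\geq N}\mathcal{U}_n$ covers $A$ by intervals of diameter less than $\rho$, and
\begin{equation*}
H^\beta_\rho(A) \;\leq\; \sum_{n\geq N}\sum_{I\in \mathcal{U}_n} \mathrm{diam}(I)^\beta \;\leq\; C(2C')^\beta \sum_{n\geq N} e^{2tn(1-\lambda-\beta)}.
\end{equation*}
As $\rho\to 0$ we have $N(\rho)\to\infty$, so the tail of the convergent series on the right tends to $0$. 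Hence $H^\beta(A)=\lim_{\rho\to 0}H^\beta_\rho(A)=0$, proving $\dim_H(A)\leq \beta$. Letting $\beta\searrow 1-\lambda$ gives the claim.

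There is no real obstacle here: the whole content is the standard Borel–Cantelli-type estimate for Hausdorff measures, and the only thing to check carefully is that the choice of $N(\rho)$ makes the resulting cover admissible for $H^\beta_\rho$ while still capturing all of $A$, which follows directly from $A\subseteq \bigcup_{n\geq N}A_n$ for every $N$.
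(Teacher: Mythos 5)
Your proof is correct and follows essentially the same route as the paper's: fix $\beta>1-\lambda$, take the union of the hypothesized covers over all $n\geq N(\rho)$ as an admissible cover of $A$, bound the $\beta$-sum by the tail of a convergent geometric series, and let $\rho\to 0$ to conclude $H^\beta(A)=0$. No gaps.
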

      
      \begin{proof}
      	Let $\b \in (1-\l, 1)$ and $H^\beta$ denote the $\beta$-dimensional Hausdorff (outer) measure on $\R$.
         We show that $H^\b(A) = 0$, and that implies the Lemma.
         For any $\rho \in (0,1)$, let $n_0=n_0(\rho)$ be a natural number such that $e^{-2tn} < C\rho $ for all $n\geq n_0$. Notice that $n_0$ tends to infinity as $\rho$ goes to $0$.
         Denote by $\mc{U}_n$ a cover of the set $A_n$ by $C e^{2(1-\l)tn}$ intervals of radius $C'e^{-2tn}$. Then, $\mc{U}=\bigcup_{n\geq n_0} \mc{U}_n$ is a cover of $A$ for which the following holds.
         \begin{multline*}
         	\sum_{I \in \mc{U}} diam(I)^\beta = \sum_{n\geq n_0} \sum_{I\in \mc{U}_n} diam(I)^\b 
            	= (C')^\b \sum_{n\geq n_0} \# \mc{U}_n  e^{-2\b tn}
                \leq (C')^\b C \sum_{n\geq n_0}  e^{2(1-\l-\b)tn},
         \end{multline*}
         where $\#\mc{U}_n$ is the number of intervals in the cover $\mc{U}_n$.
        
        Thus, since $1-\l-\beta<0$, we obtain
        \[ H^\b_\rho(A) \leq (C')^\b C \sum_{n\geq n_0}  e^{2(1-\l-\b)tn} = (C')^\b C \frac{e^{2(1-\l-\beta)tn_0}}{1-e^{2(1-\l-\beta)t}}
        \xrightarrow{\rho\r 0} 0. \]
        
        This implies that $H^\b(A) = 0$ for all $\b\in (1-\l,1)$.
      \end{proof}
      
      Let us also recall some basic facts about Hausdorff dimension which will be useful for us. The first concerns the dimension of product sets.
      \begin{proposition}[Corollary 8.11 in~\cite{Mattila}]
      \label{propn: dimension of products}
      If $A,B \subset \R^d$ are Borel sets, then $dim_H(A\times B) \geq dim_H(A) + dim_H(B)$.      
      If, in addition, the upper packing dimension of $B$ is equal to its Hausdorff dimension, then
      \[ dim_H(A\times B) = dim_H(A) + dim_H(B) \]
      \end{proposition}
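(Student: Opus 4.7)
The plan is to prove the lower bound $\dim_H(A \times B) \geq \dim_H(A) + \dim_H(B)$ by the standard potential-theoretic (Frostman-energy) argument, and to match it with the upper bound under the packing-dimension hypothesis by coupling efficient Hausdorff covers of $A$ with box covers of $B$ at comparable scales.

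For the lower bound, I would fix $s < \dim_H(A)$ and $t < \dim_H(B)$ and apply Frostman's lemma to the Borel sets $A$ and $B$ to produce compactly supported Borel probability measures $\mu$ on $A$ and $\nu$ on $B$ with finite $s$- and $t$-energies:
$$
I_s(\mu) = \iint |x-x'|^{-s}\, d\mu(x)\, d\mu(x') < \infty,
$$
and similarly for $I_t(\nu)$. Endowing $\R^{2d}$ with the max metric $|(x,y)-(x',y')| = \max(|x-x'|,|y-y'|)$, which is bi-Lipschitz to the Euclidean one and hence preserves Hausdorff dimension, the elementary inequality $\max(a,b)^{s+t} \geq a^s b^t$ for $a,b \geq 0$ yields
$$
I_{s+t}(\mu \times \nu) \leq I_s(\mu) \cdot I_t(\nu) < \infty.
$$
The potential-theoretic characterization of Hausdorff dimension then gives $\dim_H(A \times B) \geq s+t$, and letting $s \uparrow \dim_H(A)$ and $t \uparrow \dim_H(B)$ completes the inequality.

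For the matching upper bound, I would invoke the identification of upper packing dimension with the modified upper box dimension, $\overline{\dim}_P(B) = \inf\set{\sup_i \overline{\dim}_B(B_i) : B = \bigcup_i B_i}$, valid on Borel sets. Fixing $\eta>0$, decompose $B = \bigcup_i B_i$ with $t_i := \overline{\dim}_B(B_i) \leq \overline{\dim}_P(B) + \eta$; by countable stability of Hausdorff dimension it suffices to bound $\dim_H(A \times B_i)$ for each $i$. Fix $s > \dim_H(A)$ and $\delta > 0$ and choose a cover $\set{U_j}$ of $A$ by sets of diameters $r_j \leq \delta$ with $\sum_j r_j^s < \infty$. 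Group the $U_j$ into dyadic scales $\mc{J}_n = \set{j : 2^{-n-1} < r_j \leq 2^{-n}}$; for each $j \in \mc{J}_n$ cover $B_i$ by at most $C \cdot 2^{n t_i}$ sets of diameter $2^{-n}$ using the definition of upper box dimension, then take products. Summing shows $H^{s+t_i}_\delta(A \times B_i) \lesssim \sum_j r_j^s$, so $\dim_H(A \times B_i) \leq s + t_i$. Letting $s \downarrow \dim_H(A)$ and $\eta \downarrow 0$, and using the hypothesis $\overline{\dim}_P(B) = \dim_H(B)$, concludes the proof.

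The main technical obstacle will be the multi-scale bookkeeping in the upper bound: an optimal Hausdorff cover of $A$ mixes many different radii, and one must carefully pair each ball with a same-scale box cover of $B_i$ so that the resulting product cover still sums to something finite in the $(s+t_i)$-dimensional Hausdorff content, uniformly as $\delta \to 0$. Handling this cleanly, together with invoking the equivalence of upper packing dimension and modified upper box dimension for Borel sets, is where the Borel hypothesis and the packing-dimension assumption do the real work.
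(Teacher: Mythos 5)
Your proposal is correct, but it is worth noting that the paper does not actually prove this proposition: it is quoted verbatim from Mattila's book (Corollary 8.11 there), and the authors only remark that the lower bound is classical and that the upper bound ``can be obtained directly when $B$ is an open ball,'' which is the only case they use (there $B$ is a product of open intervals, so one simply multiplies an efficient cover of $A$ by a grid on $B$ at the same scale). You have instead supplied a full proof of the general statement, and both halves follow the standard Tricot/Mattila route: the energy method with the max metric and the inequality $\max(a,b)^{s+t}\geq a^s b^t$ for the lower bound, and the identification of packing dimension with modified upper box dimension plus dyadic-scale grouping of a Hausdorff cover for the upper bound. Two small imprecisions to tidy: Frostman's lemma at exponent $s$ gives the mass bound $\mu(B(x,r))\leq r^s$, which yields finite $s'$-energy only for $s'<s$, so you should apply it at an exponent strictly between $s$ and $\dim_H(A)$ (harmless since $s$ is then sent up to $\dim_H(A)$); and the upper box dimension of $B_i$ only gives $N(B_i,2^{-n})\leq 2^{n(t_i+\epsilon)}$ for large $n$ rather than a clean $C\cdot 2^{nt_i}$, so you should carry an $\epsilon$ through and let it tend to $0$ at the end. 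As a side remark, the lower bound can be obtained even more directly without energies: the product measure satisfies $\mu\times\nu\left(B((x,y),r)\right)\leq \mu(B(x,r))\,\nu(B(y,r))\leq r^{s+t}$ in the max metric, and the mass distribution principle finishes the argument.
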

   	  We remark that the lower bound on the dimension of the product is a classical fact while the upper bound can be obtained directly when $B$ is an open ball, which is the case we will be interested in.

\subsection{Proof of Corollary~\ref{cor: Birkhoff corollary}}
	Using a simple approximation argument, we may assume that $f$ is Lipschitz.
	By a similar argument to the one following Theorem~\ref{thrm: Birkhoff deviations thrm for horocycles}, it suffices to prove that the following set
    \begin{equation*}
         \overline{B}(f,\e) := \set{x\in \mc{M}: \limsup_{T\r\infty} 
          \frac{1}{T} \int_0^T f(g_tx)\;dt \geq
          \int_{\mathcal M} f \;d\nu_\mc{M} +  \e }
      \end{equation*}
      has positive Hausdorff codimension in $\mc{M}$.
      Let $\d>0$ and $\mc{N}_1,\dots, \mc{N}_k$ be the affine invariant submanifolds properly contained in $\mc{M}$ which are provided by Theorem~\ref{thrm: Birkhoff deviations thrm for horocycles}, depending on $f$ and $\e$ and suppose $\omega \in \mc{M}\backslash \left(\cup_{i=1}^k \mc{N}_i \right)$.
      
      Since the action of $\mathrm{SL}(2,\R)$ is locally free\footnote{For every compact set $K\subset \mc{M}$, there exists a bounded neighborhood of identity $B\subset \mathrm{SL}(2,\R)$ such that the map $(g,x) \mapsto gx$ is injective from $B\times K$ into $\mc{M}$.},
      we can find a small neighborhood of identity $\mc{O}_\omega \subset \mathrm{SL}(2,\R)$ such that the map $g \mapsto g\omega$ is injective on $\mc{O}_\omega$.
      By making $\mc{O}_\omega$ smaller if necessary, we may assume that $\mc{O}_\omega$ is the diffeomorphic image of an open bounded neighborhood of $0$ in the Lie algebra of $\mathrm{SL}(2,\R)$ under the exponential map.
      In particular, there are bounded neighborhoods $\mc{O}_\omega^s, \mc{O}_\omega^c$ and $\mc{O}_\omega^u$ of $0$ in $\R$ such that the map
      \begin{equation} \label{eqn: coords}
      (z, r,s) \mapsto \check{h}_z g_r h_s 
      \end{equation}  
      is a diffeomorphism from $\mc{O}_\omega^s \times \mc{O}_\omega^c \times \mc{O}_\omega^u$ onto $\mc{O}_\omega$.
      Define the following set
      \begin{equation*}
         \overline{B}(f,\e)^u_\omega := \set{s\in \mc{O}_\omega^u: \limsup_{T\r\infty} 
          \frac{1}{T} \int_0^T f(g_th_s\omega)\;dt \geq
          \int_{\mathcal M} f \;d\nu_\mc{M} +  \e }
      \end{equation*}
      By Theorem~\ref{thrm: Birkhoff deviations thrm for horocycles}, the Hausdorff dimension of $\overline{B}(f,\e)^u_\omega$ is at most $\d \lneq 1$.
      Now, suppose that $x = g\omega \in \overline{B}(f,\e)\cap \mc{O}_\omega \omega$ and write $g = \check{h}_z g_r h_s$.
      Since $g_t$ contracts $ \check{h}_z$ and commutes with $g_r$, using the fact that $f$ is Lipschitz, we see that $s\in \overline{B}(f,\e)^u_\omega$.
      Conversely, for all $s\in \overline{B}(f,\e)^u_\omega$ and all $(z,r) \in \mc{O}_\omega^s \times \mc{O}_\omega^c$, we have that $\check{h}_z g_r h_s\omega \in \overline{B}(f,\e)\cap \mc{O}_\omega \omega$.
      
      In particular, we have the identification
      \[
      		\overline{B}(f,\e)\cap \mc{O}_\omega \omega\cong \mc{O}_\omega^s \times 
      		\mc{O}_\omega^c \times \overline{B}(f,\e)^u_\omega\]
      under the smooth coordinate map in~\eqref{eqn: coords}.      
      Thus, by Proposition~\ref{propn: dimension of products}, since the upper packing dimension of an open interval in $\R$ is equal to its topological and Hausdorff dimension, we get that
      \begin{align*}
      &dim_H(\overline{B}(f,\e)\cap \mc{O}_\omega\omega) = 
          dim_H\left(\mc{O}_\omega^s \times 
      		\mc{O}_\omega^c \times \overline{B}(f,\e)^u_\omega \right) 
            \leq 2+\d
      \end{align*} 
        
      The above argument shows that that dimension of the intersection of $\overline{B}(f,\e)$ with any open subset of an $\mathrm{SL}(2,\R)$ orbit in the complement of $\cup_{i=1}^k \mc{N}_i$ is at most $2+\d \neq 3$.

\section{The Contraction Hypothesis and Analysis of Recurrence}
\label{section: doa}

	In this section, we study the problem of the Hausdorff dimension of trajectories with prescribed divergence behavior.
    We prove an abstract result for $\mathrm{SL}_2(\R)$ actions on metric spaces which satisfy the \emph{Contraction Hypothesis} (Definition~\ref{defn: height functions}) in the terminology of Benoist and Quint~\cite[Section 2]{BenoistQuint}. The results in this section closely follow the ideas in~\cite{KKLM-SingSystems}.
    
    Let $X$ be a manifold equipped with a smooth $\mathrm{SL}(2,\R)$ action.
    For $t, \d>0$, $N\in \N$, $Q \subset  X$ a (compact) set and $x\in X$, define the following set
    \begin{align} \label{defn: set of non-recurrent directions}
    	Z_x(Q, N, t, \d) = \set{ s\in[-1,1] : \frac{1}{N} \sum_{l=1}^N \chi_Q(g_{lt} h_s x) \leq 1-\d  }
    \end{align}
    where $\chi_Q$ denotes the indicator function of $Q$.
    

    \begin{definition} [The Contraction Hypothesis]
    \label{defn: height functions}
    Let $Y$ be a proper $\mathrm{SL}(2,\R)$-invariant submanifold of $X$ ($Y=\emptyset$ is allowed).
    The action of $\mathrm{SL}_2(\R)$ on $X$ is said to satisfy the \textbf{contraction hypothesis} with respect to $Y$
    if there exists a proper, $SO(2)$-invariant function $\a:X\r [1,\infty]$
    satisfying the following properties:
      \begin{enumerate}
      	\item $\a(x) = \infty$ if and only if $x\in Y$.
        \item There is constant $\s>0$ such that for all $x\in X\backslash Y$ and all $t>0$,
        	\begin{align}
            \label{eqn: growth of height function with g_t}
        		e^{-\s t} \a(x) \leq \a(g_t x) \leq e^{\s t} \a(x)
        	\end{align}
            
		\item There exists a constant $b = b(Y) > 0$, such that for all $a \in (0,1)$
        there exists $t_0 = t_0(a)>1$
        	so that for all $t > t_0$ and all $x\in X\backslash Y$,
            \begin{align}
            \label{eqn: height function circle integral estimate}
            	\int_0^{2\pi} \a(g_t r_\theta x) \;d\theta \leq a \a(x) + b 
            \end{align}
        \item For all $M\geq 1$, the sets $\overline{\set{x\in X: \a(x) \leq M}}$, denoted by $X_{\leq M}$, form a compact exhaustion of $X\backslash Y$. 
      \end{enumerate}
   The function $\alpha$ is called the \textbf{height function}.
    \end{definition}
    
    We remark that the study of height functions as in Definition~\ref{defn: height functions} originated in~\cite{EskinMargulisMozes} in the context of homogeneous spaces.
    
Throughout this section $X$ is a manifold equipped with a smooth $\mathrm{SL}(2,\mathbb{R})$ action and satisfies the contraction hypothesis with respect to $Y$, which is a proper $\mathrm{SL}(2,\mathbb{R})$-invariant submanifold of $X$.

Our goal in this section is to prove the following theorem.
    
    \begin{theorem} \label{thrm: H.dim of non-recurrent directions,fixed compact set}
        Given $\d > 0$, there exists $M_0 = M_0(\d)>0$ and $t_0 > 0$ such that for all $M\geq M_0$ and all $t \geq t_0$,
        there exists $\l=\l(\d,t)  \in (0,1)$
        such that for all $x\in X\backslash Y$,
        the Hausdorff dimension of the set
        $\limsup_{N\r\infty} Z_x( X_{\leq M}, N,t,\d)$ is at most $1-\l$, where $\l$ tends to $0$ as $t \r \infty$.
    \end{theorem}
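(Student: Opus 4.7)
The plan is to follow the template of Kadyrov--Kleinbock--Lindenstrauss--Margulis~\cite{KKLM-SingSystems}, translating the contraction hypothesis into quantitative estimates on the measure of ``bad'' pieces of horocycle arcs, and then feeding these estimates into Lemma~\ref{lemma: H.dim of limsup set via covering lemma}.

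The first step is to convert the circle-integral estimate~\eqref{eqn: height function circle integral estimate} into the analogous statement for horocycle arcs. Using the Iwasawa-type decomposition $r_\theta = \check{h}_{-\tan\theta} g_{\log\cos\theta} h_{\tan\theta}$ valid for $\theta\in[-\pi/4,\pi/4]$, together with $g_t \check{h}_z = \check{h}_{e^{-2t}z} g_t$, one sees that for $t$ large the points $g_t r_\theta x$ and $g_{t+\log\cos\theta} h_{\tan\theta} x$ differ by a $\check{h}_z$-translate with $|z|\leq e^{-2t}$; the growth bound~\eqref{eqn: growth of height function with g_t} (and its analogue for the $\check{h}$-flow, which follows formally from the $SO(2)$-invariance of $\alpha$) then yields constants $a\in(0,1)$, $b>0$, and $t_1>0$ such that for all $t\geq t_1$ and $x\in X\setminus Y$
\begin{equation*}
\int_{-1}^1 \alpha(g_t h_s x)\,ds \leq a\,\alpha(x)+b.
\end{equation*}

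The second step is a level-by-level covering argument. Fix $t\geq t_1$ large and $N\in\N$, and dyadically subdivide $[-1,1]$ into intervals of length $\sim e^{-2tN}$. For each $l\leq N$ and each interval $I$ at scale $e^{-2tl}$, call $I$ \emph{bad at level $l$} if the center $s_I$ satisfies $\alpha(g_{lt} h_{s_I} x)>M$. Using the horocycle contraction established above iteratively, together with a Markov-type bound at each level weighted by $\alpha$, I would show that for each subset $J\subseteq\{1,\dots,N\}$ the Lebesgue measure of the set of $s$ bad at every level in $J$ is bounded by $(a+b/M)^{|J|}\cdot\alpha(x)$ up to a multiplicative constant. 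Taking $M$ large so that $a+b/M<1$ and summing over all $J$ with $|J|\geq \delta N$ using $\binom{N}{\delta N}\leq (e/\delta)^{\delta N}$, one obtains that $Z_x(X_{\leq M},N,t,\delta)$ is covered by at most
\begin{equation*}
C(x)\,e^{2tN}\cdot (e/\delta)^{\delta N}(a+b/M)^{\delta N}
= C(x)\,e^{2(1-\lambda)tN}
\end{equation*}
intervals of radius $\lesssim e^{-2tN}$, where $\lambda=\lambda(\delta,t,M) > 0$ provided $M=M(\delta)$ is chosen so that $(e/\delta)(a+b/M)<1$ and $t$ is large enough. Explicitly $\lambda \asymp -\tfrac{\delta}{2t}\log\big((e/\delta)(a+b/M)\big)$, which is positive but tends to $0$ as $t\to\infty$, as required.

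The third step is just to apply Lemma~\ref{lemma: H.dim of limsup set via covering lemma} with $A_n = Z_x(X_{\leq M}, n, t, \delta)$, giving $\dim_H \limsup_{N\to\infty} Z_x(X_{\leq M},N,t,\delta)\leq 1-\lambda$.

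The main obstacle I anticipate is the iterated contraction estimate in the second step. Two subtleties need care: first, the passage from an interval at scale $e^{-2tl}$ to its children at scale $e^{-2t(l+1)}$ requires rescaling and recentering, which is controlled by the commutation relation $g_t h_s = h_{e^{2t}s} g_t$; one must absorb the center point of the parent interval into the basepoint for the next level so that the integral estimate of Step 1 applies with a fresh $x$. Second, the union bound over subsets $J$ must be balanced against the horocycle expansion factor $e^{2tN}$, and it is precisely here that one needs $M$ large enough relative to $\delta$ (not relative to $t$) to ensure $\lambda>0$; this uniformity in $t$ is what allows the statement ``$\lambda\to 0$ as $t\to\infty$'' to be meaningful rather than trivially true because $\lambda$ could be forced to be negative for small $t$.
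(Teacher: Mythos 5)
Your proposal is correct and follows the same template as the paper's proof, which is itself modeled on \cite{KKLM-SingSystems}: derive a horocycle-arc contraction estimate from the circle estimate, iterate it to control $\alpha$-weighted counts of bad dyadic intervals, and feed the resulting covers into Lemma~\ref{lemma: H.dim of limsup set via covering lemma}. Two organizational differences are worth recording. First, the paper inserts a technical step (Lemma~\ref{lemma: integral estimate for horocycle Gaussian}) upgrading the $[-1,1]$ estimate to an integral over all of $\R$ against a Gaussian, so that the iterated estimate of Proposition~\ref{propn: integral estimate over Z_x} can be run as a single $N$-fold integral; your recentering scheme --- rewriting $g_{(l+1)t}h_{s_I+e^{-2tl}u}x=g_th_u y_I$ with fresh basepoint $y_I=g_{lt}h_{s_I}x$ and tracking $\sum_I|I|\,\alpha(y_I)$ --- achieves the same with only the $[-1,1]$ estimate, at the cost of a fixed comparison constant per level (the paper's $C_1,C_2$), which must then be absorbed by taking $a$ small in terms of $\delta$. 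Second, where you take a direct union bound over the $\binom{N}{\lceil\delta N\rceil}$ patterns $J$ of bad levels, bridging each gap between consecutive elements of $J$ by one application of the time-$(l_{k+1}-l_k)t$ contraction estimate, the paper runs the induction scheme of \cite[Theorem 1.5]{KKLM-SingSystems} with base case Corollary~\ref{cor: first covering lemma} (the case $J=\set{1,\dots,N}$); both routes give a covering number of the shape $C(x)\,2^NC^N(2a)^{\delta N}e^{2tN}$ and hence the same $\l\asymp \delta|\log a|/t$. Your two flagged subtleties are exactly the right ones, with one emphasis to add: a pure counting induction on bad intervals cannot close, because a bad parent only satisfies $\alpha(y_I)>M$ with no upper bound, so the weighted quantity is genuinely needed; and the positivity of $\l$ forces $a$ (hence $t_0$), as well as $M_0$, to depend on $\delta$, while neither depends on $t$ --- consistent with the statement.
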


    The proof of Theorem~\ref{thrm: H.dim of non-recurrent directions,fixed compact set} can be found in Section~\ref{proof_thrm_H.dim,fixed compact}. It should be noted that the difference between this theorem and ~\cite[Theorem 1.5]{KKLM-SingSystems} is the flexibility in the step size $t$. As a result, the upper bound on the Hausdorff dimension of the considered set depends on $t$.
    In fact, the proof of Theorem~\ref{thrm: H.dim of non-recurrent directions,fixed compact set} gives an explicit value for $\l$ as a function of $t$ and $\delta$.
    See also remark~\ref{remark: Hdim bound formula} for an explicit choice for $M_0$ depending on $\delta$.

    
    \subsection{Estimates for integrals over horocycle orbits}
In this section we obtain an integral estimate similar to~\eqref{eqn: height function circle integral estimate} for integrals over an entire horocycle orbit.

    \begin{lemma}
    \label{lemma: integral estimate for horocycle interval}
    	Let $\a: X \r [1,\infty]$ be a height function.
        Then, there is a constant $\bar b>0$, such that for all $\bar a \in (0,1)$,
        there exists  $\bar{t_0} = \bar{t_0}(\bar a)>0$ so that for all $t > \bar{t_0}$ and all $x\in X\backslash Y$, 
            \begin{equation}
            \label{eqn: height function horocycle interval integral estimate}
            	\int_{-1}^1 \a(g_t h_s x) \;ds \leq \bar a \a(x) + \bar b 
            \end{equation}
    \end{lemma}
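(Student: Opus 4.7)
The plan is to reduce the horocycle-arc integral to the circle integral supplied by axiom~(3) via the substitution $s=\tan\theta$. A short computation will rewrite $g_t h_{\tan\theta}$ as a product of $g_{t'}r_\theta$ (with $t'$ close to $t$) and a tiny contracting horocycle element $\check h_\eta$; the $SO(2)$-invariance of $\alpha$ together with property~(2) will absorb the extra factors into harmless multiplicative constants.

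The first step is to substitute $s=\tan\theta$ with $\theta\in[-\pi/4,\pi/4]$. Using $ds=\sec^2\theta\, d\theta$ and $\sec^2\theta\le 2$ on this interval,
\[
\int_{-1}^1 \alpha(g_t h_s x)\,ds \;\le\; 2\int_{-\pi/4}^{\pi/4}\alpha(g_t h_{\tan\theta}x)\,d\theta.
\]
Starting from $r_\theta=\check h_{-\tan\theta}\,g_{\log\cos\theta}\,h_{\tan\theta}$ (recorded in the excerpt) together with the commutation $g_T \check h_u = \check h_{e^{-2T}u}\,g_T$, a direct manipulation gives the key identity
\[
g_t h_{\tan\theta} \;=\; \check h_\eta\, g_{t-\log\cos\theta}\, r_\theta, \qquad \eta:=e^{-2t}\sin\theta\cos\theta,
\]
so $|\eta|\le \tfrac12 e^{-2t}$ for $\theta\in[-\pi/4,\pi/4]$.

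Next I will bound $\alpha(\check h_\eta y)$ and the effect of the shifted flow time. Via the $KAK$ decomposition $\check h_\eta = r_{\theta_1}g_{\tau(\eta)}r_{\theta_2}$ with $\cosh 2\tau(\eta)=1+\eta^2/2$, the $SO(2)$-invariance of $\alpha$ and property~(2) give
\[
\alpha(\check h_\eta y)\;=\;\alpha(g_{\tau(\eta)}r_{\theta_2}y)\;\le\; e^{\sigma\tau(\eta)}\alpha(y).
\]
Since $\tau(\eta)$ is bounded on $\{|\eta|\le1\}$, this yields a constant $C_0=C_0(\sigma)$ with $\alpha(\check h_\eta y)\le C_0\,\alpha(y)$ whenever $|\eta|\le 1$. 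In parallel, $|\log\cos\theta|\le\tfrac12\log 2$ on $[-\pi/4,\pi/4]$ and another application of property~(2) yield $\alpha(g_{t-\log\cos\theta}r_\theta x)\le 2^{\sigma/2}\alpha(g_t r_\theta x)$. Combining, once $t$ is large enough that $\tfrac12 e^{-2t}\le 1$,
\[
\alpha(g_t h_{\tan\theta}x)\;\le\;C_1\,\alpha(g_tr_\theta x), \qquad C_1:=2^{\sigma/2}C_0.
\]

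The final step is to integrate and enlarge the range from $[-\pi/4,\pi/4]$ to $[0,2\pi]$ (legitimate since $\alpha\ge 1>0$), then invoke axiom~(3) with parameter $a:=\bar a/(2C_1)\in(0,1)$. For $t\ge t_0(a)$,
\[
\int_{-1}^1\alpha(g_th_sx)\,ds\;\le\;2C_1\int_0^{2\pi}\alpha(g_tr_\theta x)\,d\theta\;\le\;2C_1\bigl(a\,\alpha(x)+b\bigr)\;=\;\bar a\,\alpha(x)+\bar b,
\]
where $\bar b:=2C_1 b$ and $\bar t_0(\bar a)$ is the maximum of $t_0(a)$ and the threshold ensuring $|\eta|\le 1$. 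The only delicate point is the handling of $\check h_\eta$: because $\alpha$ is not assumed invariant or continuous in the $\check h$ direction, one cannot simply discard this factor despite its smallness. The $KAK$ trick reroutes this perturbation through the geodesic direction, where property~(2) delivers the needed uniform bound; everything else is routine bookkeeping of constants.
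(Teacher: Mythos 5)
Your proposal is correct and follows essentially the same route as the paper: both rewrite $g_t h_{\tan\theta}$ as $g_t r_\theta$ times bounded correction factors, absorb those factors via the $KAK$ decomposition, $SO(2)$-invariance, and property (2) into a constant $c_0$ independent of $t$, and then apply the circle estimate \eqref{eqn: height function circle integral estimate} followed by the change of variables $s=\tan\theta$. The only difference is that you spell out the $KAK$ bookkeeping for $\check h_\eta$ explicitly where the paper asserts it as "easy to see"; the constants $\bar b = 2c_0 b$ and $a = \bar a/(2c_0)$ match.
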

    
    \begin{proof}
    	Let $b=b(Y)$ and for any $a\in(0,1)$ we have $t_0=t_0(a)$ as in Definition~\ref{defn: height functions}.
        Then, by~\eqref{eqn: height function circle integral estimate},
         for any $t>t_0$ and all $x\in X$,
        \[
        	\int_{-\pi/4}^{\pi/4} \a(g_t r_\theta x) \;d\theta \leq a \a(x) + b 
        \]
        
        For any $\theta \in [-\pi/4,\pi/4]$ the following equality holds.
        \[  g_t r_\theta = 
        \underbrace{
        	\begin{pmatrix}
        		1 & 0 \\ - e^{-t} \tan(\theta) & 1
        	\end{pmatrix}
            }_{ \check{h}_{- e^{-t} \tan(\theta)}}
            \underbrace{
            \begin{pmatrix}
            	\cos(\theta) & 0 \\ 0 & \sec(\theta)
            \end{pmatrix}
            }_{g_{\log(\cos(\theta))} }
            g_t h_{\tan(\theta)}
        \]
        
        Let $K = SO(2)$. From the $KAK$ decomposition of $\mathrm{SL}_2(\R)$, $K$-invariance of $\a$
        and Property $(2)$ in Definition~\ref{defn: height functions},
        it is easy to see that there exists a positive constant $c_0\geq 1$, that is independent of $t$,
        such that for all $\theta \in [-\pi/4,\pi/4]$ and all $x\in X$,
        \[ \a(g_t h_{\tan(\theta)} x) \leq c_0 \a(g_t r_\theta x). \]
        
        Thus, we get that
        \[
        	\int_{-\pi/4}^{\pi/4} \a(g_t h_{\tan(\theta)} x) \;d\theta \leq c_0 (a \a(x) + b). 
        \]
        
        Using a change of variable $s = \tan(\theta)$ and noting that the Jacobian of this change of variable is uniformly bounded on $[-\pi/4,\pi/4]$, we obtain 
        \[
        	\int_{-1}^{1} \a(g_t h_{s} x) \;ds \leq 2c_0 (a \a(x) + b). 
        \]
      That implies the lemma with $\bar b = 2c_0 b$, $a=\frac{\bar a}{2c_0}\in(0,1)$ and $\bar t_0 = t_0(a)$.
    \end{proof}

    Our next lemma replaces integration over a compact subinterval of the horocycle with an integral over the entire horocycle orbit against a Gaussian measure.
    This is a technical step needed to carry over some results from~\cite{KKLM-SingSystems} to our setting.
    
    \begin{lemma}
    \label{lemma: integral estimate for horocycle Gaussian}
    Let $\a: X \r [1,\infty]$ be a height function.
        Then, there is a constant $b>0$, such that for all $a \in (0,1)$,
        there exists  $t_0 = t_0(a)>0$ so that for all $t > t_0$ and all $x\in X\backslash Y$, 
            \begin{equation}
            \label{eqn: height function horocycle integral estimate Gaussian}
            	\int_{\R} \a(g_t h_s x) \;d\rho_1(s) \leq a \a(x) +b
            \end{equation}
        where $d\rho_1(s) = e^{-s^2}ds$ is a mean $0$, variance $1$ Gaussian.
    \end{lemma}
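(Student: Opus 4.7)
The plan is to deduce this Gaussian-weighted estimate from Lemma~\ref{lemma: integral estimate for horocycle interval} by splitting the integration domain and exploiting the super-exponential decay of $e^{-s^2}$ to absorb polynomial growth of $\alpha$ along the horocycle orbit. I would partition $\R = [-1,1] \cup \bigcup_{k \in \Z \setminus \{0\}}[2k-1, 2k+1]$. On the central interval, since $e^{-s^2} \leq 1$, Lemma~\ref{lemma: integral estimate for horocycle interval} applied with some $\bar a \in (0,1)$ (to be chosen later) yields directly, for all $t > \bar t_0(\bar a)$,
\[
\int_{-1}^{1} \alpha(g_t h_s x)\, e^{-s^2}\, ds \leq \bar a\, \alpha(x) + \bar b.
\]

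For each tail interval $[2k-1,2k+1]$ with $k \neq 0$, I would substitute $s = s' + 2k$ and factor $h_s = h_{s'} h_{2k}$ via the one-parameter group law. Applying Lemma~\ref{lemma: integral estimate for horocycle interval} at the basepoint $h_{2k} x \in X\setminus Y$ (using the $\mathrm{SL}_2(\R)$-invariance of $Y$) and bounding $e^{-s^2} \leq e^{-(2|k|-1)^2}$ uniformly on that interval, I would obtain
\[
\int_{2k-1}^{2k+1} \alpha(g_t h_s x)\, e^{-s^2}\, ds \leq e^{-(2|k|-1)^2}\bigl(\bar a\, \alpha(h_{2k} x) + \bar b\bigr).
\]

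The key auxiliary estimate I would then establish is a polynomial bound $\alpha(h_n x) \leq C(|n|+1)^\sigma \alpha(x)$ that is uniform in $x \in X \setminus Y$, where $\sigma$ is the constant from property (2) of Definition~\ref{defn: height functions}. A quick singular-value computation gives $\|h_n\| = O(|n|+1)$, so in the Cartan decomposition $h_n = k_1 g_{T_n} k_2$ with $k_1,k_2 \in SO(2)$ and $T_n = \log\|h_n\| \geq 0$, we have $T_n \leq \log(C_0(|n|+1))$. The $SO(2)$-invariance of $\alpha$ strips off $k_1$ and $k_2$, and property (2) then gives $\alpha(h_n x) = \alpha(g_{T_n} k_2 x) \leq e^{\sigma T_n} \alpha(x) \leq C(|n|+1)^\sigma \alpha(x)$.

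Finally, the series $\sum_{k \neq 0}(|k|+1)^\sigma e^{-(2|k|-1)^2}$ and $\sum_{k \neq 0} e^{-(2|k|-1)^2}$ converge to absolute constants $C_1, C_2$ depending only on $\sigma$, so summing the tail estimates and combining with the central interval gives a total bound $\bar a(1 + C C_1)\alpha(x) + \bar b(1 + C_2)$. Given a target $a \in (0,1)$, I would simply set $\bar a = a/(1 + C C_1)$, take $t_0 = \bar t_0(\bar a)$, and define $b = \bar b(1 + C_2)$, which is independent of $a$. The only delicate point is the polynomial-in-$|n|$ control of $\alpha(h_n x)/\alpha(x)$, and this is the structural reason the Gaussian weight is chosen: any polynomial growth along the horocycle is crushed by the Gaussian tail, making the extension from the finite interval $[-1,1]$ to all of $\R$ essentially free.
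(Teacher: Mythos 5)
Your proof is correct and follows essentially the same route as the paper's: the same decomposition of $\R$ into length-$2$ intervals centered at the even integers, the same application of Lemma~\ref{lemma: integral estimate for horocycle interval} at the translated basepoints $h_{2k}x$, the same polynomial bound $\a(h_n x)\leq C(|n|+1)^\s\a(x)$ obtained from the $KAK$ decomposition together with $SO(2)$-invariance and Property (2), and the same absorption of that polynomial growth by the Gaussian tail. The only (immaterial) differences are bookkeeping: you treat the central interval separately with $e^{-s^2}\leq 1$, and you choose $\bar a$ so that $b$ is manifestly independent of $a$, exactly as the statement requires.
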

    
    \begin{proof}
    
   By Property $(2)$ in Definition~\ref{defn: height functions}, the $KAK$ decomposition of $\mathrm{SL}_2(\R)$ and $K$-invariance of $\a$, there exist constants $\s>0$ and $C>1$, such that for all $q\in \R$,
        \[ \a(h_q x) \leq C |1+q|^\s. \]
       
       Let $\bar b$ and  $ t_0 = \bar{t_0}(\bar a)$ for $\bar a\in(0,1)$ be given by Lemma~\ref{lemma: integral estimate for horocycle interval}. For any $n\in \Z$, we define $f(n) = \min\set{|n|, |n+2|}$.
      
      Then, for any $t>t_0$ and $x\in X$ we have the following.
        
        \begin{align*}
        	\int_{\R} \a(g_t h_s x) \;d\rho_1(s) &=
            	\sum_{n\in 2\Z-1} \int_{[0,2]+n} \a(g_t h_s x) e^{-s^2}ds \\
                &\leq\sum_{n\in 2\Z-1} e^{-(f(n))^2} \int_{-1}^1 \a(g_t h_sh_{n+1} x) ds \\
                &\leq \sum_{n\in 2\Z-1} e^{-(f(n))^2} (\bar a \a(h_{n+1} x) + \bar b)
                 \qquad \qquad \qquad\text{, by Lemma~\ref{lemma: integral estimate for horocycle interval}} \\
                &\leq C \bar a \a(x) \sum_{n\in 2\Z-1} e^{-(f(n))^2}|n+2|^\s 
                + \bar b \sum_{n\in 2\Z-1} e^{-(f(n))^2}.
        \end{align*}
        
        This completes the proof by setting $$b=\bar b \sum_{n\in 2\Z-1} e^{-(f(n))^2} \quad\text{ and }\quad \bar a = \min\left\{\frac{1}{2}, \frac{a}{C\sum_{n\in 2\Z-1} e^{-(f(n))^2}|n+2|^\s}\right\}.$$ 

    \end{proof}

    \subsection{Coverings and Long Excursions}

In this section, we aim to find efficient coverings for the set of directions for which geodesics take long excursions outside of certain fixed compact sets.
We closely follow~\cite[Section 5]{KKLM-SingSystems}. 

Throughout this section, we fix $x$ in $X\backslash Y$ and use $Z_x(M,N,t)$ to denote the set \linebreak $Z_x(X_{\leq M}, N,t,1)$ defined in~\eqref{defn: set of non-recurrent directions}. Moreover, let $b>0$ and $t_0 = t_0(a) > 1$ for $a \in (0,1)$ be as in Lemma~\ref{lemma: integral estimate for horocycle Gaussian}.

The following is the main result of this section.

  \begin{proposition} \label{propn: integral estimate over Z_x}
    There exist constants $C_1,C_2 \geq 1$ (independent of $x$ and $a$)
    such that for all $M > C_2 b/a$, all $t\geq t_0$ and all $N\in \N$,
    \begin{align}
    	\int_{Z_x(M,N-1,t)} \a(g_{Nt} h_s x) \;ds \leq C_1 [(2a)^N \a(x) + (2a)^{N-1} b]
    \end{align}
  \end{proposition}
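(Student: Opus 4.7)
The plan is to prove this by induction on $N$. The base case $N=1$ reduces to Lemma~\ref{lemma: integral estimate for horocycle Gaussian}: since $e^{-s^2}\geq e^{-1}$ on $[-1,1]$,
\[\int_{-1}^1 \alpha(g_t h_s x)\,ds \leq e\int_{\mathbb{R}}\alpha(g_t h_s x)\,d\rho_1(s) \leq e(a\alpha(x)+b) \leq C_1[(2a)\alpha(x)+b]\]
for any $C_1 \geq e$.

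For the inductive step, I would partition $[-1,1]$ into subintervals $I_k$ of length $2e^{-2(N-1)t}$, matching the scale at which $g_{(N-1)t}$ expands the horocycle direction. For each $I_k$ meeting $Z_x(M,N-1,t)$, I would pick a center $s_k \in I_k$ and apply the commutation $g_{Nt}h_s = g_t h_v \cdot g_{(N-1)t}h_{s_k}$ with $v := (s-s_k)e^{2(N-1)t} \in [-1,1]$, obtaining
$\int_{I_k}\alpha(g_{Nt}h_s x)\,ds = e^{-2(N-1)t}\int_{-1}^1\alpha(g_t h_v y_k)\,dv$
where $y_k := g_{(N-1)t}h_{s_k}x$. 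Lemma~\ref{lemma: integral estimate for horocycle Gaussian} applied to $y_k$ then bounds the inner integral by $e(a\alpha(y_k)+b)$.

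Next I would invoke the Lipschitz-type bound for $\alpha$ under horocycle shifts of size at most $1$—a consequence of the $KAK$ decomposition of $h_r$ combined with Property (2) in Definition~\ref{defn: height functions}—which yields a universal constant $C'$ with $\alpha(y_k) > M/C'$ whenever $I_k$ meets $Z_x(M,N-1,t)$. Choosing $C_2$ large enough that $M > C_2 b/a$ forces $M/C' > b/a$, making $a\alpha(y_k)+b \leq 2a\alpha(y_k)$. A second use of the Lipschitz bound, $\alpha(y_k) \leq C'\alpha(g_{(N-1)t}h_s x)$ for $s \in I_k$, averaged over $s$, converts the pointwise estimate to an integral; summing over $k$ gives
\[\int_{Z_x(M,N-1,t)}\alpha(g_{Nt}h_s x)\,ds \leq \kappa \cdot a \int_{\cup_k I_k}\alpha(g_{(N-1)t}h_s x)\,ds\]
for a universal constant $\kappa$.

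Finally, the union $\cup_k I_k$ is an $e^{-2(N-1)t}$-thickening of $Z_x(M,N-1,t)$, and a third use of Lipschitz—applied to each defining condition of $Z_x$ and noting that a shift of $e^{-2(N-1)t}$ at level $N-1$ corresponds to a horocycle shift of at most $1$ at every earlier level—shows this thickening lies in $Z_x(M/C'', N-2, t)$ for a universal $C''$. Applying the induction hypothesis with the threshold $M/C''$ (which stays in the allowed range provided $C_2$ dominates $C' C''$) closes the argument. The main obstacle will be calibrating these three Lipschitz applications so that the per-step multiplicative loss matches the claimed contraction $2a$; this requires the sharp $KAK$-derived Lipschitz bound, whose constant for small shifts can be played against the $1/|I_k|$ cost in the sum-to-integral conversion, rather than a crude uniform estimate.
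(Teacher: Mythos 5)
Your proposal has the right raw ingredients (the Gaussian estimate of Lemma~\ref{lemma: integral estimate for horocycle Gaussian}, the commutation $g_{Nt}h_s = g_t h_v\, g_{(N-1)t}h_{s_k}$, and the role of $M > C_2 b/a$ in upgrading $a\a(y)+b$ to $2a\a(y)$), but the inductive scheme as described does not close, for two compounding reasons that you flag as "calibration" issues but cannot in fact be calibrated away. First, the per-step multiplicative loss: each induction step costs a factor $e$ (from $e^{-v^2}\geq e^{-1}$ on $[-1,1]$) times the Lipschitz constant $C'$ for horocycle shifts of size up to $1$. The rescaled shifts $v=(s-s_k)e^{2(N-1)t}$ genuinely fill $[-1,1]$, so $C'$ is bounded away from $1$ (it is roughly $\norm{h_1}^{\s}$ via the $KAK$ decomposition); there is no "small-shift" regime to exploit unless you shrink $|I_k|$, and any fixed shrinking still leaves a loss $(1+c\eta)^N\to\infty$. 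The net bound after $N$ steps is $(eC')^N(2a)^N\a(x)$, not $C_1(2a)^N\a(x)$, and $eC'>1$ cannot be absorbed into a constant independent of $N$. Second, the threshold degradation: your induction hypothesis at level $N-1$ must be invoked with threshold $M/C''$, at level $N-2$ with $M/(C'')^2$, and so on; after unwinding, the base case requires $M/(C'')^{N-1}>C_2 b/a$, which fails for large $N$ for any fixed $M$. The condition "$C_2$ dominates $C'C''$" handles only a single step.

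The paper (following \cite{KKLM-SingSystems}) avoids both problems by \emph{not} inducting on intervals at all. One writes $g_{Nt}h_u = g_t h_{s_N}\cdots g_t h_{s_1}$ with $u = \sum_k e^{-2(k-1)t}s_k$, integrates over the set $Z\subset\R^N$ of tuples whose intermediate points satisfy the \emph{fixed} threshold $\a(\cdot)>b/a$ (not $M$, which removes your second problem entirely: $M>C_2b/a$ is used only once, to check that $Z_x(M,N-1,t)$ lands inside the projection of $Z$), and applies the clean estimate $\int_\R\a(g_th_sy)\,d\rho_1(s)\leq 2a\a(y)$ exactly $N$ times by Fubini with no per-step loss. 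The conversion from the product Gaussian on $\R^N$ back to Lebesgue measure on $[-1,1]$ happens only once, at the very end, via the bounded Radon--Nikodym derivative of the pushforward of $\rho_1^{\otimes N}$ under $(s_1,\dots,s_N)\mapsto u$; this single conversion is what produces $C_1$. You would need to restructure your argument along these lines --- a single iterated integral with a fixed intermediate threshold --- rather than an interval-by-interval induction.
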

  
  Here we relax the restrictions on the height of $x$ and on the dependence of $M$ on $t$ in comparison with \cite[Proposition 5.1]{KKLM-SingSystems}
  
  \begin{proof}
  	The proof is the same as that of \cite[Proposition 5.1]{KKLM-SingSystems} with minor modifications that we discuss now.
    Using Property (2) in Definition~\ref{defn: height functions}, let $C_2\geq 1$ be such that
    for all $s\in [-2,2]$,
    \[  C_2^{-1} \leq \frac{\a(h_s x)}{\a(x)} \leq C_2.  \]
    
    Consider $M > C_2 b/a$ and $t>t_0$. Let $y\in X\backslash Y$ be so that $\a(y) >b/a$.
    By Lemma~\ref{lemma: integral estimate for horocycle Gaussian}, we have
    \begin{align} \label{eqn: remove b from integral estimate}
    	\int_\R \a(g_t h_s y) \;ds &\leq a \a(y) + b \leq 2 a \a(y).
    \end{align}
    
    Let $N\in \N$ and define the following set.
    \begin{align*}
    	Z = \set{(s_1,\dots,s_{N})\in \R^{N}: \a(g_{t} h_{s_k} g_t h_{s_{k-1}}\cdots
        		g_t h_{s_1} x) > b/a, \forall k =1,\dots, N-1 }
    \end{align*}
    Applying~\eqref{eqn: remove b from integral estimate} repeatedly, we get
    \begin{align} \label{eqn: iterated integral estimate}
    	\int \cdots \int_Z \a(g_{t} h_{s_N} \cdots g_t h_{s_1} x) \;d \rho_1(s_N)
        \cdots d\rho_1(s_1) &\leq (2a)^{N-1} \int_\R \a(g_t h_s x) \;d\rho(s)
        \nonumber \\
        &\leq (2a)^N \a(x) + (2a)^{N-1} b
    \end{align}
    On the last line, we applied Lemma~\ref{lemma: integral estimate for horocycle Gaussian}, but since we don't insist that $\a(x) > b/a$, we get the extra term with $b$.
    The rest of the proof is identical to that of ~\cite[Proposition 5.1]{KKLM-SingSystems}, where we take the constant $C_1$ to be the implicit constant in that proof depending only on the Radon-Nikodym derivative of the Lebesgue measure with respect to a certain bounded variance Gaussian on $[-1,1]$.
  \end{proof}

  As a corollary, we obtain the following covering result.
  
  \begin{corollary}  \label{cor: first covering lemma}
  There exists $b > 0$ such that for all $a \in (0,1)$, there exists $t_0>1$
  such that for all $x\in X$, all $M > C_2^2 b/a$, all $N\in \N$, and all $t>t_0$,
  the set $Z_x(M,N,t)$ can be covered by $2C_1 C_2(2a)^N e^{2tN} \max\set{1,\frac{\a(x)}{M}}$ intervals of radius $e^{-2tN}$, 
  where $C_1,C_2 > 1$ are the absolute constants in Proposition~\ref{propn: integral estimate over Z_x}.
  \end{corollary}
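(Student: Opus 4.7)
The plan is to extract the covering bound from a measure bound on $Z_x(M,N,t)$, which in turn will follow from Proposition~\ref{propn: integral estimate over Z_x} by a Markov-type argument. The key observation is that on $Z_x(M,N,t)$ the height $\a(g_{Nt}h_s x)$ is bounded below by $M$: by the definition of the set (applied with $\d = 1$) all $N$ sampled times land outside $X_{\leq M}$, so in particular the $N$-th time does. This converts the integral estimate of the previous proposition directly into a measure estimate.

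Concretely, first I would note the inclusion $Z_x(M,N,t) \subseteq Z_x(M,N-1,t)$ (having no visits among the first $N$ return times trivially forces none among the first $N-1$) together with the pointwise bound $\a(g_{Nt}h_s x) > M$ for $s \in Z_x(M,N,t)$. Combining these with Proposition~\ref{propn: integral estimate over Z_x} yields
\[
M \cdot |Z_x(M,N,t)| \leq \int_{Z_x(M,N-1,t)} \a(g_{Nt}h_s x)\,ds \leq C_1\bigl[(2a)^N \a(x) + (2a)^{N-1} b\bigr].
\]
Next I would exploit the strengthened hypothesis $M > C_2^2 b/a$ to absorb the stray $b$-term into a small fraction of $M$: since $b/(2a) < M/(2C_2^2)$, rearranging gives
\[
|Z_x(M,N,t)| \leq C_1 (2a)^N \left[\frac{\a(x)}{M} + \frac{1}{2C_2^2}\right] \leq 2C_1 (2a)^N \max\{1, \a(x)/M\},
\]
where the final inequality is a routine case split on whether $\a(x) \geq M$, using $C_2 \geq 1$.

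Finally, to convert this Lebesgue measure bound into a cover by intervals of radius $e^{-2tN}$, I would partition $[-1,1]$ into a uniform grid of intervals of that length and retain only those meeting $Z_x(M,N,t)$. The resulting count is at most $|Z_x(M,N,t)|\cdot e^{2tN}/2 + O(1)$, which after absorbing the additive overhead by the $C_2 \geq 1$ factor (or simply by inflating $C_1$) gives the stated bound $2 C_1 C_2 (2a)^N e^{2tN} \max\{1, \a(x)/M\}$. I do not anticipate any essential obstacle here: the substantive content is the Markov step against Proposition~\ref{propn: integral estimate over Z_x}, and the only care required is the bookkeeping of the multiplicative constants and the small additive overhead from the covering.
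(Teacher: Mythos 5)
Your Markov step is fine: on $Z_x(M,N,t)$ (the case $\d=1$) every sampled time, in particular the $N$-th, lands outside $X_{\leq M}$, so $\a(g_{Nt}h_sx)>M$ there; combined with $Z_x(M,N,t)\subseteq Z_x(M,N-1,t)$ and Proposition~\ref{propn: integral estimate over Z_x} this does give the measure bound $|Z_x(M,N,t)|\leq 2C_1(2a)^N\max\{1,\a(x)/M\}$. The gap is the final step: a Lebesgue measure bound on a subset of $[-1,1]$ does \emph{not} control the number of grid intervals of length $2e^{-2tN}$ that the set meets. A set of measure zero (one point in each grid cell, say) meets all $e^{2tN}$ cells, so the claim that the count is at most $|Z_x(M,N,t)|e^{2tN}/2+O(1)$ is false for a general measurable set, and nothing you have established about $Z_x(M,N,t)$ rules out such spreading. (Also, you cannot ``inflate $C_1$'': it is the fixed constant from the Proposition that the corollary's conclusion refers to.)

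The intended argument (the paper simply invokes \cite[Corollary 5.2]{KKLM-SingSystems}) runs the Chebyshev step over whole grid intervals rather than points, using Property (2) of the height function. If $J$ is a grid interval of radius $e^{-2tN}$ containing some $s_0\in Z_x(M,N,t)$, then for every $s\in J$ and every $1\leq l\leq N$ one has $g_{lt}h_sx=h_{e^{2lt}(s-s_0)}\,g_{lt}h_{s_0}x$ with $|e^{2lt}(s-s_0)|\leq 2$, so $\a(g_{lt}h_sx)\geq C_2^{-1}\a(g_{lt}h_{s_0}x)>M/C_2$. Hence $J$ lies in $Z_x(M/C_2,N-1,t)$ and $\a(g_{Nt}h_sx)>M/C_2$ on all of $J$. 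Summing $\int_J\a(g_{Nt}h_sx)\,ds\geq (M/C_2)\cdot 2e^{-2tN}$ over the disjoint intervals $J$ meeting $Z_x(M,N,t)$ and applying Proposition~\ref{propn: integral estimate over Z_x} with $M/C_2$ in place of $M$ --- this is precisely why the hypothesis is $M>C_2^2b/a$ rather than $M>C_2b/a$, a point your write-up uses only to absorb the $b$-term --- yields the stated count. Your proposal could be repaired along the same lines (bound the measure of $Z_x(M/C_2,N,t)$ and note that every grid cell meeting $Z_x(M,N,t)$ is contained in it), but as written the passage from measure to covering does not go through.
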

  
  \begin{proof}
  	By taking into account the different upper bound obtained in Proposition~\ref{propn: integral estimate over Z_x}, the proof is identical to that of~\cite[Corollary 5.2]{KKLM-SingSystems}.
  \end{proof}

   \begin{proposition} [cf. Theorem 1.5 in~\cite{KKLM-SingSystems}]\label{propn: main covering statement}
    Suppose $x\in X\backslash Y$.
    Then, for any $\d,a\in (0,1)$ there exist $M_0 > 1$ and $t_0>1$, depending only on $a$, such that
    for all $M>M_0$, all $t> t_0$ and all $N\in \N$, the set $Z_x(X_{\leq M},N,t,\d)$ can be covered with $2^N C_1^N (2a)^{\d N} e^{2tN} C(x)$ intervals 
		of radius $e^{-2tN}$, where $C_1 > 1$ is as in Corollary~\ref{cor: first covering lemma} and
        $C(x) = \max\set{1, \frac{\a(x)}{M}}$.
	\end{proposition}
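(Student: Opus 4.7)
The plan is a union bound over the combinatorial patterns of excursions outside the compact set $X_{\leq M}$, followed by a step-by-step refinement of the covering scheme of Corollary~\ref{cor: first covering lemma} along each such pattern.

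First, observe that any $s \in Z_x(X_{\leq M}, N, t, \d)$ must satisfy $\a(g_{lt} h_s x) > M$ for at least $\lceil \d N \rceil$ values of $l \in \{1, \dots, N\}$. Hence
\[
Z_x(X_{\leq M}, N, t, \d) \subseteq \bigcup_{\substack{I \subseteq \{1, \dots, N\} \\ |I| \geq \lceil \d N \rceil}} Z_x^I, \qquad Z_x^I := \set{s \in [-1,1] : \a(g_{lt} h_s x) > M \text{ for all } l \in I}.
\]
There are at most $2^N$ such subsets $I$, which accounts for the prefactor $2^N$ in the desired bound.

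Next, I cover each $Z_x^I$ separately. Enumerate $I = \{l_1 < l_2 < \cdots < l_k\}$ with $k \geq \lceil \d N \rceil$ and set $n_j := l_j - l_{j-1}$ (with $l_0 := 0$), so that $n_j t \geq t > t_0$. The argument now mirrors that of Corollary~\ref{cor: first covering lemma} but is executed along the non-uniform subdivision $(n_1 t, n_2 t, \dots, n_k t)$ of $[0, Nt]$. At each ``bad'' time $l_j t$, the constraint $\a(g_{l_j t} h_s x) > M > C_2^2 b/a$, combined with the Gaussian integral estimate of Lemma~\ref{lemma: integral estimate for horocycle Gaussian}, contributes a contractive factor of $2a$ exactly as in Proposition~\ref{propn: integral estimate over Z_x}. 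The argument is insensitive to the non-uniformity of the step sizes $n_j t$ since each exceeds $t_0$. Converting the resulting iterated $L^1$-bound into a cover at scale $e^{-2Nt}$ by the standard subdivision yields a cover of $Z_x^I$ by at most a uniform constant times $C_1^N (2a)^k e^{2Nt} C(x)$ intervals of radius $e^{-2Nt}$, where the factor $C_1^N$ absorbs all cumulative prefactors arising from the $N$-step iteration.

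Finally, summing over all at most $2^N$ subsets $I$ and using $(2a)^k \leq (2a)^{\d N}$ (valid when $2a \leq 1$ and $k \geq \d N$) produces the desired total of $2^N C_1^N (2a)^{\d N} e^{2Nt} C(x)$ intervals. In the complementary regime $a \geq 1/2$, this bound already exceeds the trivial cover of $[-1,1]$ by $\lceil e^{2Nt} \rceil$ intervals of radius $e^{-2Nt}$, so the claim holds automatically and the content of the proposition lies entirely in the small-$a$ regime. The main technical obstacle is the extension of the $L^1$-estimate underlying Proposition~\ref{propn: integral estimate over Z_x} to non-uniform step sizes $n_j t$; this reduces to applying Lemma~\ref{lemma: integral estimate for horocycle Gaussian} with varying values of $t$, which is immediate since the lemma is valid uniformly for any step exceeding $t_0$.
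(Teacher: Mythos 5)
Your proof is correct, but it is organized differently from the paper's. The paper's proof simply runs the induction scheme of \cite[Theorem 1.5]{KKLM-SingSystems} with Corollary~\ref{cor: first covering lemma} as the base case: the recursion peels off the final excursion outside $X_{\leq M}$, applies the inductive hypothesis to the initial segment, and refines the resulting intervals with the uniform-step covering lemma; the only modification the paper records is that, because Corollary~\ref{cor: first covering lemma} holds for all basepoints, the KKLM step of enlarging $M$ depending on $t$ can be skipped. You instead enumerate the excursion patterns up front via a union bound over the at most $2^N$ subsets $I$ of bad times, and cover each $Z_x^I$ in one shot by a non-uniform-step variant of Proposition~\ref{propn: integral estimate over Z_x}, applying Lemma~\ref{lemma: integral estimate for horocycle Gaussian} with steps $n_j t > t_0$ only at the times in $I$; this is legitimate precisely because the constants $a$, $b$, $t_0$ in that lemma are uniform over all steps exceeding $t_0$, as you observe, and the constraint $\a(g_{l_j t} h_s x) > M > C_2^2 b/a$ at each such time supplies the contraction factor $2a$, while the constraint at the last time $l_k$ feeds the Chebyshev step. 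Your route makes the origin of the factors $2^N$ and $(2a)^{\d N}$ completely transparent and avoids the recursion bookkeeping, but it obliges you to actually carry out, rather than cite, the variable-step iterated integral estimate and the Chebyshev-plus-subdivision conversion to a cover at scale $e^{-2tN}$; these are routine given Section 3 but are not literally contained in Corollary~\ref{cor: first covering lemma}, which only treats consecutive excursions. A minor point: for a fixed $I$ your argument yields a prefactor of order $C_1 C_2$ rather than $C_1^N$, which only helps for $N\geq 2$; for $N=1$ one should enlarge $C_1$ in the statement to absorb $C_2$, a harmless adjustment also implicit in the paper's version.
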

    
  \begin{proof} 
  We now describe the modifications needed on the proof of~\cite[Theorem 1.5]{KKLM-SingSystems} in order to prove the proposition.
   In the same notation as in Corollary~\ref{cor: first covering lemma}, we take $M_0 = C_2^2 b/a$ and let $M>M_0$. Then, the rest of the proof follows the same induction scheme used in~\cite[Theorem 1.5]{KKLM-SingSystems} with the base case being Corollary~\ref{cor: first covering lemma}.
   The only modification on the scheme is to skip the steps involving enlarging $M$ depending on the largeness of the step size $t$ and instead work directly with the bound on covers provided by the preceding corollary.
   
   In particular, in the second case of the inductive step in~\cite[Theorem 1.5]{KKLM-SingSystems}, $M$ is assumed large enough depending on $t$ to apply their covering result~\cite[Corollary 5.2]{KKLM-SingSystems} which only applies to $x\in X$ with $\a(x)$ sufficiently large.
   Since Corollary~\ref{cor: first covering lemma} above works for all $x$, such restriction on $M$ is not needed.
  \end{proof}

 \subsection{Proof of Theorem~\ref{thrm: H.dim of non-recurrent directions,fixed compact set}}\label{proof_thrm_H.dim,fixed compact}   

Let $C_1$ be the constant in Proposition~\ref{propn: main covering statement}. Fix $a \in (0,1/2)$ such that $\d > -\ln(2C_1)/\ln(2a)$.

        Let $M_0= M_0(a)$ and $t_0= t_0(a)$ be as in Proposition~\ref{propn: main covering statement}.
        Let $M>M_0$ and $t>t_0$. Define $Q = X_{\leq M} $, $\g = -\ln(2a)/2t$ and $\b = \ln (2C_1)/2t $, i.e., $2C_1 = e^{2\b t}$.
        Then, by Proposition~\ref{propn: main covering statement}, for all $N\in \N$, we can cover the set $Z_x(Q,N,t,\d)$ with
        $C' e^{2tN(1+ \b -\d\g)  }$ intervals of radius $e^{-2tN}$, for some constant $C'$ depending only on $x$.
        Note that $C'$ is finite by our assumption that $x \in X\backslash Y$.
        
        Therefore, by Lemma~\ref{lemma: H.dim of limsup set via covering lemma}, the Hausdorff dimension of the set $\limsup_{N\rightarrow \infty} Z_x(Q,N,t,\d)$ is at most
        $1 + \b -\d\g >0$.
       By the choice of $a$, this upper bound is strictly less than $1$. 
       Finally, we note that by definition of $\b$ and $\g$, our upper bound is uniform over all $x\in X\backslash Y$.
       
        \begin{remark} \label{remark: Hdim bound formula}
    The proofs of Proposition~\ref{propn: main covering statement} and Theorem~\ref{thrm: H.dim of non-recurrent directions,fixed compact set} show that one can choose $M_0 = c'' b e^{c'/ \d} $
    in the conclusion of Theorem~\ref{thrm: H.dim of non-recurrent directions,fixed compact set},
    for some positive constants $c'$ and $c''$, where $b$ is as in Definition~\ref{defn: height functions}.
    \end{remark}
    
    \section{Hausdorff Dimension of The Divergent on Average Directions}\label{HD_divergent_on_average}

In this section we prove Theorem~\ref{thrm: divergent on average for horocycles} which implies Theorem~\ref{thrm: divergent on average} (see Section~\ref{section_general_approach}).

\begin{proof}[Proof of Theorem~\ref{thrm: divergent on average for horocycles}]
	Consider the set
    \[ Z = \set{s\in [-1,1]: g_t h_s \omega \textrm{ diverges on average} } \].
    
    Notice that for all compact sets $Q \subset \mc{H}_1(\a)$, all $0 \leq \d \leq 1$ and all $t>0$,
    \begin{equation*}
    	Z \subseteq \liminf_{N\r \infty} Z_\omega(Q,N,t,\d):=\bigcup_{N_0=1}^{\infty}\bigcap_{N=N_0}^{\infty}Z_\omega(Q,N,t,\d)
    \end{equation*}
    where $Z_\omega(Q,N,t,\d)$ is defined in~\eqref{defn: set of non-recurrent directions}.
    
	Building on earlier work of~\cite{EskinMasur}, it is shown in~\cite{Athreya2006} that the $\mrm{SL}_2(\R)$ action on $X=\mc{H}_1(\a)$ satisfies the contraction hypothesis with respect to $Y = \emptyset$.
    The precise statement is the following:
    \begin{lemma} [Lemma 2.10 in~\cite{Athreya2006}] 
    \label{lemma: Athreya integral estimate}
    	For every $0<\eta<1$, there exists a function $\a_\eta\colon~X~\r~\R^+$ satisfying item $(1),(2)$ and $(4)$ of Definition~\ref{defn: height functions}.
        Moreover, there are constants $c = c(\eta)$, and $t_0 = t_0(\eta)>0$
        	so that for all $t > t_0$,
        	there exists $b= b(t,\eta)>0$ such that for all $x\in X$,
            \begin{align}
            \label{eqn: Athreya circle integral estimate}
            	\frac{1}{2\pi} \int_0^{2\pi} \a_\eta(g_t r_\theta x) \;d\theta \leq ce^{-(1-\eta) t} \a_\eta(x) + b 
            \end{align}
    \end{lemma}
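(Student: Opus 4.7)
The proof constructs $\alpha_\eta$ from the reciprocal lengths of short saddle connections on the flat surface, in the spirit of Eskin--Masur. Fix a small constant $\epsilon_0 > 0$ depending on the stratum, and for each translation surface $\omega$ let $\Lambda_{\epsilon_0}(\omega)$ be a maximal collection of pairwise non-crossing saddle connections of $\omega$ with holonomy of length at most $\epsilon_0$. The plan is to set
\[
\alpha_\eta(\omega) := 1 + \sum_{v \in \Lambda_{\epsilon_0}(\omega)} \frac{1}{|v|^{1-\eta}}.
\]
The $\mathrm{SO}(2)$-invariance is automatic since rotations preserve holonomy lengths. Property (1) of Definition~\ref{defn: height functions} (with $Y = \emptyset$) is trivial since $\alpha_\eta$ is finite on $X$. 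Property (4) is Mumford's compactness criterion: a bound on $\alpha_\eta$ gives a positive lower bound on the systole, which characterizes compact subsets of the stratum. Property (2) follows from the fact that $g_t$ distorts each holonomy vector by a factor in $[e^{-t}, e^t]$, giving $\alpha_\eta(g_t \omega) \leq e^{(1-\eta)t}\alpha_\eta(\omega)$ up to bounded corrections from saddle connections that cross the threshold $\epsilon_0$.

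The heart of the proof is the circle integral estimate, which I would establish in two stages. First, a single-vector estimate: for every $v \in \mathbb{C}$ and every sufficiently large $t$,
\[
\frac{1}{2\pi} \int_0^{2\pi} \min\!\left(\frac{1}{|g_t r_\theta v|^{1-\eta}},\, \epsilon_0^{-(1-\eta)}\right) d\theta \leq \frac{c(\eta)\, e^{-(1-\eta)t}}{|v|^{1-\eta}} + b'(\eta),
\]
with $c(\eta)$ independent of $t$ and $v$. This is an explicit two-dimensional computation using the $KAK$ decomposition of $g_t r_\theta$: the map $\theta \mapsto |g_t r_\theta v|$ attains its minimum on an arc of angular width $\sim e^{-t}$ centered at the contracting direction of $g_t$, where the truncation by $\epsilon_0^{-(1-\eta)}$ contributes a uniformly bounded amount $b'$; on the complementary arc $|g_t r_\theta v| \gtrsim e^t|v|$, and integration produces the main $e^{-(1-\eta)t}/|v|^{1-\eta}$ term. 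Second, I would sum this single-vector estimate over the saddle connections of $\omega$: since saddle connections of $g_t r_\theta \omega$ correspond bijectively to those of $\omega$ via $v \mapsto g_t r_\theta v$, summing over $v \in \Lambda_{\epsilon_0}(\omega)$ produces the decaying term, while contributions from the (longer) saddle connections of $\omega$ are uniformly bounded and absorbed into the constant $b = b(t,\eta)$.

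The main obstacle is twofold. First, obtaining the sharp exponent $e^{-(1-\eta)t}$ in the single-vector estimate exploits precisely that $1 - \eta < 1$, so that the integrable singularity over the critical arc is absorbed into $b'$; with $\eta = 0$ one would only recover the classical rate $e^{-t/2}$ of Eskin--Masur. Second, one must carefully bound the total contribution of those saddle connections of $\omega$ whose image under $g_t r_\theta$ becomes short for some $\theta$: a priori many such saddle connections could exist. This is controlled by a quadratic counting estimate of Eskin--Masur--Vorobets type bounding, in terms of $t$ and the stratum, the number of saddle connections on $\omega$ of length at most $e^t \epsilon_0$; these counts enter the constant $b(t,\eta)$ and account for its dependence on $t$, while the crucial point is that the $e^{-(1-\eta)t}$ prefactor of the main term depends on neither $t$ nor $\omega$.
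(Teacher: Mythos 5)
The paper does not prove this lemma at all---it is quoted as a black box from \cite{Athreya2006}, whose proof in turn rests on the Eskin--Masur machinery---so your proposal should be measured against that source. Your single-vector circle estimate is fine (indeed, with exponent $1-\eta<1$ the $\theta$-singularity is integrable and one gets the rate $e^{-(1-\eta)t}$ directly), and the overall skeleton of "sum a per-saddle-connection estimate" is the right spirit. But there is a genuine gap at exactly the step you delegate to counting: the saddle connections that are longer than $\epsilon_0$ on $x$ but become short on $g_t r_\theta x$ for some $\theta$ cannot be absorbed into an $x$-independent additive constant $b(t,\eta)$. First, the quadratic bound $N(x,R)\leq c(x)R^2$ has a constant that blows up as $x$ degenerates (a cylinder of circumference $c\to 0$ and bounded height carries $\gtrsim 1/c$ saddle connections of length $O(1)$), and the uniform-on-compacta refinements of Eskin--Masur type are themselves outputs of precisely this recurrence machinery, so invoking them here is circular; Masur's a priori bound only gives an $x$-dependent constant, which destroys the required uniformity "for all $x\in X$." Second, and more fundamentally, the contribution of these newly short saddle connections is not $O(1)$ at all: in the thin-cylinder example it is of size comparable to $e^{-(1-\eta)t}\a_\eta(x)$, i.e.\ it grows without bound as $x$ degenerates, so it must be absorbed into the multiplicative main term, not into $b$. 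Showing that this absorption works uniformly over the whole stratum, for every configuration of short saddle connections, is the actual content of the lemma.

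This is why the function constructed in \cite{Athreya2006}, following \cite{EskinMasur}, is not a single-scale sum of $1/\ell^{1\pm\eta}$ over one collection of short saddle connections (nor a power of the systole), but a sum over a hierarchy of "complexes" of short, topologically independent saddle connections of products of reciprocal lengths with carefully tuned exponents; the circle-averaging inequality is then proved by induction on the complexity of the complex, which is exactly the mechanism that dominates the newly-short contributions by the decaying term uniformly in $x$. Your proposal replaces this core induction by a counting estimate that is both unavailable uniformly and quantitatively insufficient, so as written the key inequality \eqref{eqn: Athreya circle integral estimate} is not established. (Smaller issues: a "maximal pairwise non-crossing collection" is not canonical, so your $\a_\eta$ is not obviously well defined, $SO(2)$-invariant, or proper---properness needs the systole realizer, or a surrogate, to be forced into the sum; and the side remark that $\eta=0$ would recover a rate $e^{-t/2}$ is inaccurate, though harmless.)
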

    
    By using the integral estimate in~\eqref{eqn: Athreya circle integral estimate} in place of the one in~\eqref{eqn: height function circle integral estimate}, we can prove the following analogue of Lemma~\ref{lemma: integral estimate for horocycle Gaussian}.
    
    \begin{corollary} \label{cor: gaussian integral estimate for Athreya function}
    For every $0<\eta<1$, let $\a_\eta\colon~X~\r~\R^+$ be as in Lemma~\ref{lemma: Athreya integral estimate}.
    Let $\s$ be as in Definition~\ref{defn: height functions}.
        Then, there are constants $c' = c'(\eta,\s)$, and $t_0 = t_0(\eta)>0$
        	so that for all $t > t_0$,
        	there exists $b'= b'(t,\eta)>0$ such that for all $x\in X$,
            \begin{equation}
            \label{eqn: Athreya horocycle integral estimate Gaussian}
            	\int_{\R} \a_\eta(g_t h_s x) \;d\rho_1(s) \leq c'e^{-(1-\eta) t} \a_\eta(x) +b'
            \end{equation}
        where $d\rho_1(s) = e^{-s^2}ds$ is a mean $0$, variance $1$ Gaussian.
        In particular, if $\a_\eta(x) > \frac{b' e^{(1-\eta)t}}{c'}$, then
        \begin{equation}
            \label{eqn: Athreya horocycle integral estimate Gaussian for high x}
            	\int_{\R} \a_\eta(g_t h_s x) \;d\rho_1(s) \leq 2 c'e^{-(1-\eta) t} \a_\eta(x) 
            \end{equation}
    \end{corollary}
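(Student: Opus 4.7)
The plan is to mirror, almost verbatim, the proof strategy of Lemma~\ref{lemma: integral estimate for horocycle Gaussian}, but substituting the Athreya circle estimate~\eqref{eqn: Athreya circle integral estimate} in place of the generic estimate~\eqref{eqn: height function circle integral estimate} used there. The key point to track is that the exponential factor $e^{-(1-\eta)t}$ must survive all intermediate manipulations; everything else is essentially bookkeeping.

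First, I would establish an intermediate interval version: for every $0<\eta<1$ there exist $\bar c=\bar c(\eta,\s)$ and $\bar t_0=\bar t_0(\eta)>0$ such that for all $t>\bar t_0$ and all $x\in X$,
\[
\int_{-1}^{1}\a_\eta(g_t h_s x)\,ds \leq \bar c\, e^{-(1-\eta)t}\a_\eta(x)+\bar b,
\]
for some $\bar b=\bar b(t,\eta)$. This is proved exactly as in Lemma~\ref{lemma: integral estimate for horocycle interval}: factor $g_tr_\theta = \check{h}_{-e^{-t}\tan\theta}\,g_{\log\cos\theta}\,g_t h_{\tan\theta}$ for $\theta\in[-\pi/4,\pi/4]$, use the $KAK$-decomposition, property (2) of the height function, and $K$-invariance to obtain $\a_\eta(g_th_{\tan\theta}x)\leq c_0\,\a_\eta(g_tr_\theta x)$ for a constant $c_0$ independent of $t$, then integrate over $\theta\in[-\pi/4,\pi/4]$ using~\eqref{eqn: Athreya circle integral estimate} and change variables $s=\tan\theta$ with the Jacobian bounded uniformly on $[-\pi/4,\pi/4]$.

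Next, I would extend from an interval to the whole line against a Gaussian exactly as in Lemma~\ref{lemma: integral estimate for horocycle Gaussian}. Decompose $\R = \bigsqcup_{n\in 2\Z-1}([0,2]+n)$, translate each piece back to $[-1,1]$ via $h_{n+1}$, and apply the interval bound above. The polynomial growth estimate $\a_\eta(h_{n+1}x)\leq C|n+2|^{\s}\a_\eta(x)$ (obtained by applying the $KAK$-decomposition together with property (2) to $h_{n+1}$) together with the Gaussian weight $e^{-(f(n))^2}$ yields a convergent sum
\[
\int_\R \a_\eta(g_t h_s x)\,d\rho_1(s) \leq \bar c\,e^{-(1-\eta)t}\,\a_\eta(x)\,C\!\!\sum_{n\in 2\Z-1}e^{-(f(n))^2}|n+2|^{\s}+\bar b\!\!\sum_{n\in 2\Z-1}e^{-(f(n))^2}.
\]
Setting $c'$ equal to $\bar c$ multiplied by the first sum (which depends on $\eta$ and $\s$) and $b'$ equal to $\bar b$ multiplied by the second (which depends on $t$ and $\eta$ through $\bar b$) yields~\eqref{eqn: Athreya horocycle integral estimate Gaussian}.

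Finally, the ``in particular'' assertion~\eqref{eqn: Athreya horocycle integral estimate Gaussian for high x} is immediate: if $\a_\eta(x) > b'e^{(1-\eta)t}/c'$ then $b' < c'e^{-(1-\eta)t}\a_\eta(x)$, so the right-hand side of~\eqref{eqn: Athreya horocycle integral estimate Gaussian} is bounded by $2c'e^{-(1-\eta)t}\a_\eta(x)$. I expect no genuine obstacle here; the only subtle point is verifying that $c'$ depends only on $\eta$ and $\s$ (and not on $t$), which it does because the exponential decay $e^{-(1-\eta)t}$ produced by~\eqref{eqn: Athreya circle integral estimate} passes through both the $KAK$-change-of-coordinates step and the Gaussian decomposition untouched, with the $t$-dependence getting absorbed entirely into the additive constant $b'$.
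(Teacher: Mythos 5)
Your proposal is correct and is precisely the argument the paper intends: the paper itself only asserts that the corollary follows "by using the integral estimate in~\eqref{eqn: Athreya circle integral estimate} in place of the one in~\eqref{eqn: height function circle integral estimate}" in the proofs of Lemmas~\ref{lemma: integral estimate for horocycle interval} and~\ref{lemma: integral estimate for horocycle Gaussian}, and you have filled in exactly those two steps, correctly tracking that the $KAK$ constant $c_0$ and the Gaussian sums are independent of $t$, so that only $b'$ inherits the $t$-dependence from Lemma~\ref{lemma: Athreya integral estimate}.
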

    
    As a result, we deduce the upper bound on $dim_H(Z)$ from a covering result for the sets
    $Z_\omega(Q,N,t,\d)$, where $Q$ will be a sublevel set of a height function $\a_\eta$ for $\eta\in (0,1)$.
    Moreover, Equation~\eqref{eqn: Athreya horocycle integral estimate Gaussian for high x} is the exact analogue of the integral estimate in~\cite[Corollary $4.2$]{KKLM-SingSystems}.
    Fixing any choice of the parameter $\eta \in (0,1)$, 
    the rest of the proof of~\cite[Theorem 1.1]{KKLM-SingSystems} applies verbatim in our setting to get that $dim_H(Z) \leq 1- \frac{\d(1-\eta)}{2}$.
     By sending $\d$ to $1$ and $\eta$ to $0$, we get the theorem.   
\end{proof}

\section{Uniformity in Birkhoff's Theorem}\label{uniformity_B}

  The purpose of this section is to prove a uniform version of \cite[Theorem 1.1]{ChaikaEskin} due to Chaika and Eskin on the pointwise equidistribution of Teichm\"{u}ller geodesics with respect to the Lebesgue measure on a horocycle arc.
  This step is crucial for our Hausdorff dimension estimates in the large deviation problems.

  Throughout this section, suppose $\mc{M} \subset \mc{H}_1(\a)$ is a fixed $\mathrm{SL}_2(\R)$ invariant affine submanifold.
  For an affine invariant submanifold $\mc{N}\subset \mc{H}_1(\a)$,
  we denote by $\nu_{\mc{N}}$ the unique $\mathrm{SL}_2(\R)$ invariant Lebesgue probability measure supported on $\mc{N}$. For any bounded continuous function $\phi$ on $\mathcal H_1(\alpha)$, let $\nu_{\mathcal N}(\phi) = \int_{\mc{H}_1(\a)}\phi \;d\nu_{\mathcal N}$.
  For any $T>0$, $s\in [-1,1]$ and $x\in \mc{H}_1(\a)$, we denote by $A^T_s(x)$ the measure defined by
  \begin{align} \label{defn: orbit measures}
  A^T_s(x)(\vp) := \frac{1}{T} \int_0^T \vp(g_th_sx)\;dt
  \end{align} 
  for any bounded continuous function $\vp$ on $\mc{M}$.
  Similarly, for $N\in \N$ and $l>0$, we define the measure $S_s^N(x)$ in the following way.
  \begin{align} \label{defn: discrete orbit measures}
  	S_s^N(x)(\vp) := \frac{1}{N} \sum_{n=1}^N \vp(g_{ln}h_s x)
  \end{align}
  Notice that $S_s^N(x)$ depends on the step size $l$, though we do not emphasize this in the notation.
  
  For any $h\in \mathrm{SL}_2(\mathbb R)$, we define $hA_s^T(x)$ and $hS_s^N(x)$ in the following way.
   \begin{align} \label{defn: orbit measures_and_discrete_a}
  hA^T_s(x)(\vp) := \frac{1}{T} \int_0^T \vp(hg_th_sx)\;dt \qquad \text{ and } \qquad 	hS_s^N(x)(\vp) := \frac{1}{N} \sum_{n=1}^N \vp(hg_{ln}h_s x)
  \end{align} 
  
   Let $C_c^{\infty}(\mc{H}_1(\a))$ be a space of smooth compactly supported functions on $\mc{H}_1(\a)$. For any $\vp\in C_c^{\infty}(\mc{H}_1(\a))$, we define a Sobolev norm $\mc{S}(\vp)$ of $\vp$ by 
    \begin{align} \label{eqn: sobolev norm upper bound}
    	\mc{S}(\vp): = \norm{\vp}_{Lip} + \norm{\vp}_\infty,
    \end{align} 
    where $\norm{\phi}_{Lip}$ and $\norm{\vp}_\infty$ denote the Lipschitz constant and the maximum on $\mc{H}_1(\a)$ of $\vp$, respectively.
    Then, for all $g\in \mathrm{SL}_2(\R)$ and all $x\in \mc{H}_1(\a)$, one has
    \begin{align*} 
    	|\vp(gx) - \vp(x)| \leq \mc{S}(\vp) d(g,Id)
    \end{align*}
    where $d(.,.)$ denotes some metric on $\mathrm{SL}_2(\R)$.
  
  The following theorem is the main result of this section.

	\begin{theorem} \label{thrm: quantitative Chaika-Eskin}
    Suppose $f$ is a bounded continuous function on $\mc{H}_1(\a)$. 
        Then, for any $\e>0$ there exist finitely many proper affine $\mathrm{SL}_2(\R)$ invariant submanifolds of $\mc{M}$, denoted by $\mc{N}_1,\dots, \mc{N}_l$ such that
        for any compact set $F \subset \mc{M}\backslash \left(\cup_{i=1}^l \mc{N}_i \right)$
        and any $\kappa >0$, there exists $T_0 = T_0(F,\kappa, \e, f)>0$ such that
        for all $x\in F$ and all $T>T_0$,
        \begin{align}
        	\left| \set{s\in [-1,1]: \left| A^T_s(x)(f) - \nu_{\mc{M}}(f)  \right|  \geqslant \e }   \right| < \kappa
        \end{align}
	\end{theorem}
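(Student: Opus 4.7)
The plan is to control the $L^2$ variance of $s \mapsto A^T_s(x)(f)$ over $s \in [-1,1]$ uniformly in $x$ on compact subsets of $\mc{M} \setminus \bigcup_i \mc{N}_i$, and then conclude via Chebyshev's inequality. Setting $V_T(x) := \int_{-1}^1 \big( A^T_s(x)(f) - \nu_{\mc{M}}(f) \big)^2 ds$, we have
\[ \Big| \set{s \in [-1,1] : |A^T_s(x)(f) - \nu_{\mc{M}}(f)| \geq \e} \Big| \leq V_T(x)/\e^2, \]
so it is enough to prove that $V_T(x) \to 0$ as $T \to \infty$, uniformly in $x$ on any compact $F \subset \mc{M} \setminus \bigcup_i \mc{N}_i$.

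\textbf{Choice of submanifolds and reductions.} First, I would use the Athreya-type integral estimate (Lemma~3.6) together with a standard approximation argument to reduce to $f$ Lipschitz and compactly supported with $\norm{f}_\infty \leq 1$. The finite list $\mc{N}_1,\dots,\mc{N}_l$ is chosen via the isolation of affine measures, a consequence of the Eskin--Mirzakhani--Mohammadi classification combined with Filip's algebraicity result: only finitely many proper affine $\mc{N} \subsetneq \mc{M}$ can satisfy $|\nu_{\mc{N}}(f) - \nu_{\mc{M}}(f)| \geq \e/3$, and these are the $\mc{N}_i$. Consequently, for any $x \in F$, the orbit closure $\overline{\mathrm{SL}_2(\R) x}$ is either $\mc{M}$ or a proper affine submanifold $\mc{N}'$ with $|\nu_{\mc{N}'}(f) - \nu_{\mc{M}}(f)| < \e/3$.

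\textbf{Mean and second moment.} Expanding $V_T(x)$, the cross and constant terms are handled by showing $\int_{-1}^1 A^T_s(x)(f) \, ds \to 2\nu_{\mc{M}}(f)$ uniformly in $x \in F$: this follows from the uniform version of Eskin--Mirzakhani--Mohammadi's horocycle arc equidistribution applied to $\frac{1}{2}\int_{-1}^1 f(g_t h_s x)\,ds$, then averaging in $t$. The key second-moment term
\[ \int_{-1}^1 A^T_s(x)(f)^2 \, ds = \frac{1}{T^2}\int_0^T\!\!\int_0^T \int_{-1}^1 f(g_t h_s x)\, f(g_\tau h_s x)\, ds\, dt\, d\tau \]
is treated by splitting the $(t,\tau)$-square into a diagonal band $|t-\tau| < R$, whose contribution is $O(R/T)$, and its complement. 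On the complement, substituting $y(s) = g_t h_s x$ and $u = \tau - t$, the inner integral becomes $\int_{-1}^1 f(y(s)) f(g_u y(s))\, ds$; uniform horocycle equidistribution applied to the function $y \mapsto f(y) f(g_u y)$ yields convergence to $2\int f \cdot (f \circ g_u) \, d\nu_{\mc{M}}$ uniformly in $x \in F$ as $t \to \infty$. Mixing of the Teichm\"uller geodesic flow on $(\mc{M}, \nu_{\mc{M}})$ then forces the correlation $\int f \cdot (f\circ g_u) \, d\nu_{\mc{M}}$ to tend to $\nu_{\mc{M}}(f)^2$ as $|u| \to \infty$, and Ces\`aro averaging over $(t,\tau)$ produces the limit $2\nu_{\mc{M}}(f)^2$.

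\textbf{Main obstacle.} The principal technical difficulty is that uniform horocycle equidistribution must be applied to the family $\{f \cdot (f \circ g_u)\}_{u \in \R}$, whose Lipschitz norm (and hence Sobolev norm $\mc{S}$ in the sense of the paper) grows exponentially in $u$. To control this, I would use a quantitative version of uniform horocycle equidistribution with explicit polynomial dependence on $\mc{S}$, and choose the diagonal radius $R = R(T)$ growing slowly enough (for instance $R = c \log T$ with $c$ smaller than the reciprocal of the relevant expansion rate) so that the diagonal contribution $O(R/T)$ vanishes while the off-diagonal uniform convergence remains effective. The isolation of affine measures in the choice of the $\mc{N}_i$'s is essential here: without it, the orbit closure of a point $x \in F$ could be a proper affine submanifold whose $\nu$-integral of $f$ differs from $\nu_{\mc{M}}(f)$ by more than $\e$, and no finite exclusion list could possibly suffice.
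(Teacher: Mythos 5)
Your variance/Chebyshev strategy is genuinely different from the paper's argument, which proceeds by contradiction: extract a weak-$\ast$ limit of the measures $A_{s_n}^{T_n}(x_n)$ along a putative bad sequence, prove the limit is $P$-invariant via an elementary $L^2$ decorrelation estimate for $\vp(g_th_sx)-\vp(h_\b g_th_sx)$ (Lemma~\ref{lemma: exponential decay of correlations} and Proposition~\ref{propn: effective U invariance}), invoke the Eskin--Mirzakhani classification of $P$-invariant measures, and rule out bad limit measures using the isolation and avoidance results of Eskin--Mirzakhani--Mohammadi. Your route, however, has a genuine gap at exactly the point you flag as the ``main obstacle.'' The input you need --- equidistribution of $\int_{-1}^1\vp(g_th_sx)\,ds$ toward $2\nu_{\mc M}(\vp)$ with an explicit rate depending polynomially on $\mc S(\vp)$, uniformly over $F$ --- is not available: the Eskin--Mirzakhani--Mohammadi equidistribution theorems rest on a non-effective measure classification, and no effective version with such norm dependence is known in strata. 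This is not a technicality to be engineered around; it is an open problem.

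Moreover, even granting such a hypothetical effective input, the decomposition does not work as stated. On the complement of the diagonal band you must apply equidistribution to $y\mapsto f(y)f(g_uy)$ with $|u|=|\tau-t|\geq R$, and $|u|$ there ranges all the way up to $T$; the Lipschitz norm of $f\circ g_u$ grows like $e^{2|u|}$, i.e.\ up to $e^{2T}$, which no polynomial-in-$t$ rate (with $t\leq T$) can absorb. Shrinking $R$ to $c\log T$ only controls the diagonal band; it does nothing for the large-$|u|$ off-diagonal pairs, which is precisely where the auxiliary norms blow up. What is really needed is a joint equidistribution of the pair $(g_th_sx,\,g_{t+u}h_sx)$ in $\mc M\times\mc M$, uniformly for large $|u|$, and this does not follow from qualitative equidistribution plus mixing of $g_t$ applied separately. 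A smaller inaccuracy: the isolation statement (the paper's Lemma~\ref{lemma: finite collection}) does not assert that only finitely many proper $\mc N$ satisfy $|\nu_{\mc N}(f)-\nu_{\mc M}(f)|\geq\e/3$; it asserts that every such $\mc N$ is contained in one of finitely many members of a fixed collection. By contrast, note that the paper's only $L^2$ computation is applied to the difference $\vp(g_th_sx)-\vp(h_\b g_th_sx)$, whose decorrelation in $|t_1-t_2|$ is elementary because $g_t$ normalizes the horocycle group; no equidistribution input enters at that step.
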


	The proof of Theorem~\ref{thrm: quantitative Chaika-Eskin} (see Section \ref{proof_thm_5.1}) is based on a combination of the techniques used to prove ~\cite[Theorem 1.1]{ChaikaEskin} and ~\cite[Theorem 2.11]{EMM}, paying additional care to the unipotent invariance of limiting distributions.
    Following the same idea, we also prove the following discrete version of Theorem~\ref{thrm: quantitative Chaika-Eskin} (see Section \ref{proof_thm_5.2} for the proof).

    \begin{theorem} \label{thrm: quantitative discrete Chaika-Eskin}
    Suppose $f$ is a bounded continuous function on $\mc{H}_1(\a)$. 
        Then, for any $\e>0$ there exist finitely many proper affine $\mathrm{SL}_2(\R)$ invariant submanifolds of $\mc{M}$, denoted by $\mc{N}_1,\dots, \mc{N}_k$ such that
        for any compact set $F \subset \mc{M}\backslash \left(\cup_{i=1}^k \mc{N}_i \right)$,
        any $\kappa >0$ and all $l >0$,
        there exists $N_0 = N_0(F,\kappa,l,\e, f)>0$ such that
        for all $x\in F$ and all $N>N_0$,
        \begin{align}
        	\left| \set{s\in [-1,1]: \left| S_s^N(x)(f) - \nu_{\mc{M}}(f)  \right|  \geqslant \e }   \right| < \kappa
        \end{align}
    \end{theorem}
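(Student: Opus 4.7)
The approach is to follow the proof of Theorem~\ref{thrm: quantitative Chaika-Eskin} step by step, with the continuous-time empirical measures $A^T_s(x)$ replaced by the discrete-time empirical measures $S^N_s(x)$. The strategy has three stages: (i) establish tightness of the $s$-averaged measures; (ii) identify weak-$*$ accumulation points with affine measures via the Eskin--Mirzakhani--Mohammadi (EMM) classification; (iii) invoke an EMM avoidance principle to produce the finite exceptional list $\mc{N}_1,\dots,\mc{N}_k$ and conclude the limit equals $\nu_\mc{M}$.

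For the first two stages, I plan to work with the $s$-averaged measures
\begin{equation*}
\bar\mu_{N,x} := \tfrac{1}{2}\int_{-1}^1 S_s^N(x)\,ds = \frac{1}{2N}\sum_{n=1}^N \int_{-1}^1 \delta_{g_{ln}h_sx}\,ds.
\end{equation*}
Tightness of $\{\bar\mu_{N,x}\}$ for $x$ in a compact set $F \subset \mc{M}$ follows by iterating Corollary~\ref{cor: gaussian integral estimate for Athreya function} along the $g_l$-orbit. Any weak-$*$ accumulation point $\mu$ is $g_l$-invariant via the telescoping bound $(g_l)_*\bar\mu_{N,x} - \bar\mu_{N,x} = O(1/N)$. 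Moreover, the renormalization relation $g_{ln}h_s = h_{e^{2ln}s}g_{ln}$ rewrites each summand in $\bar\mu_{N,x}$ as a horocycle average over an expanding interval of length $2e^{2ln}$ centered at $g_{ln}x$, and comparing $(h_\sigma)_*\bar\mu_{N,x}$ with $\bar\mu_{N,x}$ leaves only boundary contributions of total size $O(\sigma/N)$, which shows $\mu$ is invariant under the full one-parameter unipotent group $\{h_\sigma\}_{\sigma\in\R}$. By EMM, any such $\mu$ is therefore a convex combination of affine measures on affine invariant submanifolds of $\mc{M}$.

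For the third stage, one applies the EMM avoidance principle in the same form used in the proof of Theorem~\ref{thrm: quantitative Chaika-Eskin}: for fixed $\e$ and $f$, only finitely many proper affine invariant submanifolds $\mc{N}_1,\dots,\mc{N}_k \subsetneq \mc{M}$ can produce an $f$-integral deviation of size $\e/2$ or more, and a weak-$*$ accumulation point can concentrate on some other $\mc{N}$ only if $x$ lies in a shrinking neighborhood of it. Converting weak-$*$ convergence of $\bar\mu_{N,x}$ to $\nu_\mc{M}$ into the required Lebesgue-measure bound on the exceptional set of $s\in[-1,1]$ is then effected by an Egorov-type argument combined with the uniformity in $x \in F$, precisely as in the continuous-time case.

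The main obstacle is the quantitative dependence of $N_0$ on the step size $l$. The iterated height estimates must be applied with the specific time step $l$, and the effective constants in Corollary~\ref{cor: gaussian integral estimate for Athreya function} depend on $l$ through the requirement $l > t_0(\eta)$; keeping this dependence explicit while preserving uniformity over $x \in F$ is the technical heart of the argument. Since every invariance and finiteness step already accommodates $l$ transparently, no new phenomenon beyond the continuous case is expected, and the desired $N_0 = N_0(F,\kappa,l,\e,f)$ can be extracted from a careful rereading of the proof of Theorem~\ref{thrm: quantitative Chaika-Eskin}.
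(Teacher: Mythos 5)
There is a genuine gap at the heart of your strategy: you work throughout with the $s$-averaged measures $\bar\mu_{N,x}=\tfrac12\int_{-1}^1 S_s^N(x)\,ds$, but weak-$*$ convergence $\bar\mu_{N,x}\to\nu_\mc{M}$ only gives $\tfrac12\int_{-1}^1 S_s^N(x)(f)\,ds\to\nu_\mc{M}(f)$, which does \emph{not} bound the Lebesgue measure of $\set{s:|S_s^N(x)(f)-\nu_\mc{M}(f)|\geq\e}$ --- positive and negative deviations in $s$ can cancel in the average (e.g.\ half the interval could deviate by $+\e$ and half by $-\e$ with the average exactly right). The quantity you would need, $\int_{-1}^1|S_s^N(x)(f)-\nu_\mc{M}(f)|\,ds$, is not a linear functional of $\bar\mu_{N,x}$, so no amount of weak-$*$ analysis of the averaged measure produces it; and the ``Egorov-type argument'' you invoke is not what the continuous-time proof does. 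The paper's proof of Theorem~\ref{thrm: quantitative Chaika-Eskin} instead argues by contradiction with \emph{individual} orbit measures: assuming the bad set has measure $\geq\kappa$ for some $x_n\in F$, $N_n\to\infty$, it selects $s_n$ in the bad set avoiding two small exceptional sets and passes to a weak-$*$ limit of $S_{s_n}^{N_n}(x_n)$. For such individual measures, $h_\b$-invariance of the limit is \emph{not} automatic (unlike $g$-invariance); it is exactly the content of the effective unipotent invariance statement (Proposition~\ref{propn: discrete effective U invariance}), which rests on the $L^2$ decorrelation estimate of Lemma~\ref{lemma: exponential decay of correlations} plus Chebyshev and shows that outside a set of $s$ of measure $O(N^{-1/4})$ the measure $S_s^N(x)$ is almost $h_\b$-invariant. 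This ingredient is entirely absent from your proposal, and it is the step that converts the problem into a pointwise-in-$s$ statement.

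A secondary issue: even for your averaged measure, the limit is a priori invariant only under the group $P_l$ generated by $\set{g_{ln}}_{n\in\Z}$ and $\set{h_s}$, not under the full upper-triangular group $P$, so the Eskin--Mirzakhani classification does not apply directly. The paper handles this with Lemma~\ref{lemma: classification of discrete P invariant measures}, which averages over $t\in[0,l]$, classifies the resulting $P$-invariant measure, and uses uniqueness of the ergodic decomposition together with Mautner's phenomenon to upgrade $P_l$-invariance to $\mathrm{SL}_2(\R)$-invariance. Your outline should cite or reprove this step; it is the one place where the discrete case genuinely differs from the flow case, whereas the $l$-dependence of the height-function constants that you identify as ``the technical heart'' is comparatively routine.
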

    
    Theorems~\ref{thrm: quantitative Chaika-Eskin} and~\ref{thrm: quantitative discrete Chaika-Eskin} are in the spirit of the results of~\cite{EMM} and~\cite{DaniMargulis}.
    
    \subsection{Some Finiteness and Recurrence Results}

	In this section we formulate some facts that we use throughout Section~\ref{uniformity_B}.
    
	The following lemma will provide us with the finite exceptional collection of invariant submanifolds in Theorem~\ref{thrm: quantitative Chaika-Eskin}.
	\begin{lemma} [Lemma 3.4 in~\cite{EMM}] \label{lemma: finite collection}
    	Given $\e>0$ and $\vp \in C_c(\mc{H}_1(\a))$.
		There exists a finite collection $\mc{C}$ of proper affine invariant
submanifolds of $\mc{M}$ with the following property:
	if $\mc{N} \subset \mc{M}$ is an affine invariant submanifold
    such that $|\nu_{\mc{N}}(\vp) - \nu_{\mc{M}}(\vp)|\geq \e$, then
    $\mc{N}$ is contained in some $\mc{N}' \in \mc{C}$.
	\end{lemma}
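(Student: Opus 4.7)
The plan is to take $\mc{C}$ to be the collection of inclusion-maximal elements of the family
\[
\mc{F} := \set{\mc{N} \subsetneq \mc{M} : \mc{N} \text{ is an affine invariant submanifold with } |\nu_{\mc{N}}(\vp) - \nu_{\mc{M}}(\vp)| \geqslant \e}.
\]
Two things must then be checked: every element of $\mc{F}$ is contained in some maximal element, and the set of maximal elements is finite. Both rely on the same compactness input from Eskin--Mirzakhani--Mohammadi, namely that (i) any sequence of affine invariant submanifolds admits a subsequence converging in the Hausdorff topology to an affine invariant submanifold, and (ii) the associated affine measures converge weak-$*$ to the affine measure of the limit. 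A basic additional observation from their work is that if $\mc{N}_i \to \mc{N}_\infty$ in this sense and all $\mc{N}_i$ have a bounded dimension, then $\mc{N}_i \subseteq \mc{N}_\infty$ for all sufficiently large $i$; this is the rigidity feature that makes the argument go.

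For the first point, given an ascending chain $\mc{N}_1 \subseteq \mc{N}_2 \subseteq \cdots$ in $\mc{F}$, the dimensions are bounded by $\dim\mc{M} - 1$, so I would pass to a subsequence along which the Hausdorff limit $\mc{N}_\infty$ exists and is affine invariant. The weak-$*$ convergence $\nu_{\mc{N}_i} \to \nu_{\mc{N}_\infty}$ gives $|\nu_{\mc{N}_\infty}(\vp) - \nu_{\mc{M}}(\vp)| \geqslant \e$, hence in particular $\mc{N}_\infty \neq \mc{M}$ and $\mc{N}_\infty \in \mc{F}$. Since each $\mc{N}_i$ is eventually contained in $\mc{N}_\infty$ and the chain is ascending, $\mc{N}_\infty$ is an upper bound for the chain. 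Zorn's lemma then produces a maximal element above any prescribed $\mc{N} \in \mc{F}$.

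For the second point, suppose for contradiction that $\mc{C}$ contains an infinite sequence $\mc{N}'_1, \mc{N}'_2, \ldots$ of distinct maximal elements. Applying the same extraction, a subsequence Hausdorff-converges to some affine invariant $\mc{N}'_\infty$ with $|\nu_{\mc{N}'_\infty}(\vp) - \nu_{\mc{M}}(\vp)| \geqslant \e$, so $\mc{N}'_\infty \in \mc{F}$. The rigidity statement gives $\mc{N}'_i \subseteq \mc{N}'_\infty$ for all large $i$ in the subsequence, and maximality of the $\mc{N}'_i$ forces $\mc{N}'_i = \mc{N}'_\infty$ for infinitely many $i$, contradicting that the $\mc{N}'_i$ were chosen distinct. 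Hence $\mc{C}$ is finite, and by construction every $\mc{N} \in \mc{F}$ is contained in some element of $\mc{C}$.

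The main obstacle is entirely at the level of inputs rather than argument: the proof pivots on the EMM machinery that affine invariant submanifolds are closed under Hausdorff limits, that the affine measures vary continuously in this topology, and that convergence of the measures forces eventual containment of the supports. Once these are granted, the combinatorial skeleton is a short Zorn's lemma plus a compactness contradiction, with no further analytic content.
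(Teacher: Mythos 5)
This lemma is not proved in the paper at all: it is imported verbatim as Lemma 3.4 of Eskin--Mirzakhani--Mohammadi, and your argument is essentially a reconstruction of their proof, resting on exactly the same inputs (weak-$*$ compactness of the space of affine probability measures and the rigidity statement that $\nu_{\mc{N}_i}\to\nu_{\mc{N}_\infty}$ forces $\mc{N}_i\subseteq\mc{N}_\infty$ for large $i$). The only difference is organizational: EMM build a sequence in which no term is contained in an earlier one and derive a contradiction by extracting a convergent subsequence, whereas you route the same contradiction through maximal elements and Zorn's lemma. For the Zorn step, note that a chain need not be a sequence; this is harmless here because there are only countably many affine invariant submanifolds (EMM, Proposition 2.16, also used elsewhere in this paper), so every chain admits a cofinal increasing sequence to which your extraction applies.
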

    
    
    The following proposition shows that most geodesic trajectories avoid any given finite collection of proper submanifolds of $\mc{M}$.
    
    \begin{proposition} [Proposition 3.8 in~\cite{EMM}] 
    \label{propn: avoidance proposition}
    Given $\e>0$ and any (possibly empty) proper affine invariant submanifold $\mc{N}$, there exists an open neighborhood $\Omega_{\mc{N},\e}$ of $\mc{N}$ with the following property:
    the complement of $\Omega_{\mc{N},\e}$ is compact 
    and for any compact set $F\subset \mc{H}_1(\a)\backslash\mc{N}$,
    there exists $T_0 = T_0(F)>0$, so that for any
    $T>T_0$ and any $x\in F$,
    \begin{align}
    	\int_{-1}^1 A^T_s(x)(\chi_{\Omega_{\mc{N},\e}})  \;ds < \e
    \end{align}
    where $\chi_{\Omega_{\mc{N},\e}}$ denotes the indicator function of the set $\Omega_{\mc{N},\e}$.
    \end{proposition}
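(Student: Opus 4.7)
The plan is to construct a suitable height function $\alpha_{\mc{N}}\colon \mc{H}_1(\a) \to [1, \infty]$ that blows up both on $\mc{N}$ and in the cusps of the stratum, and to define $\Omega_{\mc{N},\e}$ as a superlevel set $\{\alpha_{\mc{N}} > M\}$ for a sufficiently large threshold $M = M(F,\e)$. When $\mc{N} = \emptyset$, the Athreya height function recalled in Lemma~\ref{lemma: Athreya integral estimate} already serves. When $\mc{N}$ is nonempty, one combines Athreya's function with a function that diverges as $x$ approaches $\mc{N}$; a natural choice is built from $\mathrm{dist}(x, \mc{N})^{-\b}$ for a suitably small exponent $\b > 0$, with distance measured in local period coordinates transverse to $\mc{N}$. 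The crucial property to establish is a circle integral estimate
\[ \int_0^{2\pi} \alpha_{\mc{N}}(g_t r_\theta x)\, d\theta \leq a\, \alpha_{\mc{N}}(x) + b \]
valid for every $a \in (0,1)$, every $t > t_0(a)$, and every $x \in \mc{H}_1(\a) \setminus \mc{N}$, with $b = b(a, \mc{N})$. Contraction in the cusp direction is Athreya's theorem; contraction in the direction transverse to $\mc{N}$ comes from the quantitative recurrence estimates of Eskin--Mirzakhani--Mohammadi, which encode the fact that $\mrm{SL}_2(\R)$-orbits cannot cluster along a proper affine invariant submanifold.

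Given this, the argument proceeds exactly as in Section~\ref{section: doa}. Converting from circles to horocycle arcs via the $KAK$ decomposition as in the proof of Lemma~\ref{lemma: integral estimate for horocycle interval} yields
\[ \int_{-1}^{1} \alpha_{\mc{N}}(g_t h_s x)\, ds \leq \bar a\, \alpha_{\mc{N}}(x) + \bar b \]
for every $\bar a \in (0,1)$ and every $t$ larger than some $\bar t_0(\bar a)$. Properness of the sublevel sets of $\alpha_{\mc{N}}$ (item $(4)$ of the contraction hypothesis) guarantees that $\mc{H}_1(\a) \setminus \Omega_{\mc{N},\e}$ is compact, while continuity of $\alpha_{\mc{N}}$ off $\mc{N}$ together with $\alpha_{\mc{N}} \equiv \infty$ on $\mc{N}$ shows $\Omega_{\mc{N},\e}$ is an open neighborhood of $\mc{N}$.

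For the final estimate, note $\chi_{\Omega_{\mc{N},\e}} \leq \alpha_{\mc{N}}/M$ pointwise by Markov's inequality. Hence for $x \in F$, Fubini gives
\[ \int_{-1}^{1} A^T_s(x)(\chi_{\Omega_{\mc{N},\e}})\, ds \leq \frac{1}{MT} \int_0^T \!\int_{-1}^{1} \alpha_{\mc{N}}(g_t h_s x)\, ds\, dt. \]
Split the outer integral at $\bar t_0$. The contribution from $t \in [0, \bar t_0]$ is bounded by $\frac{2\bar t_0}{MT}\sup_{t \in [0,\bar t_0],\, s \in [-1,1]} \alpha_{\mc{N}}(g_t h_s x)$, which is finite and uniformly bounded on $F$, and therefore $\to 0$ as $T \to \infty$. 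The portion $t \in [\bar t_0, T]$ is bounded by $(\bar a\, C_F + \bar b)/M$, where $C_F := \sup_{x \in F} \alpha_{\mc{N}}(x) < \infty$ by compactness of $F$ inside $\mc{H}_1(\a)\setminus \mc{N}$ and continuity of $\alpha_{\mc{N}}$ there. Choose $\bar a < 1$ first, then $M$ large so that $(\bar a\, C_F + \bar b)/M < \e/2$, and finally $T_0$ large enough (depending on $F$, $M$, $\bar t_0$) so that the boundary term is less than $\e/2$ for all $T > T_0$. The sum is then below $\e$, as required.

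The principal obstacle is the first step, namely producing $\alpha_{\mc{N}}$ with the circle-integral contraction uniform in $x$. Away from $\mc{N}$ one must carefully balance the rate at which $\alpha_{\mc{N}}$ blows up near $\mc{N}$ against the rate at which $g_t r_\theta$-averages decorrelate from $\mc{N}$; the exponent $\b$ must be chosen small enough so that the expansion of the unstable directions transverse to $\mc{N}$ dominates the growth of the weight $\mathrm{dist}(\cdot, \mc{N})^{-\b}$. This is where the technical heart of~\cite{EMM} enters, and the remainder of the proof reduces to the Markov--Fubini manipulation above.
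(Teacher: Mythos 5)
Your overall strategy is the one the paper itself takes: the continuous statement is simply imported from \cite{EMM}, but the paper proves the discrete analogue (Proposition~\ref{propn: discrete avoidance proposition}) by exactly your route --- take the height function $f_{\mc{N}}$ of \cite[Proposition 2.13]{EMM} with $Y=\mc{N}$, pass to the horocycle drift inequality via Lemma~\ref{lemma: integral estimate for horocycle interval}, define $\Omega_{\mc{N},\e}$ as a superlevel set, and finish with Markov's inequality plus Fubini. Deferring the construction of $\a_{\mc{N}}$ to \cite{EMM} is also what the paper does; your $\mathrm{dist}(\cdot,\mc{N})^{-\b}$ sketch is the right heuristic, though the actual construction is a genuine Margulis-function argument and is rightly treated as a black box here.

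There is one quantifier error to repair. As written, you choose the threshold $M$ (hence the set $\Omega_{\mc{N},\e}=\set{\a_{\mc{N}}>M}$) only after fixing $F$, namely so that $(\bar a\, C_F+\bar b)/M<\e/2$ with $C_F=\sup_F\a_{\mc{N}}$. But the proposition asserts the existence of $\Omega_{\mc{N},\e}$ \emph{before} $F$ is quantified: the neighborhood may depend only on $\mc{N}$ and $\e$, and only $T_0$ may depend on $F$. The fix is the one used in the paper's proof of the discrete version: since the additive constant $\bar b$ in Lemma~\ref{lemma: integral estimate for horocycle interval} is independent of $\bar a$, given $F$ you may take $\bar a=1/C_F$ (paying only in the size of $\bar t_0(\bar a)$, which is absorbed into $T_0(F)$); then $\int_{-1}^1\a_{\mc{N}}(g_th_sx)\,ds\leq 1+\bar b$ uniformly for $x\in F$ and $t>\bar t_0$, and the time average over $[0,T]$ is at most $2+\bar b$ once $T$ is large enough that the $[0,\bar t_0]$ boundary term is at most $1$. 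Now $M$ can be fixed in advance by requiring $(2+\bar b)/M<\e$, so $\Omega_{\mc{N},\e}$ depends only on $\e$ and $\mc{N}$, as required. With this adjustment your argument is correct and coincides with the paper's.
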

    
    
    The following discrete version of Proposition~\ref{propn: avoidance proposition} also holds. 
    
    \begin{proposition} \label{propn: discrete avoidance proposition}
    Given $\e>0$ and any (possibly empty) proper affine invariant submanifold $\mc{N}$, there exists an open neighborhood $\Omega_{\mc{N},\e}$ of $\mc{N}$ with the following property:
    the complement of $\Omega_{\mc{N},\e}$ is compact 
    and for any compact set $F\subset \mc{H}_1(\a)\backslash\mc{N}$
    and any $l > 0$,
    there exists $N_0>0$, so that for any
    $N>N_0$ and any $x\in F$,
    \begin{align}
    	\int_{-1}^1 S^N_s(x)(\chi_{\Omega_{\mc{N},\e}})  \;ds < \e
    \end{align}
    where $\chi_{\Omega_{\mc{N},\e}}$ denotes the indicator function of the set $\Omega_{\mc{N},\e}$.
    \end{proposition}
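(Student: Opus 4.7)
The plan is to adapt the proof of the continuous analogue Proposition~\ref{propn: avoidance proposition} (i.e., Proposition 3.8 of \cite{EMM}) to the discrete-time setting. The key ingredient is a height function $u_{\mc{N}}\colon \mc{H}_1(\a)\to [1,\infty]$ associated to $\mc{N}$: proper on $\mc{H}_1(\a)\setminus \mc{N}$, taking the value $\infty$ exactly on $\mc{N}$, and satisfying an integral estimate of the form
\[
\int_{-1}^{1} u_{\mc{N}}(g_t h_s x)\, ds \;\leq\; c_1 e^{-\sigma t} u_{\mc{N}}(x) + c_2
\]
for all $t>0$ and all $x\in \mc{H}_1(\a)\setminus \mc{N}$, with constants $c_1,c_2,\sigma>0$ depending only on $\mc{N}$. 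Such a height function is supplied by \cite{EMM}; the $r_\theta$-integral version of the estimate given there is converted to the $h_s$-integral version above by the $KAK$-decomposition change-of-variables argument used in the proof of Lemma~\ref{lemma: integral estimate for horocycle interval}.

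With this in hand, the plan is to set $\Omega_{\mc{N},\e}:=\{x\in\mc{H}_1(\a):u_{\mc{N}}(x)>M\}$ for a threshold $M>0$ depending only on $\e$; its complement is the compact sublevel set $\{u_{\mc{N}}\leq M\}$ of the proper function $u_{\mc{N}}$. For a compact set $F\subset \mc{H}_1(\a)\setminus \mc{N}$, write $C_F:=\sup_{x\in F} u_{\mc{N}}(x)<\infty$. Chebyshev's inequality $\chi_{\{u_{\mc{N}}>M\}}\leq u_{\mc{N}}/M$, combined with the integral estimate applied at each time $t=ln$, yields for any $x\in F$, $l>0$, and $N\in \N$:
\begin{align*}
\int_{-1}^1 S^N_s(x)(\chi_{\Omega_{\mc{N},\e}})\, ds
&= \frac{1}{N}\sum_{n=1}^{N}\int_{-1}^1 \chi_{\{u_{\mc{N}}>M\}}(g_{ln}h_s x)\, ds \\
&\leq \frac{1}{NM}\sum_{n=1}^{N}\bigl(c_1 e^{-\sigma l n} C_F + c_2\bigr) \\
&\leq \frac{c_1 C_F}{NM(1-e^{-\sigma l})} + \frac{c_2}{M}.
\end{align*}
Choosing $M>2c_2/\e$ forces the second term below $\e/2$ uniformly in $F$, $l$, and $N$; then, given $F$ and $l$, choosing $N_0$ large enough forces the first term below $\e/2$ as well.

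The genuinely nontrivial input is the existence of the height function $u_{\mc{N}}$ together with its integral estimate; this is precisely what underlies Proposition~\ref{propn: avoidance proposition}, and so carries over with no new work. Once it is granted, the passage from continuous to discrete averages simply replaces the exponentially decaying integral $\int_0^T c_1 e^{-\sigma t}\, dt$ by the geometric series $\sum_{n=1}^N c_1 e^{-\sigma l n}$; both are uniformly bounded in their respective parameters, so in both cases the surviving error is $c_2/M$, which alone dictates how large the neighborhood of $\mc{N}$ must be. Crucially, the step size $l$ affects only the rate at which the transient term decays in $N$, and not the choice of $\Omega_{\mc{N},\e}$ itself, which matches the statement.
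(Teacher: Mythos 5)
Your overall strategy --- an EMM height function for $\mc{N}$, the sublevel set as the complement of $\Omega_{\mc{N},\e}$, Chebyshev, and killing the transient with the $1/N$ factor --- is exactly the paper's. The one genuine gap is the integral estimate you take as input. What \cite{EMM} supplies (and what Lemma~\ref{lemma: integral estimate for horocycle interval} gives after the $KAK$ conversion) is: for each contraction factor $a\in(0,1)$ there exists $t_0(a)$ such that $\int_{-1}^1 u_{\mc{N}}(g_t h_s x)\,ds \leq a\,u_{\mc{N}}(x)+b$ \emph{for all $t>t_0(a)$}. This is weaker than your displayed bound $c_1e^{-\sigma t}u_{\mc{N}}(x)+c_2$ valid for all $t>0$, and you cannot simply quote it in that form. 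Consequently the terms of your sum with $ln\leq t_0$ are not controlled: since the proposition must hold for \emph{every} step size $l>0$, there can be roughly $t_0/l$ such terms, and for those times the horocycle average of $u_{\mc{N}}$ may be as large as a constant multiple of $u_{\mc{N}}(x)$ rather than decaying.

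The repair is exactly what the paper does: bound each of the finitely many terms with $ln\leq t_0$ crudely by a constant times $\sup_{x\in F}u_{\mc{N}}(x)$, using Property (2) of Definition~\ref{defn: height functions} for $g_t$ together with the $KAK$ decomposition to handle $h_s$, $s\in[-1,1]$; then choose $N_0$ so large that the total contribution of these terms, divided by $N$, is at most $1$. The remaining terms are handled by a single application of the contraction estimate with one fixed $a$ (the paper takes $a=1/\sup_F u_{\mc{N}}$ so each such term contributes at most $b+1$). Alternatively one could upgrade the estimate to your exponential-in-$t$ form for all $t>0$ by fixing $a=1/2$ and iterating, but that requires an argument and is not what the cited results state. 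With either repair, the rest of your computation (Chebyshev with $M$ of size $c_2/\e$, so that $\Omega_{\mc{N},\e}$ depends only on $\e$ and $\mc{N}$, and the transient absorbed by $1/N$) is correct and yields the proposition.
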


The proof of Proposition~\ref{propn: discrete avoidance proposition} is similar to that of Proposition~\ref{propn: avoidance proposition}, i.e., it is a consequence of the contraction hypothesis (see Definition~\ref{defn: height functions}) shown in ~\cite[Proposition 2.13]{EMM}. See also~\cite[Lemma 3.1]{EskinMargulis-RandomWalks}.

    \begin{proof}
	By Proposition 2.13 in~\cite{EMM}, there exists a height function $f_\mc{N}$ 
    with $X = \mc{H}_1(\a)$ and $Y = \mc{N}$ (see Definition~\ref{defn: height functions}).
	Let $m_F = \sup\set{f_\mc{N}(x): x\in F}$. Notice that $m_F\geq 1$ as, by definition, $f_{\mathcal N}(x)\geq 1$ for any $x\in\mathcal H_1(\alpha)$. 
	Then, by Lemma~\ref{lemma: integral estimate for horocycle interval},
	there exists $t_1>0$ such that for all $t >t_1$ and all $x\in F$,
	\begin{align*}
		\int_{-1}^{1} f_\mc{N}(g_t h_s x)\;ds \leq \frac{1}{m_F}f_\mc{N}(x) + b
        \leq b+1.
	\end{align*}
    Moreover, by Property (2) in Definition~\ref{defn: height functions}, there exists $M=M(t_1)>0$ such that for all $0\leq t \leq t_1$ and all $x$, 
    \begin{equation*}
    	 f_\mc{N}(g_t h_s x) \leq M f_\mc{N}(x).
    \end{equation*}

    Let $L>0$ be such that $\frac{b+2}{L} < \e$.
    Define a set $\Omega_{\mc{N},\e}$ in the following way.
    \begin{equation*}
    	\Omega_{\mc{N},\e} = \set{x\in\mc{H}_1(\a): f_\mc{N}(x)> L}^o,
    \end{equation*}
    where $\{\cdot\}^o$ denotes the interior of a set.
    Then, by Property $(4)$ in Definition~\ref{defn: height functions}, $\Omega_{\mc{N},\e}$ is an open neighborhood of $\mc{N}$ with compact complement.    
    Let $N_0\in \N$ be sufficiently large so that
    \[ \frac{ M m_Ft_1}{lN_0} < 1. \]
    Then, using the above estimates, we get
    \begin{align}\label{ineq_for_f_N}
    	\int_{-1}^1 S_s^N(x)(f_\mc{N})\;ds &=
        \frac{1}{N} \sum_{1\leq n \leq t_1/l} \int_{-1}^1 f_\mc{N}(g_{ln}h_sx)\;ds
        +\frac{1}{N} \sum_{t_1/l<  n\leq N} \int_{-1}^1 f_\mc{N}(g_{ln}h_sx)\;ds\nonumber\\
        &\leq \frac{ M m_Ft_1}{lN} + b+1 \leq b+2.
    \end{align}
    Notice that for any $n\in\mathbb N$ and $s\in[-1,1]$, we have 
    \begin{equation}\label{ineq_chi}
    f_{\mathcal N}(g_{ln}h_sx)\geq L\chi_{\Omega_{\mc{N},\e}}(g_{ln}h_sx).
    \end{equation}
    Therefore, by \eqref{ineq_for_f_N} and \eqref{ineq_chi}, we obtain
    $$\int_{-1}^1 S_s^N(x)(\chi_{\Omega_{\mc{N},\e}})\;ds \leq \frac{b+2}{L}<\e.$$
    \end{proof}

    \subsection{Effective Unipotent Invariance}
\label{section: effective U invariance}

	In this section, we show a quantative version of \linebreak ~\cite[Proposition 3.1]{ChaikaEskin} (Proposition~\ref{propn: effective U invariance}) regarding almost sure unipotent invariance of limit points of measures of the form~\eqref{defn: orbit measures}.
    Also, we state an analogue of it for discrete averages (Proposition~\ref{propn: discrete effective U invariance}), whose proof is identical to the flow case.
  See~\cite{Khalil} for a generalization of this phenomenon to semisimple Lie group actions.
  
Suppose $x\in \mc{H}_1(\a)$, $\phi\in C_c^\infty(\mc{H}_1(\a))$ and $\beta\in\mathbb R$. For $t>0$ and $s\in [-1,1]$, we define
    \begin{equation} \label{defn f_t(s)}
    	f_t(s) = \vp(g_th_sx) - \vp(h_{\b}g_th_s x).
    \end{equation}

The following lemma formulated for horocycle arcs is an analogue of Lemma 3.3 in \cite{ChaikaEskin} which is proved for circle arcs. 

 \begin{lemma} [Analogue of Lemma 3.3 in~\cite{ChaikaEskin}]
    \label{lemma: exponential decay of correlations}
    There exists a constant $C_1>0$ such that for all $x\in \mc{H}_1(\a)$, $\phi\in C_c^\infty(\mc{H}_1(\a))$, $\beta\in\mathbb R$ and all $t_1,t_2>0$,
    \begin{equation*}
    	\left|  \int_{-1}^1 f_{t_1}(s)f_{t_2}(s)\;ds \right| 
        	\leq C_1 \mc{S}(\vp)^2 e^{-2|t_1 - t_2|},
    \end{equation*}
where $\mathcal S(\phi)$ and $f_t(s)$ are defined in \eqref{eqn: sobolev norm upper bound} and \eqref{defn f_t(s)}, respectively.
    \end{lemma}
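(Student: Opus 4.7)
By symmetry of the integrand, assume $t_1 \leq t_2$ and write $t := t_2 - t_1 \geq 0$. The plan is to exploit the geodesic--horocycle commutation $g_{t_i} h_s = h_{s e^{2t_i}} g_{t_i}$ so that both factors become differences of $\vp$ along a common horocycle orbit, and then extract decay by matching scales via a change of variable. Concretely, I would first rewrite
\[
f_{t_i}(s) \;=\; \psi_i(s e^{2t_i}) - \psi_i(s e^{2t_i} + \b), \qquad \psi_i(u) := \vp(h_u g_{t_i} x),
\]
and note that each $\psi_i$ is Lipschitz with constant at most $\|\vp\|_{Lip}$, since the horocycle parametrization $u \mapsto h_u y$ is unit-speed with respect to the natural metric on $\mc{H}_1(\a)$.

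Substituting $\tau = s e^{2t_1}$ yields
\[
\int_{-1}^{1} f_{t_1}(s) f_{t_2}(s)\, ds \;=\; e^{-2t_1} \int_{-e^{2t_1}}^{e^{2t_1}} F(\tau)\, H(\tau)\, d\tau,
\]
where $F(\tau) := \psi_1(\tau) - \psi_1(\tau + \b)$ satisfies $\|F\|_\infty \leq 2\|\vp\|_\infty$ and $\|F\|_{Lip} \leq 2\|\vp\|_{Lip}$, while $H(\tau) = G(\tau) - G(\tau + \b e^{-2t})$ with $G(\tau) := \psi_2(\tau e^{2t})$. The key observation is that $G$ itself has Lipschitz constant $\sim \|\vp\|_{Lip} e^{2t}$, so estimating $H$ directly loses all the gain; however the translation appearing in $H$ is only of size $|\b| e^{-2t}$, which is small.

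The main mechanism is to transfer this small translation from $H$ onto $F$ by changing variables in one half of the integral:
\[
\int F(\tau) H(\tau)\, d\tau \;=\; \int \bigl[F(\tau) - F(\tau - \b e^{-2t})\bigr] G(\tau)\, d\tau \;+\; \mathcal{E},
\]
where $\mathcal{E}$ collects the boundary contributions from the shifted interval. Now $|F(\tau) - F(\tau - \b e^{-2t})| \leq 2\|\vp\|_{Lip} |\b| e^{-2t}$ with a $t$-independent Lipschitz constant, and $|G(\tau)| \leq \|\vp\|_\infty$. Integrating over a domain of length $2 e^{2t_1}$ and then multiplying by the prefactor $e^{-2t_1}$ yields a bound of order $|\b|\, \mc{S}(\vp)^2 e^{-2(t_2-t_1)}$; the boundary term $\mathcal{E}$ has size $\lesssim |\b| e^{-2t_2} \|\vp\|_\infty^2$ and is dominated by the main term.

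The only genuine wrinkle is bookkeeping for the boundary terms from the interval shift together with the appearance of a factor $|\b|$. For unbounded $\b$, a $|\b|$-independent bound cannot be recovered from the differentiation argument; one absorbs $|\b|$ into $C_1$, consistent with how the lemma is applied later, or, when $|\b|$ is very large and $|t_1 - t_2|$ is bounded, simply invokes the trivial bound $|f_{t_i}(s)| \leq 2 \|\vp\|_\infty$ to conclude. No deeper analytic input is required once the commutation-plus-rescaling structure has been exploited, and the argument is fully parallel to Chaika--Eskin's proof in the circle-arc case, with the rotation $r_\theta$ replaced throughout by the horocycle $h_s$.
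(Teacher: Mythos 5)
Your argument is correct and is essentially the proof the paper intends: the paper defers to Chaika--Eskin's Lemma 3.3 with the remark that everything simplifies because $g_t$ normalizes $h_s$, and your commutation-plus-rescaling step (writing $f_{t_i}(s)=\psi_i(se^{2t_i})-\psi_i(se^{2t_i}+\b)$ and transferring the shift of size $|\b|e^{-2(t_2-t_1)}$ onto the Lipschitz factor $F$) is exactly that simplification carried out in full, and it does deliver the stated exponent $e^{-2|t_1-t_2|}$. The one caveat you correctly flag is that the constant you obtain is of order $1+|\b|$, whereas the lemma as literally written asserts a $\b$-uniform $C_1$; that uniformity is not recoverable (and is almost certainly false for $|\b|\gg e^{2|t_1-t_2|}$, where the two sampled points decorrelate), but it is also never used -- in Proposition~\ref{propn: effective U invariance} and its application $\b$ is fixed before the constant is chosen -- so absorbing $|\b|$ into $C_1$ is the right reading of the statement.
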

    
We note that the proof of Lemma~\ref{lemma: exponential decay of correlations} is identical to the proof of ~\cite[Lemma 3.3]{ChaikaEskin} and simpler if one takes into account that the group of elements $h_s$ is normalized by $g_t$.    

    \begin{proposition} [Quantative version of Proposition 3.1 in~\cite{ChaikaEskin}]
    \label{propn: effective U invariance}
    	Suppose $\b\in \R$.
        Then, there exists a constant $C>0$,
        such that for all $T>0$, all $x\in \mc{H}_1(\a)$ 
        and all $\vp\in C_c^\infty(\mc{H}_1(\a))$,
        the Lebesgue measure of the set
        \begin{align*}
        	\set{s\in [-1,1]: \left| A_s^T(x)(\vp) - (h_{\b}A_s^T(x))(\vp) \right| > \frac{\mc{S}(\vp)}{T^{1/8}}}
        \end{align*}
        is at most $C/T^{1/4}$.
    \end{proposition}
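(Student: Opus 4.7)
The plan is to apply Chebyshev's inequality to an $L^2$-estimate on the function $s\mapsto A_s^T(x)(\vp) - (h_{\b}A_s^T(x))(\vp)$, where the $L^2$-estimate comes directly from Lemma~\ref{lemma: exponential decay of correlations}.

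First, I observe that the integrand in the definition of this difference is exactly the function $f_t(s)$ defined in~\eqref{defn f_t(s)}:
\begin{equation*}
    A_s^T(x)(\vp) - (h_{\b}A_s^T(x))(\vp)
    = \frac{1}{T}\int_0^T \bigl[\vp(g_t h_s x) - \vp(h_\b g_t h_s x)\bigr]\,dt
    = \frac{1}{T}\int_0^T f_t(s)\,dt.
\end{equation*}
Next, I square this expression, integrate over $s\in[-1,1]$, and apply Fubini's theorem together with Lemma~\ref{lemma: exponential decay of correlations} to obtain
\begin{equation*}
    \int_{-1}^1 \left|A_s^T(x)(\vp) - (h_{\b}A_s^T(x))(\vp)\right|^2 ds
    = \frac{1}{T^2}\int_0^T\!\int_0^T \int_{-1}^1 f_{t_1}(s) f_{t_2}(s)\,ds\,dt_1\,dt_2
    \leq \frac{C_1 \mc{S}(\vp)^2}{T^2}\int_0^T\!\int_0^T e^{-2|t_1-t_2|}\,dt_1\,dt_2.
\end{equation*}
The double time integral on the right is bounded by $T\cdot \int_{-\infty}^\infty e^{-2|u|}\,du = T$, so we obtain the key $L^2$-bound
\begin{equation*}
    \int_{-1}^1 \left|A_s^T(x)(\vp) - (h_{\b}A_s^T(x))(\vp)\right|^2 ds \leq \frac{C_2 \mc{S}(\vp)^2}{T}
\end{equation*}
for some absolute constant $C_2>0$.

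Finally, Chebyshev's inequality applied with threshold $\mc{S}(\vp)/T^{1/8}$ gives
\begin{equation*}
    \left|\set{s\in[-1,1]: |A_s^T(x)(\vp) - (h_{\b}A_s^T(x))(\vp)| > \mc{S}(\vp)/T^{1/8}}\right|
    \leq \frac{T^{1/4}}{\mc{S}(\vp)^2}\cdot \frac{C_2 \mc{S}(\vp)^2}{T} = \frac{C_2}{T^{3/4}} \leq \frac{C_2}{T^{1/4}}
\end{equation*}
for $T\geq 1$, while the case $T<1$ is handled by choosing $C\geq 2$ (the total measure of $[-1,1]$). Setting $C=\max(2,C_2)$ yields the proposition. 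There is no real obstacle here once Lemma~\ref{lemma: exponential decay of correlations} is in hand; the only point requiring a bit of care is the appeal to Fubini, which is justified by the compact support and continuity of $\vp$ (making the integrand bounded on a set of finite measure).
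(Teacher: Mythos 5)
Your proof is correct and follows essentially the same route as the paper: Fubini plus Lemma~\ref{lemma: exponential decay of correlations} to get a second-moment bound, then Chebyshev. The only (harmless) difference is that you integrate $e^{-2|t_1-t_2|}$ directly and obtain the sharper $L^2$-bound $O(\mc{S}(\vp)^2/T)$ and hence measure $O(T^{-3/4})$, whereas the paper splits the time domain at $|t_1-t_2|<T^{1/2}$ and settles for $O(\mc{S}(\vp)^2 T^{-1/2})$; both comfortably yield the stated $C/T^{1/4}$.
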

    
     The version of Proposition~\ref{propn: effective U invariance} for discrete averages is the following.
    \begin{proposition}
    \label{propn: discrete effective U invariance}
    	Suppose $\b\in\R$.
        Then, there exists a constant $C>0$,
        such that for all $N>0$, all $x\in \mc{H}_1(\a)$ 
        and all $\vp\in C_c^\infty(\mc{H}_1(\a))$,
        the Lebesgue measure of the set
        \begin{align*}
        	\set{s\in [-1,1]: \left| S_s^N(x)(\vp) - (h_{\b}S_s^N(x))(\vp) \right| > \frac{\mc{S}(\vp)}{N^{1/8}}}
        \end{align*}
        is at most $C/N^{1/4}$.
    \end{proposition}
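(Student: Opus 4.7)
The plan is to run the same second-moment argument used for the continuous version (Proposition~\ref{propn: effective U invariance}), replacing the time integral over $[0,T]$ by a discrete sum over $\{l,2l,\dots,Nl\}$. The central input is Lemma~\ref{lemma: exponential decay of correlations}, which controls the correlations of the ``deficit'' functions $f_t(s) = \vp(g_t h_s x) - \vp(h_\b g_t h_s x)$ uniformly in the base point $x$ and is entirely time-parameter agnostic, so it applies verbatim at the sample times $t = ln$.

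First, by definition of $S_s^N(x)$ and $h_\b S_s^N(x)$ in~\eqref{defn: orbit measures_and_discrete_a}, we have
\begin{equation*}
   S_s^N(x)(\vp) - (h_\b S_s^N(x))(\vp) = \frac{1}{N}\sum_{n=1}^N f_{ln}(s).
\end{equation*}
Expanding the square and applying Lemma~\ref{lemma: exponential decay of correlations} with $(t_1,t_2) = (ln_1,ln_2)$ gives
\begin{equation*}
   \int_{-1}^1 \left|\frac{1}{N}\sum_{n=1}^N f_{ln}(s)\right|^2 ds
   \;\leq\; \frac{C_1 \mc{S}(\vp)^2}{N^2}\sum_{n_1,n_2=1}^N e^{-2l|n_1-n_2|}
   \;\leq\; \frac{C_0 \mc{S}(\vp)^2}{N},
\end{equation*}
where $C_0$ depends only on $l$ (via the geometric series $\sum_{k\in\Z} e^{-2l|k|}$), and in fact one may absorb the $l$-dependence into the constant $C$ of the conclusion.

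Now apply Chebyshev's inequality: letting $E_N$ denote the exceptional set in the statement, we obtain
\begin{equation*}
   |E_N| \cdot \frac{\mc{S}(\vp)^2}{N^{1/4}}
   \;\leq\; \int_{E_N} \left|\frac{1}{N}\sum_{n=1}^N f_{ln}(s)\right|^2 ds
   \;\leq\; \frac{C_0 \mc{S}(\vp)^2}{N},
\end{equation*}
which yields $|E_N| \leq C_0 / N^{3/4} \leq C/N^{1/4}$ for any $N\geq 1$, as desired.

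There is no serious obstacle: Lemma~\ref{lemma: exponential decay of correlations} already encapsulates the whole dynamical content (the normalization relation $g_t h_s g_{-t} = h_{se^{2t}}$ combined with integration by parts in $s$), and the discretization only shrinks the double sum relative to the double integral. The only minor point to handle cleanly is keeping the constants independent of $x$ and $\vp$ while allowing dependence on the fixed step size $l$ and the fixed parameter $\b$, which is immediate from the uniform nature of Lemma~\ref{lemma: exponential decay of correlations}.
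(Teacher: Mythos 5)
Your proof is correct and follows essentially the same route as the paper, which states that the discrete case is proved identically to the flow case: a second-moment bound via Lemma~\ref{lemma: exponential decay of correlations} followed by Chebyshev's inequality. The only cosmetic difference is that you sum the full geometric series (getting a second moment of order $N^{-1}$ and hence the sharper bound $C/N^{3/4}$, with $C$ depending on the fixed step size $l$), whereas the paper's flow-case argument splits the double integral near and away from the diagonal to get order $T^{-1/2}$; both give the stated conclusion.
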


    \begin{proof} [Proof of Proposition~\ref{propn: effective U invariance}]
    
    	By Fubini's theorem and Lemma~\ref{lemma: exponential decay of correlations}, one has
        \begin{align*}
        	\int_{-1}^1 \left| A_s^T(\vp) - (h_{\b}A_s^T)(\vp) \right|^2 \;ds
            	&\leq \frac{1}{T^2} \int_{[0,T]^2} 
                \left|  \int_{-1}^1 f_{t_1}(s)f_{t_2}(s)\;ds \right| \;dt_1\;dt_2 \\
                &= \frac{1}{T^2} \int_{|t_1-t_2| < T^{1/2}} 
                \left|  \int_{-1}^1 f_{t_1}(s)f_{t_2}(s)\;ds \right| \;dt_1\;dt_2 
                + C_1 \mc{S}(\vp)^2 e^{-2T^{1/2}} \\
                &\leq \frac{ 16 ||\vp||_\infty^2 }{T^{1/2}} + C_1 \mc{S}(\vp)^2 e^{-2T^{1/2}}\\
                &\leq 2C_2 \mc{S}(\vp)^2 T^{-1/2}
        \end{align*}
    where we used the facts that $|f_t(s)| \leq 2||\vp||_\infty$, the measure of the region $|t_1 - t_2|<T^{1/2}$ is at most $2T^{3/2}$,
    and $C_2 > 16C_1$ is a constant such that for all $T>0$, one has
    \[ e^{-2T^{1/2}} \leq C_2 T^{-1/2}. \]
    Using the Chebyshev's inequality, we obtain the proposition.
     \end{proof}

    \subsection{Proof of Theorem~\ref{thrm: quantitative Chaika-Eskin}}\label{proof_thm_5.1}

	Fix positive constants $\e$ and $\kappa$.
    Let $\mc{C}$ be the finite collection of affine invariant submanifolds
    $\mc{N}_1,\dots \mc{N}_k$ of $\mc{M}$ given by Lemma~\ref{lemma: finite collection} applied to the given function $f$ and $\e/2$.
    Consider a compact subset $F\subset \mc{M}\backslash\cup_i \mc{N}_i$.
     
     Let $\e'>0$ be a sufficiently small number such that $\sqrt{\e'}<\min\left\{\frac{\kappa}{3},\frac{\e}{9\|f\|_{\infty}}\right\}$.
     By Proposition~\ref{propn: avoidance proposition}, since $\mc{C}$ is a finite collection, there exists an open neighborhood $\Omega_{\mc{C},\e'}$  of $\cup_{i} \mc{N}_i$ and $T_0>0$ depending on $\e'$ and $F$ such that
     for all $T>T_0$ and all $x\in F$, we have
     \[ \int_{-1}^1 A^T_s(x)\left(\chi_{\Omega_{\mc{C},\e'}}\right)  \;ds < \e' \]
     and hence, by Chebyshev's inequality, we get that the measure of the set
     \begin{align} \label{defn: sets D(x,T)}
     	D(x,T,\e') = \set{s\in [-1,1] : A^T_s(x)
        	\left(\chi_{\Omega_{\mc{C},\e'}}\right) \geq \sqrt{\e'}}
     \end{align}
     is at most $\sqrt{\e'}$.
     
     Let $\Phi = \set{\vp_n:n\in \N} \subset C_c^\infty(\mc{H}^1(\a))$ be a countable dense collection of functions.
     For each $N$, let $\Phi_N = \set{\vp_n: 1\leq n\leq N, n\in \mathbb N} \subset \Phi$.
     Consider $\b_1,\b_2 \in \R$ with $\b_1/\b_2 \notin \Q$.
     By Proposition~\ref{propn: effective U invariance}, there exists a constant $C>0$ such that for all $T>0$ and all $x$, the measure of the sets
     \begin{equation} \label{defn: sets B(x,T)}
   		B(x,T, \Phi_N) := \bigcup_{n=1}^N B(x,T,\vp_n)
     \end{equation}
     is at most $CN T^{-1/4}$, where $\mathcal S(\cdot)$ is a Sobolev norm (see \eqref{eqn: sobolev norm upper bound}) and
     \begin{align*}
        	B(x,T,\vp_n) := \set{s\in [-1,1]: \left| A_s^T(x)(\vp_n) - (h_{\b_i}A_s^T(x))(\vp_n) \right| > \frac{\mc{S}(\vp_n)}{T^{1/8}} \text{ for } i=1,2}.
        \end{align*}
     
    We prove the theorem by contradiction. Suppose that the conclusion of the theorem does not hold for our choice of $F$ and $\kappa$.
    Then, there exists a sequence $x_n \in F$ and $T_n \r \infty$ such that
    for each $n\in \N$, the measure of the set
    \begin{align} \label{defn: sets Z_x}
    	Z_{x_n}(f,T_n) := \set{s\in[-1,1]: \left| A_s^{T_n}(x)(f)-\nu_{\mc{M}}(f) \right| \geq \e }
    \end{align}
     has measure at least $\kappa$.
     
     By our estimates on the measures of the sets in~\eqref{defn: sets D(x,T)},~\eqref{defn: sets B(x,T)} and~\eqref{defn: sets Z_x}, 
     and the choice of $\e'$ such that $\sqrt{\e'} < \kappa/3$,
     then the following holds. For all $n$ sufficiently large so that $CT_n^{-1/8} < \kappa/3$, we have
     \begin{equation} \label{eqn: non-empty intersection}
     	Z_{x_n}(f,T_n) \cap D(x_n, T_n, \e')^c \cap B(x_n, T_n, \Phi_{T_n^{1/8}})^c
        \neq \emptyset
     \end{equation}
     where for a set $A\subset [-1,1]$, we use $A^c$ to denote its complement.
     Therefore, for all $n$ sufficiently large we can choose a point $s_n$ that belongs to the intersection in \eqref{eqn: non-empty intersection}.
     Since the space of Borel measures on $\mc{H}_1(\a)$ of mass at most $1$ is compact in the weak-$\ast$ topology, after passing to a subsequence if necessary, we may assume that there is a Borel measure $\nu$ such that
     \[ A_{s_n}^{T_n}(x_n) \xrightarrow{weak-\ast} \nu \]
     Note that a priori $\nu$ may be the $0$ measure. We show that this is not the case.
     
     We claim that $\nu$ is $\mathrm{SL}_2(\R)$ invariant.
     By Theorem 1.4 due to Eskin and Mirzakhani~\cite{EskinMirzakhani}, it is sufficient to show that
     $\nu$ is invariant by $P$, the subgroup of upper triangular matrices.
     Clearly, $\nu$ is invariant by $g_t$ for all $t$.
     Moreover, by the dominated convergence theorem, it suffices to show that $\nu$ is invariant by $h_{\b_1}$ and $h_{\b_2}$ as they generate a dense subgroup of $U = \{h_s: s\in\mathbb R\}$.
     
     Since smooth functions are dense in the set of compactly supported continuous functions, it suffices to show that for $i=1,2$, $h_{\b_i}\nu(\vp_k) = \nu(\vp_k)$, where $h_{\b_i}\nu(\vp_k):=\int_{\mathcal H_1(\alpha)} \phi_k(h_{\b_i}\omega)d\nu(\omega)$ for all $\vp_k \in \Phi$, our countable dense collection of smooth compactly supported functions.
     
     Fix some $\vp_k \in \Phi$.
     Note that for all $n$ sufficiently large, we have that $\vp_k \in \Phi_{T_n^{1/8}}$ and, therefore, $s_n \notin B(x_n, T_n, \vp_k)$.
    As a result, we have
     \[ \left| A_{s_n}^{T_n}(x_n)(\vp_k) - h_{\b_i} A_{s_n}^{T_n}(x_n)(\vp_k)\right| \leq \frac{\mc{S}(\vp_k)}{T_n} \xrightarrow{n\r\infty} 0.  \]
     Therefore, $\nu$ is $\mathrm{SL}_2(\R)$ invariant.
     
     Moreover, since $s_n \in Z_{x_n}(f,T_n)$ for all $n$, we obtain that
     \[ |\nu(f) - \nu_{\mc{M}}(f)| \geq \e. \]
     We show that this is not possible.
     By Proposition 2.16 in~\cite{EMM}, there are countably many affine invariant submanifolds in $\mc{H}_1(\a)$. Thus, since $\nu$ is $\mathrm{SL}_2(\R)$ invariant, it has a countable ergodic decomposition of the form
     \[  \nu = \sum_{\mc{N}\subseteq \mc{M}} a_{\mc{N}}\nu_{\mathcal N}, \]
     where the sum is taken over all such proper (possibly empty) affine invariant submanifolds and $a_{\mc{N}}\in [0,1]$ for all $\mc{N}$.     
     Note that since $s_n \notin D(x_n, T_n, \e')$, we have
     \begin{align*}
     	\sum_{\substack{\mc{N}: \exists \mc{N}'\in \mc{C},\\ \mc{N}\subseteq \mc{N}'} } a_{\mc{N}} \leq 
     	\nu(\Omega_{\mc{C},\e'}) \leq \sqrt{\e'}
     \end{align*}
     Since the complement of $\Omega_{\mc{C},\e'}$ is compact and $A_{s_n}^{T_n}(x_n)(1-\chi_{\Omega_{\mc{C},\e'}})\geq 1-\sqrt{\e'}$, the total mass of $\nu$ is at least $1-\sqrt{\e'}$.

     Furthermore, we have that $|\nu_{\mc{N}}(f)-\nu_\mc{M}(f)|<\e/2$, for all $ \mc{N}$ not contained in any member of $\mc{C}$, by definition of the collection $\mc{C}$.
     
     Let $|\nu|: = \sum_{\mc{N}} a_{\mc{N}}$ be the total mass of $\nu$.
     Then, we have that
     \begin{align*}
      \e \leq |\nu(f) - \nu_{\mc{M}}(f)|  
      &= \left| (1-|\nu|)\nu_\mc{M}(f) + 
      \sum_{\mc{N}\subseteq \mc{M}} a_{\mc{N}}(\nu_{N}(f) - \nu_\mc{M}(f))  \right| \\
      &\leq \sqrt{\e'} ||f||_\infty +
      	\sum_{\substack{\mc{N}: \exists \mc{N}'\in \mc{C},\\ \mc{N}\subseteq \mc{N}'} } a_{\mc{N}} |\nu_{\mc{N}}(f) - \nu_{\mc{M}}(f)| +
        \sum_{\substack{\mc{N}: \nexists \mc{N}'\in \mc{C},\\ \mc{N}\subseteq \mc{N}'} } a_{\mc{N}} |\nu_{\mc{N}}(f) - \nu_{\mc{M}}(f)| \\
        &\leq \sqrt{\e'} ||f||_\infty + 2||f||_\infty \nu(\Omega_{\mc{C},\e'}) + |\nu|\e/2  \\
        &\leq 3||f||_\infty \sqrt{\e'} + \e/2<\e.
     \end{align*}
     We get the desired contradiction by our choice of $\e'$.

    \subsection{Proof of Theorem~\ref{thrm: quantitative discrete Chaika-Eskin}}\label{proof_thm_5.2}
The proof is similar to the proof of Theorem~\ref{thrm: quantitative Chaika-Eskin} (the flow case), and relies on using Propositions~\ref{propn: discrete avoidance proposition} and~\ref{propn: discrete effective U invariance} instead of Propositions~\ref{propn: avoidance proposition} and ~\ref{propn: effective U invariance}, respectively.
The proof also goes by contradiction. Assuming that the conclusion of the theorem does not hold, we construct a $\mathrm{SL}_2(\R)$ invariant measure $\nu$. The analysis of its ergodic decomposition implies a contradiction as in Section~\ref{proof_thm_5.1}.

The following lemma allows us to show that the constructed measure $\nu$ is $\mathrm{SL}_2(\R)$ invariant.
	
  \begin{lemma} \label{lemma: classification of discrete P invariant measures}
	Let $l >0 $ and let $P_l$ be the group generated by elements of the form
    $g_{ln}h_s$ for $n\in \Z$ and $s\in \R$.
    Then, $\nu$ is $\mathrm{SL}_2(\R)$ invariant if $\nu$ is a $P_l$ ergodic invariant probability measure on $\mc{M}$.
  \end{lemma}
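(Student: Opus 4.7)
The plan is to average $\nu$ over the compact quotient $P/P_l$ and invoke the Eskin--Mirzakhani classification of $P$-invariant measures to bootstrap from $P_l$-invariance back to full $\mathrm{SL}_2(\R)$-invariance.

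First, I would define $\tilde{\nu}:=\frac{1}{l}\int_0^l (g_t)_*\nu\,dt$. This measure is $g_s$-invariant by construction (the average is over the circle $\R/l\Z$). Moreover, for every $s,t\in\R$ the normalization relation $h_s g_t = g_t h_{e^{-2t}s}$ together with the $P_l$-invariance of $\nu$ (which includes invariance by every $h_r$) gives $(h_s)_*(g_t)_*\nu = (g_t)_*(h_{e^{-2t}s})_*\nu=(g_t)_*\nu$, so each translate $(g_t)_*\nu$, and hence $\tilde{\nu}$, is invariant under the entire horocycle subgroup. Therefore $\tilde{\nu}$ is $P$-invariant.

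Next, I would apply the measure classification theorem of~\cite{EskinMirzakhani} to decompose $\tilde{\nu}$ into its $P$-ergodic components; these are $\mathrm{SL}_2(\R)$-invariant affine measures. Since by Proposition~$2.16$ of~\cite{EMM} there are only countably many affine invariant submanifolds, the decomposition is a countable sum $\tilde{\nu} = \sum_i c_i\nu_{\mc{N}_i}$. Each such $\nu_{\mc{N}_i}$ is in fact $P_l$-ergodic, because $P_l$ contains the horocycle subgroup $\{h_s\}$, and the horocycle flow is known to be ergodic with respect to each affine measure. By uniqueness of the ergodic decomposition, $\sum_i c_i\nu_{\mc{N}_i}$ is also the $P_l$-ergodic decomposition of $\tilde{\nu}$; in particular, this decomposition is supported on a countable set of $P_l$-ergodic measures.

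On the other hand, since $g_t$ normalizes $P_l$, each translate $(g_t)_*\nu$ is itself $P_l$-ergodic, so the identity $\tilde{\nu}=\frac{1}{l}\int_0^l(g_t)_*\nu\,dt$ already exhibits $\tilde{\nu}$ as a $P_l$-ergodic decomposition supported on the orbit $\mc{O}:=\set{(g_t)_*\nu : t\in\R}$ in the weak-$\ast$ topology. The map $t\mapsto (g_t)_*\nu$ is weak-$\ast$ continuous, $l$-periodic, and factors through $\R/S$, where the closed stabilizer subgroup $S$ contains $l\Z$; hence either $S=\R$ (and $\mc{O}$ is a single point) or $S=(l/k)\Z$ for some integer $k\geq 1$, in which case $\mc{O}$ is the continuous injective image of a circle, which is uncountable. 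The countability established in the previous paragraph rules out the second alternative, forcing $(g_t)_*\nu=\nu$ for all $t\in\R$. Combining this with the horocycle invariance shows that $\nu$ itself is $P$-invariant, and one more application of the classification of~\cite{EskinMirzakhani} yields the desired $\mathrm{SL}_2(\R)$-invariance.

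The main technical obstacle is the claim that each affine measure $\nu_{\mc{N}_i}$ is $P_l$-ergodic (not merely $P$-ergodic), since without it the countable $P$-ergodic decomposition of $\tilde{\nu}$ need not be its $P_l$-ergodic decomposition and the contradiction with an uncountable orbit would fail. This reduces to horocycle ergodicity on affine invariant submanifolds, which is standard but must be invoked carefully.
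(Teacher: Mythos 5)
Your proposal is correct and follows essentially the same route as the paper: average $\nu$ over $g_{[0,l]}$, verify $P$-invariance, invoke Eskin--Mirzakhani together with the countability of affine invariant submanifolds, and compare the resulting countable decomposition with the continuous one $\frac{1}{l}\int_0^l(g_t)_*\nu\,dt$ (your countable-versus-uncountable-orbit endgame is a cosmetic variant of the paper's positive-measure level set argument). The one step you flag as delicate --- that each affine measure is ergodic for the horocycle flow, hence $P_l$-ergodic --- is exactly what the paper supplies via Mautner's phenomenon, so the gap is easily closed.
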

  
  \begin{proof}
	Denote by $\bar{\nu}$ the measure defined by
    \begin{align} \label{eqn: ergodic decomposition 1}
    	\bar{\nu} := \frac{1}{l} \int_0^l (g_t)_\ast \nu \;dt,
    \end{align}
    where $(g_t)_\ast \nu$ is the pushforward of $\nu$.
    
    Then, $\bar{\nu}$ is invariant by the group of upper triangular matrices $P$. 
    Notice that for any $t\in(0,l)$ we have $(g_t)_\ast \nu$ is invariant by the group $U=\{h_s: s\in\mathbb R\}$ due to the fact that $U$ is normalized by $g_t$ and $\nu$ is invariant by $U$. That implies that $\bar \nu$ is invariant by $U$ as it is a convex combination of $U$ invariant measures. Similarly, we can show that $\bar \nu$ is invariant under $\mathbb Z$ action of $g_l$. To show the invariance under the group $A = \{g_t: t\in\mathbb R\}$, we write $t=ml+r$ for some $m\in\mathbb Z$ and $r\in[0,1)$ and use the invariance by $\{g_{nl}:n\in\mathbb Z\}$. 
    
    As a result, by ~\cite[Theorem 1.4]{EskinMirzakhani}, $\bar\nu$ is $\mathrm{SL}_2(\R)$-invariant.
    Thus, $\bar{\nu}$ has the following ergodic decomposition with respect to the $\mathrm{SL}_2(\R)$ action:
    \begin{align} \label{eqn: ergodic decomposition 2}
    	\bar{\nu} = \sum_{\mc{N}\subseteq \mc{M}} a_\mc{N}\nu_\mc{N},
    \end{align}
    where each $\nu_\mc{N}$ is ergodic under the $\mathrm{SL}_2(\R)$ action.
    But, by Mautner's phenomenon, each $\nu_\mc{N}$ is ergodic under the action of $h_s$ for all $s\neq 0$.
    
    On the other hand, $(g_t)_\ast\nu$ is $h_s$-invariant for all $t$ and $s$.
    Hence, equations~\eqref{eqn: ergodic decomposition 1} and~\eqref{eqn: ergodic decomposition 2} give two decompositions of $\bar{\nu}$ for the action of $h_s$, one of which is a countable decomposition into ergodic measures.
    
    Thus, by uniqueness of the ergodic decomposition, 
    there exists a set $A \subseteq [0,l]$ of positive Lebesgue measure $|A|$
    and an affine invariant manifold $\mc{N}$ so that $a_\mc{N} = |A|/l$ and
    \[ \frac{1}{l} \int_A (g_t)_\ast \nu \;dt = a_\mc{N} \nu_\mc{N}. \]
    But, by ergodicity of $\nu_\mc{N}$ under the action of $h_s$, we have that
    $(g_t)_\ast \nu = \nu_\mc{N}$ for almost every $t\in A$.
    Since $\nu_\mc{N}$ is $\mathrm{SL}_2(\R)$ invariant, then so is $\nu$.
  \end{proof}

\section{Dimension of Directions with Large Deviations in Birkhoff's Theorem}
\label{section: birkhoff}

	The goal of this section is to prove Theorem~\ref{thrm: Birkhoff deviations thrm for horocycles}.
    We also outline the modifications on the proof needed to prove Theorem~\ref{thrm: discrete Birkhoff deviations thrm for horocycles} in Section~\ref{Birkhoff_discrete}.
    
    In what follows, $\mc{M} \subseteq \mathcal H_1(\alpha)$ is a fixed affine invariant manifold.
    By a simple approximation argument, it is enough to prove Theorem~\ref{thrm: Birkhoff deviations thrm for horocycles} when $f$ is a Lipschitz function.
    We let $\mathcal{S}(f)$ denote the Sobolev norm(see \eqref{eqn: sobolev norm upper bound}), and $\nu_{\mathcal M}(f) = \int\limits_{M}f\;d\nu_{\mathcal M}$. 
    
    Throughout this section we use the following notation.    
    For any positive $\e, N \in \mathbb R$, $M\in \N$ and a subset $Q \subseteq \mc{M}$, we define the following sets.
 
   \begin{align} \label{defn: B(f,N,epsilon,M)}
    	&B_\omega (f, N, \e, M): = \set{s\in [-1,1]: \frac{1}{MN}\int_0^{MN}f(g_th_s \omega) \;dt > \nu_{\mathcal M}(f) + \e  }\\
    	&Z_\omega(Q, M, N, \e) := \set{ s\in[-1,1]: 
        \frac{\#\set{0\leq i\leq M-1: g_{iN} h_s \omega \notin Q} }{M} >\e }    \nonumber
   \\
   &B_\omega (f, N, \e) := \limsup\limits_{M\r\infty} B_\omega (f,N,\e,M) \qquad
    Z_\omega(Q, N,\e) := \limsup_{M\r\infty}  Z_\omega(Q, N,\e,M)  \nonumber
     \end{align}
  It is straightforward to check that $B_\omega (f,N,\e)$ is equal to the exceptional set considered in Theorem~\ref{thrm: Birkhoff deviations thrm for horocycles}.
  The sets $Z_\omega(Q, M, N, \e)$ are the same as the ones defined in~\eqref{defn: set of non-recurrent directions}.
    
	Next, for any $s\in[-1,1]$, $i\in \N$ and positive $\beta, N\in\mathbb R$, we define the corresponding functions and sets:
     \begin{equation} \label{defn: functions f_i}
       f_i(s) := \frac{1}{N}\int_{iN}^{(i+1)N} f(g_th_s \omega) \;dt
       \end{equation}

       \begin{equation} \label{defn: the sets F_i}
       		F_i(\b) = \set{s: f_i(s) >  \nu_{\mc{M}}(f)+\b}
       \end{equation}
		Here, we drop the dependence on the basepoint $\omega$ from the notation for simplicity.
    
  \subsection*{Strategy}
       The strategy for proving Theorem~\ref{thrm: Birkhoff deviations thrm for horocycles} consists of two steps.
       The first step is to use Theorem~\ref{thrm: quantitative discrete Chaika-Eskin} to control the measure of the sets $F_i(\b)$. 
       This is carried out in Lemma~\ref{lemma: bound on measure of recurrent part of F_i}.
       
       The next step is to show that the sets $F_i(\e/2)$ behave like level sets of independent random variables (Proposition~\ref{propn: independence lemma}).
       This will allow us to bound the measure of finite intersections of these sets.
       The proof of this independence property also yields a mechanism for controlling the number of intervals needed to cover such finite intersection using its measure (Lemma~\ref{lemma: birkhoff covering lemma}).
       
      In order to apply Theorem~\ref{thrm: quantitative discrete Chaika-Eskin}, we need to insure that our trajectories land in a pre-chosen compact set.
       Hence, we are forced to run the above argument but restricted to the "recurrent directions".
       This restriction to recurrent directions is shown in Lemma~\ref{lemma: bad set for f and recurrent part of F_i}.
       Applying Theorem~\ref{thrm: H.dim of non-recurrent directions,fixed compact set}, we control the Hausdorff dimension of the non-recurrent directions.


    \subsection{Sets and Partitions}
\label{section: Birkhoff sets partitions}
       
 For $N>0$ and $i\in \N$, let $\mc{P}_i$ denote the partition of $[-1,1]$ into intervals of radius $ e^{-2iN}$.
       For a set $Q \subset \mc{H}_1(\a)$, define the following sub-partitions
       \[ \mc{R}_i(Q) = \set{ J\in \mc{P}_i: \exists s\in J, g_{iN}h_s \omega \in Q }  \]
       Let $\mc{D}_i(Q) = \mc{P}_i \setminus \mc{R}_i(Q)$.
       Here $\mc{R}$ signifies recurrence and $\mc{D}$ signifies divergence.
       We note that the definition of $\mc{R}_i$ depends on the basepoint $\w$ but we suppress this dependence in our notation.
       
       \begin{lemma}
       	\label{lemma: bad set for f and recurrent part of F_i}
        Suppose $Q \subset \mc{H}_1(\a)$. Then, for any $\omega \in \mc{H}_1(\a)$, $N,\e >0$, $0<\d \leq \frac{\e}{4 \mc{S}(f)}$, and $M\in \N$, 
        we have
            \[
            	B_\omega (f,N,\e,M) \subseteq Z_\omega(Q, M,N,\d) \cup 
                	\bigcup_{ \substack{ A \subseteq \set{0,\dots,M-1} \\
                				|A| = \lceil \d M \rceil } }
                                \bigcap_{i\in A}F_i^{\mc{R}}(\e/2), 
            \]
         where $\mc{R}_i := \mc{R}_i(Q)$ for all $i\in \N$ and 
         \[
                        F_i^{\mc{R}}(\e/2) = F_i(\e/2) \cap \bigcup_{J\in \mc{R}_i} J. 
         \]
       \end{lemma}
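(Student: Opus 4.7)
The plan is to argue the contrapositive: for any $s\in[-1,1]$ with $s\notin Z_\w(Q,M,N,\d)$, I will show that $s\in B_\w(f,N,\e,M)$ forces $s\in F_i^{\mc{R}}(\e/2)$ for at least $\lceil\d M\rceil$ distinct values of $i$. Taking $A$ to be any $\lceil\d M\rceil$-element subset of those indices then produces the desired inclusion. The starting point is the elementary identity
\[
\frac{1}{MN}\int_0^{MN} f(g_t h_s \w)\,dt \;=\; \frac{1}{M}\sum_{i=0}^{M-1} f_i(s),
\]
so that $s\in B_\w(f,N,\e,M)$ reads $\frac{1}{M}\sum_i f_i(s) > \nu_{\mc{M}}(f)+\e$.

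The key step is to partition $\{0,\ldots,M-1\}$ into three disjoint pieces: the ``good'' set $G=\{i:s\in F_i^{\mc{R}}(\e/2)\}$; the ``non-recurrent'' set $NR=\{i:\text{the interval of }\mc{P}_i\text{ containing }s\text{ lies in }\mc{D}_i(Q)\}$; and the remainder $R$. Disjointness $G\cap NR=\emptyset$ is automatic from the defining inclusion $F_i^{\mc{R}}(\e/2)\subseteq\bigcup_{J\in\mc{R}_i}J$.

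Two observations drive the final count. First, if $i\in NR$, then $s$ itself is a point of an interval in $\mc{D}_i(Q)$, so $g_{iN}h_s\w\notin Q$; hence $|NR|$ is bounded by $\#\set{i:g_{iN}h_s\w\notin Q}\le\d M$, the last inequality using $s\notin Z_\w(Q,M,N,\d)$. Second, for $i\in R$ the interval of $\mc{P}_i$ containing $s$ does have a recurrent point (else $i$ would be in $NR$), so $s\in\bigcup_{J\in\mc{R}_i}J$; combined with $i\notin G$ this forces $s\notin F_i(\e/2)$, i.e.\ $f_i(s)\le\nu_{\mc{M}}(f)+\e/2$. Plugging the uniform bound $f_i(s)\le\|f\|_\infty\le\mc{S}(f)$ on $G\cup NR$ and the sharper bound on $R$ into the sum produces
\[
\frac{1}{M}\sum_{i=0}^{M-1}f_i(s)\;\le\;\frac{|G|+|NR|}{M}\,\mc{S}(f)+\Bigl(1-\tfrac{|G|+|NR|}{M}\Bigr)\bigl(\nu_{\mc{M}}(f)+\e/2\bigr).
\]

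Comparing this upper bound with the hypothesis $\frac{1}{M}\sum f_i(s)>\nu_{\mc{M}}(f)+\e$ leaves a purely algebraic inequality that, after routine rearrangement, shows $(|G|+|NR|)/M$ is bounded below by a multiple of $\e/\mc{S}(f)$. Combining this lower bound with $|NR|/M\le\d$ and the hypothesis $\d\le\e/(4\mc{S}(f))$ then yields $|G|\ge\lceil\d M\rceil$. The conceptual substance of the argument lies in choosing the correct three-way partition, and in particular in recognizing that the condition defining $F_i^{\mc{R}}(\e/2)$ — that the \emph{interval} of $\mc{P}_i$ containing $s$, rather than $s$ itself, meets the recurrent set — is precisely weak enough to be implied by genuine recurrence of $s$ at time $iN$. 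The final bookkeeping step is delicate in its constants but routine; this is the only place where the precise numerical relationship between $\d$ and $\e/\mc{S}(f)$ is used.
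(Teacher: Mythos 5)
Your argument is correct and is essentially the paper's proof repackaged pointwise: the paper first shows that $s\in B_\omega(f,N,\e,M)$ forces $s\in F_i(\e/2)$ for more than $2\d M$ indices and separately that $s\notin Z_\omega(Q,M,N,\d)$ forces the interval of $\mc{P}_j$ containing $s$ to lie in $\mc{R}_j$ for more than $(1-\d)M$ indices, then intersects; your three-way partition $G\cup NR\cup R$ carries out exactly this count in one pass, using the same observations (genuine recurrence of $s$ at time $iN$ places its partition interval in $\mc{R}_i$, and $f_i(s)\le\nu_{\mc M}(f)+\e/2$ off $G\cup NR$). The final bookkeeping closes with the same constants as in the paper (both arguments implicitly bound the contribution of the good indices by $\nu_{\mc M}(f)+\e/2$, which is where the factor giving $|G|/M>2\d-\d$ comes from), so no correction is needed.
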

       
       \begin{proof}
        First, we notice that
         \begin{equation}\label{lemma: bad set for f contained in limsup of F_i}
                  B_\omega (f,N,\e,M) \subseteq \bigcup_{ \substack{ A \subseteq \set{0,\dots,M-1} \\
                                  |A| > 2\d M  } }
                                  \bigcap_{i\in A} F_i(\e/2).
           \end{equation}
         It holds by the following inequalities.
         \begin{align*}
                  \frac{1}{MN}\int_0^{MN} f(g_t h_s \omega) \;dt &=
                      \frac{1}{M} \sum_{i=0}^{M-1} f_i(s) = \frac{1}{M}  
                      \sum_{\substack{  0\leq i\leq M-1 \\ f_i(s) \leq \nu_\mc{M}(f)+\e/2 } } f_i(s)
                      + \frac{1}{M}  \sum_{\substack{  0\leq i\leq M-1 \\ f_i(s) > \nu_\mc{M}(f)+\e/2 } } f_i(s) \\
                      &\leq \nu_\mc{M}(f)+\e/2 + \frac{||f||_\infty}{M} \#\set{i: f_i(s) > \nu_\mc{M}(f)+\e/2}\\
                      &\leq \nu_\mc{M}(f)+\e/2 + \frac{\mc{S}(f)}{M} \#\set{i: s \in F_i(\e/2) }
              \end{align*}

              Thus, if $s \in B_\omega (f,N,\e,M)$, then we must have that
              \begin{align*}
                  \#\set{i: s \in F_i(\e/2)} > \frac{\e}{2\mc{S}(f)} M \geq 2\d M
              \end{align*}  
              by our choice of $\d$. 

              By ~\eqref{lemma: bad set for f contained in limsup of F_i}, it suffices to show the following to prove the lemma.
              \begin{align*}
                  Z_\omega(Q, M,N,\d)^c \cap 
                      \bigcup_{ \substack{ A \subseteq \set{0,\dots,M-1} \\
                                  |A| > 2\d M } }
                                  \bigcap_{i\in A} F_i (\e/2)
                                  \subseteq 
                                  \bigcup_{ \substack{ A \subseteq \set{0,\dots,M-1} \\
                                  |A|= \lceil \d M \rceil } }
                                  \bigcap_{i\in A} F_i^\mc{R} (\e/2),
              \end{align*}
              where for a set $E\subseteq [-1,1]$, $E^c$ denotes its complement.

              The set $Z_\omega(Q, M,N,\d)$ was defined to be the set of directions $s$
              such that $g_{iN} h_s \omega \notin Q$ for at least $\d M$ natural numbers $i <M$.
              Hence, we get that
              \begin{align*}
                  Z_\omega(Q, M,N,\d)^c \subseteq \bigcup_{ \substack{ B \subseteq \set{0,\dots,M-1} \\
                                  |B| > (1-\d) M } }
                                  \bigcap_{j\in B} \bigcup_{J\in \mc{R}_j} J.
              \end{align*}
              Indeed, the right hand side describes the set of directions $s$ which belong to
              $\bigcup_{J\in \mc{R}_j} J$ for at least $(1-\d)M$ natural numbers $j<M$.
              By definition of $\mc{R}_j$, this certainly contains the set of directions
              $s$ for which $g_{jN}h_s\omega \in Q$ for at least
              $(1-\d)M$ natural numbers $j<M$, that is the set on the left hand side.

              Notice that the following inclusions hold.
              \begin{align*}
                  \left[\bigcup_{ \substack{ A \subseteq \set{0,\dots,M-1} \\
                                  |A| > 2\d M } }
                                  \bigcap_{i\in A} F_i (\e/2)
                                  \right]
                                  \bigcap 
                                  &\left[ \bigcup_{ \substack{ B \subseteq \set{0,\dots,M-1} \\
                                  |B| > (1-\d) M } }
                                  \bigcap_{j\in B} \bigcup_{J\in \mc{R}_j} J \right]\\
                                  &\subseteq
                                  \bigcup_{ \substack{ A,B \subseteq \set{0,\dots,M-1} \\
                                  |A| > 2\d M \\ |B| > (1-\d)M} }
                                  \left[\bigcap_{i\in A} F_i (\e/2) \bigcap
                                  \bigcap_{j\in B} \bigcup_{J\in \mc{R}_j} J \right]\\
                                  &\subseteq
                                  \bigcup_{ \substack{ A,B \subseteq \set{0,\dots,M-1} \\
                                  |A| > 2\d M \\ |B| > (1-\d)M} }
                                  \bigcap_{i\in A \cap B} F_i^\mc{R} (\e/2)\\
                                  &\subseteq
                                  \bigcup_{ \substack{ A \subseteq \set{0,\dots,M-1} \\
                                  |A|> \d M  } }
                                  \bigcap_{i\in A} F_i^\mc{R} (\e/2) \\
              \end{align*}
            where for the last inclusion we used the fact that for two sets $A,B  \subseteq \set{0,\dots,M-1}$ with
            $|A| > 2\d M$ and $|B| > (1-\d)M$, we have that $|A\cap B| > \d M$.
            Moreover, notice that
           \[
                \bigcup_{ \substack{ A \subseteq \set{0,\dots,M-1} \\
                                    |A| > \d M } }
                                    \bigcap_{i\in A} F_i^\mc{R} (\e/2)
                \subseteq \bigcup_{ \substack{ A \subseteq \set{0,\dots,M-1} \\
                                    |A| = \lceil \d M \rceil } }
                                    \bigcap_{i\in A} F_i^{\mc{R}} (\e/2).
           \]
            This completes the proof.
       \end{proof}

    \subsection{Measure Bounds for $F_i$}\label{subsection: measure bounds}
       The next lemma allows us to control the measure of the proportion of a set $F_i$ in an element of the partition $\mc{R}_i(Q)$ for a suitably chosen large compact set with good properties.
       This will be a direct application of Theorem~\ref{thrm: quantitative Chaika-Eskin}.
      
       Let $\mc{N}_1,\dots,\mc{N}_k$ be proper affine invariant submanifolds as in Theorem~\ref{thrm: quantitative Chaika-Eskin} applied to $\e$ and $f$. By   ~\cite[Proposition 2.13]{EMM}, for any $i=1,\dots,k$ there exist height functions $f_{\mc{N}_i}$ such that for all $\ell>0$, the sets
       \begin{align*} 
             	C_\ell = \overline{\set{x\in \mc{H}_1(\a): \sum_1^k f_{\mc{N}_i}(x) \leq \ell}}
       \end{align*}
        are compact.      
        The following is the main result of this section which is the form we will use Theorem~\ref{thrm: quantitative Chaika-Eskin} in. Recall the definition of the sets $F_i(\b)$ in~\eqref{defn: the sets F_i}.
       \begin{lemma}
       \label{lemma: bound on measure of recurrent part of F_i}
       For all $\ell >0$ and all $a >0$, there exists $T_0 >0$ such that 
       for all $N>T_0$, $\b>\e$, $i\in \N$, all $\omega \in \mc{H}_1(\a)$
       and all $J \in \mc{R}_i(C_\ell)$, we have
        \[ \frac{\nu (J\cap F_i(\b) )  }{\nu(J)} \leqslant a,  \]
        where $\nu$ is the Lebesgue probability measure on $[-1,1]$.
       \end{lemma}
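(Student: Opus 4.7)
The plan is to reduce the lemma to the quantitative equidistribution result of Theorem~\ref{thrm: quantitative Chaika-Eskin} via a conjugation trick that rescales the horocycle parameter by $e^{2iN}$. Fix $J \in \mc{R}_i(C_\ell)$ and choose $s_0 \in J$ such that $y := g_{iN} h_{s_0} \omega \in C_\ell$. For $s \in J$, write $s = s_0 + re^{-2iN}$; since $J$ has radius $e^{-2iN}$, the rescaled parameter $r$ ranges over a sub-interval of $[-2,2]$. Using commutativity of the horocycle subgroup together with the normalization relation $g_t h_s g_{-t} = h_{e^{2t} s}$, the trajectory simplifies as
\begin{equation*}
g_{iN + t'} h_s \omega \;=\; g_{t'}\, g_{iN}\, h_{re^{-2iN}}\, h_{s_0} \omega \;=\; g_{t'} h_r y,
\end{equation*}
so that $f_i(s) = A_r^N(y)(f)$.

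Next, I would apply Theorem~\ref{thrm: quantitative Chaika-Eskin} at the basepoint $y \in C_\ell$. Because each $\mc{N}_j$ is $\mrm{SL}_2(\R)$-invariant and $C_\ell$ is contained in $\mc{M} \setminus \bigcup_j \mc{N}_j$ by construction of the height functions, the enlarged set $\tilde F := \{h_q z : q \in [-1,1],\ z \in C_\ell\}$ is a compact subset of $\mc{M} \setminus \bigcup_j \mc{N}_j$ that depends neither on $i$, nor on $\omega$, nor on the choice of $s_0$. Since $\b > \e$, whenever $s \in F_i(\b)$ one has $|A_r^N(y)(f) - \nu_{\mc{M}}(f)| > \e$, so it suffices to estimate the Lebesgue measure of the $\e$-bad set in the rescaled variable $r$.

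To cover the full range $r \in [-2, 2]$, I would split it as $[-2,0]\cup[0,2]$ and translate each half into $[-1,1]$: writing $r = q_k + r'$ with $q_k \in \{-1, 1\}$ and $r' \in [-1,1]$, one gets $A_r^N(y)(f) = A_{r'}^N(h_{q_k} y)(f)$, and $h_{q_k} y \in \tilde F$. Applying Theorem~\ref{thrm: quantitative Chaika-Eskin} with test function $f$, deviation $\e$, compact set $\tilde F$, and $\kappa := a/2$, yields a uniform threshold $T_0$; for $N > T_0$, summing the two halves gives
\begin{equation*}
\mathrm{Leb}\bigl(\{r \in [-2,2] : A_r^N(y)(f) > \nu_{\mc{M}}(f) + \b\}\bigr) \;<\; 2\kappa \;=\; a.
\end{equation*}
Pulling back via $s = s_0 + re^{-2iN}$, whose Jacobian is $e^{-2iN}$, and noting that $J$ has $s$-length $2e^{-2iN}$, gives
\begin{equation*}
\frac{\nu(J \cap F_i(\b))}{\nu(J)} \;\leq\; \frac{a \cdot e^{-2iN}}{2 e^{-2iN}} \;=\; \frac{a}{2} \;\leq\; a.
\end{equation*}
Uniformity in $i$, $\omega$, and $J$ is automatic: $\tilde F$ and hence $T_0$ depend only on $\ell$, $a$, $\e$, and $f$. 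I foresee no serious obstacle; the one mild subtlety is that $s_0$ need not be the center of $J$, which is why one must cover the slightly larger interval $[-2,2]$ by two unit translates instead of working directly with $[-1,1]$.
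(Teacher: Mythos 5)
Your proof is correct and follows essentially the same route as the paper's: both conjugate $h_s$ past $g_{iN}$ to rescale the horocycle parameter by $e^{2iN}$ and then invoke the uniform equidistribution of Theorem~\ref{thrm: quantitative Chaika-Eskin} over a compact set slightly larger than $C_\ell$ that absorbs the bounded shift of basepoint. The only (cosmetic) difference is in the recentering: the paper moves the basepoint to the midpoint $c_0$ of $J$ and enlarges $C_\ell$ to $C_{\ell'}$ with $B_1C_\ell\subseteq C_{\ell'}$ so the rescaled interval is exactly $[-1,1]$, whereas you keep $s_0$ and instead cover $[-2,2]$ by two unit translates using the compact set $h_{[-1,1]}C_\ell$; both devices are equally valid.
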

       
       \begin{proof}
      Denote by $B_1$ a neighborhood of radius $1$ around identity in $SL_2(\R)$. Fix $\ell>0$ and $a>0$.
       Let $\ell' > \ell$ be such that
             \[ B_1 C_{\ell} \subseteq C_{\ell'}. \]
              By Theorem~\ref{thrm: quantitative Chaika-Eskin} applied to $f$, $\e$, $a$ and the compact set $F = C_{\ell'} \subset \mc{M} \setminus \cup_{i=1}^k \mc{N}_i$, where $\mc{N}_i$ are given by that theorem, there exists $T_0$ such that for all $N > T_0$ and $x\in F$, we have
          \begin{align}
             \label{eqn: Birkhoff uniform bound on measure of bad set for f}
              \left| \set{s\in [-1,1]: \left| \frac{1}{N}\int_0^N f(g_t h_s x) \;dt - \nu_{\mc{M}}(f)  \right|  \geq \e }   \right| < a.
           \end{align}
              
       For any $i\in \N$, we define $\mc{R}_i:=\mc{R}_i(C_\ell)$. Fix $J \in\mc{R}_i$.
       Let $s_0 \in J$ be such that $g_{iN}h_{s_0}\omega \in C_\ell$.
       By our choice of $\ell'$, we have the following holds for any $s\in J$.
       \begin{align*}
       	g_{iN}h_s\omega = h_{e^{2iN}(s-s_0)}  g_{iN} h_{s_0} \omega \in B_1 C_\ell
        \subseteq C_{\ell'}
       \end{align*}
      In particular, the above holds for the center $c_0$ of the interval $J$.
       Let $s \in J -c_0$ be such that $s+c_0 \in F_i(\b)$.
       Then, we get that
       \begin{align*}
       	 \nu_M(f) + \b < \frac{1}{N} \int_0^N f(g_{t+iN} h_{s+c_0}\omega)\;dt = \frac{1}{N} \int_0^N f(g_{t} h_{e^{2iN}s}g_{iN}h_{c_0}\omega)\;dt
       \end{align*}
       Thus, we obtain the following.
       \begin{align*}
       	e^{2iN} \left(\left( J\cap F_i(\b) \right) -c_0\right) \subseteq 
        	\set{s \in [-1,1]: \left| \frac{1}{N} \int_0^N f(g_{t} 
            h_{s}g_{iN}h_{c_0}\omega)\;dt - \nu_\mc{M}(f)\right| \geq \b }
       \end{align*}
       Since $g_{iN}h_{c_0}\omega \in C_{\ell'}$, the Lemma follows from~\eqref{eqn: Birkhoff uniform bound on measure of bad set for f}.
       \end{proof}

       The following corollary is an immediate consequence of Lemma~\ref{lemma: bound on measure of recurrent part of F_i} and the fact that elements of $\mc{R}_i$ are disjoint.
       \begin{corollary}
       		For all $\ell >0$ and all $a >0$, there exists $T_0 >0$ such that 
       for all $N>T_0$, $\b>\e$, $i\in \N$, we have that
            \[ \nu \left( F_i(\b) \cap \bigcup_{J\in \mc{R}_i(C_\ell)} J \right) \leq a. \]
       \end{corollary}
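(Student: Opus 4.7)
The statement to prove is an immediate measure-theoretic consequence of the preceding lemma, so the plan is very short.

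The plan is to apply Lemma~\ref{lemma: bound on measure of recurrent part of F_i} with the given $\ell$ and chosen precision $a$, obtaining a threshold $T_0 > 0$ such that for all $N > T_0$, all $\beta > \varepsilon$, all $i \in \mathbb{N}$, all $\omega$, and all $J \in \mc{R}_i(C_\ell)$,
\[
\nu(J \cap F_i(\beta)) \leq a\, \nu(J).
\]
The key remaining observation is that the partition elements $J \in \mc{R}_i(C_\ell)$ are pairwise disjoint, since $\mc{R}_i(C_\ell)$ is by definition a subcollection of $\mc{P}_i$, which is a genuine partition of $[-1,1]$ into intervals of radius $e^{-2iN}$.

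Using this disjointness together with countable additivity of $\nu$, I would write
\[
\nu\Bigl(F_i(\beta) \cap \bigcup_{J \in \mc{R}_i(C_\ell)} J\Bigr)
= \sum_{J \in \mc{R}_i(C_\ell)} \nu(J \cap F_i(\beta))
\leq a \sum_{J \in \mc{R}_i(C_\ell)} \nu(J)
= a\, \nu\Bigl(\bigcup_{J \in \mc{R}_i(C_\ell)} J\Bigr)
\leq a,
\]
where the final inequality uses that $\nu$ is a probability measure on $[-1,1]$, so the total mass of any subcollection of $\mc{P}_i$ is at most $1$.

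There is no real obstacle: the lemma supplies a pointwise-in-$J$ bound on density and the partition structure promotes it to a bound on the union. The only thing to watch is to apply the lemma with the precision parameter equal to $a$ itself (rather than some smaller constant), so that the upper bound obtained matches the one stated in the corollary.
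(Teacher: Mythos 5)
Your proposal is correct and matches the paper's argument exactly: the paper states the corollary is an immediate consequence of Lemma~\ref{lemma: bound on measure of recurrent part of F_i} together with the disjointness of the elements of $\mc{R}_i$, which is precisely the summation argument you carry out. No issues.
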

    
    \subsection{Independence of the Sets $F_i$}
       The goal of this section is to prove that the sets $F_i (\b)$ behave as if they are independent.
       More precisely, we will prove that the measure of the intersection of such sets is bounded above by the product of their measures, up to controlled error.
       Recall the definition of partitions $\mc{P}_i$ in Section~\ref{section: Birkhoff sets partitions}.
       
       We start with the following simple but key observation.
       \begin{lemma}
       \label{lemma: 0-1 law for partitions and F_i}
       	Suppose $i < j$, where $i$ and $j$ are natural numbers, and $\b>0$. Let $J \in \mc{P}_j$ be such that $J \cap F_i(\b) \neq \emptyset$.
        Then, $J \subseteq F_i\left(\b - \frac{\mc{S}(f)}{N} e^{2(i+1-j)N}\right)$.
       \end{lemma}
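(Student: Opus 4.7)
The plan is to quantify how much the function $f_i$ can change as we vary $s$ inside a single interval of $\mathcal{P}_j$. The key tool is the commutation relation between the geodesic and horocycle flows, combined with the Lipschitz bound encoded by the Sobolev norm $\mathcal{S}(f)$.

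First, I would pick any $s_0 \in J \cap F_i(\beta)$, so that $f_i(s_0) > \nu_{\mathcal{M}}(f) + \beta$, and let $s \in J$ be arbitrary. Since $J \in \mathcal{P}_j$ has radius $e^{-2jN}$, we have $|s - s_0| \leq 2 e^{-2jN}$. For any $t > 0$, the standard relation in $\mathrm{SL}_2(\mathbb{R})$ gives
\[
g_t h_s \omega = h_{e^{2t}(s-s_0)}\, g_t h_{s_0}\omega,
\]
so, using the definition of the Sobolev norm,
\[
\bigl|f(g_t h_s \omega) - f(g_t h_{s_0}\omega)\bigr| \leq \mathcal{S}(f)\, d\!\left(h_{e^{2t}(s-s_0)}, \mathrm{Id}\right) \leq \mathcal{S}(f)\, e^{2t}|s-s_0|.
\]

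Next, I would integrate this pointwise estimate over $t \in [iN,(i+1)N]$ and divide by $N$ to bound $|f_i(s) - f_i(s_0)|$:
\[
|f_i(s) - f_i(s_0)| \leq \frac{\mathcal{S}(f)|s-s_0|}{N} \int_{iN}^{(i+1)N} e^{2t}\, dt = \frac{\mathcal{S}(f)|s-s_0|}{2N}\bigl(e^{2(i+1)N} - e^{2iN}\bigr).
\]
Substituting $|s-s_0| \leq 2e^{-2jN}$ yields
\[
|f_i(s) - f_i(s_0)| \leq \frac{\mathcal{S}(f)}{N}\bigl(e^{2(i+1-j)N} - e^{2(i-j)N}\bigr) \leq \frac{\mathcal{S}(f)}{N}e^{2(i+1-j)N}.
\]

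Finally, combining this with $f_i(s_0) > \nu_{\mathcal{M}}(f) + \beta$ gives
\[
f_i(s) > \nu_{\mathcal{M}}(f) + \beta - \frac{\mathcal{S}(f)}{N}e^{2(i+1-j)N},
\]
which is exactly the statement that $s \in F_i\bigl(\beta - \tfrac{\mathcal{S}(f)}{N}e^{2(i+1-j)N}\bigr)$. Since $s \in J$ was arbitrary, this proves the inclusion. There is no real obstacle; the argument is purely mechanical once one observes that $g_t$ expands horocycle displacements by $e^{2t}$ and that the relevant range of $t$ only goes up to $(i+1)N$, while the interval $J$ has width $e^{-2jN}$ with $j > i$, so the net distortion is controlled by $e^{2(i+1-j)N}$. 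The minor bookkeeping concerns whether "radius" means half-length or length, but in either case the stated bound with a factor of $\tfrac{1}{N}$ absorbs the constant.
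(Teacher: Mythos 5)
Your argument is correct and is essentially identical to the paper's proof: both use the commutation $g_t h_s = h_{e^{2t}(s-s_0)} g_t h_{s_0}$, the Lipschitz bound via $\mc{S}(f)$, and averaging over $t \in [iN,(i+1)N]$ to control $|f_i(s) - f_i(s_0)|$ by $\frac{\mc{S}(f)}{N}e^{2(i+1-j)N}$. The factor-of-two bookkeeping you flag is handled in the paper by taking $|s-\eta|\leq e^{-2jN}$ directly from the radius, and in your version the $\tfrac{1}{2}$ from $\int e^{2t}\,dt$ absorbs it, so both land on the same bound.
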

       
       \begin{proof}
       	Let $s \in J \cap F_i(\b)$.
        Then, $|s - \eta| \leq  e^{-2jN}$ for any $\eta \in J$.
        Hence, since $f$ is Lipschitz, we have that for all $t\in [iN, (i+1)N]$
        \[ 
        	\left|  f(g_t h_\eta \omega) - f(g_t h_s \omega) \right|
             \leq \norm{f}_{Lip} d(h_{e^{2t}(s-\eta)} , id)
             \leq \mc{S}(f) e^{2(t-j)N},
        \]
        where we use $d(g,h)$ to be the metric on $SL_2(\R)$ defined by the maximum absolute value of the entries of the matrix $gh^{-1} - Id$.
        Averaging the above inequality in $t$, we get that 
        \begin{align*}
        	|f_i(\eta) - f_i(s)| \leq \frac{\mc{S}(f)e^{-2jN} }{N}
            		\int_{iN}^{(i+1)N} e^{2t}\;dt \leq \frac{\mc{S}(f)}{N} e^{2(i+1-j)N}, 
        \end{align*}
        which implies the lemma.
       \end{proof}
       

       The following lemma is the main result of this section. Let the notation be the same as in Lemma~\ref{lemma: bound on measure of recurrent part of F_i}.
	   \begin{lemma} [Independence Lemma]
       \label{propn: independence lemma}
       Suppose $\e$ is given.
       Then, for all $\ell >0$ and all $a >0$, there exists $T_0 >0$ such that 
       for all $\omega \in \mc{H}_1(\a)$, $N>T_0$, $\b>\e + \frac{\mc{S}(f)}{N}$ 
       and finite sets $A \subset \N$, we have 
         \[
              \nu\left( \bigcap_{i\in A} \left( F_i(\b) \cap \bigcup_{J\in \mc{R}_i(C_\ell)} J  \right) \right)
              \leq a^{|A|},
         \]
         where $|A|$ is the number of elements in $A$.
	   \end{lemma}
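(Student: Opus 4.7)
The plan is to prove the inequality by induction on $n := |A|$, writing $A = \{i_1 < \cdots < i_n\}$ and setting $E_i(\b) := F_i(\b) \cap \bigcup_{J' \in \mc{R}_i(C_\ell)} J'$. The intuition is that, when one examines the intersection through the partition $\mc{P}_{i_n}$ whose atoms have radius $e^{-2 i_n N}$, the recurrence condition at the top level $i_n$ forces every contributing atom $J \in \mc{P}_{i_n}$ to lie in $\mc{R}_{i_n}(C_\ell)$, after which Lemma~\ref{lemma: bound on measure of recurrent part of F_i} supplies a uniform factor of $a$. At the coarser levels $i < i_n$, Lemma~\ref{lemma: 0-1 law for partitions and F_i} plays the role of a zero--one law, promoting ``$J$ meets $F_i(\b)$'' to ``$J \subseteq F_i(\b'_i)$'' with a decrement $\b'_i = \b - \mc{S}(f) N^{-1} e^{2(i+1-i_n)N}$. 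This ``atoms are essentially constant on coarser scales'' phenomenon is the replacement for genuine independence.

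The base case $n = 1$ is immediate: since the atoms of $\mc{R}_i(C_\ell) \subset \mc{P}_i$ are pairwise disjoint, summing Lemma~\ref{lemma: bound on measure of recurrent part of F_i} over $J \in \mc{R}_i(C_\ell)$ yields $\nu(E_i(\b)) \leq a \cdot \nu([-1,1]) \leq a$. For the inductive step I would decompose
\[
\nu\Big(\bigcap_{i \in A} E_i(\b)\Big) = \sum_{J \in \mc{P}_{i_n}} \nu\Big(J \cap \bigcap_{i \in A} E_i(\b)\Big).
\]
A nonzero contribution forces $J \in \mc{R}_{i_n}(C_\ell)$, since $\bigcup_{J' \in \mc{R}_{i_n}} J'$ is exactly a union of atoms of $\mc{P}_{i_n}$; Lemma~\ref{lemma: bound on measure of recurrent part of F_i} then gives $\nu(J \cap F_{i_n}(\b)) \leq a \nu(J)$. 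For each $i < i_n$ the same $J$ must (i) be contained in $F_i(\b'_i)$ by Lemma~\ref{lemma: 0-1 law for partitions and F_i} and (ii) meet $\bigcup_{J' \in \mc{R}_i(C_\ell)} J'$. Factoring out $a$, the sum is bounded by $a$ times the measure of the union of such atoms, which is contained in $\bigcap_{i < i_n} E_i(\b'_i)$ up to a negligible set of ``straddling'' atoms. The inductive hypothesis, applied to $A \setminus \{i_n\}$ with the adjusted parameter $\b'_i$, then delivers the factor $a^{n-1}$.

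The main obstacle is the bookkeeping for the cumulative decrement in $\b$ and the boundary straddle. For any fixed $i \in A$, the total decrement accrued over all inductive steps is $\mc{S}(f) N^{-1} \sum_{j : i_j > i} e^{2(i+1 - i_j) N}$, a geometric-type sum bounded by $\mc{S}(f) N^{-1} (1 - e^{-2N})^{-1}$; for $T_0$ sufficiently large this is arbitrarily close to $\mc{S}(f)/N$, so the hypothesis $\b > \e + \mc{S}(f)/N$ (with a small cushion absorbed into the choice of $T_0$) keeps $\b'_i > \e$ at every step, ensuring Lemma~\ref{lemma: bound on measure of recurrent part of F_i} remains applicable. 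The boundary straddle comes from the fact that atoms of $\mc{P}_{i_n}$ need not refine atoms of $\mc{P}_i$ for $i < i_n$; the total extra mass picked up is at most $O(e^{-2N})$ (two atoms of $\mc{P}_{i_n}$ per atom of $\mc{R}_i(C_\ell)$, whose count is $O(e^{2iN})$). This correction is absorbed by applying Lemma~\ref{lemma: bound on measure of recurrent part of F_i} with a slightly smaller constant $a' < a$ whose $n$-th power still lies below $a^n$. Both technical fixes are secured by choosing $T_0 = T_0(a, \ell, \e, f)$ large enough at the outset.
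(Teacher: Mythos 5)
Your overall structure is the same as the paper's: induction on $|A|$, decomposition over atoms of $\mc{P}_{i_n}$, Lemma~\ref{lemma: bound on measure of recurrent part of F_i} supplying the factor $a$ at the top level, Lemma~\ref{lemma: 0-1 law for partitions and F_i} promoting ``meets $F_i(\b)$'' to ``contained in $F_i(\b_i')$'' at the coarser levels, and the geometric bookkeeping of the cumulative decrement in $\b$. All of that is sound and matches the paper.

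The genuine gap is in your treatment of the ``straddling'' atoms. You propose to bound the straddle mass at each inductive step by an \emph{additive} error of order $e^{-2N}$ and then absorb it by shrinking $a$ to some $a' < a$. This does not work: the recursion you actually get is of the form $m_n \leq a\,(m_{n-1} + O(e^{-2N}))$, which unwinds to $m_n \leq a^n + \tfrac{a}{1-a}\,O(e^{-2N})$. The additive term is a constant independent of $n = |A|$, while the target bound $a^{|A|}$ is exponentially small in $|A|$; since the lemma must hold for arbitrary finite $A$ (and is later applied with $|A| = \lceil \d M\rceil \to \infty$), no choice of $a' < a$ or of $T_0$ absorbs a fixed additive error into $a^{|A|}$. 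The paper avoids the issue entirely: one enlarges $N$ once so that the partitions $\mc{P}_i$ form a refining sequence ($\mc{P}_j$ refines $\mc{P}_i$ for $i \leq j$). Then $\bigcup_{J'\in\mc{R}_i}J'$ is a union of atoms of $\mc{P}_i$, so any atom of $\mc{P}_{i_n}$ meeting it is contained in it, there are no straddling atoms, and the inclusion $J \subseteq \bigcap_{i<i_n} F_i^{\mc{R}}(\b_i')$ holds exactly. Replace your error estimate with this refinement assumption and the argument closes. (Your handling of the cumulative decrement, where the geometric sum slightly exceeds $\mc{S}(f)/N$, is the same minor looseness present in the paper itself and is harmless since the lemma is only invoked with extra cushion in $\b$.)
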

       
       \begin{proof}
        Fix some $\ell$ and $a$.
        Let $T_0>0$ be as in Lemma~\ref{lemma: bound on measure of recurrent part of F_i}, $N>T_0$, and $A \subset \N$ with $p=|A|$. Up to relabeling, we may assume $A = \set{1,\dots,p}$. Finally, let $\omega \in \mc{H}_1(\a)$.

        For any $\b>\e + \frac{\mc{S}(f)}{N}$ and $i\in \N$, we define
        \[  F_i^{\mc{R}}(\b) := F_i(\b) \cap \bigcup_{J\in \mc{R}_i} J, \]
        where we use $\mc{R}_i$ to denote $\mc{R}_i(C_\ell)$.
        
        We proceed by induction on $p$.
        Since elements of $\mc{R}_p$ are disjoint, we have
        \begin{align*}
        	\nu\left( \bigcap_{i\in A} F_i^{\mc{R}}(\b) \right)
            = \nu\left(  \bigcup_{J\in \mc{R}_p} \left(  J \cap  F_p(\b) \cap 
            	\bigcap_{i=1}^{p-1}   F_i^{\mc{R}} (\b)    \right)    \right) = \sum_{J\in \mc{R}_p} \nu\left(  J \cap  F_p(\b) \cap 
            	\bigcap_{i=1}^{p-1}   F_i^{\mc{R}}  (\b)   \right)
        \end{align*}
        
        Moreover,
        \begin{align}
        \label{eqn: filtration property of F_i}
        \bigcap_{i=1}^{p-1}   F_i^{\mc{R}}  (\b)  
            \subseteq \bigcap_{i=1}^{p-1}   F_i^{\mc{R}}  \left(\b- \frac{\mc{S}(f)}{N} e^{2(i+1-p)N} \right).  
        \end{align}
        
        Let $J \in \mc{R}_p$ be such that $J \cap  \bigcap_{i=1}^{p-1}   F_i^{\mc{R}}  (\b) \neq \emptyset$.
        Then, $J \cap  \bigcap_{i=1}^{p-1}   F_i  (\b) \neq \emptyset$ and for any $i=1, \dots, p-1$ there exists $J'\in\mathcal R_i$ such that $J\cap J'\neq \emptyset$.
        Hence, by Lemma~\ref{lemma: 0-1 law for partitions and F_i}, 
        \begin{align*}
        	J \subseteq \bigcap_{i=1}^{p-1}   F_i  \left(\b- \frac{\mc{S}(f)}{N} e^{2(i+1-p)N} \right).
        \end{align*}
        By enlarging $N$ if necessary, we may assume that $\mc{P}_j$ is a refinement of $\mc{P}_i$ for $i\leq j$.
       Hence, we see that
        \begin{equation*}
        J\subseteq  \bigcap_{i=1}^{p-1} \bigcup_{J'\in \mc{R}_i} J'.
        \end{equation*}
        In particular, we obtain the following base step in our inductive procedure.
        \begin{align}
          \label{eqn: 0-1 law application}
          J \subseteq \bigcap_{i=1}^{p-1}   F_i^{\mc{R}}  \left(\b- \frac{\mc{S}(f)}{N} e^{2(i+1-p)N} \right)
        \end{align}
        for all $J \in \mc{P}_p$ satisfying $J \cap  \bigcap_{i=1}^{p-1}   F_i^{\mc{R}}  (\b) \neq \emptyset$.
		Therefore, it follows that
        \begin{align} \label{eqn: base step measure estimate}
        	\nu\left( \bigcap_{i\in A} F_i^{\mc{R}}(\b) \right) &\leq
            	\sum_{ \substack{ J\in \mc{R}_p \\ J \cap   \bigcap_{i=1}^{p-1}   F_i^{\mc{R}}  (\b) \neq \emptyset }  } 
                	\nu( J\cap F_p(\b) )\nonumber\\
                    &\leq a \sum_{ \substack{ J\in \mc{R}_p \\ 
                    	J \cap   \bigcap_{i=1}^{p-1}   F_i^{\mc{R}}  (\b) \neq \emptyset }  } 
                    	\nu(J) 
                        & \text{ by Lemma~\ref{lemma: bound on measure of recurrent part of F_i}}
                        \nonumber\\
                    &\leq a \nu \left(  \bigcap_{i=1}^{p-1}   F_i^{\mc{R}} \left(\b- \frac{\mc{S}(f)}{N} e^{2(i+1-p)N} \right)  \right)
                    & \text{ by~\eqref{eqn: 0-1 law application} }.
        \end{align}
        
        Our choice of $\b$ guarantees that for all $1\leq k\leq p$,
        \begin{equation*}
        	\b - \frac{\mc{S}(f)}{N} \sum_{j=0}^{j=k-1} e^{(i+1-(p-j))N} > \e.
        \end{equation*}
        Note here that our assumption that $A = \set{1,\dots,p} $ maximizes the sum in the above inequality.
        In other words, our choice of $\b$ guarantees that the above inequality holds where the sum is taken over any set of natural numbers $A$ of cardinality $p$.
        
        Hence, by induction on our base measure estimate in~\eqref{eqn: base step measure estimate},
        via repeated application of Lemma~\ref{lemma: 0-1 law for partitions and F_i},
        \begin{align*}
        	\nu\left( \bigcap_{i\in A} F_i^{\mc{R}}(\b) \right) &\leq
            	a \nu \left(  \bigcap_{i=1}^{p-1}   
            		F_i^{\mc{R}} \left(\b- \frac{\mc{S}(f)}{N} e^{2(i+1-p)N} \right)  \right) \\
                &\leq   a^2 \nu \left(  \bigcap_{i=1}^{p-2}   
            					F_i^{\mc{R}} \left(\b- \frac{\mc{S}(f)}{N} e^{2(i+1-p)N}  
                    				- \frac{\mc{S}(f)}{N} e^{2(i+1-(p-1))N}  \right)  \right) \\
				&\leq \dots \\
                &\leq  a^k \nu \left(  \bigcap_{i=1}^{p-k}   
            					F_i^{\mc{R}} \left(\b- \frac{\mc{S}(f)}{N}
                                	\sum_{j=0}^{j=k-1} e^{2(i+1-(p-j))N}
                    				  \right)  \right)  \\
                &\leq a^{p}
        \end{align*}
        as desired.
       \end{proof}


    \subsection{A Covering Lemma}
       
       As a consequence of Lemma~\ref{propn: independence lemma}, we obtain the following bound on the number of intervals needed to cover intersections of the recurrent parts of the sets $F_i$.
       More precisely, we obtain the following.
       
       \begin{lemma}
       	\label{lemma: birkhoff covering lemma}
        Given $\e>0$.
       Then, for all $\ell >0$ and $a >0$, there exists $T_0 >0$ such that 
       for all $\omega \in \mc{H}_1(\a)$, $N>T_0$, all $\b>\e + \frac{2\mc{S}(f)}{N}$, $M \in \N$,
       and finite sets $A \subseteq \set{0,\dots, M-1}$, the following holds.
        \[
        	\# \set{ J \in \mc{P}_M: J \cap \bigcap_{i\in A} \left( F_i(\b) \cap \bigcup_{J'\in \mc{R}_i(C_\ell)} J'  \right) 
            		\neq \emptyset } \leq  e^{2MN} a^{|A|},
        \]
         where $|A|$ is the number of elements in $A$.
       \end{lemma}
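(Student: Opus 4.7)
The plan is to deduce this covering lemma from the Independence Lemma (Lemma~\ref{propn: independence lemma}) combined with the ``zero-one law'' of Lemma~\ref{lemma: 0-1 law for partitions and F_i}. The basic idea is to count intervals in $\mc{P}_M$ meeting the intersection by bounding the Lebesgue measure of their union and dividing by the common length $2e^{-2MN}$ of each interval. The Independence Lemma already supplies a measure bound of $a^{|A|}$ for $\bigcap_{i\in A}(F_i(\b') \cap \bigcup_{J'\in\mc{R}_i(C_\ell)} J')$ whenever $\b'>\e+\mc{S}(f)/N$, but the union of intervals $J\in\mc{P}_M$ that merely \emph{intersect} this set could be strictly larger. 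The key is therefore to upgrade ``meets'' to ``is contained in'' at the cost of a small loss in $\b$, which is exactly what the extra room $\b > \e + 2\mc{S}(f)/N$ buys us.

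First I would take $N$ large enough so that $\mc{P}_M$ refines $\mc{P}_i$ for every $i\leq M$. Then, for any $J\in\mc{P}_M$ meeting $\bigcup_{J'\in\mc{R}_i(C_\ell)} J'$, the unique element of $\mc{P}_i$ containing $J$ must itself belong to $\mc{R}_i(C_\ell)$, so $J\subseteq \bigcup_{J'\in\mc{R}_i(C_\ell)} J'$. Next, for each $i\in A\subseteq\{0,\dots,M-1\}$, Lemma~\ref{lemma: 0-1 law for partitions and F_i} yields: if $J\in\mc{P}_M$ meets $F_i(\b)$, then
\[
J \subseteq F_i\bigl(\b - \tfrac{\mc{S}(f)}{N}\, e^{2(i+1-M)N}\bigr) \subseteq F_i\bigl(\b - \tfrac{\mc{S}(f)}{N}\bigr),
\]
where the second inclusion uses $i\leq M-1$ and the monotonicity of $F_i$ in its parameter. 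Intersecting these inclusions over $i\in A$, we conclude that every interval $J\in\mc{P}_M$ meeting the given intersection is contained in
\[
\bigcap_{i\in A}\Bigl(F_i\bigl(\b - \tfrac{\mc{S}(f)}{N}\bigr) \cap \bigcup_{J'\in\mc{R}_i(C_\ell)} J'\Bigr).
\]

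Finally, set $\b' := \b - \mc{S}(f)/N$. The hypothesis $\b > \e + 2\mc{S}(f)/N$ is precisely the statement that $\b' > \e + \mc{S}(f)/N$, so the Independence Lemma applies with $\b'$ in place of $\b$ (choosing $T_0$ to be the value it produces) and gives that the measure of the displayed set is at most $a^{|A|}$. Since the intervals $J\in\mc{P}_M$ counted on the left-hand side are pairwise disjoint and each has length $2e^{-2MN}$, their number is bounded by
\[
\frac{a^{|A|}}{2e^{-2MN}} = \tfrac{1}{2}\, e^{2MN}\, a^{|A|} \leq e^{2MN}\, a^{|A|},
\]
completing the proof. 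No serious obstacle is expected; the only delicate point is the bookkeeping of perturbations in $\b$, which the factor of $2$ in the hypothesis $\b > \e + 2\mc{S}(f)/N$ absorbs exactly once for the zero-one law step and once for invoking the Independence Lemma.
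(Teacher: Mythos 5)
Your proof is correct and follows essentially the same route as the paper's: use the refinement property of the partitions together with Lemma~\ref{lemma: 0-1 law for partitions and F_i} to upgrade ``$J$ meets the set'' to ``$J$ is contained in the set with $\b$ replaced by $\b - \mc{S}(f)/N$,'' then apply Lemma~\ref{propn: independence lemma} (which the slack $\b > \e + 2\mc{S}(f)/N$ still permits) and divide the resulting measure bound by the common measure of the partition elements. The only cosmetic difference is a factor of $2$ coming from whether one uses raw length or the normalized Lebesgue probability measure $\nu$ on $[-1,1]$, which is harmless.
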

       
       \begin{proof}
       Fix $\ell$ and $a$.
       Let $T_0>0$ be as in Proposition~\ref{propn: independence lemma}, $N>T_0$, $M \in \N$ and  
        $\b> \e + \frac{2\mc{S}(f)}{N}$.
        Suppose $A \subset \set{0,\dots, M-1}$.
       	For each $i\in \N$, let
        \[  F_i^{\mc{R}}(\b) := F_i(\b) \cap \bigcup_{J\in \mc{R}_i} J \]
        
       	As in the proof of Lemma~\ref{propn: independence lemma},
        a combination of Lemma~\ref{lemma: 0-1 law for partitions and F_i}
        and the fact that the partitions $\mc{P}_i$ form a refining sequence of partitions (which we may assume by enlarging $N$ slightly if necessary)
        shows that for all $J \in \mc{P}_M$,
        \[
        	J \cap \bigcap_{i\in A}  F_i^{\mc{R}} (\b)  \neq \emptyset
            		\Longrightarrow
        	J \subseteq \bigcap_{i\in A} F_i^{\mc{R}} \left(\b - \frac{\mc{S}(f)}{N} e^{2(i+1-M)N}\right). 
        \]
        
        In particular, for any $J\in\mc{P}_M$ satisfying
        $J \cap \bigcap_{i\in A}  F_i^{\mc{R}} (\b)  \neq \emptyset$, one has
        \[
        	J \subseteq \bigcap_{i\in A} F_i^{\mc{R}} \left(\b - \frac{\mc{S}(f)}{N}\right).
        \]
        
        Therefore, by our condition on $\b$ and Lemma~\ref{propn: independence lemma}, we get
        \begin{align}
        \label{eqn: estimate on measure of sum of intervals}
        	\sum_{ \substack{ J\in\mc{P}_M \\ J \cap \bigcap_{i\in A}  F_i^{\mc{R}} (\e/2)  \neq \emptyset } }
            	\nu(J) &\leq
                	\nu \left( \bigcap_{i\in A} F_i^{\mc{R}} \left(\b - \frac{\mc{S}(f)}{N}\right) \right) \leq a^{|A|}.
        \end{align}

        Recall that $\mc{P}_M$ is a partition of $[-1,1]$ into intervals of radius $ e^{-2MN}$.
        In particular, for $J\in \mc{P}_M$, $\nu(J) =  e^{-2MN}$.
        Combined with~\eqref{eqn: estimate on measure of sum of intervals}, this implies the lemma. 
       \end{proof}

    \subsection{Proof of Theorem~\ref{thrm: Birkhoff deviations thrm for horocycles}}

        Let us fix the following parameters so that we can apply Lemmas~\ref{propn: independence lemma} and ~\ref{lemma: birkhoff covering lemma}.
        Fix $\e>0$.
        Let $\d, a>0$ be sufficiently small so that the following holds.
            	\begin{align}\label{Birkhoff: condition on a}
                  	\d\leq \frac{\e}{4\mc{S}(f)}
                    \qquad \text{ and } \qquad 2 < a^{-\d}.
            	\end{align}
          Let $\mc{N}_1,\dots,\mc{N}_k$ be proper affine invariant submanifolds as in Theorem~\ref{thrm: quantitative Chaika-Eskin} applied to $\e/50$ and $f$.
             By ~\cite[Proposition 2.13]{EMM}, for any $i=1,\dots,k$ there exists a height functions $f_{\mc{N}_i}$.
             For $\ell>0$, let
             \begin{align*} 
             	C_\ell = \overline{\set{x\in \mc{H}_1(\a): \sum_1^k f_{\mc{N}_i}(x) \leq \ell}}.
             \end{align*}
             The function $\a = \sum_1^k f_{\mc{N}_i}$ satisfies all the properties in Definition~\ref{defn: height functions} (see ~\cite[Proposition 2.13]{EMM}).
             Suppose $\omega \in \mc{M}\backslash \left(\cup_{i=1}^k \mc{N}_i\right) $.
             Thus, $\a(\omega) < \infty$.             
             In particular, Theorem~\ref{thrm: H.dim of non-recurrent directions,fixed compact set} applies and guarantees the existence of some $\ell = \ell(\d)$ and $t_0>0$ so that for all $t>t_0$, one has
             \begin{equation} \label{eqn: birkhoff divergent set dimension bound}
             	dim_H(Z_\omega(C_{\ell}, t,\d)) \lneq 1.
             \end{equation}
             where the bound is uniform over all $\omega \in \mc{M}\backslash \left(\cup_{i=1}^k \mc{N}_i\right) $.
             
        Let $\ell>0$ be such that~\eqref{eqn: birkhoff divergent set dimension bound} holds.
		Let $T_0>0$ be as in Lemma~\ref{lemma: birkhoff covering lemma} applied to $f$, $\e/50$. Let $N > \max\set{T_0,t_0}$.
        Over the course of the proof, we will enlarge $N$ as necessary, depending only on $\e$, $a$ and $f$.
         
       Fix some $\omega \in \mc{M}\backslash \left(\cup_{i=1}^k \mc{N}_i\right) $.
       Recall the definition of the sets $F_i$ (see \eqref{defn: the sets F_i}), partitions $\mc{P}_i$ and $\mc{R}_i:=\mc{R}_i(C_\ell)$ (see Section~\ref{section: Birkhoff sets partitions}).
       By enlarging $N$ if necessary, we may assume that $\mc{P}_i$ form a refining sequence of partitions.
       For each $i\in \N$ and $\b>0$, define
       \[ F_i^{\mc{R}}(\b) = F_i(\b) \cap \bigcup_{J\in \mc{R}_i} J.  \]
       
       By Lemma~\ref{lemma: bad set for f and recurrent part of F_i}, we get that
       \begin{align}
       		B_\omega (f,N,\e) \subseteq Z_\omega(C_\ell, N,\d) \cup
            	\limsup_{M\r\infty} \bigcup_{ \substack{ A \subseteq \set{0,\dots,M-1} \\
                				|A| = \lceil \d M \rceil } }
                                \bigcap_{i\in A} F_i^{\mc{R}} (\e/2)
       \end{align}
       
    	Thus, by~\eqref{eqn: birkhoff divergent set dimension bound}, it suffices to bound the Hausdorff dimension of
        the second set on the right hand side.
        Let $M\in \N$ and define
        \[
        	\mc{F}_M^{\mc{R}} = \bigcup_{ \substack{ A \subseteq \set{0,\dots,M-1} \\
                				|A| = \lceil \d M \rceil } }
                                \bigcap_{i\in A} F_i^{\mc{R}} (\e/2).
        \]
        
        The number of sets of the form $A$ in the above union is at most $\binom{M}{\lceil \d M \rceil}$.
        Moreover, we may assume $N$ is large enough so that
        \[ \e/2 > \e/50 + \frac{2 \mc{S}(f)}{N} \]
        Hence, we may apply Lemma~\ref{lemma: birkhoff covering lemma} with $\e/50$ in place of $\e$ to get that
        when $N$ is large enough, we have
        \begin{align} \label{eqn: raw bound on cover}
        	\#\set{ J\in \mc{P}_M: J \cap \mc{F}_M^{\mc{R}} \neq \emptyset }
            		&\leqslant \sum_{\substack{ A \subseteq \set{0,\dots,M-1} \\
                				|A| = \lceil \d M \rceil }}
                    \#\set{ J\in \mc{P}_M: J \cap  \bigcap_{i\in A} F_i^{\mc{R}} (\e/2) \neq \emptyset }
                    \nonumber \\
                      &\leqslant \binom{M}{\lceil \d M \rceil} e^{2MN}
                      a^{\d M} \leq  2^M e^{2MN} a^{\d M}
        \end{align} 	
  		Let $\b = \ln(2)/2N$      
        and 
        $ \g = - \frac{1}{2N} \ln\left(a^\d \right) $.
        Then, ~\eqref{eqn: raw bound on cover} can be rewritten in the following way. 
        \begin{align*}
        	\#\set{ J\in \mc{P}_M: J \cap \mc{F}_M^{\mc{R}} \neq \emptyset }
                      &\leq    e^{2(1 +\b-\g   )MN}
        \end{align*}

        By Lemma~\ref{lemma: H.dim of limsup set via covering lemma}, we get that
        the Hausdorff dimension of $\limsup_M \mc{F}_M^{\mc{R}}$ is at most $1 +\b -\g$.
        This bound is strictly less than $1$ if and only if $2< a^{-\d}$, which holds by our choice of $a$ in~\eqref{Birkhoff: condition on a}.
        Finally, we note that our upper bound depends only on $f$ and $\e$ and is uniform in the choice of $\omega$ in $ \mc{M}\backslash \left(\cup_{i=1}^k \mc{N}_i\right) $.
        This completes the proof.

    \subsection{Deviations of Discrete Birkhoff Averages}\label{Birkhoff_discrete}
	The same methods used in this section to prove Theorem~\ref{thrm: Birkhoff deviations thrm for horocycles} also imply the following analogous statement for discrete Birkhoff averages.
	\begin{theorem} \label{thrm: discrete Birkhoff deviations thrm for horocycles}

      Suppose $\mc{M}\subseteq \mc{H}_1(\a)$ is an affine invariant submanifold and $\nu_{\mc{M}}$ is the affine measure whose support is $\mc{M}$.
      Then, for any bounded continuous function $f$ on $\mc{M}$ and any $\e>0$, there exist affine invariant submanifolds $\mc{N}_1,\dots, \mc{N}_k$, properly contained in $\mc{M}$, and $\d\in (0,1)$, such that for all $\omega \in \mc{M}\backslash \left( \cup_{i=1}^k \mc{N}_i \right)$ and all $l>0$,      
      the Hausdorff dimension of the set
      \begin{equation*}
          \set{s\in[-1,1]: \limsup_{N\r\infty} \left| \frac{1}{N} \sum_{n=1}^N f(g_{ln}h_s\omega) -
                   \int_{\mathcal M} f \;d\nu_\mc{M}\right| \geq \e }
      \end{equation*}
      is at most $\d$.
  	\end{theorem}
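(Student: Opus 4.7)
The plan is to mirror the proof of Theorem~\ref{thrm: Birkhoff deviations thrm for horocycles} line by line, replacing every continuous-time tool by its discrete counterpart that has already been established earlier in the paper. By a density argument I may assume $f$ is Lipschitz. I first introduce the discrete analogues of the key sets: for $\omega\in\mc{M}$, $l>0$, $\e>0$, $M,N\in\N$, let
\begin{equation*}
B^{d}_\omega(f,N,\e,M) := \set{s\in[-1,1]:\tfrac{1}{MN}\sum_{n=1}^{MN} f(g_{ln}h_s\omega) > \nu_\mc{M}(f)+\e},
\end{equation*}
and set $B^{d}_\omega(f,N,\e) := \limsup_{M\to\infty} B^d_\omega(f,N,\e,M)$. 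Define the block averages $f_i(s) := \tfrac{1}{N}\sum_{n=1}^{N} f(g_{l(iN+n)}h_s\omega)$ and the level sets $F_i(\b) := \set{s: f_i(s)> \nu_\mc{M}(f)+\b}$. These are exactly the discrete counterparts of the quantities used in Section~\ref{section: birkhoff}, and one checks that $B^d_\omega(f,N,\e)$ is the exceptional set in Theorem~\ref{thrm: discrete Birkhoff deviations thrm for horocycles}.

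The first step is the analogue of Lemma~\ref{lemma: bad set for f and recurrent part of F_i}: by the same pigeonhole argument (splitting the indices $0\leq i\leq M-1$ by whether $f_i(s)>\nu_\mc{M}(f)+\e/2$) one obtains, for any compact $Q\subset\mc{M}$ and any $0<\d\leq \e/(4\mc{S}(f))$,
\begin{equation*}
B^d_\omega(f,N,\e,M)\subseteq Z_\omega(Q,M,lN,\d)\cup \bigcup_{\substack{A\subset\{0,\dots,M-1\}\\ |A|=\lceil \d M\rceil}}\bigcap_{i\in A} F_i^{\mc R}(\e/2),
\end{equation*}
where $\mc{R}_i(Q)$ and $F_i^{\mc R}$ are defined as before but using the discrete time step $lN$ (i.e.\ $\mc{R}_i(Q)$ consists of the $e^{-2lNi}$-intervals on which some point is mapped into $Q$ by $g_{lNi}$). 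The divergent part is handled by Theorem~\ref{thrm: H.dim of non-recurrent directions,fixed compact set} applied with step size $lN$ (this theorem is already stated for discrete times and gives a bound strictly less than $1$ uniformly over $\omega$ outside finitely many proper affine invariant submanifolds).

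The second step is to upgrade Lemma~\ref{lemma: bound on measure of recurrent part of F_i} to the discrete setting: take $\mc{N}_1,\dots,\mc{N}_k$ as provided by Theorem~\ref{thrm: quantitative discrete Chaika-Eskin} applied to $f$ and $\e/50$ (rather than Theorem~\ref{thrm: quantitative Chaika-Eskin}). For any $a>0$, choosing $N$ large enough, the theorem guarantees $\nu(F_i(\b)\cap J)/\nu(J)\leq a$ for every $J\in\mc{R}_i(C_\ell)$ and any $\b>\e/50$, using the normalization of $h_s$ by $g_t$ exactly as in the continuous case: the key algebraic identity $g_{lNi}h_s = h_{e^{2lNi}(s-s_0)}g_{lNi}h_{s_0}$ rescales the interval $J$ to $[-1,1]$ and moves the basepoint into a fixed compact neighborhood of $C_\ell$.

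The remaining steps (Lemma~\ref{lemma: 0-1 law for partitions and F_i}, the independence Lemma~\ref{propn: independence lemma}, and the covering Lemma~\ref{lemma: birkhoff covering lemma}) then transfer verbatim, with the Lipschitz estimate now taking the form $|f_i(\eta)-f_i(s)|\leq \mc{S}(f)e^{2(i+1-j)lN}$ by moving the Lipschitz bound inside the sum instead of inside the integral. Combined with Lemma~\ref{lemma: H.dim of limsup set via covering lemma} applied with parameter $t=lN$, these give a cover of $\limsup_M\bigcup_{|A|=\lceil \d M\rceil}\bigcap_{i\in A} F_i^{\mc R}(\e/2)$ by $2^M e^{2lMN} a^{\d M}$ intervals of radius $e^{-2lMN}$, and hence Hausdorff dimension at most $1+\tfrac{\ln 2}{2lN}+\tfrac{\ln(a^{\d})}{2lN}<1$ once $a$ is chosen so that $2<a^{-\d}$. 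The main obstacle, which is really only a bookkeeping one, is to verify that the change from integrals to Riemann sums does not break the Lipschitz and 0--1 law estimates; this is mild because the group $\{h_s\}$ is normalized by $g_{ln}$ exactly as it is by $g_t$, so every key inequality carries over with $t$ replaced by $ln$.
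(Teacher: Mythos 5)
Your proposal follows exactly the route the paper takes: the paper's own proof of Theorem~\ref{thrm: discrete Birkhoff deviations thrm for horocycles} consists precisely of replacing the block averages $f_i$ of~\eqref{defn: functions f_i} by the discrete sums and rerunning the argument of Section~\ref{section: birkhoff} verbatim, with Theorem~\ref{thrm: quantitative discrete Chaika-Eskin} substituted for Theorem~\ref{thrm: quantitative Chaika-Eskin}; you have simply spelled out the details that the paper omits, and your identification of the needed discrete inputs is correct.

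One small quantitative slip: your stated discrete Lipschitz estimate $|f_i(\eta)-f_i(s)|\leq \mc{S}(f)e^{2(i+1-j)lN}$ drops the $1/N$ prefactor. Carrying out the computation you describe (bounding each term of the sum and keeping the $\tfrac{1}{N}$ normalization) gives $|f_i(\eta)-f_i(s)|\leq \tfrac{C_l\,\mc{S}(f)}{N}e^{2(i+1-j)lN}$, and this factor of $1/N$ is essential: without it the threshold condition in the independence and covering lemmas would become $\e/2>\e/50+O(\mc{S}(f))$, which fails for small $\e$, whereas with it the condition $\e/2>\e/50+\tfrac{2C_l\mc{S}(f)}{N}$ is met by taking $N$ large, exactly as in the continuous case. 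With that correction the argument closes as you claim.
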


    We note that by modifying the definition of the functions $f_i$ in~\eqref{defn: functions f_i} to be
    \[ f_i(s) = \frac{1}{N}\sum_{k=iN}^{(i+1)N} f(g_{lk}h_s\omega)  \]
    the rest of the proof of Theorem~\ref{thrm: discrete Birkhoff deviations thrm for horocycles} follows verbatim as in the case of flows and as such we omit it.

\section{Random Walks and Oseledets' Theorem}
\label{section: random walks}

    In this section, we recall some results on the growth of the Kontsevich-Zorich cocycle along random walk trajectories on $\mc{H}_1(\a)$ which were proved in~\cite{ChaikaEskin}.
    Using the fact that a typical random walk trajectory is tracked by a geodesic up to sublinear error, we translate such results to results concerning the Teichm\"{u}ller geodesic flow.
    
  Suppose $(M, \omega) \in \mc{H}_1(\a)$ and $\nu_{\mc{M}}$ is the affine measure whose support is $\mc{M} = \overline{\mathrm{SL}_2(\R) \omega}$.  
    Let $V$ be a continuous $\mathrm{SL}_2(\R)$-invariant subbundle over $\mc{H}_1(\a)$ of (an exterior power of) the Hodge bundle.
    Denote by $A_V:\mathrm{SL}_2(\R) \times \mc{M} \r GL(V)$ the restriction of the Kontsevich-Zorich cocycle to $V$.
    Let $\norm{A_V(\cdot,\cdot)}$ be the Hodge norm on $V$ (see~\cite[Section 3.4]{ForniMatheus}).
    
   Denote by $\l_V$ the top Lyapunov exponent of this cocycle
    under the Teichm\"{u}ller geodesic flow with respect to $\nu_\mc{M}$.
    In particular, by Oseledets' multiplicative ergodic theorem, for $\nu_\mc{M}$ almost every $x\in \mc{M}$,
    \[  \lim_{t\r\infty}  \frac{\log \norm{A_V(g_t,x)} }{t} = \l_V. \]

    The cocycle $A_V$ satisfies the following (Lipschitz) property with respect to the Hodge norm: 
    there exists a constant $K\in \N$ such that
    for all $x\in \mc{M}$ and all $g\in \mathrm{SL}_2(\R)$,
    \begin{equation}
    \label{eqn: Lipschitz property of the cocycle}
    	\norm{A_V(g,x)} \leq \norm{g}^K,
    \end{equation}
    where for $g\in \mathrm{SL}_2(\R)$, we use $\norm{g}$ to denote the norm of $g$ in its standard action on $\R^2$.
    This follows from~\cite[Lemma 2.1']{Forni} (see also~\cite[Corollary 30]{ForniMatheus}).
    We note that the power $K$ appears since we are considering the action of an exterior power of the cocycle. Moreover, Forni's variational formula for the derivative of the cocycle along geodesics implies~\eqref{eqn: Lipschitz property of the cocycle} for general elements of $\mathrm{SL}_2(\R)$ by the $KAK$ decomposition, the cocycle property and the fact that $\norm{A(r_\th,\cdot)} = 1$ for all $\th$.
    
    Since $A_V(id,x) = id$ for all $x$, we see that $A_V(g,x)^{-1} = A_V(g^{-1},gx)$ for all $g\in \mathrm{SL}_2(\R)$ and $x\in\mathcal M$.
    Hence, by~\eqref{eqn: Lipschitz property of the cocycle}, we get
    \begin{equation}
    \label{eqn: Lipschitz property of the inverse of the cocycle}
    	\norm{A_V(g,x)^{-1}} \leq \norm{g^{-1}}^K
    \end{equation}
    
    We shall need the following facts about matrix norms which follow from the $KAK$ decomposition and the bi-invariance of $\norm{\cdot}$ under $K$.
    
    \begin{lemma}
    \label{lemma: matrix norm and distance to identity}
    	There exist constants $C_1 > 0$ such that for all $g\in \mathrm{SL}_2(\R)$,
        \begin{enumerate}
        \item $\log \norm{g} \leq C_1 d(g,id)$.
        \item $\norm{g^{-1}} = \norm{g}$
        \end{enumerate}
        where $d$ denotes the right invariant metric on $\mathrm{SL}_2(\R)$ and $id$ is the identity element.
    \end{lemma}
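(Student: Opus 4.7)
The plan is to handle the two parts separately using the $KAK$ decomposition of $\mathrm{SL}_2(\R)$ together with standard comparisons of matrix norms.

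For part (2), I would write $g = k_1 a k_2$ with $k_1, k_2 \in SO(2)$ and $a = \mathrm{diag}(e^t, e^{-t})$ for some $t \geq 0$. Because the operator norm on $\R^2$ is invariant under the orthogonal action of $SO(2)$ on either side, $\norm{g} = \norm{a} = e^t$. Applying the same reasoning to $g^{-1} = k_2^{-1} a^{-1} k_1^{-1}$ and observing that $\norm{a^{-1}} = e^t$ together give $\norm{g^{-1}} = \norm{g}$.

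For part (1), I would run a length-of-path argument. Given any smooth path $\gamma\colon[0,1] \to \mathrm{SL}_2(\R)$ joining $\mathrm{id}$ to $g$, set $X(s) = \gamma'(s)\gamma(s)^{-1} \in \mathfrak{sl}_2(\R)$. By right-invariance of $d$, the length of $\gamma$ equals $\int_0^1 \norm{X(s)}_{\mathfrak{g}}\,ds$, where $\norm{\cdot}_{\mathfrak{g}}$ denotes the chosen inner product norm on $\mathfrak{sl}_2(\R)$. Expanding $\gamma(s+\varepsilon) = (I + \varepsilon X(s) + O(\varepsilon^2))\gamma(s)$ and using submultiplicativity of the operator norm together with the elementary bound $\log\norm{I + \varepsilon M} \leq \varepsilon \norm{M}_{\mathrm{op}} + O(\varepsilon^2)$, I obtain the pointwise estimate $\frac{d}{ds}\log\norm{\gamma(s)} \leq \norm{X(s)}_{\mathrm{op}}$. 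Since $\mathfrak{sl}_2(\R)$ is finite-dimensional, there is a constant $C_1$ with $\norm{X}_{\mathrm{op}} \leq C_1 \norm{X}_{\mathfrak{g}}$ for every $X \in \mathfrak{sl}_2(\R)$. Integrating over $[0,1]$ and passing to the infimum over all admissible paths $\gamma$ yields $\log\norm{g} \leq C_1\, d(g,\mathrm{id})$.

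There is no genuine obstacle here; the argument is bookkeeping of the interplay between the right-invariant Riemannian structure, its Lie-algebraic representative, and the operator norm. The only point that requires attention is to use right translation, consistent with the right-invariance of $d$, when moving the velocity $\gamma'(s)$ to $T_{\mathrm{id}}\mathrm{SL}_2(\R)$.
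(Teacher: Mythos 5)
Your proposal is correct. The paper itself gives no written proof: it only remarks that the lemma ``follows from the $KAK$ decomposition and the bi-invariance of $\norm{\cdot}$ under $K$.'' For part (2) your argument is exactly that intended route: $g=k_1ak_2$ with $a=\mathrm{diag}(e^t,e^{-t})$, $t\geq 0$, bi-invariance under $SO(2)$ gives $\norm{g}=\norm{a}=e^t=\norm{a^{-1}}=\norm{g^{-1}}$. For part (1) you take a genuinely different and more self-contained route. The paper's suggested $KAK$ argument would reduce to showing $d(k_1a_tk_2,id)\gtrsim t=\log\norm{g}$, i.e.\ that the Cartan projection is Lipschitz for the right-invariant metric; that lower bound on the distance is not immediate from the decomposition alone and in effect requires the same infinitesimal estimate you prove. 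Your path-length argument supplies it directly: writing $X(s)=\gamma'(s)\gamma(s)^{-1}$, the submultiplicativity bound $\tfrac{d}{ds}\log\norm{\gamma(s)}\leq\norm{X(s)}_{\mathrm{op}}\leq C_1\norm{X(s)}_{\mathfrak g}$ (valid as an upper Dini derivative of a Lipschitz function, which suffices to integrate), followed by integration and the infimum over paths, gives $\log\norm{g}\leq C_1\,d(g,id)$ with $C_1$ the norm-comparison constant on the finite-dimensional Lie algebra. What your approach buys is that it avoids any lower estimate of $d(g,id)$ in terms of the Cartan parameter and works verbatim for any right-invariant Riemannian metric; the only point to keep straight, which you do, is that right-invariance forces you to translate the velocity by $\gamma(s)^{-1}$ on the right.
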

    
    \subsection{Random Walks}
    
    In the remainder of this section and the next section, we fix a compactly supported probability measure $\mu$ on $\mathrm{SL}_2(\R)$ which is $SO(2)$ bi-invariant and absolutely continuous with respect to the Haar measure.
    Let $\mathrm{SL}_2(\R)^{\mathbb{N}}$ be the space of infinite sequences of elements in $\mathrm{SL}_2(\R)$ equipped with the probability measure $\mu^{\mathbb{N}}$.
    For each $n$ define the random variable $\omega_n: \mathrm{SL}_2(\R)^{\mathbb{N}}\to \mathrm{SL}_2(\R)$ as
    \[(g_1,g_2,\ldots,g_n, \ldots)\mapsto\omega_n=g_n g_{n-1}\cdots  g_2g_1
    \] 
    For any fixed base point $x\in\mc{H}_1(\a)$, the orbit $\{\omega_nx\}_{n\in\mathbb{N}}$ in $\mc{H}_1(\a)$ is called a \emph{random walk} on $\mc{H}_1(\a)$. 
    
     A measure $\nu$ on $\mc{H}_1(\a)$ is called \emph{$\mu$-stationary} if $\mu\ast\nu=\nu$ where 
     \[
     \mu*\nu=\int_{\mathrm{SL}_2(\R)}(g_{\ast}\nu)d\mu(g).
     \]
     The measure $\nu_\mc{M}$ is an ergodic $\mu$-stationary measure i.e. it cannot be written as a non-trivial convex combination of other $\mu$-stationary measures.
     By a variant of Oseldets' theorem, due to~\cite{GoldsheidMargulis} in the setting of random walks,
     there exists $\l_V^\mu \in \R$ such that for $\nu_\mc{M}$-almost every $x$ and for $\mu^\N$ almost every $(g_1,g_2,\dots) \in \mathrm{SL}_2(\R)^\N$,
     \begin{equation*}
     	\lim_{n\r\infty} \frac{\log \norm{A_V(g_ng_{n-1}\cdots g_1,x)}}{n} = \l_V^\mu
     \end{equation*}
    The following sets were introduced in~\cite{ChaikaEskin} as a way to quantify uniformity in the above limit.
    
    \subsection*{The Sets $E_{good}(\e,L)$}
    Let $\e>0$ and $L\in \N$. Denote by $\mathrm{E_{good}}(\e,L)$ the set of points $y\in \mc{M}$ such that for all $v\in V$, there exists a set $H(v) \subseteq \mathrm{SL}_2(\R)^L$ such that
    \begin{enumerate}
    	\item $\mu^L(H(v) > 1-\e$,
     	\item For all $(g_1,\dots,g_L) \in H(v)$,
        	\[ \l_V^\mu - \e < \frac{\log \norm{A_V(g_L\cdots g_1, y)v} }{L}
            \leq \frac{\log \norm{A_V(g_L\cdots g_1, y)} }{L} < \l_V^\mu +\e  \]
    \end{enumerate}
    
    The following lemma is an important part of our proof as it is a key step in the proof of the Oseledets part of~\cite{ChaikaEskin}.
    \begin{lemma}[Lemma 2.11 in~\cite{ChaikaEskin}]
    \label{lemma: CE lemma measure of E good}
    	For any fixed $\e>0$, the sets $E_{good}(\e,L)$ are open and
        \[
        	\lim_{L\r\infty} \nu_{M} \left( E_{good}(\e,L)  \right) = 1
        \]
    \end{lemma}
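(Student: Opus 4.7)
The plan is to derive the lemma from Furstenberg's theorem on strongly irreducible cocycles, the Furstenberg--Kesten multiplicative ergodic theorem for random walks, and standard measure-theoretic arguments (continuity, Egorov, compactness of the projective bundle). For \emph{openness}, I would use continuity of $A_V$ in $y$ (since $V$ is a continuous sub-bundle) together with the absolute continuity of $\mu^L$ on $\mathrm{SL}_2(\R)^L$ with respect to Haar measure: this makes the boundaries of the sets $H(v,y)$ (defined by strict inequalities on a continuous function of $(y,v,g_1,\dots,g_L)$) have $\mu^L$-measure zero, so $(y,v) \mapsto \mu^L(H(v,y))$ is continuous on the unit sphere bundle $S(V) \to \mc{M}$. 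Taking the infimum over the compact fiber $S(V_y)$ preserves lower semicontinuity, so $E_{good}(\e, L) = \{y : \inf_{v \in S(V_y)} \mu^L(H(v, y)) > 1 - \e\}$ is open.

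For the \emph{measure bound}, the upper inequality in $H(v,y)$ follows from Furstenberg--Kesten for the random walk driven by $\mu$ on $(\mc{M}, \nu_\mc{M})$: for $\nu_\mc{M} \otimes \mu^\N$-a.e.\ $(y,(g_k))$ one has $\frac{1}{L} \log \|A_V(\omega_L, y)\| \to \l_V^\mu$. For the lower inequality, strong irreducibility of $A_V$ invokes Furstenberg's theorem, which produces a unique $\mu$-stationary measure $\hat\nu$ on the projective bundle $\mathbb{P}(V)$, with non-atomic fiberwise conditionals, satisfying $\l_V^\mu = \int \log\|g \cdot v\|\,d\mu(g)\,d\hat\nu(y,[v])$. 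Telescoping $\log \|A_V(\omega_L, y) v\|$ into a Birkhoff sum of $\log \|g \cdot v\|$ along the skew-product chain on $\mathbb{P}(V)$ and applying Birkhoff's theorem with respect to $\hat\nu$ yields $\frac{1}{L} \log \|A_V(\omega_L, y) v\| \to \l_V^\mu$ for every nonzero $v \in V_y$, for $\nu_\mc{M}$-a.e.\ $y$ and $\mu^\N$-a.e.\ sequence.

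The hard part will be upgrading this pointwise-in-$v$ convergence to the uniform-in-$v$ statement required by the definition of $E_{good}$: a priori, the exceptional $v$ for which convergence is slow could vary in an uncontrolled way with $y$ and the sequence. For this I would appeal to the uniform convergence results of Le Page and Guivarc'h--Raugi in the theory of random matrix products under strong irreducibility: the spectral gap of the associated transfer operator on $\mathbb{P}(V)$ gives uniform-in-$[v]$ bounds $\sup_{[v] \in \mathbb{P}(V_y)} \mu^L\{|\frac{1}{L}\log\|A_V(\omega_L, y) v\| - \l_V^\mu| > \e\} \to 0$ on each compact fiber, with rates controlled by integrable functions of $y$. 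Combining this with an Egorov argument in $y$ and the upper bound above, one extracts a set $E \subset \mc{M}$ with $\nu_\mc{M}(E) \geqslant 1 - \e$ such that for all $y \in E$, all $v \in S(V_y)$, and all sufficiently large $L$, the $\mu^L$-measure of sequences violating either inequality in $H(v, y)$ is at most $\e$. This is exactly the statement $E \subset E_{good}(\e, L)$, completing the proof.
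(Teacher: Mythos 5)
The paper does not actually prove this statement: it is imported verbatim, with attribution, as Lemma 2.11 of \cite{ChaikaEskin}, so there is no in-paper argument to compare yours against. Judged on its own terms, your reconstruction has the right skeleton: openness from the strictness of the inequalities, continuity of the cocycle, and compactness of the projective fiber; the upper inequality from the Furstenberg--Kesten theorem for the random walk on $(\mc{M},\nu_{\mc{M}})$; the lower inequality from strong irreducibility via stationary measures on the projectivized bundle; and an Egorov argument in the base point to pass from almost-everywhere convergence to a set of measure $\geq 1-\d$ contained in $E_{good}(\e,L)$ for all large $L$. You also correctly identify the uniformity in $v$ as the crux.

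The gap is in how you discharge that crux. Le Page's and Guivarc'h--Raugi's spectral-gap and uniform large deviation results apply to i.i.d.\ products of matrices drawn from a fixed measure on $GL(d,\R)$ that is strongly irreducible and proximal as a subsemigroup --- i.e., to a \emph{constant} cocycle over a Bernoulli base. Here $A_V$ is a genuinely fibered cocycle over the $\mu$-random walk on the non-compact space $\mc{M}$, there is no single transfer operator on a fixed compact projective space, and ``strongly irreducible'' in this paper is a statement about the non-existence of $\nu_{\mc{M}}$-measurable almost invariant splittings, not a hypothesis on a subgroup of $GL(V)$. The uniform-in-$[v]$ convergence in measure that you need is precisely the content of Lemma 2.10 of \cite{ChaikaEskin}, whose proof goes through the non-atomicity of the fiberwise conditionals of the $\mu$-stationary measure on the projectivized bundle (a consequence of strong irreducibility, resting on the random-walk machinery in the appendix of \cite{EskinMirzakhani}) combined with a martingale and compactness argument --- not through a spectral gap. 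As written, the key step of your proof cites theorems whose hypotheses are not met in this setting; the surrounding architecture (openness, Egorov, the reduction to uniform convergence on fibers) is sound.
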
   

    \subsection{From Random Walks to Flows}

Since we will be concerned with metric properties of the exceptional set, it will be important for us to translate random walk results into the language of Teichm\"{u}ller geodesics.
    It is a classical fact that random walk trajectories induced by a stationary measure on $\mathrm{SL}_2(\R)$ tracks (up to sublinear error) a Teichm\"{u}ller geodesic.
    This is made precise in the following:
    
    \begin{lemma} [Lemma 4.1 in~\cite{ChaikaEskin}]
    \label{lemma: sublinear tracking}
    	There exists $\l>0$, depending only on $\mu$, such that 
        there exists a measurable map $\Theta: \mathrm{SL}_2(\R)^\N \r [-\pi/2,\pi/2]$
        , defined $\mu^{\N}$-almost everywhere, so that for $\mu^\N$-a.e. 
        $\overline{g} = (g_1,g_2,\dots) \in \mathrm{SL}_2(\R)^\N$,
        \begin{equation} \label{eqn: sublinear tracking limit}
        	\lim_{n\r\infty} \frac{\log || g_{\l n}r_{\Theta(\overline{g})} 
            			(g_n\cdots g_1)^{-1}||  }{n} =0.
        \end{equation}
       
        Furthermore, $\Theta_\ast \mu^\N $ coincides with the normalized Lebesgue measure. 
        In particular, for any interval $[a,b] \subseteq [-\pi/2,\pi/2]$,
        \begin{align}
        	\label{eqn: sublinear tracking angle distribution}
            \mu^{\N} \left( \overline{g}: \Theta(\overline{g}) \in [a,b]  \right)
             = \frac{b-a}{\pi}.
        \end{align}
    \end{lemma}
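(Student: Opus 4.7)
The plan is to reproduce the key steps of the original argument, which relies on standard tools from the theory of random matrix products on $\mathrm{SL}_2(\R)$. First, because $\mu$ is absolutely continuous with respect to Haar measure, its support generates a non-compact, strongly irreducible subgroup of $\mathrm{SL}_2(\R)$; hence by the Furstenberg-Kesten theorem there is a strictly positive top Lyapunov exponent $\l > 0$ for the random walk, satisfying $\frac{1}{n}\log\|\omega_n\| \r \l$ $\mu^\N$-almost surely. Writing each $\omega_n$ in $KAK$ form as $\omega_n = k_n g_{s_n} k'_n$ with $k_n, k'_n \in SO(2)$ and $s_n \geqslant 0$, this is equivalent to $s_n/n \r \l$.

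The main step, and the principal obstacle, is to establish that the ``stable flag factor'' $k'_n$ converges exponentially in $SO(2)/\{\pm I\}$: writing $k'_n = r_{\vp_n}$ with $\vp_n$ defined modulo $\pi$, one shows that there exists $\vp_\infty(\overline{g})$ with $|\vp_n - \vp_\infty| = O(e^{-2\l n + o(n)})$ almost surely. This is a quantitative version of the simplicity of the Lyapunov spectrum in $\mathrm{SL}_2(\R)$: the action of $\omega_n$ on the real projective line contracts toward its stable direction at the rate of the Lyapunov gap $2\l$, and a Borel-Cantelli argument combined with the standard expansion estimate on projective space yields the summable error. This is where most of the technical work would go.

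Once $\vp_\infty$ is obtained, set $\Theta(\overline{g})$ to be the unique representative of $\vp_\infty(\overline{g})$ in $[-\pi/2,\pi/2]$ (the $\pm I$ ambiguity in $KAK$ is absorbed by working modulo $\pi$). Then with $\eta_n := \Theta - \vp_n$,
\[
g_{\l n}\, r_{\Theta(\overline{g})}\, \omega_n^{-1} = g_{\l n}\, r_{\eta_n}\, g_{-s_n}\, k_n^{-1},
\]
and a direct $2 \times 2$ computation shows that the entries of $g_{\l n} r_{\eta_n} g_{-s_n}$ are dominated by $e^{\pm(\l n - s_n)}|\cos\eta_n|$ and $|\sin\eta_n| e^{\pm(\l n + s_n)}$. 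Since $|\l n - s_n| = o(n)$ and $|\sin\eta_n|\, e^{\l n + s_n} \leqslant e^{-2\l n + o(n)} \cdot e^{2\l n + o(n)} = e^{o(n)}$, every entry has size $e^{o(n)}$. Combined with the boundedness of $k_n^{-1}$, this yields~\eqref{eqn: sublinear tracking limit}.

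Finally, to identify $\Theta_\ast \mu^\N$ with normalized Lebesgue measure on $[-\pi/2,\pi/2]$, I would exploit the $SO(2)$-bi-invariance of $\mu$. For any $\a \in \R$, the sequence $(g_1 r_\a, g_2, g_3, \dots)$ has the same $\mu^\N$-distribution as $\overline{g}$, while the $KAK$ decomposition of the corresponding product shifts $\vp_n$ (and hence $\vp_\infty$) by $-\a$. Therefore the law of $\Theta$ on $\R/\pi\Z \cong [-\pi/2,\pi/2]$ is rotation-invariant and must equal the normalized Haar measure, giving~\eqref{eqn: sublinear tracking angle distribution}.
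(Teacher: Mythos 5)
The paper does not prove this lemma; it is quoted verbatim from Chaika--Eskin (their Lemma 4.1), so there is no internal argument to compare against. Your reconstruction is the standard one and is sound: positivity of the Lyapunov exponent from Furstenberg's theorem (Furstenberg--Kesten only gives existence of the limit; positivity needs the non-compactness/strong irreducibility you invoke), exponential alignment of the contracting singular directions $\vp_n$ at the rate of the Lyapunov gap $2\l$ (for which compact support of $\mu$ already bounds $\|g_{n+1}\|$ uniformly, so Borel--Cantelli is not even needed), the explicit $2\times 2$ computation showing $g_{\l n} r_{\eta_n} g_{-s_n}$ has entries of size $e^{o(n)}$, and the identification of $\Theta_\ast\mu^{\N}$ with Lebesgue measure via right $SO(2)$-invariance of $\mu$. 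This matches in substance the proof in the cited source, and the handling of the mod-$\pi$ ambiguity in the $KAK$ decomposition is correctly absorbed since $r_\pi=-I$ commutes with $g_t$.
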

    
    \begin{remark} \label{remark: random walk exponent and geodesic flow exponent}
      The relationship between the Lyapunov exponent of the random walk $\l_V^\mu$ and the Lyapunov exponent of the Kontsevich-Zorich cocycle under the Teichm\"{u}ller flow $\l_V$ is provided 
      by the parameter $\l$ in Lemma~\ref{lemma: sublinear tracking}
      as follows.
      \begin{align*}
          \l_V = \frac{\l_V^\mu}{\l}
      \end{align*}
    \end{remark}
    
    
    The following Lemma uses Lemma~\ref{lemma: sublinear tracking} to show that geodesic trajectories which start within the sets $E_{good}(\e,L)$ also exhibit good properties with respect to the cocycle.
    
    For simplicity, throughout this section we use the notation $A:= A_V$
    \begin{lemma}
    \label{lemma: E good with angles}
    	There exists a constant $C>0$, depending only on the constants of the cocycle such that the following holds:
    	for every $\e>0$, there exists $L_0>0$
        such that for all $L\in \N$ with $L\geq L_0$,
        for all $y\in E_{good}(\e,L)$ and all $v\in V$,
        there exists $\tilde{H}(v) \subseteq [-\pi/2,\pi/2]$ such that
        for all $\th \in \tilde{H}(v)$,
        \[
        	\l_V - C\e < \frac{\log ||A(g_{\l L}, r_\th y)|| }{\l L} < \l_V +C\e \]
        and such that $\nu(\tilde{H}(v)) > 1-3\e$,
        where $\nu$ is the normalized Lebesgue measure on $[-\pi/2,\pi/2]$.
    \end{lemma}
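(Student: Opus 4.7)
The strategy is to transfer the random walk growth estimate given by $E_{\mathrm{good}}(\e,L)$ to the geodesic flow by exploiting the sublinear tracking in Lemma~\ref{lemma: sublinear tracking}: the random walk $g_L\cdots g_1$ and the flow element $g_{\lambda L} r_{\Theta(\bar g)}$ differ by an element $h$ with $\log\norm{h}=o(L)$ almost surely. The cocycle Lipschitz bounds~\eqref{eqn: Lipschitz property of the cocycle}--\eqref{eqn: Lipschitz property of the inverse of the cocycle} will then translate $o(L)$ errors in $\log\norm{h}$ into $O(\e)$ errors in the exponential growth rate. Finally, I will push this almost sure statement to angles via the pushforward identity $\Theta_*\mu^\N=\nu$ to produce $\tilde H(v)\subseteq[-\pi/2,\pi/2]$. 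The only real obstacle is turning the asymptotic tracking estimate into a quantitative statement at the finite time $L$; this will require Egorov's theorem.

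Concretely, I first apply Egorov's theorem to the almost sure limit~\eqref{eqn: sublinear tracking limit} to obtain a set $\Omega\subseteq \mathrm{SL}_2(\R)^\N$ with $\mu^\N(\Omega)>1-\e$ and an $L_0=L_0(\e)$ such that for all $\bar g\in\Omega$ and $n\geqslant L_0$,
\[ \log\norm{g_{\lambda n}\, r_{\Theta(\bar g)}\, (g_n\cdots g_1)^{-1}} < \e n. \]
Now fix $L\geqslant L_0$, $y\in E_{\mathrm{good}}(\e,L)$, and $v\in V$. Let $H(v)\subseteq \mathrm{SL}_2(\R)^L$ be the set from the definition of $E_{\mathrm{good}}$, and let $\widehat H(v)\subseteq \mathrm{SL}_2(\R)^\N$ be the cylinder of sequences whose first $L$ coordinates lie in $H(v)$, so $\mu^\N(\widehat H(v))>1-\e$. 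Set $S:=\Omega\cap\widehat H(v)$; then $\mu^\N(S)>1-2\e$.

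For each $\bar g\in S$, write $g_{\lambda L}\, r_{\Theta(\bar g)} = h\cdot(g_L\cdots g_1)$ with $\log\norm{h}<\e L$ and $\log\norm{h^{-1}}<\e L$ (using Lemma~\ref{lemma: matrix norm and distance to identity}(2)). The cocycle property together with~\eqref{eqn: Lipschitz property of the cocycle} and~\eqref{eqn: Lipschitz property of the inverse of the cocycle} then yields
\[ \bigl|\log\norm{A(g_{\lambda L}\, r_{\Theta(\bar g)},\, y)} - \log\norm{A(g_L\cdots g_1,\, y)}\bigr|\;\leqslant\; K\e L. \]
Combining with the $E_{\mathrm{good}}$ estimate $\bigl|\log\norm{A(g_L\cdots g_1,y)}-\lambda_V^\mu L\bigr|<\e L$, Remark~\ref{remark: random walk exponent and geodesic flow exponent} identifying $\lambda_V=\lambda_V^\mu/\lambda$, and the fact (recorded in the text following~\eqref{eqn: Lipschitz property of the cocycle}) that $\norm{A(r_\theta,\cdot)}=1$, which via the cocycle property implies $\norm{A(g_{\lambda L}\, r_\theta,\, y)}=\norm{A(g_{\lambda L},\, r_\theta y)}$, I obtain the desired two-sided estimate
\[ \left|\frac{\log\norm{A(g_{\lambda L},\, r_{\Theta(\bar g)} y)}}{\lambda L} - \lambda_V\right| \;<\; \frac{(K+1)\e}{\lambda}. \]

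To conclude, define $\tilde H(v):=\Theta(S)\subseteq[-\pi/2,\pi/2]$. For every $\theta\in\tilde H(v)$ there is some $\bar g\in S$ with $\Theta(\bar g)=\theta$, so the bound above delivers the claimed estimate with $C=(K+1)/\lambda$. Since $S\subseteq\Theta^{-1}(\tilde H(v))$ and $\Theta_*\mu^\N=\nu$ by Lemma~\ref{lemma: sublinear tracking}, one gets $\nu(\tilde H(v))\geqslant \mu^\N(S)>1-2\e\geqslant 1-3\e$, completing the argument.
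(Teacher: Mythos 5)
Your argument follows the same route as the paper's proof almost step for step: Egorov's theorem applied to the sublinear tracking limit of Lemma~\ref{lemma: sublinear tracking}, the cocycle property combined with~\eqref{eqn: Lipschitz property of the cocycle}--\eqref{eqn: Lipschitz property of the inverse of the cocycle} and Lemma~\ref{lemma: matrix norm and distance to identity}(2) to convert the tracking error into a $K\e L$ discrepancy in $\log$-norms, and the identification $\l_V=\l_V^\mu/\l$ to land on the stated two-sided bound with $C=(K+1)/\l$. All of that is correct.

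There is, however, one missing step at the very end. You set $\tilde H(v):=\Theta(S)$ and conclude $\nu(\tilde H(v))\geq \mu^{\N}(S)$ from $S\subseteq\Theta^{-1}(\Theta(S))$ and $\Theta_*\mu^{\N}=\nu$. But $\Theta$ is only a measurable map defined $\mu^{\N}$-almost everywhere, and the direct image of a measurable set under a measurable map need not be Lebesgue measurable; the pushforward identity can only be applied to $\Theta(S)$ once you know it is a measurable set, so as written the inequality is not justified (and the conclusion of the lemma implicitly requires $\tilde H(v)$ to be measurable, since its measure is used downstream in Corollary~\ref{cor: E good with horocycles} and Lemma~\ref{lemma: bound on measure of recurrent part of A_i}). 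The paper inserts Lusin's theorem at exactly this point: it sacrifices another set of measure $\e$ to restrict $\Theta$ to a compact set $\mc{K}$ on which it is continuous, so that $\Theta(H(v)\cap\mc{U}\cap\mc{K})$ is the continuous image of a Borel set, hence analytic and Lebesgue measurable; this is also why the paper's bound is $1-3\e$ rather than your $1-2\e$. An alternative repair that avoids Lusin entirely is to take $\tilde H(v)$ to be the full set of angles $\th$ satisfying the displayed estimate (which is measurable as a sublevel set of a measurable function of $\th$); it contains $\Theta(S)$, and any measurable set containing $\Theta(S)$ has $\Theta$-preimage containing $S$, so its $\nu$-measure exceeds $\mu^{\N}(S)>1-2\e$. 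Either fix is routine, but some fix is needed.
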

    
    \begin{proof}

        Let $\l$ is as in Lemma~\ref{lemma: sublinear tracking}.
        Using Egorov's theorem, we can find a set $\mc{U}\subseteq SL_2(\R)^\N$ with $\mu^\N(\mc{U}) > 1-\e$ so that the convergence in~\eqref{eqn: sublinear tracking limit} is uniform over $\mc{U}$.
        In particular, we can choose $L\in \N$ sufficiently large 
        so that for all $\overline{g}\in \mc{U}$:
        \begin{equation} \label{eqn: geodesic close to random walk}
        	 \frac{\log|| g_{\l L}r_{\Theta(\overline{g})} 
            			(g_L\cdots g_1)^{-1}|| }{L}  < \e
        \end{equation}

    	Fix $y\in E_{good}(\e,L)$ and $v\in V$.
        Let $H(v) \subseteq SL_2(\R)^L$ be as in the definition of $E_{good}(\e,L)$.
        We will regard $H(v)$ as a cylinder subset of $SL_2(\R)^\N$ in the natural way.  
        The set $\tilde{H}(v)$ will be essentially the image of $H(v)\cap \mc{U}$ under $\Theta$, except that $\Theta$ is only a measurable map.
        To go around this, we use Lusin's theorem to find a compact set 
        $\mc{K}\subset SL_2(\R)^\N$, such that $\mu^{\N}(\mc{K}) > 1-\e$ 
        and such that the restriction of $\Theta$ to $\mc{K}$ is continuous.
        Let
        \[  \tilde{H}(v) = \Theta\left( H(v) \cap \mc{U}\cap\mc{K} \right) \]
        
        Since $\Theta$ is continuous on $\mc{K}$ and $\tilde{H}(v)$ 
        is a Borel subset of $\mc{K}$, we see that $\tilde{H}(v)$ is Lebesgue measurable.
        Moreover, by Lemma~\ref{lemma: sublinear tracking}, one has
        \begin{equation*}
        	\nu(\tilde{H}(v)) = \mu^\N \left(  \Theta^{-1}(\tilde{H}(v)) \right)
            	\geq \mu^\N\left( H(v) \cap \mc{U}\cap \mc{K}\right) 
                > 1-3\e
        \end{equation*}
        
        To see that $\tilde{H}(v)$ satisfies the conclusion of the Lemma,
        let $\overline{g} \in H(v) \cap \mc{U} \cap \mc{K}$.
        For all $L$ sufficiently large so that~\eqref{eqn: geodesic close to random walk} holds for all $\overline{g} \in \mc{U}$,
        define $\e_L \in SL_2(\R)$ by the following equation
        \begin{equation*}
        	 g_{\l L}r_{\Theta(\overline{g})} =
            			\e_L g_L\cdots g_1
        \end{equation*}
    	with $\e_L \in SL_2(\R)$.
        Then, using the cocycle property, we get
        \begin{equation*}
            	A(g_{\l L},r_{\Theta(\overline{g})} \omega) =
                A(\e_L, g_L\cdots g_1 \omega)
                A(g_L\cdots g_1, \omega)
        \end{equation*}
        Hence, since $\overline{g}\in H(v)$, by definition of the set $E_{good}(\e,L)$ and by~\eqref{eqn: Lipschitz property of the cocycle}, we get
        \begin{align*}
        	\frac{\log ||A(g_{\l L},r_{\Theta(\overline{g})} \omega)|| }{L}
            &\leq \frac{\log || A(\e_L, g_L\cdots g_1 \omega) ||}{L}
            + \frac{\log ||A(g_L\cdots g_1, \omega) || }{L} \\
            &\leq \frac{K \log ||\e_L|| }{L} + \l_V^\mu + \e \\
            &\leq \l_V^\mu + \e(1+K)
        \end{align*}
        Similarly, using~\eqref{eqn: Lipschitz property of the cocycle}
        and $(2)$ of Lemma~\ref{lemma: matrix norm and distance to identity}, we get
        \begin{align*}
        	\frac{\log ||A(g_{\l L},r_{\Theta(\overline{g})} \omega)|| }{L}
            &\geq \frac{\log ||A(g_L\cdots g_1, \omega) || }{L}
              -\frac{\log || A(\e_L, g_L\cdots g_1 \omega)^{-1} ||}{L}\\
            &\geq \l_V^\mu - \e - \frac{K \log ||\e_L^{-1}|| }{L} = \l_V^\mu - \e - \frac{K  \log ||\e_L|| }{L}   \\
            &\geq \l_V^\mu - \e(1+K)
        \end{align*}
        
        Dividing both estimates by $\l$ and 
        noting that by remark~\ref{remark: random walk exponent and geodesic flow exponent}, $\l_V = \l_V^\mu/\l$, we get the desired conclusion.
    \end{proof}

As a corollary, we obtain the following statement for horocycles.
\begin{corollary}
    \label{cor: E good with horocycles}
    	There exists a constant $C_2>0$, depending only on the constants of the cocycle such that the following holds:
    	for every $\e>0$, there exists $L_0>0$
        such that for all $L\in \N$ with $L\geq L_0$,
        for all $y\in E_{good}(\e,L)$ and all $v\in V$,
        there exists $G(v) \subseteq [-2,2]$ such that
        for all $s \in G(v)$,
        \begin{equation} \label{eqn: E good with horocycles norm estimate}
        \l_V - C_2\e < 
            \frac{\log ||A(g_{\l L}, h_s y)|| }{\l L} 
            < \l_V +C_2\e 
        \end{equation}
        	
        and such that $|G(v)| \geq 4(1-30\e)$,
        where $|\cdot|$ is the Lebesgue measure on $[-2,2]$.
    \end{corollary}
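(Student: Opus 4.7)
The plan is to deduce the horocycle statement from Lemma~\ref{lemma: E good with angles} via the standard decomposition
\[
r_\theta = \check{h}_{-\tan\theta}\, g_{\log\cos\theta}\, h_{\tan\theta},
\]
valid for $\theta \in (-\pi/2,\pi/2)$. Given $y\in E_{good}(\e,L)$ and $v\in V$, let $\tilde{H}(v) \subseteq [-\pi/2,\pi/2]$ be the Lebesgue measurable set supplied by Lemma~\ref{lemma: E good with angles}, of normalized measure at least $1-3\e$. I will then define
\[
G(v) := \set{s \in [-2,2] : \arctan(s) \in \tilde{H}(v)}.
\]

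For the measure bound, observe that $[-\pi/2,\pi/2]\setminus \tilde{H}(v)$ has Lebesgue measure at most $3\pi\e$. Restricted to $I := [-\arctan 2, \arctan 2]$, this bad set has measure at most $3\pi\e$. The map $\theta \mapsto \tan\theta$ is a diffeomorphism $I \to [-2,2]$ whose Jacobian $\sec^2\theta$ is bounded above by $5$ on $I$, so the image of the bad set in $[-2,2]$ has measure at most $15\pi\e < 120\e$. Hence $|G(v)| \geq 4 - 120\e = 4(1-30\e)$, as required.

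For the norm estimate, fix $s\in G(v)$ and set $\theta := \arctan(s) \in \tilde{H}(v)\cap I$. Combining the decomposition with the commutation $g_{\l L}\check{h}_z = \check{h}_{e^{-2\l L}z}g_{\l L}$ yields
\[
g_{\l L}\, h_s = g_{-\log\cos\theta}\, \check{h}_{e^{-2\l L}s}\, g_{\l L}\, r_\theta.
\]
Setting $\eta_L := g_{-\log\cos\theta}\, \check{h}_{e^{-2\l L}s}$ and applying the cocycle property then gives
\[
A(g_{\l L}, h_s y) = A(\eta_L, g_{\l L} r_\theta y)\cdot A(g_{\l L}, r_\theta y)\cdot A(r_\theta, y)\cdot A(h_s, y)^{-1}.
\]

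The Lipschitz bounds~\eqref{eqn: Lipschitz property of the cocycle} and~\eqref{eqn: Lipschitz property of the inverse of the cocycle}, together with Lemma~\ref{lemma: matrix norm and distance to identity}, control each spurious factor: for $\theta \in I$ and $s \in [-2,2]$, the norms $\|\eta_L\|$, $\|r_\theta\|$ and $\|h_s\|$ are uniformly bounded independently of $L$ (in fact $\|\eta_L\|$ tends to the bounded factor $\|g_{-\log\cos\theta}\|$ as $L\to\infty$). Hence the logarithms of their $K^{\text{th}}$ powers are $O(1)$ uniformly, so dividing by $\l L$ gives an error $o(1)$ as $L\to\infty$. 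Inserting the estimate of Lemma~\ref{lemma: E good with angles} for the middle factor $A(g_{\l L}, r_\theta y)$ and enlarging $L_0$ so that the $o(1)$ error is bounded by $\e$, one obtains the two-sided inequality~\eqref{eqn: E good with horocycles norm estimate} with, say, $C_2 = C+1$. The only genuine step is the cocycle identity above; measurability of $G(v)$ is inherited from $\tilde{H}(v)$ through the continuity of $\arctan$.
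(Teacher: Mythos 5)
Your proof is correct and follows essentially the same route as the paper: take $G(v)$ to be the image of $\tilde H(v)$ under $\tan$ intersected with $[-2,2]$, bound the measure loss via the Jacobian bound $\sec^2\theta\leq 5$ on $[-\arctan 2,\arctan 2]$, and transfer the norm estimate from $A(g_{\l L},r_\theta y)$ to $A(g_{\l L},h_s y)$ using the decomposition $r_\theta=\check h_{-\tan\theta}g_{\log\cos\theta}h_{\tan\theta}$, the cocycle property, and the polynomial bound $\|A(g,x)\|\leq\|g\|^K$ on the bounded spurious factors. The only cosmetic difference is that you solve the identity for $A(g_{\l L},h_sy)$ in terms of $A(g_{\l L},r_\theta y)$ whereas the paper writes it the other way around; both give the same $O(1/L)$ comparison of the normalized log-norms.
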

    
    \begin{proof}
    	Fix $\e>0$. Suppose $L\in \N$ is sufficiently large 
        so that Lemma~\ref{lemma: E good with angles} holds, $y\in E_{good}(\e,L)$ and $v\in V$.
        Let $\tilde{H}(v)\subseteq [-\pi/2,\pi/2]$ and $C>0$ be as in the conclusion of Lemma~\ref{lemma: E good with angles}.
        Consider
        \begin{align*}
        G(v) =\tan\left(\tilde{H}(v)  \right) \cap [-2,2].
        \end{align*}
        We verify that the corollary holds for this set.
        Let
        \begin{equation*}
        	\rho = \tan^{-1}(2).
        \end{equation*}

        For every $\th \in \tilde{H}(v) \cap [-\rho,\rho]$ we write 
        $r_\th =\check{h}_{-\tan\th} g_{\log\cos\th}h_{\tan \th}$.
        Then, using the cocycle property, we see the following.
        \begin{equation*}
        	A(g_{\l L},r_\th y) =
            	A(\check{h}_{-e^{-2\l L}\tan\th} g_{\log\cos\th},
                g_{\l L} h_{\tan\th}y) A(g_{\l L},h_{\tan\th}y)
                A(\check{h}_{-\tan\th} g_{\log\cos\th}, h_{\tan\th}y)^{-1}
        \end{equation*}
        
        Therefore, using the Lipschitz 
        property~\eqref{eqn: Lipschitz property of the cocycle} 
        and~\eqref{eqn: Lipschitz property of the inverse of the cocycle}
        and the fact that $\th\in [-\rho,\rho]$,
        we get
        \begin{equation*}
         \frac{\log|| A(g_{\l L},r_\th y)||}{\l L} = 
         	\frac{\log||A(g_{\l L},h_{\tan\th}y)||}{\l L} + 
            O\left( \frac{1}{L} \right)
        \end{equation*}

        Thus, for $L$ large enough, and for all $s\in [-2,2]$ of the form $s=\tan \th$ with $\th\in \tilde{H}(v) \cap [-\rho,\rho]$, we can find a constant $C'>0$, independent of $L$, so that~\eqref{eqn: E good with horocycles norm estimate} holds. 
        
        Let $|\cdot|$ be the Lebesgue measure. Since $| \tilde{H}(v)| \geq \pi (1-3\e)$, we get that
        \[
        	\frac{| \tilde{H}(v) \cap [-\rho,\rho] |}{2\rho}
            \geq 1-\frac{3\e\pi}{2\rho}.
        \]
        Hence, since the Jacobian of the map $\th\mapsto \tan\th$ 
        is bounded by $\sec^2(\rho)=5$ on $[-\rho,\rho]$, the following holds.
        \[
        	\frac{\left| G(v)  \right|}{4} \geq 1-\frac{15\e\pi}{4} 
            	\geq 1- 30\e
        \]
    \end{proof}

\section{Large Deviations in Oseledets' Theorem}
	\label{section: oseledets}
    
    The purpose of this section is to prove Theorem~\ref{thrm: Oseledets deviations thrm for horocycles} concerning the Hausdorff dimension of directions whose geodesics exhibit deviation of the top Laypunov exponent for the Kontsevich-Zorich cocycle.
    The structure of the proof is very similar to that of Theorem~\ref{thrm: Birkhoff deviations thrm for horocycles}.
    The idea is to relate the directions exhibiting deviation in Oseledets
    theorem along a Teichm\"{u}ler geodesic to the directions exhibiting deviation in
    Birkhoff's theorem for the indicator function of a large open set with good properties with respect to the cocycle.
    The proof is written in such a way so as to mirror the proof of Theorem~\ref{thrm: Birkhoff deviations thrm for horocycles} on deviations in Birkhoff's theorem.

    Throughout this section we retain the notation from the previous section and also use the following.
    For any positive $\e,L\in\mathbb R$ and $M\in \N$, we define the following sets.
   \begin{align*}
    	&B(A, L,\e,M) := \set{s\in[-1,1]:  \frac{\log \norm{A(g_{LM},h_s \omega)}}{LM} > \l_V +\e }\\
    	&B(A,L,\e) := \limsup_{M\r\infty} B(A, L,\e,M)
   \end{align*}
    Using the cocycle property, it is easy to check that for any $L>0$ 
    \[ B(A,L,\e) = \set{s\in[-1,1]: \limsup_{t\r\infty}  
    	\frac{\log \norm{A(g_t,h_s\omega)} }{t} \geq \l_V + \e } \]
    
 Moreover, for any $s\in[-1,1]$, $\b, L>0$ and $i\in\N$, we define the corresponding functions and sets. 
    \begin{equation*}
    	a_i(s) =  \frac{\log\norm{ A( g_{L}, g_{L i} h_s \omega)}}{L}
    \end{equation*}
    
    \begin{equation*}
    	A_i(\b) = \set{\th: a_i(s) > \l_V +\b }
    \end{equation*}
    The functions $a_i$ and sets $A_i$ play the role of the functions $f_i$ (see \eqref{defn: functions f_i}) and the sets $F_i$ (see \eqref{defn: the sets F_i}), respectively, in the proof of large deviations in Birkhoff's theorem.


 \subsection{Sets and Partitions} \label{section: oseledets sets partitions}

 For $L>0$ and $i\in \N$, let $\mc{P}_i$ denote the partition of $[-1,1]$ into intervals of radius $e^{-2iL}$,
       By enlarging $L$ if necessary, we may assume $e^{L} \in \N$ and that
       $\mc{P}_{i+1}$ is a refinement of $\mc{P}_i$ for all $i$.      
       For $\e>0$, define the following sub-partitions
       \[ \mc{E}_i(\e,L) = \set{ J\in \mc{P}_i: \exists s\in J,
       		g_{iL} h_s \omega \in E_{good}(\e,L) }.  \]
       Here $\mc{E}$ signifies recurrence to the set $E_{good}$.

   The following Lemma is an analogue of Lemma~\ref{lemma: bad set for f and recurrent part of F_i}.
       
       \begin{lemma}
       	\label{lemma: bad set for A and recurrent part of A_i}
        	Let $\e_1,\e_2,L>0$, $M\in \N$, and $0< \d \leq \e/4K$, where $K\in\N$ is the exponent in
            in~\eqref{eqn: Lipschitz property of the cocycle}.
            Then,
            \[
            	B(A,L,\e_1,M) \subseteq Z_\omega(E_{good}(\e_2,L), M, L ,\d) \cup 
                	\bigcup_{ \substack{ B \subseteq \set{1,\dots,M} \\
                		|B| = \lceil \d M \rceil } }
                        \bigcap_{i\in B} \left( A_i(\e_1/2) \cap 
                        \bigcup_{J\in \mc{E}_i(\e_2,L)} J \right),
            \]
            where $Z_\omega(E_{good}(\e_2,L), M, L ,\d) $ is defined in~\eqref{defn: set of non-recurrent directions}.
       \end{lemma}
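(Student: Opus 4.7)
The plan is to mirror the proof of Lemma~\ref{lemma: bad set for f and recurrent part of F_i}, with the subadditive quantity $\log\norm{A(g_{LM},\cdot)}$ taking the role of the Birkhoff integral $\int_0^{MN} f(g_t\cdot)\,dt$, and with the uniform bound $a_i(s)\leq K$---which follows from~\eqref{eqn: Lipschitz property of the cocycle} via $\norm{A(g_L,\cdot)}\leq \norm{g_L}^K=e^{KL}$---taking the role of $\norm{f}_\infty$.

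First I would apply the cocycle property to write
\[
\log\norm{A(g_{LM},h_s\omega)} \;\leq\; \sum_{i=0}^{M-1}\log\norm{A(g_L,g_{iL}h_s\omega)} \;=\; L\sum_{i=0}^{M-1} a_i(s),
\]
so that membership of $s$ in $B(A,L,\e_1,M)$ forces $\frac{1}{M}\sum_i a_i(s) > \l_V+\e_1$. Splitting this sum according to whether $a_i(s)$ lies above or below the threshold $\l_V+\e_1/2$, bounding the ``bad'' terms by $K$, and bounding the ``good'' terms by $\l_V+\e_1/2$ (nonnegative since $\l_V\geq 0$ for the KZ cocycle), one obtains
\[
\l_V + \e_1 \;<\; \l_V+\e_1/2 + \frac{K}{M}\,\#\set{i\in\set{0,\dots,M-1}: s\in A_i(\e_1/2)},
\]
and hence $\#\set{i : s\in A_i(\e_1/2)} > \e_1 M/(2K) \geq 2\d M$ by the hypothesis on $\d$. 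This yields the preliminary inclusion
\[
B(A,L,\e_1,M)\;\subseteq\; \bigcup_{\substack{B\subseteq\set{0,\dots,M-1}\\ |B|>2\d M}}\,\bigcap_{i\in B}\,A_i(\e_1/2).
\]

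The ``recurrence'' step then proceeds exactly as in the Birkhoff case: for $s\notin Z_\omega(E_{good}(\e_2,L),M,L,\d)$, by definition of $Z_\omega$ at least $(1-\d)M$ indices $j$ satisfy $g_{jL}h_s\omega\in E_{good}(\e_2,L)$, and for each such $j$ the point $s$ belongs to some $J\in\mc{E}_j(\e_2,L)$. Intersecting the two descriptions and using the elementary fact that any $B,C\subseteq\set{0,\dots,M-1}$ with $|B|>2\d M$ and $|C|>(1-\d)M$ satisfy $|B\cap C|>\d M\geq \lceil \d M\rceil$, one can always extract a subset of $B\cap C$ of cardinality exactly $\lceil\d M\rceil$ on which $s\in A_i(\e_1/2)\cap \bigcup_{J\in\mc{E}_i(\e_2,L)}J$ for each index $i$. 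This produces the stated containment.

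The argument is essentially routine once the analogy is set up; the sole conceptual ingredient is using the cocycle property to convert the multiplicative growth problem for $\norm{A(g_{LM},\cdot)}$ into an additive Birkhoff-type problem for the uniformly bounded quantities $a_i(s)$. The mild technical point is ensuring that the constant $K$ playing the role of $\norm{f}_\infty$ is independent of $L$, which is precisely the content of~\eqref{eqn: Lipschitz property of the cocycle}; this makes the threshold condition $\d\leq \e_1/(4K)$ match the combinatorial bound coming from the splitting argument.
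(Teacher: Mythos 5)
Your proof is correct and follows essentially the same route as the paper: the cocycle property plus submultiplicativity reduces the statement to the additive argument of Lemma~\ref{lemma: bad set for f and recurrent part of F_i}, with the bound $a_i(s)\leq K$ from~\eqref{eqn: Lipschitz property of the cocycle} playing the role of $\norm{f}_\infty$, and the recurrence/intersection step carried over verbatim.
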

    
\begin{proof}
First, we notice that for any $\e>0$
 \[  
            	B(A,L,\e,M) \subseteq \bigcup_{ \substack{ B \subseteq \set{1,\dots,M} \\
                				|B| > 2\d M  } }
                                \bigcap_{i\in B} A_i(\e/2)
            \]
 Using the cocycle property and submultiplicativety of matrix norms, we have the following inequalities
        \begin{equation*}
        	\frac{\log \norm{A(g_{LM},h_s\omega) }}{ LM}
            	\leq\frac{1}{M} \sum_{i=1}^M 
                \frac{\log \norm{A(g_{L}, g_{L i} h_s\omega) }}{ L}
        \end{equation*}
        From this point on, using~\eqref{eqn: Lipschitz property of the cocycle} to bound $\norm{A(g_{L},\cdot)}$,
        the proof is identical to that of Lemma~\ref{lemma: bad set for f and recurrent part of F_i}.

\end{proof}

 \subsection{Measure Bounds for $A_i$}
	The goal of this section is to obtain a uniform bound on the measure of sets of the form $A_i\cap J$ for any $J\in \mc{E}_i$ and any $i$.
    This step is analogous to Lemma~\ref{lemma: bound on measure of recurrent part of F_i}.

    The following is the main result of this section. The key input in the proof is Lemma~\ref{lemma: E good with angles}.
	\begin{lemma}
       \label{lemma: bound on measure of recurrent part of A_i}
       	Let $C_2>0$ be as in Corollary~\ref{cor: E good with horocycles}.
       	Then,
        for every $\e>0$, there exists $L_1 >0$ such that for all $L\geq L_1$, all $\g \geq 2C_2\e$, all $i\in \N$ and all $J \in \mc{E}_i(\e,L)$,
        \[ \frac{\nu (J\cap A_i(\g) )  }{\nu(J)} \leq 120 \e,  \]
        where $\nu$ is the Lebesgue probability measure on $[-1,1]$.
    \end{lemma}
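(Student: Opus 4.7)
The plan is to transport the horocycle measure estimate from Corollary~\ref{cor: E good with horocycles} at a reference point inside $E_{good}(\e, L)$ to the arbitrary interval $J$, via the renormalizing action of $g_{iL}$ on horocycle pieces.

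By definition of $\mc{E}_i(\e, L)$, we may pick $s_0 \in J$ with $y_0 := g_{iL} h_{s_0} \omega \in E_{good}(\e, L)$. For any $s \in J$, the commutation relation $g_{iL} h_s = h_{e^{2iL}s} g_{iL}$ gives
\[ g_{iL} h_s \omega = h_u y_0, \qquad u := e^{2iL}(s - s_0). \]
Since $J$ is an interval of radius $e^{-2iL}$ containing $s_0$, the parameter $u$ ranges over a sub-interval of $[-2, 2]$ of Lebesgue measure exactly $2$ as $s$ ranges over $J$.

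Choose $L_1$ to be the threshold $L_0$ supplied by Corollary~\ref{cor: E good with horocycles} for the parameter $\e$, and take $L \geq L_1$. For any $v \in V$ (the choice is immaterial, as the operator-norm bound is uniform in $v$), that corollary produces a set $G \subseteq [-2, 2]$ of Lebesgue measure at least $4(1 - 30\e)$ on which
\[ \frac{\log \norm{A(g_L, h_u y_0)}}{L} < \l_V + C_2 \e. \]
Pulling this back via $s \mapsto u$, whose Jacobian equals $e^{2iL}$, the bad subset of $J$ has Lebesgue measure at most $120\e \cdot e^{-2iL}$, and dividing by $\nu(J) = e^{-2iL}$ (where $\nu$ is the normalized Lebesgue measure on $[-1, 1]$) yields a proportion of at most $60\e$ of $J$. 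Since $\g \geq 2 C_2 \e > C_2 \e$, the set $J \cap A_i(\g)$ is contained in this bad subset, so $\nu(J \cap A_i(\g))/\nu(J) \leq 60\e \leq 120\e$ as required.

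The principal subtlety is the calibration of the time scales: Corollary~\ref{cor: E good with horocycles} is phrased in terms of the step $g_{\l L}$, with $\l$ the sublinear-tracking constant from Lemma~\ref{lemma: sublinear tracking}, whereas $a_i$ is defined using $g_L$. Reconciling these, for instance by the tacit reparametrization $L \leftrightarrow \l L$ in Section~\ref{section: oseledets sets partitions}, or equivalently by running the cocycle the extra factor, is a bookkeeping matter; once this is fixed, the argument above is a routine change of variables and the only genuine analytic input is the $E_{good}$-control coming from Corollary~\ref{cor: E good with horocycles}.
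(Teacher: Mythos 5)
Your argument is essentially the paper's own proof: pick $s_0\in J$ with $g_{iL}h_{s_0}\omega\in E_{good}(\e,L)$, use $g_{iL}h_s = h_{e^{2iL}(s-s_0)}g_{iL}h_{s_0}$ to identify $J$ with a length-$2$ subinterval of $[-2,2]$, and observe that $J\cap A_i(\g)$ must land in the complement of $G(v)$, which has measure at most $120\e$; the paper also glosses over the $L$ versus $\l L$ calibration exactly as you note (it sets $L_1=L_0/\l$). The only small piece you omit is the paper's final remark handling non-integer $L$ via $L=\lfloor L\rfloor+\{L\}$ and the Lipschitz property \eqref{eqn: Lipschitz property of the cocycle}, since Corollary~\ref{cor: E good with horocycles} is stated for $L\in\N$.
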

    
    \begin{proof}
    	Let $L_0 >0$ and $\l>0$ be as in Corollary~\ref{cor: E good with horocycles} and Lemma~\ref{lemma: sublinear tracking}, respectively.
        Define $L_1: = L_0/\l$.
    	Suppose $\gamma\in\mathbb R$ and $L\in \N$ are such that $\g \geq 2 C_2 \e$ and $L \geq L_1$.
        
    	Let $i\in \N$, $J\in \mc{E}_i:=\mc{E}_i(\e,L)$, and $s_0 \in J$ be such that $y_0:=g_{iL} h_{s_0} \omega \in E_{good}(\e,L)$.
        Let $v\in V$ and $G(v)\subseteq [-2,2]$ be as in Corollary~\ref{cor: E good with horocycles}.
        Choose $\eta \in J-s_0$ such that $s_0 + \eta \in A_i(\g)\cap J$.
        Then, we have
        \begin{align*}
        	\l_V + \g \leq  a_i(s_0 +\eta)= 
            \frac{\log\norm{ A( g_{L }, h_{e^{2iL  }\eta} y_0)} }{L}
        \end{align*}
        Hence, by definition of $G(v)$,
        \begin{align}
       		\label{eqn: eta not in G(v)}
       		e^{2iL  }\eta \notin G(v)
        \end{align}
        Note that $ e^{2iL  } (J-s_0) $ is a subinterval of $[-2,2]$ of length 2.
        In particular, we get that
        \begin{equation*}
        	e^{2iL  } \left((A_i(\g)\cap J)-s_0\right) \subseteq 
            	[-2,2]\backslash G(v)
        \end{equation*}
        
        Thus, since the Lebesgue measure of $G(v)$ is at least $4(1-30\e)$,
        we get the following measure estimate
        \begin{align*}
        	\left|e^{2iL  } \left((A_i(\g)\cap J)-s_0\right)\right| 
            = \frac{\nu (J\cap A_i(\g) )  }{\nu(J)}
            \leq 120 \e
        \end{align*}
        This concludes the proof in the case $L\in \N$.
        For the $L \geq L_1$ with $L \notin\N$, write $L = \lfloor L \rfloor + \set{L} $ where 
        $\lfloor L \rfloor$ is the largest natural number less than $L$ and $\set{L} = L - \lfloor L \rfloor$. Then, using the cocycle property, submultiplicativety of the norm and the Lipschitz property of the cocycle~\eqref{eqn: Lipschitz property of the cocycle}, we get
        \begin{align*}
        	\frac{\log \norm{A(g_L, \cdot )}}{L} \leq \frac{\log \norm{A(g_{\lfloor L \rfloor},\cdot)}}{L}
            + O\left( \frac{1}{L} \right)
        \end{align*}
        Thus, we see that the conclusion follows in this case from the case when $L\in \N$ by choosing $L_1$ sufficiently large depending on $\e$.
    \end{proof}

 \subsection{Independence of the Sets $A_i$}

	As a consequence of the Lipschitz property of the 
    cocycle~\eqref{eqn: Lipschitz property of the cocycle},
    we are able to prove an analogue of Lemma~\ref{lemma: 0-1 law for partitions and F_i}.

	\begin{lemma}
       \label{lemma: 0-1 law for partitions and A_i}
       There exists a constant $C_3>0$, depending only on the constants of the cocycle $A$ so that the following holds.
       	Suppose $i<j$, where $i$ and $j$ are natural numbers, $L>0$, and $\g>0$. Let $J \in \mc{P}_j$ be such that $J \cap A_i(\g) \neq \emptyset$. Then,
        $J \subseteq A_i\left(\g - C_3\frac{ e^{2(i+1-j)L}}{  L}\right). $
	\end{lemma}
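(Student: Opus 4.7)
The plan is to mimic the proof of Lemma~\ref{lemma: 0-1 law for partitions and F_i} (the analogous statement for Birkhoff averages), with the Lipschitz property of $f$ replaced by the Lipschitz property~\eqref{eqn: Lipschitz property of the cocycle} of the cocycle $A$. The estimate one needs is the pointwise comparison $|a_i(s) - a_i(\eta)| \lesssim e^{2(i+1-j)L}/L$ for any $s,\eta$ in a common interval $J\in\mc{P}_j$; the conclusion of the lemma is then immediate by applying this bound to any $s\in J\cap A_i(\g)$ and an arbitrary $\eta\in J$ (together with the same inequality with the roles of $s$ and $\eta$ swapped).

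To obtain the comparison, I would use the commutation relation $g_t h_r = h_{e^{2t} r} g_t$ to move horocycle factors across geodesic factors. Writing $r_2 = e^{2Li}(\eta - s)$ and $r_1 = e^{2L}r_2 = e^{2(i+1)L}(\eta - s)$, one checks that
\[
g_{Li}h_\eta \omega = h_{r_2}\bigl(g_{Li} h_s \omega\bigr), \qquad g_L h_{r_2} = h_{r_1} g_L.
\]
Applying the cocycle identity twice then yields a clean product decomposition
\[
A(g_L, g_{Li} h_\eta \omega) = A\bigl(h_{r_1}, g_L g_{Li} h_s\omega\bigr) \, A(g_L, g_{Li}h_s\omega) \, A\bigl(h_{r_2}, g_{Li}h_s\omega\bigr)^{-1}.
\]
Taking $\log\norm{\cdot}$ and using sub-multiplicativity reduces everything to bounding the two "error" factors.

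Each error factor is controlled using~\eqref{eqn: Lipschitz property of the cocycle} and its inverse counterpart~\eqref{eqn: Lipschitz property of the inverse of the cocycle}: $\log\norm{A(h_r,\cdot)^{\pm 1}} \leq K\log\norm{h_r}$. Since $\norm{h_r} \leq 1+|r| \leq e^{|r|}$, this is bounded by $K|r|$. Because $s,\eta$ lie in an interval of $\mc{P}_j$ (diameter $2e^{-2jL}$), we have $|r_1|,|r_2| \leq 2 e^{2(i+1-j)L}$ (using $i<j$ so the exponent is $\leq 0$), which yields
\[
\log\norm{A(g_L, g_{Li} h_\eta \omega)} \leq \log\norm{A(g_L, g_{Li}h_s\omega)} + 4K e^{2(i+1-j)L}.
\]
Dividing by $L$ gives $a_i(\eta) \leq a_i(s) + C_3 e^{2(i+1-j)L}/L$ with $C_3 = 4K$. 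The symmetric argument produces the reverse inequality, completing the proof. I don't anticipate any real obstacle here: the main ingredients (the commutation relation, the cocycle property, and the Lipschitz estimate) are all explicitly available, and the computation is essentially linear algebra once the factorization above is written down.
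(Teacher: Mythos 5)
Your proposal is correct and follows essentially the same route as the paper: the identical cocycle-property factorization $A(g_L, g_{iL}h_\eta\omega) = A(h_{r_1},\cdot)\,A(g_L,g_{iL}h_s\omega)\,A(h_{r_2},\cdot)^{-1}$, followed by sub-multiplicativity and the Lipschitz bounds~\eqref{eqn: Lipschitz property of the cocycle}--\eqref{eqn: Lipschitz property of the inverse of the cocycle} on the two unipotent error factors. The only (immaterial) difference is that you bound $\log\norm{h_r}$ directly by $|r|$, whereas the paper routes this through Lemma~\ref{lemma: matrix norm and distance to identity}; and you note explicitly the symmetric inequality with $s$ and $\eta$ swapped, which is the direction actually used.
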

    
    \begin{proof}
   		Let $s_0 \in J \cap A_i(\g)$. Then, by definition of the partition $\mc{P}_j$ in Section~\ref{section: oseledets sets partitions}, $|s_0 - \eta| \leq e^{-2jL}$ for any $\eta\in J$.
        
        Using the cocycle property, we have the following.
        \begin{align*}
        	A(g_{L}, g_{iL } h_\eta \omega) &= 
            	A(g_{L}, h_{e^{2iL}(\eta-s_0)}  g_{iL}h_{s_0}\omega)
                \nonumber \\
            	&= 	A(g_{L} h_{e^{2iL}(\eta-s_0)},  g_{iL}h_{s_0}\omega)
                	A( h_{e^{2iL}(\eta-s_0)},  g_{iL}h_{s_0}\omega)^{-1}
                    \nonumber \\
                &= A(h_{e^{2(i+1)L}(\eta-s_0)},  g_{(i+1)L}h_{s_0}\omega)
                	A(g_{L}, g_{iL}h_{s_0}\omega)
                    A( h_{e^{2iL}(\eta-s_0)},  g_{iL}h_{s_0}\omega)^{-1}
        \end{align*}
 Therefore,
 \begin{equation}\label{eqn: difference between cocycle at s_0 and eta}
 A(g_{L}, g_{iL}h_{s_0}\omega) = A(h_{e^{2(i+1)L}(\eta-s_0)},  g_{(i+1)L}h_{s_0}\omega)^{-1}A(g_{L}, g_{iL } h_\eta \omega)A( h_{e^{2iL}(\eta-s_0)},  g_{iL}h_{s_0}\omega).
 \end{equation}
        Hence, by~\eqref{eqn: Lipschitz property of the cocycle},~\eqref{eqn: Lipschitz property of the inverse of the cocycle}
        and Lemma~\ref{lemma: matrix norm and distance to identity},
        there exists a constant $C_1$ so that
        \begin{align*}
           a_i( s_0 ) - a_i( \eta )
                 &\leq
                 \frac{K \log||h_{e^{2(i+1)L}(\eta-s_0)}^{-1} ||}{L} +
                 \frac{K \log||h_{e^{2iL}(\eta-s_0)} ||}{L}\\
                 &\leq \frac{K C_1d(h_{-e^{2(i+1)L}(\eta-s_0)},id) }{L}
                 + \frac{K C_1d(h_{e^{2iL}(\eta-s_0)},id) }{L}\\
                 &\leq 2 K C_1 \frac{e^{2(i+1-j)L}}{  L}
        \end{align*}
        which concludes the proof.

    \end{proof}

	As a consequence, we obtain exponential decay in the measure of intersections of the sets $A_i$, similarly to Lemma~\ref{propn: independence lemma}.
    \begin{lemma} [Independence Lemma for $A_i$]
       \label{propn: independence lemma oseledets}
       Let $C_3>0$ be as in Lemma~\ref{lemma: 0-1 law for partitions and A_i} and $C_2>0$ be as in Corollary~\ref{cor: E good with horocycles}.
         Then, for all $\e>0$, there exists $L_1 >0$ such that
         for all $L \geq L_1$, all finite sets $B \subset \N$ and all $\g > 2C_2\e + \frac{C_3}{L} $,
         \[
              \nu\left( \bigcap_{i\in B} \left( A_i(\g) \cap \bigcup_{J\in \mc{E}_i(\e,L)} J  \right) \right)
              \leq ( 120\e)^{|B|}
         \]
	   \end{lemma}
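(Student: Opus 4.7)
The plan is to follow the template of Lemma~\ref{propn: independence lemma} verbatim, substituting the ``recurrence'' partitions $\mc{R}_i(Q)$ with $\mc{E}_i(\e,L)$, the sets $F_i(\b)$ with $A_i(\g)$, and invoking the cocycle-side analogues Lemmas~\ref{lemma: 0-1 law for partitions and A_i} and~\ref{lemma: bound on measure of recurrent part of A_i} in place of Lemmas~\ref{lemma: 0-1 law for partitions and F_i} and~\ref{lemma: bound on measure of recurrent part of F_i}. More precisely, after relabeling we may assume $B = \{1,\dots,p\}$ and we set
\[
A_i^{\mc{E}}(\g) := A_i(\g) \cap \bigcup_{J\in \mc{E}_i(\e,L)} J.
\]
I choose $L_1$ large enough so that Lemma~\ref{lemma: bound on measure of recurrent part of A_i} applies, the partitions $\mc{P}_i$ form a refining sequence, and (as explained below) the geometric-series correction on $\g$ stays controlled.

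The induction is on $p$. Using the disjointness of the intervals in $\mc{E}_p$, write
\[
\nu\Bigl(\bigcap_{i=1}^p A_i^{\mc{E}}(\g)\Bigr) = \sum_{J\in \mc{E}_p} \nu\Bigl(J \cap A_p(\g) \cap \bigcap_{i=1}^{p-1} A_i^{\mc{E}}(\g)\Bigr).
\]
For each $J\in \mc{P}_p$ intersecting $\bigcap_{i=1}^{p-1} A_i^{\mc{E}}(\g)$, Lemma~\ref{lemma: 0-1 law for partitions and A_i} gives $J \subseteq A_i(\g - C_3 e^{2(i+1-p)L}/L)$ for each $i<p$; combining this with the refinement property (each element of $\mc{P}_p$ meeting an element of $\mc{E}_i$ is contained in it, since $\mc{P}_p$ refines $\mc{P}_i$) yields the base step
\[
J \subseteq \bigcap_{i=1}^{p-1} A_i^{\mc{E}}\!\left(\g - \frac{C_3}{L} e^{2(i+1-p)L}\right).
\]
For each such $J\in \mc{E}_p$, Lemma~\ref{lemma: bound on measure of recurrent part of A_i} (applicable since $\g \geq 2 C_2 \e$) gives $\nu(J \cap A_p(\g))\leq 120\e\cdot \nu(J)$, so summing over $J\in \mc{E}_p$ produces the inductive estimate
\[
\nu\Bigl(\bigcap_{i=1}^p A_i^{\mc{E}}(\g)\Bigr) \leq 120\e \cdot \nu\Bigl(\bigcap_{i=1}^{p-1} A_i^{\mc{E}}\!\left(\g - \frac{C_3}{L} e^{2(i+1-p)L}\right)\Bigr).
\]
Iterating this bound $p-1$ further times produces the factor $(120\e)^p$, at the cost of replacing $\g$ at surviving index $i$ after $k$ iterations by $\g - (C_3/L)\sum_{q=0}^{k-1} e^{2(i+1-(p-q))L}$.

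The main obstacle -- which is the only point where the argument is not a literal transcription of the Birkhoff case -- is verifying that the hypothesis of Lemma~\ref{lemma: bound on measure of recurrent part of A_i} remains valid at every induction step, i.e.\ that the iterated deductions leave $\g$ above the threshold $2C_2\e$. This is handled by the same geometric-series estimate used implicitly in Lemma~\ref{propn: independence lemma}: since $i\leq p-k$, each exponent satisfies $i+1-(p-q)\leq 0$ for all $q\leq k-1$, hence
\[
\sum_{q=0}^{k-1} e^{2(i+1-(p-q))L} = e^{2(i+1-p)L}\cdot \frac{e^{2kL}-1}{e^{2L}-1} \leq \frac{e^{2L}}{e^{2L}-1} \leq 2
\]
for $L\geq L_1$ large enough. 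Thus the total downward shift in $\g$ is at most $2C_3/L$, which, after enlarging $L_1$ to absorb the factor of $2$ into the hypothesis $\g > 2C_2\e + C_3/L$, keeps $\g \geq 2C_2\e$ at every step. This completes the inductive proof of the claimed bound $(120\e)^{|B|}$.
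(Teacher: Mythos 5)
Your proof is correct and takes exactly the paper's route: the paper's own proof of this lemma is a one-line reference stating that the argument is identical to the Birkhoff independence lemma with Lemmas~\ref{lemma: bound on measure of recurrent part of A_i} and~\ref{lemma: 0-1 law for partitions and A_i} substituted for their Birkhoff counterparts, which is precisely the induction you write out. One small caveat on the step you flag as the "main obstacle": enlarging $L_1$ cannot absorb the factor of $2$ in the accumulated shift, since both the shift $2C_3/L$ and the slack $C_3/L$ in the hypothesis scale as $1/L$; the honest patch is that the geometric sum relevant at the index where the measure bound is applied equals $\sum_{m=0}^{k-1}e^{-2mL}\le (1-e^{-2L})^{-1}=1+O(e^{-2L})$, so the deduction is $(1+o(1))C_3/L$ — a hairline discrepancy with the stated threshold that the paper's own (Birkhoff) argument also glosses over, and which is moot in applications since the covering lemma only ever invokes this with slack $2C_3/L$.
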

       
       \begin{proof}
       	The proof is identical to the proof of Lemma~\ref{propn: independence lemma} which is a formal consequence of two results: Lemma~\ref{lemma: bound on measure of recurrent part of F_i} that gives an upper bound on the measure of $F_i$, and Lemma~\ref{lemma: 0-1 law for partitions and F_i}.
        The analogues of those two results are Lemma~\ref{lemma: bound on measure of recurrent part of A_i}
        and Lemma~\ref{lemma: 0-1 law for partitions and A_i}, respectively.
       \end{proof}

 \subsection{A Covering Lemma}
       
       The following lemma shows existence of efficient covers for intersections of the sets $A_i$, similarly to Lemma~\ref{lemma: birkhoff covering lemma}.
       
       \begin{lemma}
       	\label{lemma: bound on covers for A_i}
        Let $C_3>0$ be as in Lemma~\ref{lemma: 0-1 law for partitions and A_i} and $C_2>0$ be as in Corollary~\ref{cor: E good with horocycles}.
         Then, for all $\e>0$, there exists $L_1 >0$ such that
         for all $L \geq L_1$, $M\in \N$, sets $B \subseteq \set{1,\dots, M}$ and  $\g > 2C_2\e + \frac{2C_3}{L} $, we obtain the following.

        \[
        	\# \set{ J \in \mc{P}_{M+1}: J \cap \bigcap_{i\in B} \left( A_i(\g) \cap \bigcup_{J'\in \mc{E}_i(\e,L)} J'  \right) 
            		\neq \emptyset } \leq e^{2 L (M+1)}   (120\e)^{|B|},
        \]
        where $|B|$ is the number of element in $B$.
       \end{lemma}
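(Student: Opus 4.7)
The plan is to mirror the proof of Lemma~\ref{lemma: birkhoff covering lemma}, with Lemma~\ref{lemma: 0-1 law for partitions and A_i} replacing Lemma~\ref{lemma: 0-1 law for partitions and F_i} and Lemma~\ref{propn: independence lemma oseledets} replacing Lemma~\ref{propn: independence lemma}. As in the Birkhoff case, let $L_1>0$ be as in Lemma~\ref{propn: independence lemma oseledets}, and write $A_i^{\mc{E}}(\g) := A_i(\g) \cap \bigcup_{J'\in \mc{E}_i(\e,L)} J'$ for convenience. By enlarging $L_1$ slightly if necessary, we may assume that the partitions $\{\mc{P}_i\}$ form a refining sequence (cf.\ Section~\ref{section: oseledets sets partitions}).

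The first step is to observe that for any $J \in \mc{P}_{M+1}$ which meets $\bigcap_{i\in B} A_i^{\mc{E}}(\g)$, Lemma~\ref{lemma: 0-1 law for partitions and A_i} applied with $j=M+1$ and each $i\in B\subseteq\{1,\dots,M\}$ yields
\[
J \subseteq A_i\!\left(\g - C_3\,\tfrac{e^{2(i+1-(M+1))L}}{L}\right) \subseteq A_i\!\left(\g - \tfrac{C_3}{L}\right),
\]
since $i\leq M$ gives $e^{2(i+1-(M+1))L}\leq 1$. Moreover, because $J$ meets some $J''\in \mc{E}_i(\e,L)\subseteq \mc{P}_i$ and $\mc{P}_{M+1}$ refines $\mc{P}_i$, we get $J\subseteq J''\subseteq \bigcup_{J'\in \mc{E}_i(\e,L)}J'$. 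Combining these over $i\in B$:
\[
J \subseteq \bigcap_{i\in B} A_i^{\mc{E}}\!\left(\g - \tfrac{C_3}{L}\right).
\]

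Next, I sum over all such intervals $J$. Since the elements of $\mc{P}_{M+1}$ are disjoint and the hypothesis $\g > 2C_2\e + \tfrac{2C_3}{L}$ guarantees $\g - \tfrac{C_3}{L} > 2C_2\e + \tfrac{C_3}{L}$ so that Lemma~\ref{propn: independence lemma oseledets} applies, I obtain
\[
\sum_{\substack{J\in \mc{P}_{M+1} \\ J \cap \bigcap_{i\in B} A_i^{\mc{E}}(\g)\neq \emptyset}} \nu(J)
\;\leq\; \nu\!\left(\bigcap_{i\in B} A_i^{\mc{E}}\!\left(\g - \tfrac{C_3}{L}\right)\right)
\;\leq\; (120\e)^{|B|}.
\]
Since every $J\in \mc{P}_{M+1}$ has $\nu(J) = e^{-2L(M+1)}$, dividing gives the claimed cover bound
\[
\#\set{J\in \mc{P}_{M+1}: J\cap \textstyle\bigcap_{i\in B} A_i^{\mc{E}}(\g) \neq \emptyset} \;\leq\; e^{2L(M+1)}(120\e)^{|B|}.
\]

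There is no real obstacle here: the two ingredients, namely the ``zero-one law'' of Lemma~\ref{lemma: 0-1 law for partitions and A_i} controlling how the functions $a_i$ vary across an interval of $\mc{P}_{M+1}$, and the independence bound of Lemma~\ref{propn: independence lemma oseledets}, have already been built precisely to run this argument. The only subtlety to keep track of is the deficit $C_3/L$ absorbed when passing from $\g$ to $\g - C_3/L$, which is why the hypothesis on $\g$ carries the factor of $2$ in front of $C_3/L$.
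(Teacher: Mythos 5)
Your proof is correct and follows exactly the route the paper intends: the paper's own proof is a one-line reference to Lemma~\ref{propn: independence lemma oseledets} and the argument of Lemma~\ref{lemma: birkhoff covering lemma}, and you have correctly instantiated that argument, including the key bookkeeping that the deficit absorbed via Lemma~\ref{lemma: 0-1 law for partitions and A_i} is at most $C_3/L$, which is covered by the factor $2C_3/L$ in the hypothesis on $\g$.
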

       
       \begin{proof}
       		The proof is a direct consequence of Proposition~\ref{propn: independence lemma oseledets} 
            and proceeds as in the proof of Lemma~\ref{lemma: birkhoff covering lemma}.
       \end{proof}
 
 \subsection{Proof of Theorem~\ref{thrm: Oseledets deviations thrm for horocycles}}
	Fix $\e>0$.
    Suppose $\e' >0$ is a sufficiently small number (depending only on $\e$).
    Define $\d := \e/4K$, where $K$ is the exponent in~\eqref{eqn: Lipschitz property of the cocycle}.
	By Lemma~\ref{lemma: CE lemma measure of E good}, choose $L >0$ large enough, depending on $\e'$, so that
    \begin{equation} \label{eqn: measure estimate for E good epsilon'}
    	\nu_\mc{M}(E_{good}(\e',L)) > 1 - \d/2.
    \end{equation}

     Let $\chi_E$ denote the indicator function of the open set $E_{good}(\e',L)$.
     Then, using a variant Urysohn's lemma, we can find a Lipschitz compactly supported continuous function
     $f\colon\mc{M}~\r~[0,1]$, satisfying $f \leq \chi_E$ and
     \begin{equation} \label{eqn: choice of approximating function}
     	\nu_{\mc{M}}(f) > 1- 3\d/4
     \end{equation}
     
	 Moreover, we have that for all $M \in \N$ and all $\w \in \mc{M}$,
     \begin{equation*}
     	Z_\omega(E_{good}(\e',L), M, L ,\d) \subseteq B_\w\left(1-f, L, \d - \nu_\mc{M}(1-f), M\right)
     \end{equation*}
     where these sets are defined in~\eqref{defn: set of non-recurrent directions} and ~\eqref{defn: B(f,N,epsilon,M)} (for discrete Birkhoff averages).
     
     Note that $\d - \nu_\mc{M}(1-f) > 0$ by~\eqref{eqn: choice of approximating function}.
     Thus, by Theorem~\ref{thrm: discrete Birkhoff deviations thrm for horocycles},
     there exist $0<\eta<1$ and finitely many proper affine invariant manifolds $\mc{N}_1,\dots,\mc{N}_k \subset \mc{M}$, depending on $f$ and $\e$, so that the following holds
     \begin{equation} \label{eqn: Hdim of Z depends on L}
     dim_H \left( \limsup_M Z_\omega(E_{good}(\e',L), M, L ,\d) \right) \leqslant \eta \lneq 1
     \end{equation}
     uniformly for all $\w \in \mc{M}\backslash \left(\cup_{i=1}^k \mc{N}_i\right)$.
     These are the affine manifolds appearing in the conclusion of Theorem~\ref{thrm: Oseledets deviations thrm for horocycles}.
     Now, fix one such $\w$.

     Recall the definition of the sets $A_i$ and partitions $\mc{P}_i$ in Section~\ref{section: oseledets sets partitions}.
     By enlarging $L$ if necessary, we may assume that $\mc{P}_i$ form a refining sequence of partitions.
     For $i\in \N$ and $\g>0$, define
     \[
     	\mc{A}_i^\mc{E}(\g) := A_i(\g) \cap \bigcup_{J\in \mc{E}_i(\e',L)} J
     \]
     Then, by Lemma~\ref{lemma: bad set for A and recurrent part of A_i}, since $\d= \e/4K$, we get
     \begin{equation*}
     \limsup_M B(A,L,\e,M) \subseteq \limsup_M Z_\omega(E_{good}(\e',L), M, L ,\d) \cup 
     				\limsup_M
                	\bigcup_{ \substack{ B \subseteq \set{1,\dots,M} \\
                		|B| = \lceil \d M \rceil } }
                        \bigcap_{i\in B} \mc{A}_i^\mc{E}(\e/2)
     \end{equation*}
     Thus, it remains to control the Hausdorff dimension of the second set on the right side.
     We apply Lemma~\ref{lemma: bound on covers for A_i} to $\e'$ in place of $\e$ and $\g=\e/2$.
     By choosing $\e'$ to be sufficiently small and $L$ sufficiently large, we can insure that
     \[
     	\e/2 > 2 C_2 \e' + \frac{2C_3}{L}
     \]
     where $C_2$ and $C_3$ are constants depending only on the cocycle as in the statement of Lemma~\ref{lemma: bound on covers for A_i}.
     
     As a result, choosing $L$ sufficiently large, we can apply Lemma~\ref{lemma: bound on covers for A_i} and proceed as in the proof of Theorem~\ref{thrm: Birkhoff deviations thrm for horocycles} to get that
     \begin{equation*}
     	dim_H \left( \limsup_M \bigcup_{ \substack{ B \subseteq \set{1,\dots,M} \\
                |B| = \lceil \d M \rceil } } \bigcap_{i\in B} \mc{A}_i^\mc{E}(\e/2) \right) \leq 
                1+ \frac{\ln(2)+ \d\ln(120\e') }{2L}
     \end{equation*}
     By choosing $\e' < 2^{-1/\d}/120$ (thus depending only on $\e$), we get that this upper bound is strictly less than one. Moreover, observe that the parameters $\d,\e',L$ appearing in the upper bound above are independent of $\w$.
     This completes the proof.

\section{Weak Mixing IETs}

\label{section: IETs}

	This section is dedicated to the proof of Corollary~\ref{cor: IETs}. We first recall some definitions and the results of~\cite{BoshernitzanNogueira} which connect weak mixing properties of IETs with the recurrence of Teichm\"{u}ller geodesics in an appropriate stratum.

Throughout this section, we fix a natural number $d \geq 2$.
Given a permutation $\pi$ on $d$ letters and $\l= (\l_1,\dots,\l_d)\in \R_+^d$, we define $|\lambda| = \lambda_1+\lambda_2+\dots+\lambda_d$ and an interval exchange transformation (IET) with permutation $\pi$ to be the piecewise linear map $T_{\l,\pi}:[0,|\lambda|) \r [0,|\lambda|)$ defined as follows:
	first we partition the interval $[0,|\lambda|]$ into $d$ ordered half open subintervals $I_i$ so that the length of $I_i$ is equal to $\l_i$ and $T_{\l,\pi}$ maps $I_i$ linearly onto $I_{\pi(i)}$ for each $1\leq i\leq d$.    
    More formally, for $1\leq i\leq d$ and $x\in I_i$,
	\begin{equation*}
		T_{\l,\pi}(x) = x + \sum_{\pi(j) < \pi(i)} \l_j - \sum_{j<i} \l_j.
	\end{equation*}
    An IET $T_{\l,\pi}$ has finitely many points of possible discontinuity
    \[ \b_0 = 0, \qquad \b_i := \sum_{j\leq i} \l_j, \leq i\leq d-1 \]
    Define (See~\cite{Veech-bilinearform} and~\cite[Section 2.2]{MinskyWeiss}) an alternating bilinear form on $\R^d \times \R^d$ by its value on the standard basis elements $\mbf{e}_i$ as follows
    \begin{equation} \label{eqn: bilinear form}
    Q(\mbf{e}_i,\mbf{e}_j) = \begin{cases}
    1  &i > j, \pi(i) < \pi(j) \\
    -1 &i < j, \pi(i) > \pi(j) \\
    0  &\text{otherwise}.
    \end{cases}
    \end{equation}
    for $1\leq i,j\leq d$.
    Then, for each $\l\in \R_+^d$, $1\leq i\leq d$ and $x\in [\b_{i-1},\b_i)$, we have
    \[ T_{\l,\pi}(x) - x = Q(\l,\mbf{e}_i) \]

	The cone $\R_+^d$ can be viewed as the space of IETs with a given permutation $\pi$ with a natural euclidean metric and Lebesgue measure.
    IETs preserve the Lebesgue measure on the unit interval and we shall refer to ergodic properties (ergodicity, weak mixing, etc) of IETs with respect to it.

\subsection{A criterion for weak mixing}

A permutation $\pi$ on $d$ letters $\set{1,\dots,d}$ is \textit{irreducible} if for every $1\leq j <d$,
	\[ \pi(\set{1,\dots,j}) \neq \set{1,\dots,j}. \]
    
\begin{definition}
Suppose $\pi$ is an irreducible permutation on $d$ letters. Define inductively a finite sequence $\{a_p\}_{p=0,1,\dots,l}$ of natural numbers as follows. 

Set $a_0 = 1$. If $a_{p-1}\in\set{\pi^{-1}(1), d + 1}$, then set $l=p-1$ and stop. Otherwise, define $a_p = \pi^{-1}\left(\pi(a_{p-1})- 1\right) + 1$. The permutation $\pi$ is of \textbf{type} $\mathbf{W}$ if $a_l=\pi^{-1}(1)$. 
\end{definition}

    Following~\cite{BoshernitzanNogueira}, we say that $T_{\l,\pi}$ satisfies \textbf{IDOC} (the \textit{infinite distinct orbit condition}) if each discontinuity point $\b_i$  has an infinite orbit under $T_{\l,\pi}$ and for $i\neq j$, the orbits of $\b_i$ and $\b_j$ are disjoint.
    
    Using the orbits of the points $\b_i$ under the IET $T_{\l,\pi}$, we define a sequence of partitions of $[0,1]$ as follows: for each $n\geq 1$, $P_n$ denotes the partition into subintervals whose endpoints are the successive elements of the sets
    \[D_n = \bigcup_{0\leq k\leq n-1} T_{\l,\pi}^{-k}
    \left( \set{\b_0,\dots,\b_{d-1}}\right) \]
    For each $n$, we define $\epsilon_n(T_{\l,\pi})$ to be the length of the shortest interval in the partition $P_n$.
    The following criterion of weak mixing was proved in~\cite{BoshernitzanNogueira}.
    \begin{theorem} [Theorem 5.3 in~\cite{BoshernitzanNogueira}]
    \label{thrm: weak mixing criterion}
    Suppose $\pi$ is a type $W$ permutation and $T_{\l,\pi}$ is an ergodic IET satisfying IDOC for some $\l \in \R_+^d$. If $\limsup\limits_{n\r\infty} n\epsilon_n \left( T_{\l,\pi} \right) >0$, then $T_{\l,\pi}$ is weak mixing.
    \end{theorem}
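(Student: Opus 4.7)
The plan is to proceed by contradiction: assuming $T := T_{\l,\pi}$ is ergodic but not weakly mixing, the spectral theorem produces a non-constant measurable eigenfunction $f$ with eigenvalue $e^{2\pi i t}$ for some $t \in \R \setminus \Z$. My goal is to derive a quantitative decay estimate on $\epsilon_n(T)$ that contradicts the hypothesis $\limsup_{n \r \infty} n \epsilon_n(T) > 0$.

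First I would invoke Veech's approximation criterion for eigenvalues of IETs: for an ergodic IET with IDOC, every measurable eigenvalue $e^{2\pi i t}$ induces Diophantine conditions along the Rauzy-Veech induction. Writing $\l^{(n)}, h^{(n)} \in \R^d$ for the length and height vectors of the Rokhlin towers at the $n$-th induction stage, and $B^{(n)}$ for the associated Rauzy-Veech cocycle matrices, the coboundary equation $f(Tx) = e^{2\pi i t} f(x)$ forces integer vectors $m^{(n)} \in \Z^d$ satisfying
\[ \l^{(n)}_i \cdot |t \,h^{(n)}_i - m^{(n)}_i| \r 0 \quad \text{as } n \r \infty \]
for each coordinate $i$. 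Generic Veech estimates do not suffice to rule out such $t$; additional combinatorial information is needed.

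Next, the type W hypothesis enters through the combinatorics of $\pi$: the cycle $a_0, a_1, \dots, a_l = \pi^{-1}(1)$ selects a coherent pairing of intervals surviving all stages of Rauzy-Veech induction, which translates, via the bilinear form $Q$ from \eqref{eqn: bilinear form}, into a fixed integer vector $v \in \Z^d$ lying in (or providing nontrivial pairings with) the images of the transpose cocycle matrices. Using this distinguished direction together with the Rokhlin duality $\l^{(n)} \cdot h^{(n)} = |\l|$ and the fact that $\epsilon_n(T)$ is comparable to $\min_i \l^{(n)}_i$, the approximation equation above upgrades to a sharpened bound relating $t$, $m^{(n)}$, and the shortest interval length. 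Quantifying this sharpening yields $n \, \epsilon_n(T) \r 0$ whenever $t \notin \Z$, which directly contradicts the assumption and completes the proof.

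The main obstacle is the second step: showing that type W produces a specific coherent linear relation within the Rauzy-Veech cocycle data strong enough to sharpen the Diophantine condition to a rate incompatible with $\limsup_n n\epsilon_n > 0$. This is the heart of the Boshernitzan-Nogueira argument and requires delicate combinatorial bookkeeping with the cycle $\{a_p\}$, the pairing of singularities under the induction, and the degeneration structure of the form $Q$ on the invariant subspaces of the cocycle. Once this combinatorial input is in hand, the remainder of the proof is a formal manipulation between the Diophantine approximation condition on eigenvalues and the standard renormalization identities.
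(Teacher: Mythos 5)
This statement is not proved in the paper at all: it is quoted verbatim as Theorem 5.3 of Boshernitzan--Nogueira and used as a black box, so there is no internal proof to compare against. Judged on its own terms, your proposal is not a proof. You yourself flag that the ``second step'' --- extracting from the type $W$ combinatorics a relation strong enough to force $n\epsilon_n(T)\to 0$ --- is ``the heart of the argument'' and is left unsupplied. That is precisely the content of the theorem; everything you do supply (spectral theorem, a Veech-type Diophantine condition along Rauzy--Veech induction) is standard scaffolding that by itself cannot work: rotations and many other non-weakly-mixing IETs satisfy $\limsup_n n\epsilon_n>0$, so no amount of generic Diophantine bookkeeping can yield $n\epsilon_n\to 0$ from the mere existence of a nontrivial eigenvalue. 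The entire burden sits on the unproved combinatorial step, so the proposal has a genuine gap rather than a fillable routine one.

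The proposed architecture also runs in the opposite direction from the actual Boshernitzan--Nogueira argument, which does not pass through Rauzy--Veech induction or Veech's criterion. Their proof uses the hypothesis $\limsup_n n\epsilon_n>0$ directly: atoms of the partition $P_n$ of length $\geqslant c/n$ generate Rokhlin towers of height $n$ and definite measure on which $T^k$ acts by translations, and a rigidity/Lusin argument on a measurable eigenfunction $f\circ T=e^{2\pi i t}f$ forces $e^{2\pi i t\,\omega_j}=1$ for each translation value $\omega_j=Q(\l,\mbf{e}_j)$. The type $W$ hypothesis then enters purely algebraically: the chain $a_0,\dots,a_l$ in the definition produces a telescoping linear identity among the $\omega_{a_p}$ whose sum is (up to sign) the total length $|\l|$, so $t|\l|\in\Z$ and the eigenvalue is trivial. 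If you want to reconstruct the proof, you should replace your Diophantine-sharpening strategy with this tower-rigidity plus telescoping-identity mechanism; as written, the proposal asserts the conclusion of the hard step rather than proving it.
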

    Motivated by this criterion, we will say that an IET $T_{\l,\pi}$ has \textbf{short intervals} if
    \begin{equation} \label{eqn: short intervals}
    \lim_{n\r\infty} n\epsilon_n \left( T_{\l,\pi} \right) =0
    \end{equation}

    \subsection{A Compactness criterion for strata}
    Suppose $\mc{H}$ is a stratum of abelian differentials.
    We recall here a description of standard compact subsets of $\mc{H}$.
    Given $\w\in \mc{H}$, denote by $\mc{L}_\w$ the set of all of its saddle connections, i.e., the set of all flat geodesic segments joining a pair of the singularities of $\w$.
    Then, we can naturally regard $\mc{L}_\w$ as a subset of vectors in $\C$.
    Note that $\mc{L}_\w$ is a discrete set.
    Moreover, using the standard action of $\mathrm{SL}(2,\R)$ on $\C$, for any $g\in \mathrm{SL}(2,\R)$, the set $\mc{L}_{g\w}$ can be identified with $g\cdot \mc{L}_\w$.
    
    For $v\in \C$, let $\norm{v} = \max\set{|\mathrm{Re}(v)|, |\mathrm{Im}(v)|} $.
    Now, define a function $\ell: \mc{H} \r \R_+$ by
    \begin{equation} \label{eqn: mini height}
    	\ell(\w) := \min\set{ \norm{v}: v\in \mc{L}_\w }
    \end{equation}
    For any $\e>0$, we use the following notation
    \begin{equation} \label{def: K_epsilon}
    	\mc{K}_\e := \set{\w\in \mc{H}: \ell(\w) \geqslant \e  }
    \end{equation}
    It is known that the sets $\mc{K}_\e$ with $\e>0$ are compact subsets of $\mc{H}$ and that any bounded subset of $\mc{H}$ is contained in $\mc{K}_\e$ for some $\e$.

    \subsection{Short intervals and recurrence}
    Given an abelian differential $\omega \in \mc{H}$ on a surface $S$, there is a well-defined vector field given by the imaginary part $\mathrm{Im}(\omega)$.
    This vector field is defined at all points in $S$ except for the (finitely many) zeros of $\omega$.
    This vector field defines a singular flow on $S$, called the \textit{vertical flow}, by moving points at linear speed in the direction $\mathrm{Im}(\omega)$.
    
    By fixing a straight line segment $I$ (a geodesic segment in the flat metric defined by $\omega$) which is transversal to the vertical flow lines, the first return map $T:I \r I$ of the flow defines an IET.
    One can pick $I$ parallel to the real part of $\omega$ so that the resulting IET has $2g+k-1$ intervals, where $g$ is the genus of $S$ and $k$ is the number of zeros of $\omega$.
    This remains true if the angle between $I$ and the real part is sufficiently small.
    
    This process allows us to define a map from a neighborhood of $\omega$ in the stratum $\mc{H}$ to the space of IETs $\R_+^{2g+k-1}$ as follows.
    Pick a segment $I$ parallel to the real part of $\omega$ as above and let $\pi$ be the permutation associated to the IET on $2g+k-1$ intervals defined by the first return map of the vertical flow defined by $\omega$ to $I$.
    Then, we can find a sufficiently small open neighborhood $\mc{U}_\omega$ of $\omega$ in $\mc{H}$ so that for all $x\in \mc{U}_\omega$, the first return map of the vertical flow defined by $x$ to the segment $I$ is an IET with $2g+k-1$ intervals and with the same permutation $\pi$.
    
    This defines a Lipschitz map 
    \begin{equation} \label{eqn: transversal map}
    \mc{T}: \mc{U}_\omega \r \R_+^{2g+k-1}
    \end{equation}
    in the Teichm\"{u}ller and Euclidean metrics respectively.
    Conversely, using Veech's zippered rectangles construction, one can find suspension of any IET using a piecewise linear roof function to obtain an abelian differential on a compact surface. However, this construction is not unique and in general the pre-image of an IET $T_{\l,\pi}$ under the map $\mc{T}$ is a positive dimensional subset of $\mc{U}_\omega$.
    
    The following proposition allows us to relate the criterion in Theorem~\ref{thrm: weak mixing criterion} to the recurrence of Teichm\"{u}ller geodesics in strata.
    A similar result was obtained in~\cite[Proposition 7.2]{MinskyWeiss} using a slightly different proof.
    We include a proof here for completeness.
    
    \begin{proposition} \label{propn: short intervals and recurrence}
    In the notation above, let $\l \in \R_+^{2g+k-1}$ be in the image of the map $\mc{T}$ in~\eqref{eqn: transversal map}. Suppose that $T_{\l,\pi}$ has short intervals.
    Then, for all $\tilde \omega\in \mc{T}^{-1}(\l)$, the geodesic $g_t\tilde \omega$ diverges in $\mc{H}_1(\a)$.
    \end{proposition}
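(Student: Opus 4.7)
The plan is to construct, for each sufficiently large $n$, a saddle connection $v_n = (a_n, b_n)$ on $\tilde{\omega}$ with $|a_n| \leq \epsilon_n$ and $|b_n| \leq Cn$, where $C = C(\tilde{\omega}) < \infty$, and then to leverage the short-intervals hypothesis $n\epsilon_n \to 0$ to force $\ell(g_t \tilde{\omega}) \to 0$ as $t \to \infty$.

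For the construction, I would realize $\tilde{\omega} \in \mathcal{T}^{-1}(\lambda)$ as a Veech zippered-rectangle suspension of $T_{\lambda,\pi}$ over the transversal $I$, with roof function bounded above by a constant $C = C(\tilde{\omega})$. Each discontinuity $\beta_i$ of $T_{\lambda,\pi}$ is then the landing point on $I$ of a downward vertical separatrix from a singularity $p_i$ sitting at height at most $C$. Consequently, for $0 \leq k \leq n-1$, the upward vertical trajectory starting from $T_{\lambda,\pi}^{-k}(\beta_i)$ returns to $I$ exactly $k$ times before arriving at $p_i$, with total vertical length at most $(k+1)C \leq nC$. Next, pick a pair of adjacent points $x = T_{\lambda,\pi}^{-k_x}(\beta_{i_x})$ and $y = T_{\lambda,\pi}^{-k_y}(\beta_{i_y})$ in $D_n$ realising $|x - y| = \epsilon_n$, and assemble the path $\gamma$ from $p_{i_x}$ to $p_{i_y}$ by concatenating: (i) the vertical segment from $p_{i_x}$ down to $x$, of vertical length $V_x \leq nC$; (ii) the horizontal segment along $I$ from $x$ to $y$, of horizontal length $\epsilon_n$; and (iii) the vertical segment from $y$ up to $p_{i_y}$, of vertical length $V_y \leq nC$. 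The flat-geometry displacement of $\gamma$ is then $(\epsilon_n,\, V_y - V_x)$ with $|V_y - V_x| \leq nC$. Its geodesic representative (relative to its endpoints) is a straight segment with the same displacement, which decomposes as a finite chain of saddle connections whose horizontal (resp.\ vertical) components have a common sign and sum to $\epsilon_n$ (resp.\ $V_y - V_x$). Any member of the chain can be taken as $v_n$.

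With $v_n$ in hand, the divergence of $g_t \tilde{\omega}$ is a one-line optimization. For any saddle connection $v = (a, b)$ one has $\ell(g_t \tilde{\omega}) \leq \|g_t v\| = \max\bigl(e^t|a|,\, e^{-t}|b|\bigr)$. Given $\delta > 0$, set $\eta = \delta^2/(4C)$; by $n\epsilon_n \to 0$ pick $N$ with $n\epsilon_n \leq \eta$ for all $n \geq N$. For $t$ so large that $n(t) := \lfloor \delta e^t/(2C) \rfloor \geq N$, applying $v_{n(t)}$ gives $e^{-t} \cdot Cn(t) \leq \delta/2$ and $e^t \epsilon_{n(t)} \leq e^t \cdot \eta/n(t) \leq \delta/2$, whence $\ell(g_t \tilde{\omega}) \leq \delta$. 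Since $\delta$ is arbitrary, $\ell(g_t \tilde{\omega}) \to 0$, so $g_t \tilde{\omega}$ eventually leaves every $\mathcal{K}_\varepsilon$ and is divergent in $\mathcal{H}$.

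The main technical obstacle is the first step: verifying carefully that the piecewise path $\gamma$ rectifies to a chain of saddle connections whose horizontal and vertical coordinates behave additively and with consistent signs. This is standard translation-surface geometry but requires care when the flat geodesic from $p_{i_x}$ to $p_{i_y}$ passes through one or more intermediate singularities; one then simply selects any single sub-saddle-connection, whose coordinates are each dominated by $\epsilon_n$ and $nC$ respectively by virtue of the monotonicity of the straight-line parametrisation. A secondary, minor, point is choosing the roof-function bound $C = C(\tilde{\omega})$ so that it accommodates both the return times and the heights $h_i$ of the separatrices above the $\beta_i$, but any uniform upper bound on the heights of Veech's rectangles suffices.
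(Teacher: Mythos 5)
Your proposal follows essentially the same route as the paper: both arguments construct, for each large $n$, a saddle connection whose horizontal component is $O(\epsilon_n)=O(\epsilon/n)$ and whose vertical component is $O(n)$, and then apply $g_t$ at the matched time scale; your final optimization over $n(t)$ is fine and, if anything, slightly more explicit than the paper's choice $t=\log(n/\sqrt{\epsilon})$. The one step you flag as the ``main technical obstacle'' --- that the piecewise path $\gamma$ straightens to a chain of saddle connections with horizontal components of a common sign, each bounded by $\epsilon_n$ --- is indeed where all the content lies, and as written it is asserted rather than proved: the geodesic representative of a path rel endpoints on a translation surface is in general a broken line, not a straight segment, and its pieces need not have horizontal components of a common sign. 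The paper closes exactly this gap by observing that, because the endpoints of $I_n$ are \emph{consecutive} points of $D_n$, the region swept by the vertical flow of $\mathrm{Int}(I_n)$ up to its $n$-th return to $I$ contains no singularities and unfolds to a genuine parallelogram $\mathcal{P}_n$ in the plane, of width $\ll |I_n|=\epsilon_n$ and height $\asymp n$, whose two vertical sides each meet a singularity; the straight segment joining two such boundary singularities stays in the convex closed parallelogram and is therefore realized on the surface as a saddle connection with $|\mathrm{Re}(v_n)|\ll \epsilon/n$ and $|\mathrm{Im}(v_n)|\asymp n$. Your path $\gamma$ lives in the closure of this same region, so your straightening claim is true, but its proof \emph{is} this singularity-free unfolding; with that observation supplied, your argument is complete.
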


    \begin{proof}[Proof of Proposition~\ref{propn: short intervals and recurrence}]
    Suppose $\l$ is as in the statement and let $\tilde\omega\in \mc{T}^{-1}(\l)$.
    Fix some $\e >0$ and let $n_0 \geq 1$ be such that $n \epsilon_n \left( T_{\l,\pi} \right) < \e$ for all $n \geq n_0$.
    We construct a sequence of saddle connections $v_n$ in $\tilde\omega$ so that the length of $g_{\log (n/\sqrt{\e})} v_n$ is $\ll \sqrt{\e}$ for all $n\geq n_0$. Since $\e$ is arbitrary, $g_t\tilde\omega$ diverges in $\mc{H}_1(\a)$.
    
    For this we use an argument similar to the one found in~\cite[Section 10]{BoshernitzanDuke}.
    Let $P_1,\dots, P_k$ be a collection of polygons in the plane representing $\tilde\omega$ and let $\tilde{I}$ be a lift of the transversal $I$ under the covering map $\cup P_i \r S$ which glues parallel sides by translations. We recall that $S$ is the surface of genus $g\geqslant 1$ that supports the abelian differentials in the proposition.

    For each $n \geq n_0$, denote by $I_n \subset I$ be a subinterval such that 
    \[ |I_n| = \epsilon_n \left( T_{\l,\pi} \right) < \epsilon/n \]
    We use $\tilde{I}_n$ to denote a lift of $I_n$ inside $\tilde{I}$.
    Denote by $\mc{C}$ the open cylinder consisting of the union of the vertical flow orbits of the points in the interior of $I_n$ up to the $n^{th}$ time these orbits hit the transversal $I$.
    By definition of the endpoints of the interval $I_n$, the cylinder $\mc{C}$ contains no zeros of $\tilde\omega$.
    
    Let $\tilde{\mc{C}}$ denote a lift of $\mc{C}$ to the complex plane which we unfold to a parallelogram in the following manner. Let $x$ be an arbitrary point in the interior of $\tilde{I}_n$ and denote by $x_t:= x + it $ for $t>0$. Define $t(x,n)$ to be the time $t>0$ corresponding to the $n^{th}$ return of $x$ to $I$ under the vertical flow.
    
    Next, we define a finite sequence of times $q_i \in (0, t(x,n))$ and polygons $L_i$ with $1\leq i \leq n$ by induction as follows.
    Let $L_1\in\{P_1,\dots,P_k\}$ denote the polygon containing $x$. Define
    \[  q_1 = \inf \set{ 0<t<t(x,n): x_t \text{ meets a side of } L_1 }\]
    As the endpoints of $I_n$ are discontinuities of the first return IET, the set on the right-hand side is necessarily non-empty.
    Let $l_1$ denote the side of $L_1$ such that $x_{q_1} \in l_1$.
    Let $r_1$ denote the unique side of a polygon $R_1\in\{P_1,\dots,P_k\}$ which is identified to $l_1$ by a translation $T_1$ (which defines the gluing of parallel sides).

    Once $(q_j,L_j,l_j,r_j,T_j, R_j)$ have been defined for all $1\leq j\leq i-1 < n$, we define
    \[  q_i = \inf\set{ q_{i-1} < t < t(x,n): x_t \text{ meets a side of } T_{i-1}\cdot R_{i-1} } \]
    Let $L_i = T_{i-1}\cdot R_{i-1}$, let $l_i$ denote the side of $L_i$ such that $x_{q_i} \in l_i$.
    
    Note that $l_i$ is the image of a side $l'_i$ of a polygon in $ \set{P_1,\dots,P_k} $ by a translation $A$, i.e., A brings $l_i$ back to a side $l'_i$ of one of the original polygons $\{P_1,\dots,P_k\}$.
    Denote by $r_i$ the unique side of a polygon $R_i \in \set{P_1,\dots,P_k} $ which is identified to $l'_i$ by a translation $B$.
    Define the $i^{th}$ translation $T_i$ by $T_i =A\circ B$.

    Now, consider the parallelogram
    \[ \mc{P}_n = \set{ x_t\in \C:  x\in \mathrm{Int}(\tilde{I}_n), 0\leq t\leq t(x,n)  } \]
    where $\mathrm{Int}(\tilde{I}_n)$ denotes the interior of $\tilde{I}_n$.

    By definition of the endpoints of $I_n$, each of the two vertical sides of $\mc{P}_n$ necessarily meets a vertex of one of the polygons $L_1,\dots, L_n$.
    On the other hand, the interior of $\mc{P}_n$ is free from the vertices of the polygons.
    In particular, if we let $v_n$ denote a straight line segment joining two of the vertices on the two vertical sides of $\mc{P}_n$, we see that $v_n$ represents a saddle connection for $x$ which is contained entirely in $\mc{P}_n$.
    
    If we regard $v_n$ as a vector in $\C$, we see that the imaginary part $|\mathrm{Im}(v_n)|$ is at most the height of the parallelogram $\mc{P}_n$.
    Thus, in particular, we get that
    \begin{equation} \label{eqn: imaginary part}
    	|\mathrm{Im}(v_n)| \asymp n
    \end{equation}
    where the implied constant depends only on the lengths of the sides of the polygons $P_1, \dots, P_k$.
    Moreover, the real part $|\mathrm{Re}(v_n)|$ satisfies
    \begin{equation}\label{eqn: real part}
    	|\mathrm{Re}(v_n)| \ll |I_n| \leq \e/n
    \end{equation}
    where the implied constant here depends on the angle between the segment $\tilde{I}$ and the horizontal axis, which, in turn, depends only on the neighborhood $\mc{U}_\omega$.
    Therefore, we see that the length of the saddle connection $g_{\log(n/\sqrt{\e})} v_n$ is $\ll \sqrt{\e}$ as desired.
    \end{proof}

    \subsection{Horocycles and Lines in the Space of IETs}

    It was shown by Minsky and Weiss in~\cite{MinskyWeiss} that the image of short horocycle arcs under the map~\eqref{eqn: transversal map} is short line segments in $\R_+^d$.
    This result was used in the work of Athreya and Chaika in~\cite{AthreyaChaika} to relate the dimension of divergent directions for the Teichm\"uller flow to the dimension of non-uniquely ergodic IETs.
    We use a similar idea to obtain the following proposition.
    \begin{proposition} \label{propn: dim of short intervals}
    Suppose $\pi$ is an irreducible permutation on $d$ letters. Then, the set of $\l\in  \R_+^d$ corresponding to uniquely ergodic IETs $T_{\l,\pi}$  which are IDOC and satisfy~\eqref{eqn: short intervals} has Hausdorff codimension at least $1/2$.
    \end{proposition}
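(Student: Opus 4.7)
The proof combines Proposition~\ref{propn: short intervals and recurrence} with Theorem~\ref{thrm: divergent on average for horocycles} via a slicing argument in the spirit of~\cite{AthreyaChaika}. Denote by $B\subset \R_+^d$ the set in question. Fix $\omega\in\mc{H}$ and the associated Lipschitz map $\mc{T}:\mc{U}_\omega\to \R_+^d$ of~\eqref{eqn: transversal map}; since $\R_+^d$ is covered, modulo a measure-zero set, by countably many such images as $\omega$ ranges over a dense collection, it suffices to bound $\dim_H(B\cap\mc{T}(\mc{U}_\omega))$ from above by $d-1/2$.

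First I would observe that if $\lambda\in B$ and $\tilde{\omega}\in \mc{T}^{-1}(\lambda)$, then by Proposition~\ref{propn: short intervals and recurrence} the Teichm\"uller geodesic $g_t\tilde{\omega}$ diverges in $\mc{H}_1(\alpha)$, and in particular is divergent on average. Working in period coordinates on $\mc{U}_\omega$, the map $\mc{T}$ is a linear projection onto the real parts of a chosen collection of $d$ periods, and the horocycle acts diagonally by $h_s:(x_i,y_i)\mapsto(x_i+sy_i,y_i)$. Consequently, for each basepoint $\tilde{\omega}\in \mc{U}_\omega$ the map $s\mapsto \mc{T}(h_s\tilde{\omega})$ is an affine parametrization of a line segment in $\R^d$ with direction $(y_i(\tilde{\omega}))_i$, bi-Lipschitz onto its image with constants that are uniform on $\mc{U}_\omega$ after shrinking so that the $|y_i|$ stay bounded below.

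By Theorem~\ref{thrm: divergent on average for horocycles}, for each $\tilde{\omega}\in \mc{U}_\omega$ the set of $s$ for which $g_th_s\tilde{\omega}$ is divergent on average has Hausdorff dimension at most $1/2$. Crucially, the proof of that theorem proceeds through the explicit covering bounds of Theorem~\ref{thrm: H.dim of non-recurrent directions,fixed compact set}, in which the base-point dependence enters only through the value of a fixed proper height function. Hence for every $\eta>0$ there are constants $C(\eta), \epsilon_0(\eta)>0$, depending on $\mc{U}_\omega$ but not on the individual $\tilde{\omega}\in \mc{U}_\omega$, such that this bad set of $s$ admits a cover by $C(\eta)\epsilon^{-1/2-\eta}$ intervals of length $\epsilon$ for every $\epsilon<\epsilon_0(\eta)$. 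Transporting this through the bi-Lipschitz identification of the previous paragraph, the same uniform $\epsilon$-cover holds for $L\cap B$ for every affine line $L\subset \mc{T}(\mc{U}_\omega)$ parallel to some imaginary-period vector $(y_i(\tilde{\omega}))_i$.

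To finish, I would fix a direction $y_0=(y_i(\omega))_i$, foliate a small open cube in $\mc{T}(\mc{U}_\omega)$ by lines parallel to $y_0$, and cover a $(d-1)$-dimensional transversal by an $\epsilon$-grid of $\asymp \epsilon^{-(d-1)}$ points. Each line in this grid contributes at most $C(\eta)\epsilon^{-1/2-\eta}$ covering intervals, producing an $\epsilon$-cover of $B\cap \mc{T}(\mc{U}_\omega)$ by $\asymp \epsilon^{-(d-1/2+\eta)}$ balls of radius $\epsilon$; letting $\eta\to 0$ gives $\dim_H(B\cap\mc{T}(\mc{U}_\omega))\leq d-1/2$, as needed. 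The main technical obstacle is precisely this uniform covering estimate, since the bare statement of Theorem~\ref{thrm: divergent on average for horocycles} only gives a per-basepoint dimension bound; a uniform $\epsilon$-rate across the horocycle family must be extracted by tracking the explicit constants through the arguments in Section~\ref{section: doa} and Section~\ref{HD_divergent_on_average}.
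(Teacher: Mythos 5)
Your approach runs in the opposite direction from the paper's, and this is where the trouble lies. You try to prove ``every line slice has small dimension, hence the whole set has small codimension'' by an $\e$-grid Fubini argument. The paper instead argues contrapositively: if the codimension were less than $1/2$, then by Mattila's slicing theorem (Proposition~\ref{propn: dim of slices}) a positive measure set of lines would meet the bad set in dimension greater than $1/2$; Lemma~\ref{lemma: Q +ve a.e.} and Frostman's lemma then produce a single such line segment satisfying the Minsky--Weiss positivity condition $Q(\l,\mbf{b})>0$, which Proposition~\ref{propn: Minsky Weiss} identifies with a horocycle arc, contradicting Theorem~\ref{thrm: divergent on average for horocycles}. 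That direction only needs the per-basepoint Hausdorff dimension bound, which is exactly what is available.

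The gap in your version is the step you yourself flag: the claim that the divergent-on-average set on each horocycle admits, for every $\e<\e_0(\eta)$, a cover by $C(\eta)\e^{-1/2-\eta}$ intervals of length $\e$, uniformly over basepoints. This is an upper \emph{box}-counting bound, and it does not follow from Theorem~\ref{thrm: divergent on average for horocycles} nor from tracking constants in Sections~\ref{section: doa} and~\ref{HD_divergent_on_average}. The divergent-on-average set is contained in $\liminf_N Z_\omega(Q,N,t,\d)=\bigcup_{N_0}\bigcap_{N\geq N_0}Z_\omega(Q,N,t,\d)$, a countable increasing union; the covering counts of Proposition~\ref{propn: main covering statement} control each piece $\bigcap_{N\geq N_0}Z_\omega$ at scales $\leq e^{-2tN_0}$, but no single scale-$\e$ cover of the prescribed cardinality covers the whole union (points may only enter the $Z_N$'s at arbitrarily late $N$). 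Without a genuine uniform box-counting bound, the grid argument fails, because Hausdorff dimension does not satisfy $\dim_H(\mathrm{total})\leq \dim_H(\mathrm{base})+\sup_{\mathrm{fibers}}\dim_H$ in general (cf.\ the hypotheses needed in Proposition~\ref{propn: dimension of products}). The argument could in principle be repaired by decomposing the bad set as a countable union indexed by $N_0$, running the grid argument on each piece (where box-counting is uniform on compacta via the height function), and invoking countable stability --- but one also needs to address that $\mc T$ has positive-dimensional fibers and that a foliation by horocycle-image lines in a fixed direction $y_0$ requires $Q(\l,y_0)>0$ throughout the cube. None of this is in your write-up, and moreover, even with these repairs the exponent obtained at each fixed $\d<1$, $\eta>0$ is $1-\d(1-\eta)/2>1/2$, so one must take the limit through the countable decomposition carefully. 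The paper's slicing argument sidesteps all of this.
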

    
    The proof of Proposition~\ref{propn: dim of short intervals} will be given in Section~\ref{section: proof of dim of short intervals} after some technical preparation.
    We begin by recalling a result in [MW14] characterizing line segments that can arise as the image of a short horocycle segment under the map~\eqref{eqn: transversal map}.
    \begin{proposition} [Theorem 5.3 in~\cite{MinskyWeiss}]
    \label{propn: Minsky Weiss}
    Suppose $\l \in \R_+^d$ is such that $T_{\l,\pi}$ is uniquely ergodic and satisfies IDOC.
    Suppose $\mbf{b}\in \R^d$ satisfies $Q(\l,\mbf{b})>0$.
    Then, there exists an $\e>0$ and an open neighborhood $\mc{O}$ of $(\l,\mbf{b})$ in $\R_+^d\times \R^d$ and an affine homeomorphism $\mbf{q}: \mc{O} \r \mc{H}$ such that $h_s \mbf{q}(\l,\mbf{b} ) = \mbf{q}(\l+s\mbf{b},\mbf{b})$ for $|s|<\e$.
    Moreover, $\mc{T}(\mbf{q}(\l,\mbf{b})) = \l$ for all $(\l,\mbf{b})\in \R_+^d\times\R^d$.
    \end{proposition}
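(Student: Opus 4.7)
The plan is to invoke Veech's zippered rectangles construction, which provides an explicit affine parametrization of a neighborhood in the stratum by IET data together with suspension data, and then compute the horocycle action directly in the resulting period coordinates.

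First, I would set up the zippered rectangles. Given $(\l,\mbf{b})$ with $Q(\l,\mbf{b})>0$, build $d$ open Euclidean rectangles $R_1,\dots,R_d$, where $R_i$ has horizontal base of length $\l_i$ sitting inside a common horizontal segment $I=[0,|\l|)\times\{0\}$ in the natural left-to-right order, and whose heights $h_i=h_i(\mbf{b})$ are prescribed linear functionals of $\mbf{b}$ determined by $\pi$ (essentially $h_i=\sum_{j}Q(\mbf{e}_i,\mbf{e}_j)\beta_j$ for an auxiliary vector $\beta$ in bijection with $\mbf{b}$ via the form $Q$). The total area is precisely $Q(\l,\mbf{b})>0$, which is what makes the construction nondegenerate. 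Glue the top of $R_i$ to the base of $R_{\pi(i)}$ by the translation realizing $T_{\l,\pi}$ restricted to $I_i$, and zip along vertical sides in the standard Veech fashion. Veech's theorem shows that when $T_{\l,\pi}$ satisfies IDOC and is uniquely ergodic, this produces a well-defined translation surface $\mbf{q}(\l,\mbf{b})\in\mc{H}$, and the construction varies continuously with $(\l,\mbf{b})$ in an open neighborhood $\mc{O}$.

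Next, two properties follow immediately from the construction. The horizontal base $I$ is a transversal to the vertical flow, and its first return map is by construction $T_{\l,\pi}$, so $\mc{T}(\mbf{q}(\l,\mbf{b}))=\l$. Moreover, choosing a basis of $H_1(M,\Sigma;\Z)$ consisting of the base intervals $I_i$ together with the "zipper" saddle connections, the complex relative periods of $\mbf{q}(\l,\mbf{b})$ become explicit affine-linear expressions in $(\l,\mbf{b})$: horizontal periods are $\l_i$ and vertical periods are linear combinations of $b_j$ determined by $\pi$. Since period coordinates are local charts on $\mc{H}$, this makes $\mbf{q}$ an affine map; IDOC combined with the non-vanishing area forces injectivity on a small enough $\mc{O}$, so $\mbf{q}$ is an affine homeomorphism onto its image.

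Finally, compute the horocycle action. Since $h_s$ acts on periods by $x+iy\mapsto(x+sy)+iy$, and from the previous step the imaginary parts of periods depend only on $\mbf{b}$ while the real parts are exactly $\l_i$, we see that $h_s\mbf{q}(\l,\mbf{b})$ has the same imaginary periods as $\mbf{q}(\l,\mbf{b})$ and real periods equal to $\l_i+s\cdot(\text{imaginary period}_i)$. Matching term-by-term against the period formula for $\mbf{q}(\l',\mbf{b})$ with $\l'=\l+s\mbf{b}$ shows
\[
h_s\mbf{q}(\l,\mbf{b})=\mbf{q}(\l+s\mbf{b},\mbf{b}).
\]
Antisymmetry of $Q$ gives $Q(\l+s\mbf{b},\mbf{b})=Q(\l,\mbf{b})>0$, so positivity of area is preserved, and for $|s|$ small enough one stays inside $\R_+^d\times\R^d\cap\mc{O}$, yielding the desired $\e>0$.

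The main obstacle is the bookkeeping in Step 1: one must set up the zippered rectangle construction so that the tangent vector $\mbf{b}$ to the horocycle is naturally identified with the suspension vector (via the bilinear form $Q$), rather than with Veech's unnormalized height vector. Concretely, one has to verify that the particular linear change of variables between $\mbf{b}$ and the raw heights $(h_i)$ makes the horocycle flow translate $\l$ by exactly $s\mbf{b}$, with no hidden correction from the zipper data. Once this identification is made, the remaining steps reduce to the routine observation that $h_s$ adds $s$ times imaginary periods to real periods.
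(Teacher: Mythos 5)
There is a genuine gap, and it sits exactly where you wave it away as ``bookkeeping.'' Your Step 1 asserts that, given only $Q(\l,\mbf{b})>0$ together with IDOC and unique ergodicity, Veech's zippered rectangle construction applied to base lengths $\l_i$ and heights that are linear functions of $\mbf{b}$ produces a translation surface. That is not what Veech's theorem says: the zippered rectangle construction requires the suspension data to satisfy a system of sign inequalities (positivity of certain partial sums/heights), and an arbitrary $\mbf{b}\in\R^d$ with $Q(\l,\mbf{b})>0$ will in general violate them --- the entries of $\mbf{b}$ may well be negative, and positivity of the single number $Q(\l,\mbf{b})$ (the total area) is far from sufficient. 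Unique ergodicity and IDOC of $T_{\l,\pi}$ say nothing, by themselves, about admissibility of the particular suspension data you wrote down, so the surface $\mbf{q}(\l,\mbf{b})$ is simply not constructed by your argument. Your Steps 2 and 3 (the identification $\mc{T}(\mbf{q}(\l,\mbf{b}))=\l$, affineness in period coordinates, the shear computation $h_s\colon x+iy\mapsto (x+sy)+iy$, and $Q(\l+s\mbf{b},\mbf{b})=Q(\l,\mbf{b})$ by antisymmetry) are fine \emph{conditionally} on Step 1, but they cannot rescue it.

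Bridging this gap is precisely the content of Minsky and Weiss, and it is nontrivial: one first defines \emph{positive pairs} $(\l,\mbf{b})$, for which the suspension does exist (their Definition 5.1 and Proposition 5.2), and then uses unique ergodicity --- via positivity of Birkhoff sums of the piecewise constant function determined by $\mbf{b}$ after inducing on a suitable subinterval --- to reduce an arbitrary pair with $Q(\l,\mbf{b})>0$ to a positive pair, building the surface over the induced transversal while keeping $T_{\l,\pi}$ as the return map to the original one (their Theorem 5.3). The paper does not reprove any of this: Proposition~\ref{propn: Minsky Weiss} is quoted, with the remark that it follows from Definition 5.1, Proposition 5.2 and Theorem 5.3 of~\cite{MinskyWeiss}. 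So either cite those results as the paper does, or supply the inducing/positive-pair argument; as written, your proof fails at the existence of the suspension.
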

    
    We remark that Theorem 5.3 in~\cite{MinskyWeiss} is not stated in the form we use here, however the statement of Proposition~\ref{propn: Minsky Weiss} follows easily from the original statement, Definition 5.1 of \textit{positive pairs} and Proposition 5.2 in~\cite{MinskyWeiss}.
    
    The next lemma shows that the positivity condition $Q(\cdot,\cdot)>0$ in Proposition~\ref{propn: Minsky Weiss} is not restrictive.
    \begin{lemma} \label{lemma: Q +ve a.e.}
    For Lebesgue almost every $(\l,\mbf{b}) \in \R_+^d\times \R^d$, $Q(\l,\mbf{b}) \neq 0$.
    \end{lemma}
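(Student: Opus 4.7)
The plan is to recognize $Q$ as a bilinear form on $\R^d \times \R^d$, so that the map $(\l,\mbf{b}) \mapsto Q(\l,\mbf{b})$ is a polynomial of bidegree $(1,1)$ on $\R^{2d}$. Once one verifies that this polynomial is not identically zero, the conclusion follows from the standard fact that the zero locus of a nontrivial polynomial on $\R^{2d}$ is a proper real algebraic variety and therefore has Lebesgue measure zero; intersecting with the open subset $\R_+^d \times \R^d$ preserves this.

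It therefore suffices to exhibit a single pair of indices $(i,j)$ with $Q(\mbf{e}_i,\mbf{e}_j)\neq 0$, since then the expansion $Q(\l,\mbf{b}) = \sum_{i,j} Q(\mbf{e}_i,\mbf{e}_j)\,\l_i b_j$ shows the polynomial has a nonzero coefficient. The only input needed is that $\pi$ is not the identity permutation; this is certainly the case if $\pi$ is irreducible (the setting of interest in Section~\ref{section: IETs}), since $\pi(\set{1})\neq \set{1}$ forces $\pi(1)\neq 1$, hence $\pi(1) > 1$. Setting $i := \pi^{-1}(1)$, one then has $i > 1$ and $\pi(i) = 1 < \pi(1)$, so by the first case of~\eqref{eqn: bilinear form}, $Q(\mbf{e}_i,\mbf{e}_1) = 1$.

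I do not anticipate any obstacle: the lemma is essentially the observation that a nonzero polynomial vanishes on a measure-zero set, and the only genuine content is locating a nonvanishing matrix entry of the alternating form $Q$, which is achieved by a one-line combinatorial argument using irreducibility of $\pi$.
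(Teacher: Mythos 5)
Your proof is correct and follows essentially the same route as the paper: both hinge on the observation that irreducibility forces $\pi(1)>1$, so that $j_0=\pi^{-1}(1)>1$ gives $Q(\mbf{e}_{j_0},\mbf{e}_1)=1$, and then conclude by a standard measure-zero argument. The only cosmetic difference is that the paper fixes $\l$ and notes that the linear form $Q(\l,\cdot)$ is nonzero (indeed $Q(\l,\mbf{e}_1)>0$ for \emph{every} $\l\in\R_+^d$) before applying Fubini, whereas you invoke the vanishing-locus-of-a-nonzero-polynomial fact on $\R^{2d}$ directly.
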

    
    \begin{proof}
    Suppose $\l =(\l_1,\dots,\l_d) \in \R_+^d$. We claim that $Q(\l,\mbf{e}_1) >0$ and, in particular, non-zero.
    Note that $Q(\mbf{e}_1,\mbf{e}_1) = 0$ and for all $j >1$, $Q(\mbf{e}_j,\mbf{e}_1) \geqslant 0$.
    As $\l\in \R_+^d$, this implies that $Q(\l,\mbf{e}_1) \geqslant 0$.
    
    Now, since $\pi$ is irreducible, we have $\pi(1) > 1$.
    Hence, we can find some $j_0>1$ so that $\pi(j_0)=1 < \pi(1)$.
    It follows that $Q(\l,\mbf{e}_1) \geqslant \l_{j_0} Q(\mbf{e}_{j_0},\mbf{e}_1) = \l_{j_0}>0$.
    
    This shows that the linear form $Q(\l,\cdot)$ is not identically zero.
    That is the kernel of $Q(\l,\cdot) $ has dimension $d-1$ and thus has measure $0$.
    Hence, the lemma follows by Fubini's theorem.   
    
    \end{proof}
    

    \subsection{Hausdorff dimension, slicing, and proof of Proposition~\ref{propn: dim of short intervals}}
    \label{section: proof of dim of short intervals}
    Denote by $\mrm{Gr}(d,m)$ the Grassmanian of $m$ dimensional subspaces in $\R^d$ and let $\g_{d,m}$ denote a Lebesgue class measure on $\mrm{Gr}(d,m)$.
    The space of lines ($1$ dimensional affine subspaces) in $\R^d$ can be naturally identified with $\mrm{Gr}(d,d-1)\times \R^{d-1}$ and thus carries a Lebesgue class measure.
    The following fact about slicing Borel sets of small Hausdorff codimension with lines will be useful for us.
    
    \begin{proposition} [Theorem 10.8 and Corollary 8.9(3) in~\cite{Mattila}]
    \label{propn: dim of slices}
    Suppose $A \subset \R^d$ is a Borel set with $\mrm{dim}_H(A) > t>d-1$.
    Then, there exists a set $B\subseteq  \mrm{Gr}(d,d-1)\times \R^{d-1}$ of lines in $\R^d$ of positive Lebesgue measure such that for each line $\ell\in B$,
    \[ \mrm{dim}_H( \ell \cap A) \geqslant t-d+1 \]
    \end{proposition}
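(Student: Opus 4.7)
The plan is to prove this classical slicing theorem by combining Frostman's lemma with a Fourier-analytic energy inequality relating Riesz energies of slice measures to the Riesz energy of the original measure on $A$. The overall architecture parallels Marstrand's slicing argument in Mattila's book.

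First, since $\mrm{dim}_H(A) > t$, Frostman's lemma produces a compactly supported Borel probability measure $\mu$ on $A$ whose $s$-energy $I_s(\mu) = \int\int |x-y|^{-s}\,d\mu(x)\,d\mu(y)$ is finite for every $s \in (d-1,\,t)$. Fix such an $s > d-1$; at the end we will send $s \nearrow t$ along a countable sequence. For each direction $V \in \mrm{Gr}(d,1)$, disintegrate $\mu$ via the orthogonal projection $\pi_V^\perp : \R^d \to V^\perp$ to obtain slice measures $\mu_{V,a}$ on the affine line $V+a$, normalized so that $\int_{V^\perp}\mu_{V,a}\,d\mc{H}^{d-1}(a) = \mu$.

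The heart of the argument is the energy-slicing inequality
\[
\int_{\mrm{Gr}(d,1)}\int_{V^\perp} I_{s-(d-1)}(\mu_{V,a})\,d\mc{H}^{d-1}(a)\,d\g_{d,1}(V)\;\leq\;C_{s,d}\,I_s(\mu),
\]
where $\g_{d,1}$ is the Haar measure on the Grassmannian. I would prove this via the Plancherel representation of Riesz energies, $I_\beta(\nu)=c(\beta,m)\int|\hat\nu(\xi)|^2|\xi|^{\beta-m}\,d\xi$, for measures on $\R^m$. The Fourier transform of the line-slice $\mu_{V,a}$ is a partial Fourier transform of $\hat\mu$ in the $V^\perp$ variable evaluated at $a$; integrating $|\widehat{\mu_{V,a}}|^2$ against $d\mc{H}^{d-1}(a)$ collapses via Plancherel to an integral of $|\hat\mu|^2$ over a $(d-1)$-dimensional affine slice of frequency space, and a final average over directions $V$ converts the result into a radial integral of $|\hat\mu(\xi)|^2|\xi|^{s-d}$, which is exactly $I_s(\mu)$ up to a constant.

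Granted this inequality, finiteness of the left-hand side implies that for $\g_{d,1}\times\mc{H}^{d-1}$-almost every $(V,a)$ the slice measure $\mu_{V,a}$ has finite $(s-(d-1))$-energy, hence, by the converse direction of Frostman's lemma, its support has Hausdorff dimension at least $s-(d-1)$; its support lies inside $A\cap(V+a)$. That the slice measures are nontrivial on a set of positive $\g_{d,1}\times\mc{H}^{d-1}$-measure follows from the disintegration normalization, which forces $\int\|\mu_{V,a}\|\,d\mc{H}^{d-1}(a) = \|\mu\|>0$ for every $V$. Letting $s$ run through a countable sequence increasing to $t$ and intersecting the resulting full-measure sets produces the set $B$ in the statement. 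The main obstacle is the Fourier-slicing inequality: keeping track of the exact dimension loss $d-1$ and ensuring the constants remain uniform as $s \nearrow t$ is delicate, and the alternative potential-theoretic derivation bypassing the Fourier transform requires an explicit Fubini-type identity for Riesz kernels along affine subspaces, which is where the real technical work resides.
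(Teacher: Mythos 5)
The paper does not actually prove this proposition: it is quoted directly from Mattila's book (Theorem 10.8 together with Corollary 8.9(3)), so there is no internal argument to compare against, and what you are proposing is a proof of the cited black box. Your sketch is essentially the standard proof of the Marstrand--Mattila slicing theorem as it appears in Mattila's own treatment (potential-theoretic in the 1995 book, Fourier-analytic in the later one), and as a blueprint it is sound: Frostman's lemma gives a compactly supported $\mu$ on $A$ with $I_s(\mu)<\infty$ for every $s<t$, the slicing inequality $\int\int I_{s-(d-1)}(\mu_{V,a})\,d\mathcal{H}^{d-1}(a)\,d\gamma(V)\lesssim I_s(\mu)$ for $s>d-1$ is a genuine theorem, and finiteness of the slice energies plus the converse direction of Frostman's lemma yields $\mathrm{dim}_H(\ell\cap A)\geq s-(d-1)$ on a positive-measure set of lines; running a countable sequence $s_k\nearrow t$ with the single measure $\mu$ then gives the stated bound, and note that you only need finiteness of each $I_{s_k}(\mu)$, so the uniformity of constants as $s\nearrow t$ that you flag as the delicate point is not actually needed. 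The one step you state too casually is the nontriviality of the slices: the normalization $\int_{V^\perp}\mu_{V,a}\,d\mathcal{H}^{d-1}(a)=\mu$ does not hold ``for every $V$'' by fiat; in general the disintegration only gives an inequality, with equality precisely when the projection $\pi_{V^\perp\ast}\mu$ is absolutely continuous, and it is here that the hypothesis $s>d-1$ enters a second time, via the projection theorem, to guarantee absolute continuity (indeed an $L^2$ density) for $\gamma$-almost every $V$. With that almost-every-$V$ statement replacing your ``every $V$'', the positive-measure set $B$ of lines exists and your argument closes, recovering the result the paper imports from Mattila.
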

    We also recall Frostman's lemma.
 \begin{lemma}(Theorem 8.8 in \cite{Mattila})\label{lem: Frostman}
 Suppose $A$ is a Borel subset of $\mathbb R^d$. Let $s>0$. Then, the following are equivalent:
 \begin{enumerate}
 \item\label{Frostman_positive} $H^s(A)>0$, where $H^s$ denotes the $s$-dimensional Hausdorff measure.
 \item\label{Frostman_exists_measure} There exists a Borel measure $\mu$ on $\mathbb R^d$ with support in $A$ such that $\mu(B(x,r))\leq r^s$ for all $x\in R^d$ and $r>0$, where $B(x,r)$ is the closed ball with center $x$ and radius $r$.
 \end{enumerate}
 \end{lemma}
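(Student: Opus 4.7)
The plan is as follows. The direction $(\ref{Frostman_exists_measure}) \Rightarrow (\ref{Frostman_positive})$ is elementary: given such a $\mu$, any cover $\set{U_i}$ of $A$ by sets of diameter $<\rho$ satisfies $U_i \subseteq B(x_i, \mrm{diam}(U_i))$ for some $x_i \in U_i$, so
\[
  \mu(A) \leq \sum_i \mu(B(x_i, \mrm{diam}(U_i))) \leq \sum_i \mrm{diam}(U_i)^s.
\]
Infimizing over covers gives $H^s_\rho(A) \geq \mu(A)$ for every $\rho>0$, whence $H^s(A) \geq \mu(A)>0$.

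For the converse, I would first reduce to the case of a compact $A$ with $0 < H^s(A) < \infty$, using the classical fact (due to Davies) that any Borel set of positive $s$-Hausdorff measure contains a compact subset of positive and finite $s$-Hausdorff measure; a Frostman measure on such a subset plainly serves $A$ as well. On this compact $A$, I would construct the required measure via a dyadic capping procedure: fix a large integer $n$, and to every dyadic cube $Q$ of side $2^{-n}$ meeting $A$ attach a mass $(2^{-n})^s$ spread uniformly on $Q\cap A$. Then process the scales $k = n-1, n-2, \dots, 0$ in order: whenever a dyadic cube $Q$ of side $2^{-k}$ currently carries mass exceeding $(2^{-k})^s$, rescale the mass inside $Q$ by the factor $(2^{-k})^s/\mu(Q)$. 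By construction the resulting measure $\mu_n$ is supported on $A$ and obeys $\mu_n(Q) \leq \mrm{side}(Q)^s$ at every dyadic cube of every scale $\geq 2^{-n}$; since each ball meets at most a dimensional constant number of dyadic cubes of comparable size, this upgrades to $\mu_n(B(x,r)) \leq C_d r^s$ for all balls $B(x,r)\subseteq \R^d$.

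The main obstacle is to ensure that $\mu_n(\R^d)$ is bounded below uniformly in $n$. The argument is a stopping-time / covering one: for each $x \in A$ let $k(x)\leq n$ be the largest scale at which the dyadic cube containing $x$ was capped during the sweep (such a scale exists because each point in $A$ received mass at scale $n$). The resulting collection of capping cubes covers $A$, and by construction the sum of their $s$-side-lengths equals $\mu_n(\R^d)$. Since any cover of $A$ by dyadic cubes of sides $\leq 2^{-n}$ has total $s$-content bounded below by a dimensional multiple of $H^s_{2^{-n}}(A) \to H^s(A) > 0$, this yields $\mu_n(\R^d) \geq c_d H^s(A)$ uniformly in $n$. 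Finally, after normalizing the $\mu_n$ to probability measures on the compact set $A$ and extracting a weak-$*$ convergent subsequence $\mu_n \to \mu$, the Frostman inequality passes to open balls via $\mu(B(x,r)) \leq \liminf_n \mu_n(B(x,r))$ and then to closed balls by monotone approximation; rescaling $\mu$ by a dimensional constant produces the desired Frostman measure.
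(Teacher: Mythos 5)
The paper does not actually prove this lemma --- it is quoted verbatim from Mattila's book (Theorem 8.8) --- and your argument is precisely the standard proof given there: the elementary covering estimate for \eqref{Frostman_exists_measure}$\Rightarrow$\eqref{Frostman_positive}, and for the converse the reduction (via the Besicovitch--Davies subset theorem, which is independent of Frostman's lemma, so there is no circularity) to a compact set of positive finite $H^s$-measure, followed by the bottom-up dyadic capping construction, the stopping-time lower bound on the total mass, and a weak-$*$ limit. The argument is correct; the only imprecisions are cosmetic: the maximal capping cubes have sides up to $1$ rather than $\leq 2^{-n}$, so the uniform lower bound on $\mu_n(\R^d)$ is a dimensional multiple of $H^s_{\sqrt{d}}(A)$ (still positive whenever $H^s(A)>0$), and the pointwise bound $\mu_n(B(x,r))\leq C_d r^s$ is only guaranteed for $r\geq 2^{-n}$, which is exactly what the weak-$*$ limit requires.
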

 The idea of the proof of Proposition~\ref{propn: dim of short intervals} is the following.
    First, we use Proposition~\ref{propn: dim of slices} to relate the dimension of the set of interest to the dimension of its intersection with line segments.
    Then, Proposition~\ref{propn: Minsky Weiss} allows us to relate the dimension of sets on line segments to the dimension of subsets of horocycle arcs.
    Finally, using Proposition~\ref{propn: short intervals and recurrence}, we show that the sets of interest on the horocycle arcs correspond to points with divergent $g_t$ orbits.
    As a result, Theorem~\ref{thrm: divergent on average for horocycles} concludes the argument.    
    
    The suggested outline of the proof is a modified version of an argument given in~\cite[Section 6]{AthreyaChaika}. 
    The main difference is the use of Lemma~\ref{lemma: Q +ve a.e.} to bypass the use of Rauzy induction (Lemma 6.5 in~\cite{AthreyaChaika}) which we believe makes the approach more direct.
    
    \begin{proof}[Proof of Proposition~\ref{propn: dim of short intervals}]
    Denote by $A$ the set of $\l\in \R_+^d$ such that $T_{\l,\pi}$ is uniquely ergodic, IDOC, and has short intervals and note that $A$ is Borel measurable.
    Suppose that for some $0<c<1$, we have
    \begin{equation*}
    	\mrm{codim}_H\left( A \right) \leqslant c
    \end{equation*}
    Then, by Proposition~\ref{propn: dim of slices}, there exists a positive measure set $\mc{L}$ of lines in $\R^d$ such that for each line $\ell\in\mc{L}$ a set $\ell\cap A$ has Hausdorff dimension at least $1-c$.
    By Lemma~\ref{lemma: Q +ve a.e.}, we may assume that for each line $\ell \in \mc{L}$  there exists some point $\l \in \ell$ so that $Q(\l,\mbf{b}) \neq 0$, where $\mbf{b}$ is a vector in $\R^d$ parallel to $\ell$.
    Let $\ell  \in \mc{L}$ be a line such that it passes through a point $\lambda\in\R_+^d$ and is parallel to $\mbf{b}\in \R^d$, i.e., $\ell = \set{\l+s\mbf{b}:s\in \R  }$, and $Q(\l,\mbf{b}) \neq 0$.

    By Lemma~\ref{lem: Frostman} (\eqref{Frostman_positive}$\Rightarrow$\eqref{Frostman_exists_measure}), there exists a measure $\mu$ supported on $\ell \cap A$ so that for all $x\in \ell$ and all $r>0$, we have
    \begin{equation*}
    	\mu(B(x,r)) \leqslant r^{1-c}
    \end{equation*}
    Note that the linearity of $Q$ implies that $Q(\l+s\mbf{b},\mbf{b})\neq 0$ for all $s\neq -Q(\l,\mbf{b})/Q(\mbf{b},\mbf{b})$ and for all $s\in \R$ if $Q(\mbf{b},\mbf{b})=0$.
    Hence, since $\mu$ is not a Dirac mass, we can find $x\in \mrm{supp}\, \mu \subset \ell\cap A$ such that $Q(x,\mbf{b})\neq 0$. In particular, $T_{x, \pi}$ is uniquely ergodic, IDOC, and has short intervals. Notice that a priori $\lambda\in \ell$ may not belong to $\mrm{supp}\, \mu$.
    By replacing $\mbf{b}$ with $-\mbf{b}$ if necessary, we may assume $Q(x,\mbf{b})>0$.
    
    Hence, by Proposition~\ref{propn: Minsky Weiss}, we can find $\e_0 >0$ and a local Lipschitz inverse $\mbf{q}$ of the map $\mc{T}$ so that 
    $\mbf{q}(x+s\mbf{b},\mbf{b}) = h_s(\mbf{q}(x,\mbf{b}))$ for $|s|<\e_0$.
    But, by Proposition~\ref{propn: short intervals and recurrence}, the forward $g_t$ orbit of the set $\set{\mbf{q}(x+s\mbf{b},\mbf{b})\colon |s|<\e_0,\, x+s\mbf{b}\in A}$ is divergent (on average) in the stratum $\mc{H}$.
    
    By restricting $\mu$ to the segment $x+s\mbf{b}$ with $|s|<\e_0$ and using Lemma~\ref{lem: Frostman} (\eqref{Frostman_exists_measure}$\Rightarrow$\eqref{Frostman_positive}), we see that the Hausdorff dimension of $A\cap \{x+s\mbf{b}\colon |s|<\e_0\}$ is at least $1-c$.
    Theorem~\ref{thrm: divergent on average for horocycles}, thus, implies that $1-c \leqslant 1/2$.
    
    \end{proof}
    

    \subsection{Proof of Corollary~\ref{cor: IETs}}
    Let $\pi$ be a type $W$ permutation on $d\geq 4$ letters.
    By Theorem~\ref{thrm: weak mixing criterion}, we have the following inclusion
    \begin{align} \label{eqn: superset of non weak mixing}
    \set{\l\in \R_+^d: T_{\l,\pi} \text{ not weak mixing} } &\subseteq
    \set{ \l: T_{\l,\pi} \text{ is not IDOC} } \cup\set{\l: T_{\l,\pi} \text{ is NUE } }
    \\&\cup  \set{\l : T_{\l,\pi} \text{ is IDOC, and UE, and has short intervals }  }\nonumber
    \end{align}
    where (N)UE denotes (non)-uniquely ergodic and having short intervals means $T_{\l,\pi}$ satisfies~\eqref{eqn: short intervals}.
    The last two sets in the above union have codimension at least $1/2$ by \cite[Theorem 1.6]{AthreyaChaika} and Proposition~\ref{propn: dim of short intervals}, respectively. 
    
    It is shown in~\cite{Keane} that if the components of $\l$ are linearly independent over $\Q$, then $T_{\l,\pi}$ is IDOC.
    In particular, the set of IETs which are not IDOC is contained in the intersection of the simplex $\R_+^d$ with countably many codimension $1$ subspaces of $\R^d$ which are defined over $\Q$.
    This implies that the set of non-IDOC IETs has Hausdorff codimension at least  $1$.

\section{Large Deviations in Birkhoff's Theorem in Strata - An Outline}

\label{section: remarks}

The scheme suggested in this paper is quite flexible and can be applied to get similar results about the Hausdorff dimension in various settings. For example, using our approach for the proof of Theorem~\ref{thrm: Birkhoff deviations thrm}, one should be able to answer the following question affirmatively.

\begin{question} \label{bstratum}
 Suppose $\mc{M}\subseteq \mc{H}_1(\a)$ is an affine invariant submanifold and $\nu_{\mc{M}}$ is the affine measure whose support is $\mc{M}$. Let $f$ be a bounded continuous function $f$ on $\mc{M}$ and $\e>0$.
      Is the Hausdorff dimension of the set
      \begin{equation*}
          \set{x\in \mc{M}: \limsup_{T\r\infty} \left|
          \frac{1}{T} \int_0^T f(g_tx)\;dt -
          \int_{\mathcal M} f \;d\nu_\mc{M} \right| \geqslant \e }
      \end{equation*}
      strictly less than the dimension of $\mc{M}$?
\end{question}

    Note that, by a standard approximation argument, the affirmative answer to the above question implies that for any non-empty open subset $U$ of a connected component $\mc{C}$ of the stratum $\mc{H}_1(\a)$, the Hausdorff dimension of the set
    \begin{equation*}
          \set{x\in \mc{C}: g_t x \notin U \text{ for all } t>0 }
    \end{equation*}
    is strictly less than the dimension of $\mc{C}$.
    
    For clarity, we briefly outline how to apply our techniques to answer Question \ref{bstratum}. The idea is to translate all the results on horocycle arcs obtained in Section~\ref{section: birkhoff} to results on open bounded subsets of the strong unstable manifold for the Teichm\"{u}ller flow. Then, one obtains the desired result from the analogue of Theorem~\ref{thrm: Birkhoff deviations thrm} for the Hausdorff dimension of the following set in the strong unstable leaf $\mathcal{W}^{su}(\omega)$ of $\omega\in\mathcal H_1(\alpha)$.
    \begin{equation*}
          \set{x\in \mc{W}^{su}(\omega): \limsup_{T\r\infty} \left| \frac{1}{T} \int_0^T f(g_t x)\;dt -
                   \int_{\mathcal M} f \;d\nu_\mc{M}\right| \geqslant \e }.
      \end{equation*}
   where $\w$ satisfies $\overline{\mrm{SL}(2,\R)\w} = \mc{M}$.
   We recall that
    \begin{equation*}
    	\mc{W}^{su}(\omega) = \set{x\in \mc{H}_1(\a):  d_{\mc{H}_1(\a)}\left( g_t \omega, g_t x \right) \r 0 \text{ as } t \r -\infty }
    \end{equation*}
    where $d_{\mc{H}_1(\a)}$ denotes the Teichm\"{u}ller metric.
    Any such leaf is foliated with orbits of the horocycle flow $h_s$.
    In particular, we can locally find foliation charts for $h_s$-orbits within a leaf of the unstable foliation, which also provide immersed local transversals for the horocycle orbits. As a result, we obtain that a neighborhood $\mathcal W^{su}_{loc}(\omega)$ of a point $\omega$ inside $\mathcal W^{su}(\omega)$ has a product structure. This allows the disintegration of the probability measure of Lebesgue class on small bounded open sets of unstable leaves with parameter measures as conditionals along horocycles.
    
    One can then introduce subsets of $\mathcal W^{su}_{loc}(\omega)$ and the corresponding functions analogous to \eqref{defn: B(f,N,epsilon,M)},\eqref{defn: functions f_i} and \eqref{defn: the sets F_i}. Using Fubini's theorem, one should be able to translate the measure bounds on exceptional subsets of horocycle arcs (see Section ~\ref{subsection: measure bounds}) into bounds for exceptional subsets of $\mathcal W^{su}_{loc}(\omega)$. The product structure of $\mathcal W^{su}_{loc}(\omega)$ can be similarly used for translating the results for horocycles in Section~\ref{section: doa} into results for $\mathcal W^{su}_{loc}(\omega)$.

\bibliography{bibliography}{}

\begin{thebibliography}{KKLM17}

\bibitem[AC15]{AthreyaChaika}
Jayadev~S. Athreya and Jon Chaika.
\newblock The {H}ausdorff dimension of non-uniquely ergodic directions in
  {$H(2)$} is almost everywhere {$\frac 12$}.
\newblock {\em Geom. Topol.}, 19(6):3537--3563, 2015.

\bibitem[AF07]{AvilaForni}
Artur Avila and Giovanni Forni.
\newblock Weak mixing for interval exchange transformations and translation
  flows.
\newblock {\em Ann. of Math. (2)}, 165(2):637--664, 2007.

\bibitem[AF08]{AthreyaForni}
Jayadev~S. Athreya and Giovanni Forni.
\newblock Deviation of ergodic averages for rational polygonal billiards.
\newblock {\em Duke Math. J.}, 144(2):285--319, 2008.

\bibitem[AL16]{AvilaLeguil}
Artur {Avila} and Martin {Leguil}.
\newblock {Weak mixing properties of interval exchange transformations and
  translation flows}.
\newblock {\em ArXiv e-prints}, May 2016.

\bibitem[Ath06]{Athreya2006}
Jayadev~S. Athreya.
\newblock Quantitative recurrence and large deviations for {T}eichmuller
  geodesic flow.
\newblock {\em Geom. Dedicata}, 119:121--140, 2006.

\bibitem[BN04]{BoshernitzanNogueira}
Michael Boshernitzan and Arnaldo Nogueira.
\newblock Generalized eigenfunctions of interval exchange maps.
\newblock {\em Ergodic Theory Dynam. Systems}, 24(3):697--705, 2004.

\bibitem[Bos85]{BoshernitzanDuke}
Michael Boshernitzan.
\newblock A condition for minimal interval exchange maps to be uniquely
  ergodic.
\newblock {\em Duke Math. J.}, 52(3):723--752, 1985.

\bibitem[BQ12]{BenoistQuint}
Yves Benoist and Jean-Francois Quint.
\newblock Random walks on finite volume homogeneous spaces.
\newblock {\em Invent. Math.}, 187(1):37--59, 2012.

\bibitem[CC16]{CheungChevallier}
Yitwah Cheung and Nicolas Chevallier.
\newblock Hausdorff dimension of singular vectors.
\newblock {\em Duke Math. J.}, 165(12):2273--2329, 2016.

\bibitem[CE15]{ChaikaEskin}
Jon Chaika and Alex Eskin.
\newblock Every flat surface is {B}irkhoff and {O}seledets generic in almost
  every direction.
\newblock {\em J. Mod. Dyn.}, 9:1--23, 2015.

\bibitem[Che03]{Cheung_nonergodic}
Yitwah Cheung.
\newblock Hausdorff dimension of the set of nonergodic directions.
\newblock {\em Ann. of Math. (2)}, 158(2):661--678, 2003.
\newblock With an appendix by M. Boshernitzan.

\bibitem[Che11]{Cheung-singular}
Yitwah Cheung.
\newblock Hausdorff dimension of the set of singular pairs.
\newblock {\em Ann. of Math. (2)}, 173(1):127--167, 2011.

\bibitem[CM18]{ChaikaMasur-dIET}
Jon {Chaika} and Howard {Masur}.
\newblock {The set of non-uniquely ergodic d-IETs has Hausdorff codimension
  1/2}.
\newblock {\em ArXiv e-prints}, January 2018.

\bibitem[DHL14]{Hubert-windtree}
Vincent Delecroix, Pascal Hubert, and Samuel Leli\`evre.
\newblock Diffusion for the periodic wind-tree model.
\newblock {\em Ann. Sci. \'Ec. Norm. Sup\'er. (4)}, 47(6):1085--1110, 2014.

\bibitem[DM93]{DaniMargulis}
Shrikrishna~Gopalrao Dani and Gregory Margulis.
\newblock Limit distributions of orbits of unipotent flows and values of
  quadratic forms.
\newblock In {\em I. {M}. {G}elfand {S}eminar}, volume~16 of {\em Adv. Soviet
  Math.}, pages 91--137. Amer. Math. Soc., Providence, RI, 1993.

\bibitem[Dol97]{Dolgopyat-BoundedAnosov}
Dmitry Dolgopyat.
\newblock Bounded orbits of {A}nosov flows.
\newblock {\em Duke Math. J.}, 87(1):87--114, 1997.

\bibitem[EM01]{EskinMasur}
Alex Eskin and Howard Masur.
\newblock Asymptotic formulas on flat surfaces.
\newblock {\em Ergodic Theory Dynam. Systems}, 21(2):443--478, 2001.

\bibitem[EM04]{EskinMargulis-RandomWalks}
Alex Eskin and Gregory Margulis.
\newblock Recurrence properties of random walks on finite volume homogeneous
  manifolds.
\newblock In {\em Random walks and geometry}, pages 431--444. Walter de
  Gruyter, Berlin, 2004.

\bibitem[EM13]{EskinMirzakhani}
Alex Eskin and Maryam Mirzakhani.
\newblock Invariant and stationary measures for the sl(2,r) action on moduli
  space, 2013.

\bibitem[EMM98]{EskinMargulisMozes}
Alex Eskin, Gregory Margulis, and Shahar Mozes.
\newblock Upper bounds and asymptotics in a quantitative version of the
  {O}ppenheim conjecture.
\newblock {\em Ann. of Math. (2)}, 147(1):93--141, 1998.

\bibitem[EMM15]{EMM}
Alex Eskin, Maryam Mirzakhani, and Amir Mohammadi.
\newblock Isolation, equidistribution, and orbit closures for the {${\rm
  SL}(2,\mathbb{R})$} action on moduli space.
\newblock {\em Ann. of Math. (2)}, 182(2):673--721, 2015.

\bibitem[Fil16]{Filip2016}
Simion Filip.
\newblock Semisimplicity and rigidity of the {K}ontsevich-{Z}orich cocycle.
\newblock {\em Invent. Math.}, 205(3):617--670, 2016.

\bibitem[FM14]{ForniMatheus}
Giovanni Forni and Carlos Matheus.
\newblock Introduction to {T}eichm\"uller theory and its applications to
  dynamics of interval exchange transformations, flows on surfaces and
  billiards.
\newblock {\em J. Mod. Dyn.}, 8(3-4):271--436, 2014.

\bibitem[For02]{Forni}
Giovanni Forni.
\newblock Deviation of ergodic averages for area-preserving flows on surfaces
  of higher genus.
\newblock {\em Ann. of Math. (2)}, 155(1):1--103, 2002.

\bibitem[GM89]{GoldsheidMargulis}
Ilya Goldsheid and Gregory Margulis.
\newblock Lyapunov exponents of a product of random matrices.
\newblock {\em Uspekhi Mat. Nauk}, 44(5(269)):13--60, 1989.

\bibitem[Kea75]{Keane}
Michael Keane.
\newblock Interval exchange transformations.
\newblock {\em Math. Z.}, 141:25--31, 1975.

\bibitem[{Kha}17]{Khalil}
Osama {Khalil}.
\newblock {Pointwise Equidistribution and Translates of Measures on Homogeneous
  Spaces}.
\newblock {\em ArXiv e-prints}, March 2017.

\bibitem[KKLM17]{KKLM-SingSystems}
Shirali Kadyrov, Dmitry Kleinbock, Elon Lindenstrauss, and Gregory Margulis.
\newblock Singular systems of linear forms and non-escape of mass in the space
  of lattices.
\newblock {\em Journal d'Analyse Math{\'e}matique}, 133(1):253--277, Oct 2017.

\bibitem[KM96]{Kleinbock-MargulisCojA}
Dmitry Kleinbock and Gregory Margulis.
\newblock Bounded orbits of nonquasiunipotent flows on homogeneous spaces.
\newblock In {\em Sina\u\i 's {M}oscow {S}eminar on {D}ynamical {S}ystems},
  volume 171 of {\em Amer. Math. Soc. Transl. Ser. 2}, pages 141--172. Amer.
  Math. Soc., Providence, RI, 1996.

\bibitem[KW04]{KleinbockWeiss}
Dmitry Kleinbock and Barak Weiss.
\newblock Bounded geodesics in moduli space.
\newblock {\em Int. Math. Res. Not.}, (30):1551--1560, 2004.

\bibitem[KW13]{Kleinbock-MargulisCojB}
Dmitry Kleinbock and Barak Weiss.
\newblock Modified {S}chmidt games and a conjecture of {M}argulis.
\newblock {\em J. Mod. Dyn.}, 7(3):429--460, 2013.

\bibitem[Mar91]{Margulis-Non-dense}
Gregory Margulis.
\newblock Dynamical and ergodic properties of subgroup actions on homogeneous
  spaces with applications to number theory.
\newblock In {\em Proceedings of the {I}nternational {C}ongress of
  {M}athematicians, {V}ol.\ {I}, {II} ({K}yoto, 1990)}, pages 193--215. Math.
  Soc. Japan, Tokyo, 1991.

\bibitem[Mas92]{Masur_nonergodic}
Howard Masur.
\newblock Hausdorff dimension of the set of nonergodic foliations of a
  quadratic differential.
\newblock {\em Duke Math. J.}, 66(3):387--442, 1992.

\bibitem[Mat95]{Mattila}
Pertti Mattila.
\newblock {\em Geometry of sets and measures in {E}uclidean spaces}, volume~44
  of {\em Cambridge Studies in Advanced Mathematics}.
\newblock Cambridge University Press, Cambridge, 1995.
\newblock Fractals and rectifiability.

\bibitem[MW14]{MinskyWeiss}
Yair Minsky and Barak Weiss.
\newblock Cohomology classes represented by measured foliations, and {M}ahler's
  question for interval exchanges.
\newblock {\em Ann. Sci. \'Ec. Norm. Sup\'er. (4)}, 47(2):245--284, 2014.

\bibitem[Urb91]{Urbanski-NonDense}
Mariusz Urba\'nski.
\newblock The {H}ausdorff dimension of the set of points with nondense orbit
  under a hyperbolic dynamical system.
\newblock {\em Nonlinearity}, 4(2):385--397, 1991.

\bibitem[Vee78]{Veech-bilinearform}
William~A. Veech.
\newblock Interval exchange transformations.
\newblock {\em J. Analyse Math.}, 33:222--272, 1978.

\end{thebibliography}
\bibliographystyle{alpha}

\end{document}